\documentclass[12pt]{article}
\usepackage{amsthm,amsfonts,amsmath,amscd,amssymb,epsfig}
\usepackage[left=2cm, right=2cm]{geometry}
\usepackage{epigraph} 
\numberwithin{equation}{section}
\theoremstyle{plain}
\newtheorem{theorem}{Theorem}[section]

\newtheorem{proposition}[theorem]{Proposition}
\newtheorem{corollary}[theorem]{Corollary}
\newtheorem{lemma}[theorem]{Lemma}
\newtheorem{lemma-definition}[theorem]{Lemma-Definition}

\newtheorem{conjecture}[theorem]{Conjecture}
\newtheorem{claim}[theorem]{Claim}

\theoremstyle{definition}
\newtheorem{definition}[theorem]{Definition}

\newtheorem{remark}[theorem]{Remark}

\newtheorem{example}[theorem]{Example}
\newtheorem{counterexample}[theorem]{Counterexample}

\newcommand{\Addresses}{{
  \bigskip
\footnotesize
\textsc{Department of Mathematics, Uppsala University, 753 10 Uppsala, Sweden}\par\nopagebreak
\textit{E-mail address:} \texttt{shahmath19@gmail.com, shah.faisal@math.uu.se}}}
\addtolength{\textheight}{0.5in}
\graphicspath{{Images/}}
\begin{document}
\title{Extremal Lagrangian tori in toric domains}
\author{Shah Faisal}
\date{}
\maketitle
\begin{abstract}
Let $L$ be a closed Lagrangian submanifold of a symplectic manifold $(X,\omega)$. Cieliebak and Mohnke define the symplectic area of $L$ as the minimal positive symplectic area of a smooth $2$-disk in $X$ with boundary on $L$. An extremal Lagrangian torus in $(X,\omega)$  is a Lagrangian torus that maximizes the symplectic area among the Lagrangian tori in  $(X,\omega)$. We prove that every extremal Lagrangian torus in the symplectic unit ball $(\bar{B}^{2n}(1),\omega_{\mathrm{std}})$ is contained entirely in the boundary $\partial B^{2n}(1)$. This answers a question attributed to Lazzarini and completely settles a conjecture of Cieliebak and Mohnke in the affirmative. In addition, we prove the conjecture for a class of toric domains in $(\mathbb{C}^n, \omega_{\mathrm{std}})$, which includes all compact strictly convex four-dimensional toric domains. We explain with counterexamples that the general conjecture does not hold for non-convex domains. \end{abstract}	
\tableofcontents
\parskip=4pt
\section{Introduction and statements of results}
\subsection{Introduction}
A symplectic manifold is a pair $(X,\omega)$, where $X$ is a smooth $2n$-dimensional manifold and $\omega$ is a closed differential $2$-form which is non-degenerate in the sense that the top-degree form $\omega^n$ is nowhere vanishing. Important examples of symplectic manifolds are convex toric domains in the standard symplectic Euclidean space $(\mathbb{C}^{n}, \omega_{\mathrm{std}}:=\frac{i}{2}\sum_{j=1}^ndz_j\wedge d\bar{z}_j)$. These are defined as follows. Consider the standard moment map $\mu:\mathbb{C}^{n}\to \mathbb{R}^{n}_{\geq 0}$ given by
\[\mu(z_1,z_2,\dots, z_n):= \pi(|z_1|^2,\dots, |z_n|^2).\]
A toric domain in $\mathbb{C}^{n}$ is a subset $X^{2n}_{\Omega}$ of $\mathbb{C}^{n}$ that can be written as $X^{2n}_{\Omega}=\mu^{-1}(\Omega)$, for some domain $\Omega \subset \mathbb{R}^{n}_{\geq 0}$. A convex toric domain is a toric domain $X^{2n}_{\Omega}$ for which the set
\[\widehat{\Omega}:=\{(x_1,x_2,\dots,x_n)\in \mathbb{R}^n: (|x_1|,|x_2|,\dots,|x_n|)\in \Omega\}\] 
is a convex subset of $\mathbb{R}^{n}$. We define the diagonal of a toric domain $X^{2n}_{\Omega}$ to be the number defined by
\[\operatorname{diagonal}(X^{2n}_{\Omega}):=\sup\{a>0:(a,a,\dots,a)\in \Omega\}.\]

Convex toric domains inherit the standard symplectic structure from  $\mathbb{C}^{n}$; this way, they form an essential class of symplectic manifolds with boundary. 

For example, let $\Omega$ be the $n$-simplex in $\mathbb{R}^{n}_{\geq 0}$ with vertices $(0,\dots,0), (a_1,\dots,0),$ $(0,a_2, \dots,0)$ and $(0, \dots,a_n)$, where $0<a_1\leq a_2\leq a_3\leq \dots\leq a_n<\infty$. Then $X^{2n}_{\Omega}$ is the standard $2n$-dimensional ellipsoid defined by
\[E^{2n}(a_1,a_2,a_3,\dots,a_n):=\bigg\{(z_1,\dots, z_{n})\in \mathbb{C}^{n}: \sum_{i=1}^{n}\frac{  \pi|z_i|^2}{a_i}\leq 1\bigg \}.\] 
In particular,
\[\bar{B}^{2n}(a):=E^{2n}(a,a,a,\dots,a)\] 
is the standard ball of capacity $a>0$ and radius $\sqrt{a/\pi}$. Moreover, $S^{2n-1}(a):=\partial \bar{B}^{2n}(a)$ is the sphere of radius $\sqrt{a/\pi}$ centered at the origin. In particular, $S^{1}(a)$ is the circle centered at the origin that bounds a symplectic area equal to $a$. It is easy to see that 
\[\operatorname{diagonal}(E^{2n}(a_1,a_2,a_3,\dots,a_n))= \left( \frac{1}{a_1}+\dots+\frac{1}{a_n}\right)^{-1},\]
and 
\[\operatorname{diagonal}(\bar{B}^{2n}(a))= \frac{a}{n}.\]

 A submanifold $L\subset (X,\omega)$ is called Lagrangian if $2\operatorname{dim}(L)=\operatorname{dim}(X)$ and $i^*\omega=0$, where $i: L\to X$ is the inclusion. In this article, our focus is on Lagrangian tori; a Lagrangian torus is a Lagrangian submanifold that is diffeomorphic to the standard torus. A Lagrangian torus $L\subset (X,\omega)$ is monotone if there exists $K>0$ such that for all $A\in H_2(X,L)$ we have
 \[\int_{A}\omega=K\mu_L(A),\]
 where $\mu_L:H_2(X,L)\to \mathbb{Z}$ denotes the Maslov class of $L$.
\begin{definition}[Cieliebak--Mohnke \cite{Cieliebak2018}]
Let $L\subset (X,\omega)$ be a Lagrangian torus. The quantity 
\[\inf_{A\in \pi_2(X, L)\atop \int_{A}\omega>0} \int_{A}\omega\in [0, \infty]\]
is called the symplectic area  of $L$ in $ (X,\omega)$.
\end{definition} 
\begin{definition}[Cieliebak--Mohnke \cite{Cieliebak2018}]
The Lagrangian capacity of the symplectic manifold $(X,\omega)$, denoted by $\mathrm{c}_{L}(X,\omega)$, is the maximal symplectic area of a Lagrangian torus sitting in $ (X,\omega)$, i.e.,
\[\mathrm{c}_{L}(X,\omega):=\sup_{
	L\subset (X,\omega)\atop
	\text{Lag. torus}} \inf_{A\in \pi_2(X, L)\atop \int_{A}\omega>0} \int_{A}\omega \in  [0, \infty].\]
\end{definition}
\begin{definition}[Cieliebak--Mohnke \cite{Cieliebak2018}]
A Lagrangian torus $L$ in  $(X,\omega)$ is called extremal if  
\[\inf_{A\in \pi_2(X, L)\atop \int_{A}\omega>0} \int_{A}\omega=\mathrm{c}_{L}(X,\omega).\]
\end{definition}	
\begin{theorem}[Cieliebak--Mohnke \cite{Cieliebak2018}]\label{compute-lag}
 Let $\omega_{\mathrm{FS}}$ be the Fubini--Study form on  $\mathbb{CP}^n$ scaled so that $\int_{\mathbb{CP}^1}\omega_{\mathrm{FS}}=1$. Then
\[\mathrm{c}_{L}(\mathbb{CP}^n,\omega_{\mathrm{FS}})=\frac{1}{n+1}.\]
 For the standard closed ball of capacity $r>0$ (and radius $\sqrt{r/\pi}$), we have
	\[\mathrm{c}_{L}(\bar{B}^{2n}(r),\omega_{\mathrm{std}})=\frac{r}{n}.\]
\end{theorem}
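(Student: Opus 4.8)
The identity is proved by matching a lower bound --- exhibited by an explicit Lagrangian torus --- with an upper bound, which is the substantial part and rests on pseudoholomorphic curve methods. For the lower bounds I would, in $(\mathbb{CP}^n,\omega_{\mathrm{FS}})$, take the monotone Clifford torus $L_{\mathrm{Cl}}=\{[z_0:\dots:z_n]:|z_0|=\dots=|z_n|\}$, i.e.\ the moment fibre over the barycentre of the standard simplex. Applying the doubling relation $\mu_{L_{\mathrm{Cl}}}(i_*c)=2\langle c_1(\mathbb{CP}^n),c\rangle$ to the line class shows the monotonicity constant of $L_{\mathrm{Cl}}$ is $\tfrac1{2(n+1)}$; since $L_{\mathrm{Cl}}$ is a torus its minimal Maslov number is $2$, realised by the $n+1$ toric disks, each of symplectic area $\tfrac1{n+1}$. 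As every class in $\pi_2(\mathbb{CP}^n,L_{\mathrm{Cl}})$ of positive symplectic area has Maslov index $\ge 2$, hence area $\ge\tfrac1{n+1}$, the symplectic area of $L_{\mathrm{Cl}}$ equals $\tfrac1{n+1}$, giving $\mathrm{c}_L(\mathbb{CP}^n,\omega_{\mathrm{FS}})\ge\tfrac1{n+1}$. For $\bar{B}^{2n}(r)$ I would take the boundary torus $L_r:=\mu^{-1}(\tfrac rn,\dots,\tfrac rn)\subset S^{2n-1}(r)$, which lies in $\bar{B}^{2n}(r)$ precisely because $\tfrac rn=\operatorname{diagonal}(\bar{B}^{2n}(r))$. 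As $\bar{B}^{2n}(r)$ is contractible, $\pi_2(\bar{B}^{2n}(r),L_r)\cong\pi_1(L_r)\cong\mathbb{Z}^n$, and the area homomorphism sends the $j$-th generator to the area $\tfrac rn$ of the obvious embedded holomorphic disk filling the $j$-th circle; so every positive-area class has area in $\tfrac rn\mathbb{Z}_{>0}$, with $\tfrac rn$ attained, whence $\mathrm{c}_L(\bar{B}^{2n}(r),\omega_{\mathrm{std}})\ge\tfrac rn$.

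\textbf{The upper bounds.} Given an arbitrary Lagrangian torus $L$, the goal is to produce a single non-constant holomorphic disk with boundary on $L$ whose symplectic area is already $\le\tfrac1{n+1}$ in the $\mathbb{CP}^n$ case, resp.\ $\le\tfrac rn$ in the ball case; such a disk has positive area and therefore forces the symplectic area of $L$ --- the infimum over positive-area classes --- down to the required value. The plan is to manufacture this disk by degenerating a holomorphic curve of small total area ``along $L$''. In $\mathbb{CP}^n$ I would start from a complex line through a point of $L$: it has symplectic area $1$ and relative Maslov index $2(n+1)$ after doubling. In $\bar{B}^{2n}(r)$ I would start from a minimal-area holomorphic disk with boundary on $L$ passing through the centre $0$, whose area is $\le r$ and whose relative Maslov index is $\ge 2n$ by positivity of intersection with the $n$ coordinate hyperplanes $\{z_j=0\}$ through $0$. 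Stretching the neck along the unit (co)sphere bundle of a Weinstein neighbourhood of $L$ and invoking SFT-type compactness, the chosen curve breaks into a holomorphic building whose pieces with ends on $L$ carry, between them, the whole relative Maslov index $2(n+1)$ (resp.\ $2n$) and the whole symplectic area $\le 1$ (resp.\ $\le r$); for a generic almost complex structure these pieces should be Maslov-$2$ disks, so there are exactly $n+1$ of them (resp.\ $n$), and the pigeonhole principle yields one of symplectic area $\le\tfrac1{n+1}$ (resp.\ $\le\tfrac rn$).

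\textbf{The main obstacle.} Everything above except the neck-stretching step is elementary; the hard part --- which is precisely the content of Cieliebak--Mohnke's punctured-holomorphic-curve machinery --- is to realise the ``degeneration along $L$'' rigorously. One must choose a Weinstein model and contact form near $L$ whose Reeb flow is Morse--Bott with closed-orbit families indexed by $H_1(L)\cong\mathbb{Z}^n$ and actions recording symplectic areas of bounding disks; establish compactness and enough transversality for the relevant moduli of punctured holomorphic curves, ruling out components of non-positive Maslov index and (using the Fano property of $\mathbb{CP}^n$, resp.\ the absence of holomorphic spheres in $\mathbb{C}^n$) sphere bubbling; and finally carry out the index-and-area bookkeeping that pins the number of surviving Maslov-$2$ pieces at exactly $n+1$ (resp.\ $n$). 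The two delicate points I expect to fight hardest with are controlling the area of the auxiliary disk through the centre in the ball case, and arranging the neck-stretching so that a genuine Maslov-$2$ disk (rather than a more degenerate configuration) persists in the limit.
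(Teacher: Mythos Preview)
The paper does not give its own proof of this theorem: it is quoted as a result of Cieliebak--Mohnke \cite{Cieliebak2018} and used as input. However, the machinery the paper deploys in Sections~\ref{intersect}--\ref{proof for ball} to prove the stronger Theorem~\ref{extremal-lag-ball} is essentially the Cieliebak--Mohnke upper-bound argument, so one can compare your plan against that.

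Your lower bounds via the Clifford torus are correct and standard; this is exactly how the paper obtains the inequality~\eqref{lag-capacity-con}.

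Your upper-bound plan has the right architecture --- compactify, stretch along a Weinstein neighbourhood of $L$, count the pieces --- but is missing the single idea that makes it work: the \emph{local tangency constraint}. Starting from ``a complex line through a point of $L$'' and hoping that generically the pieces are all of Maslov index~$2$ is not justified; nothing in the index formula for a plane in $\mathbb{CP}^n\setminus L$ forbids a single Maslov~$2(n+1)$ plane in the limit building. What Cieliebak--Mohnke do (and what the paper uses in Section~\ref{proof for ball}) is to take a line in the class $[\mathbb{CP}^1]$ carrying the constraint $\ll\mathcal{T}_D^{\,n-1}p\gg$ at a point $p\in L$; the count of such lines is $(n-1)!\ne 0$ (Theorem~\ref{count}). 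Under neck-stretching this constraint is inherited by the bottom component $C_{\mathrm{bot}}\subset T^*L$, and the puncture-count Theorem~\ref{count-positive-punctures} forces $C_{\mathrm{bot}}$ to have at least $n+1$ positive ends. \emph{That} is what produces $n+1$ planes in the top level; the ``Maslov~$2$'' heuristic is a consequence, not a cause.

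For the ball your proposed route --- a minimal-area disk through the origin with Maslov index $\ge 2n$ --- is not how the argument goes, and the existence/area/index claims you make for that disk are all unproven. The actual approach is simpler: embed $\bar B^{2n}(r)\hookrightarrow(\mathbb{CP}^n,\tfrac{r+\epsilon}{\pi}\omega_{\mathrm{FS}})$ as the complement of the hyperplane at infinity $\mathbb{CP}^{n-1}$ and run the \emph{same} tangency-constrained neck-stretching. Of the $n+1$ resulting planes in $\mathbb{CP}^n\setminus L$, exactly one meets $\mathbb{CP}^{n-1}$ (positivity of intersection plus $[\mathbb{CP}^1]\cdot[\mathbb{CP}^{n-1}]=1$), so the remaining $n$ planes lie in the affine chart and compactify to disks in $\bar B^{2n}(r+\epsilon)$ with total area $\le r+\epsilon$; pigeonhole gives one of area $\le (r+\epsilon)/n$, then let $\epsilon\to 0$. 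This is precisely Lemmas~\ref{count-disks}--\ref{count-intersections} of the paper, specialised to the upper-bound statement rather than the extremality statement.
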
 
Let $X^{2n}_{\Omega}$ be a toric domain and $\delta=\operatorname{diagonal}(X^{2n}_{\Omega})$. The Clifford torus $S^1(\delta)\times\dots\times S^1(\delta)$ in $(\mathbb{C}^n,\omega_{\mathrm{std}})$ is a Lagrangian torus of symplectic area $\delta$  that sits on the boundary $\partial X^{2n}_{\Omega}$. Therefore, every extremal Lagrangian torus in $(X^{2n}_{\Omega},\omega_{\mathrm{std}})$ has symplectic area at least $\operatorname{diagonal}(X^{2n}_{\Omega})$. In other words,
 \begin{equation}\label{lag-capacity-con}
 \mathrm{c}_{L}(X^{2n}_{\Omega},\omega_{\mathrm{std}})\geq \operatorname{diagonal}(X^{2n}_{\Omega}).
 \end{equation}
In particular, 
\begin{equation}\label{lag-capacity}
	\mathrm{c}_{L}(E^{2n}(a_1,a_2,a_3,\dots,a_n),\omega_{\mathrm{std}})\geq \left( \frac{1}{a_1}+\dots+\frac{1}{a_n}\right)^{-1}.
\end{equation}

Theorem \ref{compute-lag} implies that we have equality in (\ref{lag-capacity}) for the round balls  $E(r,r,\dots,r)=\bar{B}^{2n}(r)$. Pereira \cite{Pereira:2022ab, Pereira:2022aa} has shown that (\ref{lag-capacity}) is an equality for all four-dimensional ellipsoids, whereas for a higher dimensional ellipsoid equality holds provided that a suitable virtual perturbation scheme exists to define the curve counts for its linearized contact homology.

Cieliebak and Mohnke have stated two conjectures about extremal Lagrangian tori in their influential $2014$ paper \cite{Cieliebak2018}. The first conjecture claims that the symplectic area of a Lagrangian torus remembers information about the extrinsic geometry of the Lagrangian torus:
\begin{conjecture}[Cieliebak--Mohnke {\cite[Conjecture 1.9]{Cieliebak2018}}]\label{extremal-lang1}
	Every extremal Lagrangian torus in the standard symplectic unit ball $(\bar{B}^{2n}(1),\omega_{\mathrm{std}})$  lies entirely on the boundary $\partial B^{2n}(1)$. 
\end{conjecture}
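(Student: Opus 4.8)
The plan is to argue by contradiction. By Theorem~\ref{compute-lag} we have $\mathrm{c}_{L}(\bar{B}^{2n}(1),\omega_{\mathrm{std}})=1/n$, so a Lagrangian torus $L$ is extremal precisely when its symplectic area equals $1/n$; I assume henceforth that $L$ has symplectic area $1/n$ and that $L\not\subset\partial B^{2n}(1)$, and I try to derive a contradiction.

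The first step is to reduce to a torus that \emph{meets} the boundary sphere. For every $\rho>0$ and every Lagrangian torus $L'\subset\bar{B}^{2n}(\rho)$, contractibility of the ball and the long exact sequence of the pair yield a canonical isomorphism $\pi_{2}(\bar{B}^{2n}(\rho),L')\cong\pi_{1}(L')$, and under it the symplectic-area homomorphism $A\mapsto\int_{A}\omega_{\mathrm{std}}$ is independent of $\rho$: it is well defined on homotopy classes because $\omega_{\mathrm{std}}$ is closed and vanishes on $L'$, and enlarging the ambient ball does not change the integral of a representative. Consequently, if the extremal $L$ were contained in the \emph{open} ball $B^{2n}(1)$, then by compactness $L\subset\bar{B}^{2n}(1-\delta)$ for some $\delta>0$; its symplectic area computed in $\bar{B}^{2n}(1-\delta)$ would still be $1/n$, yet it would be at most $\mathrm{c}_{L}(\bar{B}^{2n}(1-\delta))=(1-\delta)/n<1/n$, a contradiction. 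Hence an extremal Lagrangian torus must intersect $\partial B^{2n}(1)$, and under our standing assumption $L$ meets both $\partial B^{2n}(1)$ and the open ball.

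The heart of the matter is to rule out this mixed situation. Since $L$ is a closed submanifold of dimension $n<2n-1$, there is a point $q_{0}\in\partial B^{2n}(1)\setminus L$ with a neighbourhood in $\bar{B}^{2n}(1)$ disjoint from $L$; in particular $\bar{B}^{2n}(1)\setminus L$ contains a symplectically embedded ball abutting the boundary sphere. I stress that a localised volume- or packing-type improvement cannot by itself finish the argument: the value $1/n$ is attained by Clifford tori lying on $\partial B^{2n}(1)$ in \emph{every} position (apply a generic unitary rotation), so deleting a region near the boundary does not decrease $\mathrm{c}_{L}$. Instead I would revisit the holomorphic-curve proof of the inequality $\mathrm{c}_{L}(\bar{B}^{2n}(1))\le 1/n$ behind Theorem~\ref{compute-lag} and upgrade it to the sharp statement: the symplectic area of a Lagrangian torus in $\bar{B}^{2n}(1)$ is \emph{strictly} less than $1/n$ unless the torus lies on $\partial B^{2n}(1)$. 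In outline, realising $\bar{B}^{2n}(1)$ inside $\mathbb{CP}^{n}$ as the complement of a neighbourhood of a hyperplane and using holomorphic spheres in the line class, one produces for a generic adapted almost complex structure a configuration of holomorphic disks with boundary on $L$ whose Maslov indices and symplectic areas are constrained so that some class $\beta\in\pi_{2}(\bar{B}^{2n}(1),L)$ satisfies $0<\int_{\beta}\omega_{\mathrm{std}}\le 1/n$ (for instance via $n$ disks of total Maslov index $2n$ and total area at most $1$). Extremality forces $\int_{\beta}\omega_{\mathrm{std}}=1/n$ and hence makes every inequality in that estimate an equality; I would then invoke Gromov/SFT compactness and positivity of intersection with the complex lines through a chosen point $p_{0}\in L\cap\partial B^{2n}(1)$ to show that in the equality case the limiting disks must be ``flat'', which pins down $T_{p_{0}}L$ relative to the Reeb (Hopf) direction of $S^{2n-1}(1)$; since the relevant evaluation maps cover $L$, this should propagate to show that $L$ is foliated by closed Reeb orbits of $\partial B^{2n}(1)$ and therefore $L\subset\partial B^{2n}(1)$, contradicting the standing assumption.

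The main obstacle is the last step, and in particular the interplay between $L$ and the hypersurface along which one wishes to degenerate. Because the extremal torus actually \emph{touches} $\partial B^{2n}(1)$, one cannot neck-stretch along a slightly smaller sphere with $L$ strictly inside, so the compactness and transversality behind the breaking of a line of area $1$ into disks on $L$ must be set up relative to a Lagrangian that is everywhere tangent to, and partially contained in, the contact boundary — a genuinely delicate analytic situation. Equally delicate is converting equality in the area estimate into a pointwise tangency condition strong enough to force all of $L$ onto $\partial B^{2n}(1)$; I expect this to require the Stokes identity $\int_{u}\omega_{\mathrm{std}}=\int_{\partial u}\lambda$ for disks $u$, where $\lambda$ is the standard primitive with $d\lambda=\omega_{\mathrm{std}}$, together with a careful no-bubbling analysis of the limiting buildings. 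A useful intermediate goal that would simplify the endgame would be to first establish that every extremal Lagrangian torus in $\bar{B}^{2n}(1)$ is monotone.
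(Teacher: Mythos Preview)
Your first paragraph (ruling out the case $L\subset B^{2n}(1)$ by scaling) is fine, and the instinct to compactify to $\mathbb{CP}^{n}$ and break a line is correct. But from that point on the proposal is not a proof: you yourself flag the equality-case rigidity step as ``genuinely delicate'' and leave it unresolved. Two concrete points.

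First, you are stretching along the wrong hypersurface. You worry that $L$ touches $\partial B^{2n}(1)$ and so you cannot neck-stretch along a slightly smaller sphere with $L$ inside. The paper avoids this entirely: it stretches along the boundary $S^{*}L$ of a Weinstein neighbourhood of $L$, which lies in the \emph{interior} of a slightly enlarged ball $B^{2n}(1+\epsilon)\subset(\mathbb{CP}^{n},\omega_{\mathrm{FS},\epsilon})$. The torus may graze the unit sphere all it likes; the stretching hypersurface is disjoint from it by construction, and there is no analytic difficulty of the kind you anticipate.

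Second, the contradiction mechanism is not an ``equality forces flatness'' rigidity argument. After stretching along $S^{*}L$, a line in $\mathbb{CP}^{n}$ (with a tangency constraint at a point of $L$) breaks into a punctured sphere $C_{\mathrm{bot}}$ in $T^{*}L$ with exactly $n+1$ ends, capped by $n+1$ planes $u_{1},\dots,u_{n},u_{\infty}$ in $\mathbb{CP}^{n}\setminus L$. Extremality forces $\int u_{i}^{*}\omega=1/n$ for $i\le n$ and hence $\int u_{\infty}^{*}\omega=\epsilon$; the plane $u_{\infty}$ is the one hitting the hyperplane at infinity. Using the Borman--Sheridan class and a half-cylinder count in $T^{*}T^{n}$, one glues to obtain a genuine $J$-holomorphic disk of area $\epsilon$ whose boundary passes through a \emph{fixed} interior point $p\in L\cap B^{2n}(1)$. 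The monotonicity lemma then gives $\epsilon\ge\tfrac{1}{2}\pi\delta^{2}$ for a fixed Darboux radius $\delta$ at $p$, while $\epsilon$ may be taken arbitrarily small. That is the contradiction. Your plan to deduce that $L$ is foliated by Hopf fibres from an equality case of an area estimate is not needed and, as you suspected, would be hard to make rigorous; the small-area disk plus monotonicity replaces it entirely.
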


Let  $L$ be a Lagrangian torus in  $(\bar{B}^{2n}(1), \omega_{\mathrm{std}})$ that lies entirely on the boundary $\partial B^{2n}(1)$. To test the validity of Conjecture \ref{extremal-lang1}, one can try pushing a part (or whole) of the Lagrangian $L$ into the interior of the ball via a Hamiltonian isotopy without making $L$ exit the ball $\bar{B}^{2n}(1)$. However, such a Hamiltonian isotopy does not exist. Hopf-fibers foliate the Lagrangian $L$ because the standard Reeb vector field is tangent to it. From the proof of {\cite[Proposition B.1]{Cieliebak2018}}, it follows that any Hamiltonian isotopy of $L$ that does not make $L$ exit $\bar{B}^{2n}(1)$ keeps all Hopf-fibers on $L$ in $\partial \bar{B}^{2n}(1)$---hence keeps $L$ on $\partial \bar{B}^{2n}(1)$.

 Based on the intersection theory of holomorphic curves in dimension four,
	Dimitroglou Rizell \cite{DimitroglouRizell1} has given a proof of Conjecture \ref{extremal-lang1} for the four-dimensional ball $(\bar{B}^{4}(1),\omega_{\mathrm{std}})$. In this article, we prove this conjecture in the affirmative in all dimensions (cf. Theorem \ref{extremal-lag-ball}).  Our proof does not rely on intersection theory.

The second conjecture claims that extremal Lagrangian tori offer a replacement for monotone tori, which exist only in monotone symplectic manifolds.

\begin{conjecture}[Cieliebak--Mohnke {\cite[Conjecture 1.8]{Cieliebak2018}}]\label{monotone-extremal-conjecture}
	Let  $\omega_{\mathrm{FS}}$ be the Fubini--Study form on $\mathbb{CP}^n$. A Lagrangian torus in $(\mathbb{CP}^n,\omega_{\mathrm{FS}})$ is monotone if and only if it is extremal.
\end{conjecture}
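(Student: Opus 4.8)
The plan is to prove the two implications separately; the implication \emph{monotone $\Rightarrow$ extremal} is formal and short, while \emph{extremal $\Rightarrow$ monotone} carries all the analytic content.

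For \emph{monotone $\Rightarrow$ extremal}, let $L\subset(\mathbb{CP}^n,\omega_{\mathrm{FS}})$ be a monotone Lagrangian torus, so $\omega_{\mathrm{FS}}=K\,\mu_L$ on $\pi_2(\mathbb{CP}^n,L)$ for some $K>0$. Evaluating this identity on the image of the generator $[\mathbb{CP}^1]\in\pi_2(\mathbb{CP}^n)$, for which $\int_{\mathbb{CP}^1}\omega_{\mathrm{FS}}=1$ and the relative Maslov number is $2\langle c_1(\mathbb{CP}^n),[\mathbb{CP}^1]\rangle=2(n+1)$, forces $K=\tfrac{1}{2(n+1)}$. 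Since $L$ is a torus its tangent bundle is trivial, so the Maslov class of $L$ takes only even values; hence the minimal Maslov number $N_L$ is a positive even integer and the symplectic area of $L$ equals $K\,N_L=\tfrac{N_L}{2(n+1)}\ge\tfrac{1}{n+1}$. On the other hand Theorem~\ref{compute-lag} gives $\mathrm{c}_L(\mathbb{CP}^n,\omega_{\mathrm{FS}})=\tfrac{1}{n+1}$, so the symplectic area of $L$ is also at most $\tfrac{1}{n+1}$; therefore it equals $\tfrac{1}{n+1}=\mathrm{c}_L(\mathbb{CP}^n,\omega_{\mathrm{FS}})$ and $L$ is extremal (and, incidentally, $N_L=2$).

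For \emph{extremal $\Rightarrow$ monotone}, let $L$ be extremal, so its symplectic area is $\tfrac{1}{n+1}$. First a soft reduction. The group $G:=\omega_{\mathrm{FS}}(\pi_2(\mathbb{CP}^n,L))\subset\mathbb{R}$ contains $\omega_{\mathrm{FS}}(\pi_2(\mathbb{CP}^n))=\mathbb{Z}$, hence is either dense or of the form $\tfrac{1}{m}\mathbb{Z}$ with $m\in\mathbb{Z}_{>0}$; density would force the symplectic area to be $0$, so $G=\tfrac{1}{m}\mathbb{Z}$, and since $\tfrac{1}{m}$ equals the symplectic area we get $G=\tfrac{1}{n+1}\mathbb{Z}$. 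Therefore $\psi:=\mu_L-2(n+1)\,\omega_{\mathrm{FS}}$ is a homomorphism $\pi_2(\mathbb{CP}^n,L)\to 2\mathbb{Z}$ vanishing on the image of $\pi_2(\mathbb{CP}^n)$, so it descends through the surjection $\pi_2(\mathbb{CP}^n,L)\twoheadrightarrow\pi_1(L)$ to a homomorphism $\bar\psi\colon H_1(L;\mathbb{Z})\to 2\mathbb{Z}$, and $L$ is monotone --- necessarily with constant $\tfrac{1}{2(n+1)}$ --- if and only if $\bar\psi\equiv 0$. Because $\psi$ depends only on the boundary of a class, $\bar\psi\equiv 0$ follows as soon as one exhibits, for each element of a generating set of $H_1(L;\mathbb{Z})$, some relative class with that boundary, symplectic area $\tfrac{1}{n+1}$, and Maslov index $2$. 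Such classes should be realized by the minimal-area holomorphic disks of an extremal torus, and I would produce them by refining the holomorphic-curve argument of Cieliebak--Mohnke that yields $\mathrm{c}_L(\mathbb{CP}^n)=\tfrac{1}{n+1}$ in Theorem~\ref{compute-lag}: for a generic compatible $J$ one constructs enough $J$-holomorphic disks bounded by $L$ of Maslov index $2$ and area $\tfrac{1}{n+1}$ to cover a generating set of $H_1(L;\mathbb{Z})$, which is the same sort of disk-production that underlies the proof of Theorem~\ref{extremal-lag-ball}. An essentially equivalent route, better adapted to that proof, is to pass to the Hopf preimage $\widetilde L:=h^{-1}(L)\subset S^{2n+1}(1)=\partial\bar B^{2n+2}(1)$ of $L$ under the Hopf fibration $h\colon S^{2n+1}\to\mathbb{CP}^n$: since $\omega_{\mathrm{FS}}|_L\equiv 0$ the circle bundle $\widetilde L\to L$ has vanishing integral Euler class, so $\widetilde L$ is again a Lagrangian torus; the action--Maslov correspondence for prequantization bundles identifies $\omega_{\mathrm{std}}(\pi_2(\bar B^{2n+2}(1),\widetilde L))$ with $\tfrac{1}{n+1}\mathbb{Z}$, so $\widetilde L$ is extremal in $\bar B^{2n+2}(1)$ by Theorem~\ref{compute-lag}, and through the same correspondence $L$ is monotone in $\mathbb{CP}^n$ if and only if $\widetilde L$ is monotone in $\bar B^{2n+2}(1)$; in this form this direction of Conjecture~\ref{monotone-extremal-conjecture} reduces to showing that every extremal Lagrangian torus in a ball is monotone, a statement within reach of the methods behind Theorem~\ref{extremal-lag-ball}.

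The hard part is the holomorphic-curve input common to both routes: showing that the minimal-area $J$-holomorphic disks bounded by an extremal torus have Maslov index $2$ and that their boundary homology classes generate $H_1(L;\mathbb{Z})$ --- equivalently, ruling out that all small-area holomorphic disks have boundary confined to a proper sublattice of $H_1(L;\mathbb{Z})$. This requires a careful analysis of the relevant moduli spaces: Fredholm regularity for generic $J$, the structure of the boundary evaluation map, and the exclusion of disk- and sphere-bubbling in the Gromov limits, of exactly the kind carried out in the proof of Theorem~\ref{extremal-lag-ball}. The remaining ingredients are formal symplectic and homotopy-theoretic bookkeeping.
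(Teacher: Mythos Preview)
This statement is a \emph{conjecture} and remains open for $n\ge 3$; the paper proves only the case $n=2$ (Theorem~\ref{monoton=extremal}). Your argument for monotone $\Rightarrow$ extremal is correct and essentially unpacks the reference to \cite[Corollary~1.7]{Cieliebak2018} that the paper invokes for that direction.

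For extremal $\Rightarrow$ monotone your proposal has a genuine gap, as you yourself acknowledge. The first route needs the boundaries of the minimal-area Maslov-$2$ disks to generate $H_1(L;\mathbb{Z})$; the neck-stretching of Section~\ref{proof for ball} does produce $n+1$ such disks whose boundary classes sum to zero (because $C_{\mathrm{bot}}$ furnishes a null-homology of the asymptotic orbits), but nothing in that analysis forces these classes out of a proper sublattice --- this is precisely the obstruction you name, and the methods behind Theorem~\ref{extremal-lag-ball} do not resolve it. The Hopf-lift route is circular: Theorem~\ref{extremal-lag-ball} places $\widetilde L$ on $S^{2n+1}$, whence it projects back to $L\subset\mathbb{CP}^n$, so ``$\widetilde L$ extremal in the ball $\Rightarrow$ $\widetilde L$ monotone'' is, via your own Maslov--area correspondence, equivalent to the statement you started from (compare the paper's Corollary~\ref{1-1coresp}).

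The paper's proof for $\mathbb{CP}^2$ follows a completely different, dimension-four-specific path: it uses \cite[Theorem~C]{DimitroglouRizell2} to Hamiltonian-isotope $L$ into the affine chart $B^4(3)$, chooses a Maslov-$2$ basis $\{b_1,b_2\}$ of $H_1(L;\mathbb{Z})$ with $b_1$ of area $1$ (supplied by \cite{Cieliebak2018}), and then applies the Hind--Opshtein obstruction \cite[Theorem~2]{Hind} to rule out $b_2$ having area $\neq 1$. Both external inputs are genuinely four-dimensional, which is why the general conjecture remains open.
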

Conjecture \ref{monotone-extremal-conjecture} holds for $\mathbb{CP}^2$. Using the works of Cieliebak--Mohnke \cite{Cieliebak2018}, Dimitroglou Rizell--Goodman--Ivrii \cite{DimitroglouRizell2}, and Hind--Opshtein \cite{Hind},  we provide a proof in the next section (cf. Theorem \ref{monoton=extremal}). 

\subsection{Statements of results}
We prove the following, hence answering Conjecture \ref{extremal-lang1}.
\begin{theorem}\label{extremal-lag-ball}
	For all $n\in \mathbb{N}$, every extremal Lagrangian torus in $(\bar{B}^{2n}(1),\omega_{\mathrm{std}})$  lies entirely on the boundary $\partial B^{2n}(1)$.\end{theorem}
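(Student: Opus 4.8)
The plan is to combine the computation $\mathrm{c}_L(\bar B^{2n}(1),\omega_{\mathrm{std}})=1/n$ from Theorem~\ref{compute-lag} with a quantitative ``neck-stretching'' or monotone-Lagrangian-displacement argument showing that any Lagrangian torus $L$ which enters the open ball $B^{2n}(1)$ must have symplectic area strictly less than $1/n$. The core observation is that if $L\subset\bar B^{2n}(1)$ meets the open ball, then $L$ is contained in a slightly smaller closed ball $\bar B^{2n}(1-\varepsilon)$ only after a compactly supported perturbation --- which is false in general --- so instead one should argue directly: the sphere at infinity being absent, one works with the symplectization/completion and uses that $L$ being disjoint from a neighborhood of some point of $\partial B^{2n}(1)$, or rather that $L$ fails to contain a Hopf circle through each of its points, forces the existence of a holomorphic disk of controlled area.

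First I would recall the standard setup behind Theorem~\ref{compute-lag}: the proof of $\mathrm{c}_L(\bar B^{2n}(1))\le 1/n$ produces, for any Lagrangian torus $L$, a nonconstant $J$-holomorphic disk (or sphere-with-boundary configuration) with boundary on $L$ of symplectic area at most $1/n$, using a stretched almost complex structure adapted to $\partial B^{2n}(1)$ and the fact that filling curves in $\mathbb{CP}^n$ through generic point constraints degenerate. Second, I would upgrade this to a \emph{strict} inequality under the hypothesis that $L\cap B^{2n}(1)\neq\varnothing$: pick a point $p\in L$ in the interior, a small Darboux ball $B(p,\delta)\subset B^{2n}(1)$, and run the curve-counting argument for $\mathbb{CP}^n$ with the point constraint placed at $p$; the resulting limit building has a disk component passing through $p$, hence of positive area bounded away from $1/n$ by an amount depending only on $\delta$ and the geometry, because this disk must also ``escape'' to reach the divisor/point configuration at infinity and so picks up a definite extra area. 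Concretely, one shows the minimal disk area of $L$ is at most $1/n-f(\delta)$ with $f(\delta)>0$, contradicting extremality. Third, the reverse direction --- that every $L\subset\partial B^{2n}(1)$ automatically realizes area $1/n$ --- is already essentially in the literature (the Clifford torus gives the lower bound~\eqref{lag-capacity-con}, and Theorem~\ref{compute-lag} gives the matching upper bound), so extremal tori on the boundary do exist, and the content of the theorem is exactly the strict-inequality-in-the-interior statement.

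The main obstacle I expect is making the ``definite extra area'' quantitative without intersection theory (the paper emphasizes this contrasts with Dimitroglou Rizell's four-dimensional proof). The honest way to get a uniform lower bound on the area lost is a compactness/contradiction argument: suppose there were a sequence $L_k$ of Lagrangian tori in $\bar B^{2n}(1)$, each meeting a fixed interior ball $B(p,\delta)$, with minimal disk area $\to 1/n$; apply SFT-type compactness to the associated filling curves in the completion of $\mathbb{CP}^n\setminus\bar B^{2n}(1)$ (or to curves in $\mathbb{CP}^n$ with a stretched neck along $S^{2n-1}(1)$), extract a limit holomorphic building, and use the constraint at $p$ together with an area/energy bookkeeping across the neck to force a component of area $\le 1/n-c(\delta)$ with $c(\delta)>0$ fixed, a contradiction. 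The delicate points are (i) ensuring the limit building actually has a disk component with boundary on the limit of the $L_k$ passing through $p$ --- this needs the point constraint to survive the limit, which is why $p$ must be an interior point with a fixed-size Darboux ball around it --- and (ii) ruling out that all the ``lost'' area escapes into closed sphere components in the neck region, which is handled by the index/dimension count for the constrained moduli space (the formal dimension forces a disk to carry the constraint). I would also need the elementary fact, extracted from the proof of \cite[Proposition B.1]{Cieliebak2018} mentioned in the introduction, that a Lagrangian torus on $\partial B^{2n}(1)$ is foliated by Hopf circles, to identify precisely when the interior-point hypothesis fails and thereby confirm the dichotomy is sharp.
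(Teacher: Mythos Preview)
Your proposal has the right list of ingredients (compactify to $\mathbb{CP}^n$, fix an interior point $p\in L$ with a Darboux ball of radius $\delta$, stretch a neck, invoke monotonicity), but the mechanism by which the contradiction arises is misidentified, and as written the argument does not close.

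The central error is the claim that one obtains ``minimal disk area $\le 1/n - f(\delta)$''. Extremality of $L$ means exactly that every disk in the ball with boundary on $L$ has area $\ge 1/n$, so you cannot hope to produce an affine disk of smaller area; in the actual limiting building the $n$ affine planes all have area \emph{exactly} $1/n$. Relatedly, after stretching the neck along the boundary $S^*L$ of a Weinstein neighborhood of $L$ (not along $\partial B^{2n}(1)$, which would be problematic since $L$ may meet it), the tangency/point constraint lands on the punctured sphere $C_{\mathrm{bot}}$ in $T^*L$, not on any top-level plane --- so no plane in $\mathbb{CP}^n\setminus L$ passes through $p$ for free, and your sentence ``the resulting limit building has a disk component passing through $p$ \ldots\ and must escape to the divisor'' has no justification.

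What the paper actually does is single out the unique top-level plane $u_\infty$ that \emph{does} intersect the divisor $\mathbb{CP}^{n-1}$ at infinity; energy bookkeeping forces its $\omega_{\mathrm{FS},\epsilon}$-area to be exactly $\epsilon$. This plane is not an affine disk and is not subject to the $1/n$ lower bound. The substantial remaining work --- entirely absent from your outline --- is to show that the count of such $u_\infty$ is nonzero and robust (via the Borman--Sheridan class of $D^*L\subset\mathbb{CP}^n$ and the relation $\langle\mathcal{BS}|\cdots|\mathcal{BS}\rangle=(n+1)!\,\langle\psi_{n-1}p\rangle^\bullet_{\mathbb{CP}^n}\neq 0$), and then to \emph{glue} a rigid half-cylinder in $T^*T^n$ (counted by the Cieliebak--Latschev isomorphism) onto $u_\infty$ so that the resulting disk $\bar u_\infty$ in $\mathbb{CP}^n$ has its boundary passing through the prescribed interior point $p$. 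Only then does monotonicity bite: $\epsilon = \int \bar u_\infty^*\omega_{\mathrm{FS},\epsilon} \ge \tfrac{1}{2}\pi\delta^2$, a contradiction since $\epsilon$ can be taken arbitrarily small with $\delta$ fixed. The contradiction is thus ``small disk through interior point has too much area by monotonicity'', not ``some affine disk has too little area''.
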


By Theorem \ref{compute-lag}, an extremal Lagrangian torus in the standard $2n$-dimensional ball of capacity $r>0$ has symplectic area equal to $r/n$. So, by Theorem \ref{extremal-lag-ball}, a Lagrangian torus of symplectic area $A_{\mathrm{min}}(L)>0$ in $(\mathbb{C}^n,\omega_{\mathrm{std}})$ cannot be placed in a $2n$-dimensional ball of capacity $nA_{\mathrm{min}}(L)$. Figure \ref{not-allowed} depicts how a Lagrangian torus $L$ of positive symplectic area $A_{\mathrm{min}}(L)$ can sit in $(\mathbb{C}^n,\omega_{\mathrm{std}})$ according to Theorem \ref{extremal-lag-ball}.
\begin{figure}[h]
	\centering
	\includegraphics[width=4cm]{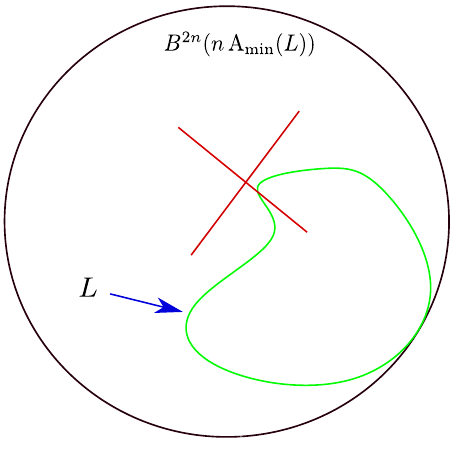}
	\includegraphics[width=4cm]{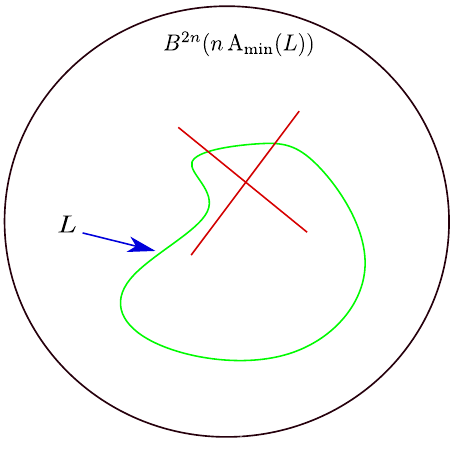}
	\includegraphics[width=4.62cm]{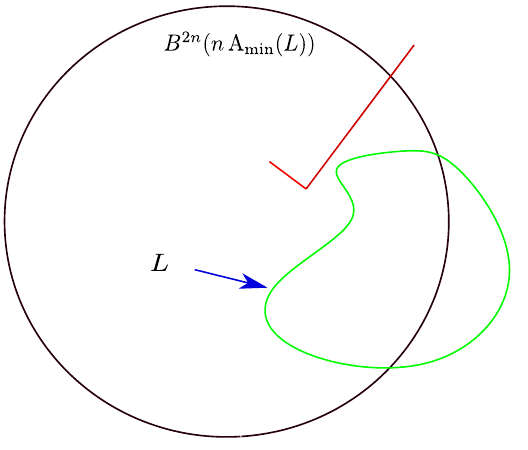}
	\includegraphics[width=4cm]{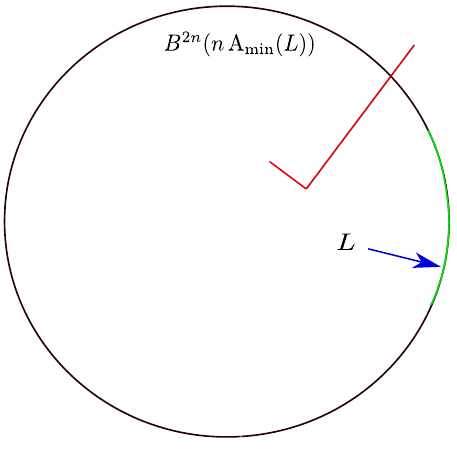}
	\caption{The crossed out configurations on the left are not allowed by Theorem \ref{extremal-lag-ball}}\label{not-allowed}
\end{figure}

The following theorem generalizes Theorem \ref{extremal-lag-ball}.
\begin{theorem}\label{extremal-lag-cylinder}
	For any $k,m \in \mathbb{Z}_{\geq 0}$, we have
	\[\mathrm{c}_{L}(B^{2k}(1)\times \mathbb{C}^m,\omega_{\mathrm{std}})=\mathrm{c}_{L}(B^{2k}(1),\omega_{\mathrm{std}}).\]
	Moreover, every extremal Lagrangian torus in the cylinder $(B^{2k}(1)\times \mathbb{C}^m,\omega_{\mathrm{std}})$  lies entirely on the boundary $\partial B^{2k}(1)\times \mathbb{C}^m$.
\end{theorem}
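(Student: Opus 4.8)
The plan is to reduce Theorem~\ref{extremal-lag-cylinder} to an upper bound together with a rigidity statement, and then to run the holomorphic-disk argument behind Theorem~\ref{extremal-lag-ball}. Denote by $\sigma(L)$ the symplectic area of a Lagrangian torus $L$. The inequality $\mathrm{c}_{L}(\bar B^{2k}(1)\times\mathbb{C}^m,\omega_{\mathrm{std}})\geq 1/k$, and the existence of an extremal torus lying on the boundary, are elementary: the product $T:=S^1(1/k)\times\dots\times S^1(1/k)$ of $k+m$ circles of area $1/k$ is a Lagrangian torus contained in $S^{2k-1}(1)\times\mathbb{C}^m=\partial B^{2k}(1)\times\mathbb{C}^m$, and since $\omega_{\mathrm{std}}=d\lambda_{\mathrm{std}}$ for the standard primitive $\lambda_{\mathrm{std}}$ and $\bar B^{2k}(1)\times\mathbb{C}^m$ is contractible (so that $\int_{A}\omega_{\mathrm{std}}=\int_{\partial A}\lambda_{\mathrm{std}}$ depends only on $[\partial A]\in H_1(T)$), one has $\sigma(T)=\min\{\,j/k:j\in\mathbb{Z}_{>0}\,\}=1/k$; the commensurability of the circle areas is exactly what keeps this minimum positive. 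It therefore suffices to prove that every Lagrangian torus $L\subset\bar B^{2k}(1)\times\mathbb{C}^m$ satisfies $\sigma(L)\leq 1/k$, with equality only if $L\subseteq S^{2k-1}(1)\times\mathbb{C}^m$: combined with Theorem~\ref{compute-lag} this yields $\mathrm{c}_{L}(\bar B^{2k}(1)\times\mathbb{C}^m)=\mathrm{c}_{L}(\bar B^{2k}(1))=1/k$ and the second assertion of the theorem, the cases $k=0$ or $m=0$ being trivial or Theorem~\ref{extremal-lag-ball} itself.

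For the upper bound I would argue as follows. Since $L$ is compact, $L\subset\bar B^{2k}(1)\times\bar B^{2m}(R)$ for some $R>0$, and hence, for any $\varepsilon>0$, $L\subset B^{2k}(1+\varepsilon)\times B^{2m}(R+1)$; by exactness $\sigma(L)$ may be computed in this polydisc. Using the standard symplectomorphisms $B^{2k}(1+\varepsilon)\cong\mathbb{CP}^k\setminus H_1$ and $B^{2m}(R+1)\cong\mathbb{CP}^m\setminus H_2$ (of line areas $1+\varepsilon$ and $R+1$), view $L$ as a Lagrangian torus in the closed symplectic manifold $M:=(\mathbb{CP}^k\times\mathbb{CP}^m,\omega_M)$, disjoint from the divisors at infinity $D_1:=H_1\times\mathbb{CP}^m$ and $D_2:=\mathbb{CP}^k\times H_2$, with $\omega_M(\ell_1)=1+\varepsilon$ on the line class $\ell_1$ of the first factor. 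Following the strategy of the proof of Theorem~\ref{extremal-lag-ball}, one uses the Cieliebak--Mohnke stabilizing-divisor technology, for a generic $\omega_M$-compatible $J$ making $D_1,D_2$ holomorphic, to produce through a generic point of $L$ a stable configuration of Maslov-index-$2$ $J$-holomorphic disks with boundary on $L$ and total class $\ell_1\in H_2(M,L)$; since $\langle c_1(M),\ell_1\rangle=k+1$, the configuration consists of exactly $k+1$ non-constant such disks. Positivity of intersection with $D_2$ together with $\ell_1\cdot D_2=0$ forces every disk to miss $D_2$, while $\ell_1\cdot D_1=1$ forces exactly one disk $u_0$ to meet $D_1$ once, transversely, and the remaining $k$ disks $u_1,\dots,u_k$ to miss $D_1$. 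Each $u_i$ ($1\leq i\leq k$) is then a $J$-holomorphic disk in $B^{2k}(1+\varepsilon)=\mathbb{CP}^k\setminus H_1$ with boundary on $L\subset\bar B^{2k}(1)$ and nonzero boundary class in $H_1(L)$ (area-zero disks being excluded by exactness), so $\operatorname{area}(u_i)\geq\sigma(L)$; and the first-factor projection of $u_0$ must cross the shell $B^{2k}(1+\varepsilon)\setminus\bar B^{2k}(1)$ to reach $D_1$, so the monotonicity lemma for holomorphic curves gives $\operatorname{area}(u_0)\geq\varepsilon$. Hence $k\,\sigma(L)+\varepsilon\leq\sum_{i=0}^{k}\operatorname{area}(u_i)=\omega_M(\ell_1)=1+\varepsilon$, so $\sigma(L)\leq 1/k$; letting $\varepsilon\to0$ proves the capacity statement. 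Running the same computation with $\bar B^{2k}(1-\delta)$ in place of $\bar B^{2k}(1)$ gives $\operatorname{area}(u_0)\geq\delta+\varepsilon$ and hence $\sigma(L)\leq(1-\delta)/k<1/k$; therefore an extremal $L$ is contained in no $\bar B^{2k}(1-\delta)\times\mathbb{C}^m$, i.e.\ it meets $S^{2k-1}(1)\times\mathbb{C}^m$.

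It remains to upgrade ``$L$ meets $S^{2k-1}(1)\times\mathbb{C}^m$'' to ``$L\subseteq S^{2k-1}(1)\times\mathbb{C}^m$'', and here the $\mathbb{C}^m$-factor plays no role: the argument is the one used for the ball in Theorem~\ref{extremal-lag-ball}. At any $p\in L\cap(S^{2k-1}(1)\times\mathbb{C}^m)$ the characteristic direction of the hypersurface $S^{2k-1}(1)\times\mathbb{C}^m$ (the Hopf direction of the $\bar B^{2k}(1)$-factor) is automatically tangent to $L$, because $T_pL\subset T_p\bigl(S^{2k-1}(1)\times\mathbb{C}^m\bigr)$ and $T_pL$ is Lagrangian; one then shows, by the characteristic-foliation/neck-stretching mechanism of \cite[Proposition~B.1]{Cieliebak2018} (sharpened by the equality case of the monotonicity lemma above, which pins the disk $u_0$ through a generic point of $L$ to a flat disk whose boundary circle lies on $L$ and abuts $S^{2k-1}(1)\times\mathbb{C}^m$), that the Hopf circle through $p$ lies on $L$ and, propagating this over $L$, that $L\subseteq S^{2k-1}(1)\times\mathbb{C}^m$. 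The main obstacle is exactly this last step---converting ``touching'' into ``containment''---which is where the argument must say more than the capacity value requires; apart from it, the proof is a routine adaptation of the ball case, the only point needing care being that the extra $\mathbb{CP}^m$-directions create no stable holomorphic disks or sphere bubbles escaping the stabilizing-divisor control, which is ensured by positivity of intersection with $D_2$ and by taking $R$ large.
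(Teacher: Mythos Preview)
Your capacity argument is close in spirit to the paper's, though you compactify the $\mathbb{C}^m$-factor to $\mathbb{CP}^m$ rather than to a torus $T^{2m}$ as the paper does. Either choice can be made to work---your positivity-of-intersection argument with $D_2$ does the bubble control that the paper gets for free from $\pi_2(T^{2m})=0$---but be careful: the claim that ``the first-factor projection of $u_0$ must cross the shell, so monotonicity gives $\operatorname{area}(u_0)\geq\varepsilon$'' is not justified, because for a non-split $J$ the projection $\pi_1\circ u_0$ is not holomorphic and the monotonicity lemma does not apply to it. Fortunately you do not need this sharp bound for the capacity statement: $\operatorname{area}(u_0)>0$ already gives $k\,\sigma(L)<1+\varepsilon$, and $\varepsilon\to0$ yields $\sigma(L)\leq 1/k$.

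The genuine gap is your last paragraph, which you yourself flag as the obstacle. Neither \cite[Proposition~B.1]{Cieliebak2018} nor an ``equality case of the monotonicity lemma'' supplies a mechanism for promoting ``$L$ touches $S^{2k-1}(1)\times\mathbb{C}^m$'' to ``$L\subseteq S^{2k-1}(1)\times\mathbb{C}^m$''. Proposition~B.1 says that a Hamiltonian isotopy preserving the ball keeps a boundary torus on the boundary; it does not say that a torus merely tangent to the boundary at some points must lie on it. And even if equality in monotonicity forced $u_0$ to be a flat disk, that constrains one boundary circle of one disk, not the whole torus; ``propagating this over $L$'' is the entire content of what needs proving and you have not indicated how.

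The paper's rigidity argument proceeds quite differently and does not go via touching-implies-containment. It argues by contradiction: assume $L$ is extremal and pick a point $p\in L$ in the \emph{interior} $B^{2k}(1)\times\mathbb{C}^m$. After neck-stretching along a Weinstein neighbourhood of $L$ inside $(\mathbb{CP}^k\times T^{2m},(1+\varepsilon)\omega_{\mathrm{FS}}\oplus\omega_{\mathrm{std}})$, the limit building has exactly $k+1$ planes in the top level; the $k$ planes missing the divisor $\mathbb{CP}^{k-1}\times T^{2m}$ have area $1/k$ each, and the remaining plane $u_\infty$ has area exactly $\varepsilon$. Via the Borman--Sheridan class one shows that the count of such $u_\infty$ (for a suitable asymptotic orbit $\gamma_\infty$) is nonzero, and gluing to a rigid half-cylinder in $T^*L$ produces a genuine $J$-holomorphic disk $\bar u_\infty$ with boundary on $L$ \emph{through the interior point $p$}, of area $\varepsilon$. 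Now the monotonicity lemma applied at $p$ (Lemma~\ref{monotoncity1}) gives $\varepsilon\geq\frac12\pi\delta^2$ for a fixed $\delta>0$, a contradiction for small $\varepsilon$. The crucial ingredient you are missing is this: one must manufacture a small-area disk \emph{through a prescribed interior point of $L$}, and that requires the Borman--Sheridan/gluing machinery rather than a bare disk count.
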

\begin{corollary}\label{cora-extemal}
	Let $m\in \mathbb{N}$ and $0<1\leq r_1\leq\dots \leq r_m\leq \infty$. Every extremal Lagrangian torus $L$ in the polydisk $\big(\bar{B}^{2}(1)\times \bar{B}^{2}(r_1)\times\dots\times \bar{B}^{2}(r_m),\omega_{\mathrm{std}}\big)$ lies on the boundary. More precisely, 
\[L\subset \partial \bar{B}^{2}(1)\times \bar{B}^{2}(r_1)\times\dots\times \bar{B}^{2}(r_m).\]
\end{corollary}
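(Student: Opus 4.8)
The plan is to deduce this from Theorem~\ref{extremal-lag-cylinder} with $k=1$ by enlarging the polydisk to a cylinder in a way that does not change symplectic areas. Write
\[P:=\bar B^{2}(1)\times \bar B^{2}(r_1)\times\cdots\times\bar B^{2}(r_m),\qquad Z:=\bar B^{2}(1)\times\mathbb{C}^m,\]
so that $P\subset Z$. Radially scaling each $\mathbb{C}$-factor of $Z$ onto the corresponding $\bar B^{2}(r_i)$ (and doing nothing to a factor with $r_i=\infty$) is a deformation retraction of $Z$ onto $P$ that fixes $P$ pointwise. Hence for any Lagrangian torus $L\subset P$ this retraction fixes $L$, so it exhibits $(Z,L)$ as deformation retracting onto $(P,L)$; the inclusion $P\hookrightarrow Z$ therefore induces an isomorphism $\pi_2(P,L)\cong\pi_2(Z,L)$, and because $\omega_{\mathrm{std}}|_P$ is the restriction of $\omega_{\mathrm{std}}$ on $Z$, this isomorphism is compatible with integrating $\omega_{\mathrm{std}}$ over relative $2$-disks. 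Consequently the symplectic area of $L$ computed inside $P$ equals the symplectic area of $L$ computed inside $Z$.

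Next I would compute $\mathrm{c}_L(P)$. Taking the supremum over Lagrangian tori $L\subset P$, the previous paragraph gives $\mathrm{c}_L(P)=\sup_{L\subset P}(\text{symplectic area of }L\text{ in }Z)\le \mathrm{c}_L(Z)$, and Theorem~\ref{extremal-lag-cylinder} together with Theorem~\ref{compute-lag} identifies $\mathrm{c}_L(Z)=\mathrm{c}_L(\bar B^{2}(1))=1$. For the reverse inequality, since $1\le r_1\le\cdots\le r_m$ the moment image of $P$ is $[0,1]\times[0,r_1]\times\cdots\times[0,r_m]$, whose diagonal equals $\min(1,r_1,\dots,r_m)=1$, so (\ref{lag-capacity-con}), applied via the Clifford torus $S^1(1)\times\cdots\times S^1(1)\subset P$, yields $\mathrm{c}_L(P)\ge 1$. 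Hence $\mathrm{c}_L(P)=1$.

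Finally, let $L\subset P$ be an extremal Lagrangian torus of $P$. Its symplectic area in $P$ is then $\mathrm{c}_L(P)=1$, so by the first paragraph its symplectic area in $Z$ is also $1=\mathrm{c}_L(Z)$; that is, $L$ is extremal in $Z$. Theorem~\ref{extremal-lag-cylinder} then forces $L\subset\partial\bar B^{2}(1)\times\mathbb{C}^m$, and intersecting with $L\subset P$ yields $L\subset\partial\bar B^{2}(1)\times\bar B^{2}(r_1)\times\cdots\times\bar B^{2}(r_m)$, which is the assertion.

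The argument is essentially formal once Theorem~\ref{extremal-lag-cylinder} is in hand; the one point requiring care is the first paragraph — one must ensure the retraction of $Z$ onto $P$ can be taken to fix $P$ (hence every Lagrangian torus in $P$) pointwise, so that it genuinely identifies $\pi_2(Z,L)$ with $\pi_2(P,L)$ and thus the two notions of symplectic area, including the degenerate cases $r_i=\infty$ where the corresponding factor of $P$ is already all of $\mathbb{C}$ and needs no retraction.
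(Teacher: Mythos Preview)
Your proof is correct and follows essentially the same route as the paper's: enlarge the polydisk $P$ to the cylinder $Z=\bar B^2(1)\times\mathbb{C}^m$, show $\mathrm{c}_L(P)=\mathrm{c}_L(Z)=1$ using Theorem~\ref{extremal-lag-cylinder} and the Clifford torus, then deduce that any extremal $L\subset P$ is extremal in $Z$ and apply Theorem~\ref{extremal-lag-cylinder} again. The one place you are more explicit than the paper is the first paragraph: the paper simply asserts that an extremal $L$ in $P$ is also extremal in $Z$ ``by Theorem~\ref{extremal-lag-cylinder}'', whereas you justify via the deformation retraction that the symplectic area of $L$ is unchanged under the inclusion $P\hookrightarrow Z$, which is exactly what is needed to pass from ``extremal in $P$'' to ``extremal in $Z$''.
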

\begin{proof}[Proof of Corollary \ref{cora-extemal}]
We have the symplectic inclusion
\[(\bar{B}^{2}(1)\times \bar{B}^{2}(r_1)\times\dots\times \bar{B}^{2}(r_m),\omega_{\mathrm{std}})\to (\bar{B}^{2}(1)\times \mathbb{C}^m,\omega_{\mathrm{std}}).\]
The monotonicity property of the Lagrangian capacity {\cite[Section 1.2]{Cieliebak2018}}, Theorem \ref{extremal-lag-cylinder},  and Theorem \ref{compute-lag} imply
\[\mathrm{c}_{L}(\bar{B}^{2}(1)\times \bar{B}^{2}(r_1)\times\dots\times \bar{B}^{2}(r_m),\omega_{\mathrm{std}})\leq \mathrm{c}_{L}(\bar{B}^{2}(1)\times \mathbb{C}^m,\omega_{\mathrm{std}})=1.\]
On the other hand, the torus $\overbrace{S^1(1)\times \cdots \times S^1(1)}^\text{ $m+1$ times }$ has symplectic area equal to $1$ and is contained in  $\bar{B}^{2}(1)\times \bar{B}^{2}(r_1)\times\dots\times \bar{B}^{2}(r_m)$. Therefore, 
\[\mathrm{c}_{L}(\bar{B}^{2}(1)\times \bar{B}^{2}(r_1)\times\dots\times \bar{B}^{2}(r_m),\omega_{\mathrm{std}})=1.\]

If $L$ is an extremal Lagrangian torus in $\big(\bar{B}^{2}(1)\times \bar{B}^{2}(r_1)\times\dots\times \bar{B}^{2}(r_m),\omega_{\mathrm{std}}\big)$, then $L$ is also extremal in $(\bar{B}^{2}(1)\times \mathbb{C}^m,\omega_{\mathrm{std}})$ by Theorem \ref{extremal-lag-cylinder} and, moreover, we must have $L\subset \partial \bar{B}^{2}(1)\times \mathbb{C}^m$. We conclude that
\[L\subset \partial \bar{B}^{2}(1)\times \bar{B}^{2}(r_2)\times\cdots\times \bar{B}^{2}(r_m).\qedhere\]
\end{proof}
Let $\omega_{\mathrm{FS}}$ be the Fubini--Study form on $\mathbb{CP}^n$ scaled so that $\int_{\mathbb{CP}^1}\omega_{\mathrm{FS}}=1$. Recall that by Theorem \ref{compute-lag} an extremal Lagrangain torus in $(\mathbb{CP}^{n-1},\omega_{\mathrm{FS}})$ has symplectic area $1/n$.
\begin{corollary}\label{1-1coresp}
	\begin{itemize}
		\item [(1)]  There is a one-to-one correspondence between Hamiltonian isotopy classes of extremal Lagrangian tori in $(\bar{B}^{2n}(1),\omega_{\mathrm{std}})$ and the Hamiltonian isotopy classes of extremal (monotone for $n=3$ by Theorem \ref{monoton=extremal}) Lagrangian tori in $(\mathbb{CP}^{n-1},\omega_{\mathrm{FS}})$. 
		
		\item [(2)] For each $n\geq 3$, there exist infinitely many pairwise distinct Hamiltonian isotopy classes of extremal Lagrangian tori in $(\bar{B}^{2n}(1),\omega_{\mathrm{std}})$.
		\item [(3)] {\cite[Corollary 1.3]{DimitroglouRizell1}} Up to  Hamiltonian isotopy of $(\bar{B}^{4}(1),\omega_{\mathrm{std}})$ preserving the boundary set-wise, there is a unique extremal Lagrangian torus in  $(\bar{B}^{4}(1),\omega_{\mathrm{std}})$, namely the Clifford torus $S^1(\frac{1}{2})\times S^1(\frac{1}{2})$. 
	\end{itemize}
\end{corollary}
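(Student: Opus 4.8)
The plan is to derive all three parts from one \emph{prequantization correspondence}. Write $h\colon S^{2n-1}(1)\to\mathbb{CP}^{n-1}$ for the Hopf fibration and $\alpha:=\lambda|_{S^{2n-1}(1)}$ for the contact form coming from a primitive $\lambda$ of $\omega_{\mathrm{std}}$ on $\mathbb{C}^n$, so that $d\alpha=h^{*}\omega_{\mathrm{FS}}$ and each Hopf fibre bounds a $2$-disk of $\omega_{\mathrm{std}}$-area $1$ in $\bar B^{2n}(1)$. The first step is to show that $L\mapsto h(L)$ is a bijection from the extremal Lagrangian tori in $(\bar B^{2n}(1),\omega_{\mathrm{std}})$ onto the Lagrangian $(n-1)$-tori in $(\mathbb{CP}^{n-1},\omega_{\mathrm{FS}})$ of symplectic area $1/n$ --- equivalently, onto the extremal ones, as $\mathrm{c}_{L}(\mathbb{CP}^{n-1},\omega_{\mathrm{FS}})=1/n$ by Theorem~\ref{compute-lag} --- with inverse $\bar L\mapsto h^{-1}(\bar L)$. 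For the forward map: by Theorem~\ref{extremal-lag-ball} an extremal $L$ lies in $S^{2n-1}(1)$, and a pointwise linear-algebra argument (a Lagrangian subspace of $T_p\bar B^{2n}(1)$ contained in $T_pS^{2n-1}(1)$ must contain the Reeb line) shows that $L$ is foliated by Hopf fibres, hence $S^1$-invariant, so $h(L)$ is an embedded $(n-1)$-torus with $\omega_{\mathrm{FS}}|_{h(L)}=0$. For the inverse map: for a Lagrangian torus $\bar L\subset\mathbb{CP}^{n-1}$ the restricted Hopf bundle has vanishing Euler class $[\omega_{\mathrm{FS}}|_{\bar L}]=0$, hence is trivial and $h^{-1}(\bar L)\cong T^{n}$, and $h^{-1}(\bar L)$ is Lagrangian in $\bar B^{2n}(1)$ since $\omega_{\mathrm{std}}|_{h^{-1}(\bar L)}=d\alpha|_{h^{-1}(\bar L)}=h^{*}\omega_{\mathrm{FS}}|_{h^{-1}(\bar L)}=0$. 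The crux is an area computation: for $L\subset S^{2n-1}(1)$ Hopf-saturated with $h(L)=\bar L$, the symplectic-area homomorphism on $\pi_2(\bar B^{2n}(1),L)\cong\pi_1(L)$ is computed by integrating $\alpha$ over loops, and closing up horizontal lifts of loops in $\bar L$ by arcs of Hopf fibres identifies its image with $S+\mathbb{Z}$, where $S\subset\mathbb{R}$ is the image of the area homomorphism on $\pi_2(\mathbb{CP}^{n-1},\bar L)$. As $S$ and $S+\mathbb{Z}$ are subgroups of $(\mathbb{R},+)$ containing $1$, having minimal positive element $1/n$ is the same as being $\tfrac1n\mathbb{Z}$, and $S=\tfrac1n\mathbb{Z}\Leftrightarrow S+\mathbb{Z}=\tfrac1n\mathbb{Z}$; hence $L$ is extremal iff $\bar L$ is. (For $n=2$ read ``$1$-torus'' as ``embedded circle''.)

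The second step promotes this to a bijection of Hamiltonian isotopy classes. Given a Hamiltonian isotopy of $(\mathbb{CP}^{n-1},\omega_{\mathrm{FS}})$ taking $\bar L_0$ to $\bar L_1$, I would realise $\mathbb{CP}^{n-1}$ as the symplectic reduction of $\mathbb{C}^n$ at the level $S^{2n-1}(1)=\{\pi|z|^{2}=1\}$ and extend the generating Hamiltonian to an $S^1$-invariant function on $\bar B^{2n}(1)$ (e.g. $\beta(\pi|z|^{2})$ times the pull-back of the Hamiltonian under $z\mapsto[z]$, with $\beta$ a cutoff equal to $1$ near $1$ and $0$ near $0$); being $S^1$-invariant it Poisson-commutes with $\pi|z|^{2}$, so its flow preserves every sphere $S^{2n-1}(r)$ --- in particular $\partial\bar B^{2n}(1)$ --- and on $S^{2n-1}(1)$ descends to the given isotopy, hence carries $h^{-1}(\bar L_0)$ to $h^{-1}(\bar L_1)$. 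Conversely, given a Hamiltonian isotopy $\Phi_t$ of $\bar B^{2n}(1)$ with $\Phi_1(L_0)=L_1$ and $L_0$ extremal, each $\Phi_t(L_0)$ is again an extremal Lagrangian torus, so by Theorem~\ref{extremal-lag-ball} it lies in $S^{2n-1}(1)$ and is Hopf-saturated; then $\bar L_t:=h(\Phi_t(L_0))$ is a smooth Lagrangian isotopy in $\mathbb{CP}^{n-1}$ whose flux vanishes, because $h^{*}$ of the normal-velocity $1$-form of $\bar L_t$ is the (exact) normal-velocity $1$-form of $\Phi_t(L_0)$ on $S^{2n-1}(1)$ while $h^{*}\colon H^{1}(\bar L_t)\to H^{1}(\Phi_t(L_0))$ is injective; thus $\bar L_t$ is a Hamiltonian isotopy. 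This proves part~(1) (for $n=3$, combine with Theorem~\ref{monoton=extremal}).

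For part~(2), by part~(1) it suffices to exhibit, for each $n\ge 3$, infinitely many pairwise non-Hamiltonian-isotopic extremal Lagrangian tori in $(\mathbb{CP}^{n-1},\omega_{\mathrm{FS}})$. I would first observe that in $(\mathbb{CP}^{m},\omega_{\mathrm{FS}})$ every \emph{monotone} Lagrangian torus $L$ is extremal: since $[\omega_{\mathrm{FS}}]\ne 0$ on $\pi_2(\mathbb{CP}^m)$ the line class injects into $\pi_2(\mathbb{CP}^m,L)$ and pins the monotonicity constant to $K=1/(2(m+1))$ (the line has area $1$ and Maslov index $2(m+1)$); as $T^m$ is orientable the minimal Maslov number $N_L$ is a positive even integer, so the minimal positive area $K\,N_L\ge 2K=1/(m+1)=\mathrm{c}_{L}(\mathbb{CP}^m,\omega_{\mathrm{FS}})$, forcing equality by Theorem~\ref{compute-lag}. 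It then remains to invoke the known existence of infinitely many pairwise non-Hamiltonian-isotopic monotone Lagrangian tori in $(\mathbb{CP}^m,\omega_{\mathrm{FS}})$ for every $m\ge 2$ --- Vianna's exotic tori for $m=2$, and for $m\ge 3$ either a direct reference or the inductive monotone circle-bundle (Biran-decomposition) construction $\mathbb{CP}^{m}\supset S^{2m-1}(1)\xrightarrow{\,h\,}\mathbb{CP}^{m-1}$, which preserves monotonicity and, by the flux argument above, keeps distinct Hamiltonian isotopy classes distinct. Finally, part~(3) is the case $n=2$ of part~(1): $\mathbb{CP}^{1}\cong S^{2}$, an extremal ``Lagrangian $1$-torus'' is an embedded circle cutting $S^2$ into two disks of area $\tfrac12$, all such circles are Hamiltonian isotopic in $S^2$ by a Moser argument, the model being $\{|z_1|=|z_2|\}$ with $h$-preimage the Clifford torus $S^1(\tfrac12)\times S^1(\tfrac12)$, and by the $S^1$-invariant extension of the previous paragraph a Hamiltonian isotopy of $S^2$ lifts to one of $\bar B^4(1)$ preserving $S^3(1)$ set-wise --- which recovers \cite[Corollary~1.3]{DimitroglouRizell1}.

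The step I expect to be hardest is the descent half of the Hamiltonian-isotopy bijection in part~(1): one must use Theorem~\ref{extremal-lag-ball} along the entire isotopy to keep the torus on $S^{2n-1}(1)$, and then make the flux computation precise enough to conclude that the projected Lagrangian isotopy in $\mathbb{CP}^{n-1}$ is genuinely Hamiltonian; a related, and for $m\ge 3$ not wholly routine, point is verifying that the inductive circle-bundle lift in part~(2) preserves monotonicity and separates Hamiltonian isotopy classes.
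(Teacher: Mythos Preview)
Your proposal is correct and follows essentially the same route as the paper: both use Theorem~\ref{extremal-lag-ball} to place extremal tori on $S^{2n-1}(1)$, observe Reeb-tangency to get Hopf-saturation, set up the Hopf correspondence with extremal tori in $\mathbb{CP}^{n-1}$, lift Hamiltonian isotopies of $\mathbb{CP}^{n-1}$ via $S^1$-invariant extensions, descend Hamiltonian isotopies of the ball by projecting and invoking exactness of the resulting Lagrangian isotopy, cite Vianna-type tori for part~(2), and reduce part~(3) to area-$\tfrac12$ circles on $S^2$. The only notable differences are in presentation: your flux/$h^*$-injectivity argument for the descent is slightly more explicit than the paper's appeal to ``preserves the symplectic area class $+$ isotopy extension'', and for part~(2) the paper simply cites \cite{Vianna:2014aa,Chanda:2023aa} for monotone (hence extremal, by \cite[Corollary~1.7]{Cieliebak2018}) tori in all $\mathbb{CP}^{n-1}$ with $n\ge 3$, so your proposed inductive circle-bundle construction is unnecessary.
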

\begin{proof}[Proof of Corollary \ref{1-1coresp}]
	By Theorem \ref{extremal-lag-ball}, all extremal Lagrangian tori in $(\bar{B}^{2n}(1),\omega_{\mathrm{std}})$ are entirely contained in the boundary $S^{2n-1}(1)$. Since $L$ is Lagrangian, the standard Reeb vector field on $S^{2n-1}(1)$ is tangent to $L$. Therefore, $L$ is invariant under the $S^1$-action induced by the Reeb flow. Under the Hopf fibration $P:S^{2n-1}(1)\to \mathbb{CP}^{n-1}$, extremal Lagrangian tori in $(\bar{B}^{2n}(1),\omega_{\mathrm{std}})$ descend to extremal Lagrangian tori in $(\mathbb{CP}^{n-1},\omega_{\mathrm{FS}})$ and vice versa. Moreover, if Conjecture \ref{monotone-extremal-conjecture} holds, then there is a one-to-one correspondence between extremal Lagrangian tori in $(\bar{B}^{2n}(1),\omega_{\mathrm{std}})$ and  monotone Lagrangian tori in $(\mathbb{CP}^{n-1},\omega_{\mathrm{FS}})$.
	
	Let $L_0, L_1$ be two Lagrangian tori in $(\mathbb{CP}^{n-1},\omega_{\mathrm{FS}})$ that are Hamiltonian isotopic. Let 
	$H:\mathbb{R}\times \mathbb{CP}^{n-1}\to \mathbb{R}$ be a Hamiltonian generating the isotopy. The function $H\circ P:\mathbb{R}\times S^{2n-1}(1)\to \mathbb{R}$ can be extended to the symplectization $(\mathbb{R}\times S^{2n-1}(1),d(e^r\lambda_{\mathrm{std}}))=(\mathbb{C}^4\setminus \{0\}, \omega_{\mathrm{std}})$  by $r$-translations and then to a smooth function on $(\mathbb{C}^{2n}, \omega_{\mathrm{std}})$. The flow of the time-dependent Hamiltonian $H\circ P$ preserves the unit sphere $S^{2n-1}(1)$ and takes $P^{-1}(L_0)$ to 
	$P^{-1}(L_1)$. 
	
	On the other hand, let $L_0$ and $ L_1$ be extremal Lagrangian tori in $(\bar{B}^{2n}(1),\omega_{\mathrm{std}})$ and let $\{\phi^t\}$ be a Hamiltonian isotopy taking $L_0$ to $L_1$ while keeping it in the ball. By Theorem \ref{extremal-lag-ball}, the  extremal Lagrangian torus $\phi^t(L_0)$ is contained entirely in the boundary $S^{2n-1}(1)$ for each $t$. The projection under Hopf fibration $P(\phi^t(L_0))$ is a smooth isotopy between $P(L_0)$ and $P(L_1)$ that preserves the symplectic area class. By the isotopy extension theorem, we can extend this smooth isotopy to a global Hamiltonian isotopy of $(\mathbb{CP}^{n-1},\omega_{\mathrm{FS}})$. This completes the proof of $(1)$.
	
	For $n\geq 3$, the Vianna tori \cite{Vianna:2014aa,Chanda:2023aa} are extremal Lagrangian tori in $(\mathbb{CP}^{n-1},\omega_{\mathrm{FS}})$ that are pairwise non Hamiltonian isotopic. Lifting these tori to $S^{2n-1}(1)$ yields infinitely many Hamiltonian isotopy classes of extremal Lagrangian tori in $(\bar{B}^{2n}(1),\omega_{\mathrm{std}})$.
	
	For $(3)$, it is well known that any embedded loop on  $\mathbb{CP}^1$ that bounds an area equal to half of the total area can be moved to the equator via a Hamiltonian isotopy of $\mathbb{CP}^1$. Under the Hopf fibration the equator lifts to the torus  $S^1(\frac{1}{2})\times S^1(\frac{1}{2})$ in $(\bar{B}^{4}(1),\omega_{\mathrm{std}})$. The result follows from $(1)$ because any extremal Lagrangian torus in $(\bar{B}^{4}(1),\omega_{\mathrm{std}})$ projects to an embedded loop on  $\mathbb{CP}^1$ bounding an area of $\frac{1}{2}$. 
\end{proof}
The following theorem proves that Conjecture \ref{extremal-lang1} holds for a broader class of symplectic manifolds, which includes four-dimensional strictly convex toric domains. This pushes the work of Georgios Dimitroglou Rizell
 \cite{DimitroglouRizell1} in a different direction.
\begin{theorem}\label{extremal-lag-toric}
Let $(X^4_{\Omega},\omega_{\mathrm{std}})$ be a four-dimensional toric domain such that there exists an ellipsoid $E^{4}(a,b)$ satisfying  $X^4_{\Omega} \subseteqq E^{4}(a,b)$ and $\operatorname{diagonal }(X^4_{\Omega})=\operatorname{diagonal }(E^{4}(a,b))$; see Figure \ref{toric-inter} for an illustration. Then every Lagrangian torus in $(X^4_{\Omega},\omega_{\mathrm{std}})$ which has  symplectic area at least $\operatorname{diagonal}(X^{4}_{\Omega})$ lies entirely in $\partial X^4_{\Omega}\cap \partial E^{4}(a,b)$. In particular, it follows from (\ref{lag-capacity-con}) that
	Conjecture \ref{extremal-lang1} holds for $(X^4_{\Omega},\omega_{\mathrm{std}})$ (cf. Conjecture \ref{generalizedconjecture}).
\end{theorem}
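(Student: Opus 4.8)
The plan is to reduce the statement to the sharp behaviour of Lagrangian tori in the four-dimensional ellipsoid $E:=E^4(a,b)$, using the containment $X^4_{\Omega}\subseteq E$ together with the equality of diagonals. Write $\delta:=\operatorname{diagonal}(X^4_{\Omega})=\operatorname{diagonal}(E)$ and let $L\subset X^4_{\Omega}$ be a Lagrangian torus of symplectic area at least $\delta$. The inclusion $\iota\colon X^4_{\Omega}\hookrightarrow E$ induces a map $\iota_*\colon\pi_2(X^4_{\Omega},L)\to\pi_2(E,L)$ preserving $\int(\cdot)\,\omega_{\mathrm{std}}$, so the symplectic area measured in $E$ can only decrease: $A^{E}_{\mathrm{min}}(L)\le A^{X^4_{\Omega}}_{\mathrm{min}}(L)$. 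On the other hand, by the definition of the Lagrangian capacity and Pereira's equality in (\ref{lag-capacity}) for four-dimensional ellipsoids, $A^{E}_{\mathrm{min}}(L)\le\mathrm c_L(E)=\delta$. Hence, once one knows that $\iota_*$ is \emph{surjective}, so that $A^{E}_{\mathrm{min}}(L)=A^{X^4_{\Omega}}_{\mathrm{min}}(L)$, the chain $\delta\le A^{X^4_{\Omega}}_{\mathrm{min}}(L)=A^{E}_{\mathrm{min}}(L)\le\delta$ shows that $L$ is an extremal Lagrangian torus in $(E,\omega_{\mathrm{std}})$.

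The argument thus splits into two parts. First, the surjectivity of $\iota_*$, which I would prove as a topological lemma about the domains allowed by the hypothesis: every compact strictly convex four-dimensional toric domain is diffeomorphic to a ball, hence simply connected, and more generally $X^4_{\Omega}$ is simply connected whenever $\Omega$ is simply connected and contains the origin (every circle in a moment-map fibre contracts towards $0\in\mathbb{C}^2$); for such $X^4_{\Omega}$ the boundary map $\pi_2(X^4_{\Omega},L)\to\pi_1(L)$ is onto, while contractibility of $E$ makes $\pi_2(E,L)\to\pi_1(L)$ an isomorphism, and naturality of the long exact sequences of the two pairs then forces $\iota_*$ to be surjective. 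Second, the rigidity for ellipsoids: an extremal Lagrangian torus in $(E^4(a,b),\omega_{\mathrm{std}})$ lies on $\partial E^4(a,b)$. This is the four-dimensional case of Theorem \ref{extremal-lag-ball} with the round ball replaced by $E^4(a,b)$, and I would obtain it by running the same (intersection-theory-free) argument that proves Theorem \ref{extremal-lag-ball}, fed with the input $\mathrm c_L(E^4(a,b))=\operatorname{diagonal}(E^4(a,b))$.

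Combining the two parts gives $L\subset\partial E^4(a,b)$; since moreover $L\subset X^4_{\Omega}\subseteq E^4(a,b)$, every point of $L$ lies on $\partial E^4(a,b)$ and is therefore a limit of points outside $E^4(a,b)$, hence outside $X^4_{\Omega}$, which together with $L\subset X^4_{\Omega}$ yields $L\subset\partial X^4_{\Omega}$; thus $L\subset\partial X^4_{\Omega}\cap\partial E^4(a,b)$. The final ``in particular'' is then automatic, because an extremal Lagrangian torus in $X^4_{\Omega}$ has symplectic area $\mathrm c_L(X^4_{\Omega})\ge\operatorname{diagonal}(X^4_{\Omega})$ by (\ref{lag-capacity-con}) and so falls under the hypothesis. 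I expect the main obstacle to be the surjectivity of $\iota_*$ in full generality: for a badly behaved domain $\Omega$ the image of $\pi_1(L)$ in $\pi_1(X^4_{\Omega})$ can be nonzero and $\iota_*$ need not be onto, so the clean topological lemma must be replaced by a geometric input — for instance, a filling of $E^4(a,b)$ by $\omega_{\mathrm{std}}$-holomorphic disks arranged so that a disk through a point of $L\cap\operatorname{int}(E^4(a,b))$ stays inside $X^4_{\Omega}$, producing a disk of area $<\delta$ with boundary on $L$ already in $X^4_{\Omega}$ and contradicting the hypothesis. As the stated applications only involve convex-type domains, restricting the hypothesis accordingly keeps the argument within the topological lemma.
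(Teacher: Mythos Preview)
Your reduction step is essentially correct and conceptually clean: once $\iota_*\colon\pi_2(X^4_\Omega,L)\to\pi_2(E,L)$ is surjective (which holds whenever $X^4_\Omega$ is simply connected, as in the convex-type applications) and you input Pereira's equality $\mathrm c_L(E^4(a,b))=\operatorname{diagonal}(E^4(a,b))$, you correctly conclude that $L$ is extremal in $E^4(a,b)$, and the passage from $L\subset\partial E$ to $L\subset\partial X^4_\Omega\cap\partial E$ is fine.

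The gap is in your second part. You write that the ellipsoid boundary-containment ``follows by running the same argument that proves Theorem \ref{extremal-lag-ball}''. It does not, at least not directly. The proof of Theorem \ref{extremal-lag-ball} compactifies $\bar B^{2n}(1)$ to $(\mathbb{CP}^n,\omega_{\mathrm{FS},\epsilon})$ and uses the closed-curve invariant $\mathcal N_{\mathbb{CP}^n,[\mathbb{CP}^1]}\ll\mathcal T^{n-1}\gg=(n-1)!$; there is no analogous compactification of $E^4(a,b)$ for generic $a,b$ to a closed symplectic manifold carrying a non-vanishing tangency invariant in the right class with the right energy. The paper's actual proof (Section \ref{sectiontoricproof}) replaces this input with genuinely different machinery: the rounding procedure of Subsection \ref{roudningfullyconvex}, the McDuff--Siegel count of punctured planes $\#\mathcal M^J_{X^4_{\Omega^{\mathrm{FR}}}}(e_{\bar k})\ll\mathcal T_D^{\bar k-1}p\gg>0$ from Theorem \ref{puctured-dsik}, and the identification of the Gutt--Hutchings capacity with the action of a suitable elliptic orbit (Theorem \ref{GH}), with $k$ chosen (using rationality of the ellipsoid) so that $C^{\mathrm{GH}}_k(E^{\mathrm{ext}})=k(1+\epsilon)\operatorname{diagonal}(X^4_\Omega)$. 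The subsequent neck-stretching then produces a two-level building whose top piece contains a rigid cylinder $C_{\mathrm{top}}$ of $\tilde\omega$-area at most $k\epsilon\operatorname{diagonal}(X^4_\Omega)$ with negative end on an \emph{elliptic} orbit that, by construction of the Morse perturbation, projects to a geodesic through the chosen interior point $p$; the contradiction via monotonicity follows. None of these ingredients are present in, or trivially transplantable from, the ball proof.

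In short: your modularization correctly shifts the burden to the ellipsoid, but the ellipsoid case \emph{is} the content of the theorem --- it occupies all of Section \ref{sectiontoricproof} and rests on the punctured-curve counts of Section \ref{puncturecountlio}, not on a rerun of Section \ref{proof for ball}. Your proposal also imports Pereira's capacity computation as a black box, whereas the paper's argument is independent of it.
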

\begin{corollary}\label{extremal-cylinder}
Let $X^4_{\Omega}$ be a four-dimensional toric domain such that there exists an ellipsoid $E^{4}(a,b)$ satisfying  $X^4_{\Omega} \subseteqq E^{4}(a,b)$ and $\operatorname{diagonal }(X^4_{\Omega})=\operatorname{diagonal }(E^{4}(a,b))$; see Figure \ref{toric-inter} for an illustration. Then every Lagrangian torus in $(X^4_{\Omega},\omega_{\mathrm{std}})$ which has  symplectic area at least $\operatorname{diagonal}(X^{4}_{\Omega})$ intersects the  Clifford torus $\mu^{-1}(\frac{ab}{a+b},\frac{ab}{a+b})=S^1(\frac{ab}{a+b})\times S^1(\frac{ab}{a+b})\subset \partial X^4_{\Omega}\cap \partial E^{4}(a,b)$. In particular:
\begin{itemize}
	\item for any $0<a\leq b<\infty$, a Hamiltonian isotopy $L_t\subset \mathbb{C}^2$ of the torus $S^1(\frac{ab}{a+b})\times S^1(\frac{ab}{a+b})$ cannot displace $S^1(\frac{ab}{a+b})\times S^1(\frac{ab}{a+b})$ from itself provided that $L_1\subset E^{4}(a,b)$;
	\item every extremal Lagrangain torus in $(X^4_{\Omega},\omega_{\mathrm{std}})$ lies in the connected component of $\partial X^4_{\Omega}\cap \partial E^{4}(a,b)$ that contains $S^1(\frac{ab}{a+b})\times S^1(\frac{ab}{a+b})$. Moreover, if $(\frac{ab}{a+b},\frac{ab}{a+b})\in \partial \Omega \cap  \mu(\partial E^{4}(a,b))$ is an isolated intersection, then $(X^4_{\Omega},\omega_{\mathrm{std}})$ contains a unique extremal Lagrangain torus, namely, $S^1(\frac{ab}{a+b})\times S^1(\frac{ab}{a+b})$. 
\end{itemize}  
\end{corollary}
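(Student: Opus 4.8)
The plan is to deduce the statement from Theorem~\ref{extremal-lag-toric} — which already forces the torus into $\partial X^4_\Omega\cap\partial E^4(a,b)$ — together with Theorem~\ref{extremal-lag-cylinder} and the following soft remark: if $L$ is a closed Lagrangian torus in $(\mathbb{C}^2,\omega_{\mathrm{std}})$ and $U\subseteq\mathbb{C}^2$ is a simply connected domain with $L\subseteq U$, then the symplectic area of $L$ in $(U,\omega_{\mathrm{std}})$ is independent of $U$. Indeed, two disks in $U\subseteq\mathbb{C}^2$ bounding the same loop $\ell\subseteq L$ differ by a $2$-sphere, on which the exact form $\omega_{\mathrm{std}}$ integrates to $0$; so the area of a filling disk depends only on $[\ell]\in H_1(L;\mathbb{Z})$ and equals $\int_\ell\lambda_{\mathrm{std}}$, while simple connectivity of $U$ makes $\partial\colon\pi_2(U,L)\to\pi_1(L)$ surjective, so every class is realized by a disk in $U$; hence the symplectic area of $L$ in $U$ is the least positive value of $[\ell]\mapsto\int_\ell\lambda_{\mathrm{std}}$, a quantity intrinsic to $L\subseteq\mathbb{C}^2$. (I use here that the toric domains under consideration are simply connected.) As a preliminary step I would record that $\operatorname{diagonal}(E^4(a,b))=(1/a+1/b)^{-1}=\frac{ab}{a+b}$, so that $\operatorname{diagonal}(X^4_\Omega)=\frac{ab}{a+b}$, and that $p_0:=(\frac{ab}{a+b},\frac{ab}{a+b})$ lies on the segment $\mu(\partial E^4(a,b))=\{(x_1,x_2):x_1/a+x_2/b=1\}$ and, since $\Omega\subseteq\mu(E^4(a,b))$ has the same diagonal, on $\partial\Omega$ as well; hence $\mu^{-1}(p_0)=S^1(\frac{ab}{a+b})\times S^1(\frac{ab}{a+b})\subseteq\partial X^4_\Omega\cap\partial E^4(a,b)$, which also gives the inclusion claimed in the statement.

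Next I would prove the main assertion. Let $L\subseteq X^4_\Omega$ be a Lagrangian torus with symplectic area $\ge\operatorname{diagonal}(X^4_\Omega)=\frac{ab}{a+b}$. Theorem~\ref{extremal-lag-toric} gives $L\subseteq\partial X^4_\Omega\cap\partial E^4(a,b)\subseteq\partial E^4(a,b)$, so $\mu(L)$ is a compact connected subset of the segment $\mu(\partial E^4(a,b))$. Suppose $p_0\notin\mu(L)$. Then $\mu(L)$ lies in one of the two components of that segment minus $p_0$; say the one with $x_1<\frac{ab}{a+b}$ (the other is symmetric under $(z_1,z_2)\mapsto(z_2,z_1)$). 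By compactness there is a $c$ with $\max_{\mu(L)}x_1<c<\frac{ab}{a+b}$, and then $L\subseteq\mu^{-1}(\{x_1<c\})=B^2(c)\times\mathbb{C}$. Since $X^4_\Omega$ and $B^2(c)\times\mathbb{C}$ are simply connected subdomains of $\mathbb{C}^2$ both containing $L$, the remark above gives that the symplectic area of $L$ in $X^4_\Omega$ equals its symplectic area in $B^2(c)\times\mathbb{C}$, which is at most $\mathrm{c}_{L}(B^2(c)\times\mathbb{C})$; by Theorem~\ref{extremal-lag-cylinder} (after rescaling) and Theorem~\ref{compute-lag} the latter equals $\mathrm{c}_{L}(B^2(c))=c$. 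Thus the symplectic area of $L$ in $X^4_\Omega$ is at most $c<\frac{ab}{a+b}$, contradicting the hypothesis. Hence $p_0\in\mu(L)$, i.e.\ $L$ meets the Clifford torus $\mu^{-1}(p_0)$.

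For the bullet points: applying the main assertion with $X^4_\Omega=E^4(a,b)$ (the hypotheses hold with the same ellipsoid), any Hamiltonian isotopy $L_t\subseteq E^4(a,b)$ of $L_0:=\mu^{-1}(p_0)$ runs through Lagrangian tori whose symplectic area equals that of $L_0$, namely $\frac{ab}{a+b}=\operatorname{diagonal}(E^4(a,b))$; so each $L_t$ meets $\mu^{-1}(p_0)=L_0$, i.e.\ such an isotopy cannot displace $S^1(\frac{ab}{a+b})\times S^1(\frac{ab}{a+b})$ from itself. For the last bullet, if $L$ is extremal in $X^4_\Omega$ then its symplectic area equals $\mathrm{c}_{L}(X^4_\Omega)\ge\operatorname{diagonal}(X^4_\Omega)$ by (\ref{lag-capacity-con}), so by the main assertion and Theorem~\ref{extremal-lag-toric} the connected set $L$ lies in $\partial X^4_\Omega\cap\partial E^4(a,b)$ and meets the connected set $\mu^{-1}(p_0)\subseteq\partial X^4_\Omega\cap\partial E^4(a,b)$; hence $L$ lies in the component of $\partial X^4_\Omega\cap\partial E^4(a,b)$ containing $S^1(\frac{ab}{a+b})\times S^1(\frac{ab}{a+b})$. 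If moreover $p_0$ is isolated in $\partial\Omega\cap\mu(\partial E^4(a,b))$, then from $\partial X^4_\Omega\cap\partial E^4(a,b)\subseteq\mu^{-1}(\partial\Omega\cap\mu(\partial E^4(a,b)))$ the torus $\mu^{-1}(p_0)$ is a closed and open connected subset of $\partial X^4_\Omega\cap\partial E^4(a,b)$, hence a whole component, forcing $L=\mu^{-1}(p_0)$; this torus has symplectic area $\frac{ab}{a+b}=\mathrm{c}_{L}(X^4_\Omega)$ (the main assertion shows no Lagrangian torus in $X^4_\Omega$ has strictly larger area, and (\ref{lag-capacity-con}) gives the reverse inequality), so it is the unique extremal Lagrangian torus.

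The one point that requires care is the ambient-independence of the symplectic area among simply connected subdomains of $\mathbb{C}^2$ — the $\pi_2/\pi_1$ bookkeeping above, together with simple connectivity of $X^4_\Omega$ — because this is exactly what lets the hypothesis on the area of $L$ inside $X^4_\Omega$ be played against the bound $\mathrm{c}_{L}(B^2(c)\times\mathbb{C})=c$. The rest is routine manipulation of the moment image of $\partial E^4(a,b)$ and of connectedness, and the deep input, Theorem~\ref{extremal-lag-toric}, is taken as given.
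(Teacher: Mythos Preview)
Your proof is correct and follows essentially the same route as the paper's: use Theorem~\ref{extremal-lag-toric} to force $L$ onto $\partial E^4(a,b)$, observe that missing the Clifford torus traps $\mu(L)$ on one side of $p_0$ on the hypotenuse, and then invoke Theorem~\ref{extremal-lag-cylinder} for a contradiction. The only cosmetic difference is that the paper places $L$ in the cylinder $B^2(\tfrac{ab}{a+b})\times\mathbb{C}$ and appeals to the ``extremal tori lie on the boundary'' clause of Theorem~\ref{extremal-lag-cylinder}, whereas you shrink to $B^2(c)\times\mathbb{C}$ with $c<\tfrac{ab}{a+b}$ and use the capacity computation instead; your explicit remark that the symplectic area is intrinsic to $L\subset\mathbb{C}^2$ for simply connected ambient domains is a point the paper uses implicitly, and your treatment of the second bullet (connected component, isolated case) is more complete than what the paper spells out.
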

\begin{figure}[h]
	\centering
	\includegraphics[width=6cm]{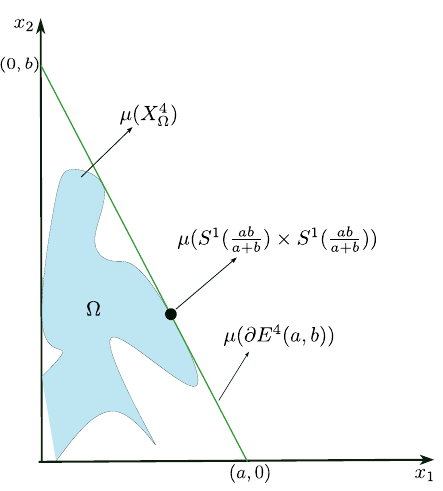}
\caption{The moment map images of $X^4_{\Omega}$ and $E^{4}(a,b)$.}\label{toric-inter}
\end{figure}

\begin{proof}[Proof of Corollary \ref{extremal-cylinder}]
The moment map image of $E^4(a,b)$ is the  triangle with vertices $(0,0), (a,0)$ and $(0,b)$ as shown in Figure \ref{toric-inter}. The green side corresponds to the boundary $\partial E^4(a,b)$, where the point $(\frac{ab}{a+b}, \frac{ab}{a+b})$, denoted by the black dot, is the moment map image of  $S^1(\frac{ab}{a+b})\times S^1(\frac{ab}{a+b})$.
	
Suppose there is an extremal Lagrangain torus $L$ in 	$(E^4(a,b),\omega_{\mathrm{std}})$ that does not intersect the Clifford torus $S^1(\frac{ab}{a+b})\times S^1(\frac{ab}{a+b})$. By Theorem \ref{extremal-lag-toric}, $L$ entirely lies on $\partial E^4(a,b)$. So either $L$ lies above or below the point $(\frac{ab}{a+b}, \frac{ab}{a+b})$ on the green side of the triangle. If it lies above $(\frac{ab}{a+b}, \frac{ab}{a+b})$, then  $L$ is an extremal Lagrangian torus in $(B^{2}(\frac{ab}{a+b})\times \mathbb{C},\omega_{\mathrm{std}})$  which does not lie entirely on the boundary of $(B^{2}(\frac{ab}{a+b})\times\mathbb{C},\omega_{\mathrm{std}})$. This contradicts Theorem \ref{extremal-lag-cylinder}.  If it lies below the point $(\frac{ab}{a+b}, \frac{ab}{a+b})$, then  $L$ is an extremal Lagrangian torus in $(\mathbb{C}\times B^{2}(\frac{ab}{a+b}),\omega_{\mathrm{std}})$  which does not lies entirely on its boundary---a contradiction again. 
	
Hamiltonian isotopies preserve the symplectic area of Lagrangian tori. If $L_t\subset E^4(a,b)$ is a Hamiltonian isotopy of the Clifford torus $S^1(\frac{ab}{a+b})\times S^1(\frac{ab}{a+b})$, then $L_t$ is extremal for each $t$. Hence $L_t\subset \partial E^4(a,b)$ for all $t$ by Theorem \ref{extremal-lag-toric}. We now apply the above argument for each $t$.
\end{proof}
\begin{remark}
It is an elementary fact that any embedded loop on $\mathbb{CP}^1$ that bounds an area of $\frac{1}{2}$ (half of the total area) does not lie in an open hemisphere. This can be deduced from Corollary \ref{extremal-cylinder}: note that it is enough to prove that any such loop intersects the equator. Arguing by contradiction, suppose that it is not the case. The lift of this loop to $S^3(1)$ under the Hopf fibration is a Lagrangian torus in the ball $\bar{B}^4(1)$. This lifted torus has symplectic area $\frac{1}{2}$ and hence is extremal by Theorem \ref{compute-lag}. Moreover, our assumption implies that this lift does not intersect the Clifford torus $S^1(\frac{1}{2})\times S^1(\frac{1}{2})$. This is a contradiction to Corollary \ref{extremal-cylinder}.
	
It would be interesting to see if the higher dimensional version of this holds, i.e., whether all extremal (hence monotone by Theorem \ref{monoton=extremal}) Lagrangian tori in $(\mathbb{CP}^2, \omega_{\mathrm{FS}})$ intersect the monotone Clifford torus in $(\mathbb{CP}^2, \omega_{\mathrm{FS}})$.  By the discussion above, it boils down to whether or not Corollary \ref{extremal-cylinder} is valid for  $(\bar{B}^{6}(1),\omega_{\mathrm{std}})$.
\end{remark}
\begin{remark}[Non-squeezing of monotone tori]
Consider the moment map image of $\mathbb{CP}^2$ as shown in Figure \ref{momentofball1}. As pointed out in {\cite[Figure 1]{MR4251089}}, the monotone Chekanov torus $L_{\mathrm{ch}}$ in $\mathbb{CP}^2$ can be placed on the diagonal in the region defined by
\[x_1<\frac{1}{3}+\epsilon,\]
for any $\epsilon>0$. It follows from Theorem  \ref{extremal-lag-cylinder}, with $k=m=1$, that $L_{\mathrm{ch}}$ can neither be squeezed into the region
\begin{equation}\label{non-squeezing01}
x_1\leq \frac{1}{3},
\end{equation}
nor into the region 
\begin{equation}\label{non-squeezing02}
x_2\leq \frac{1}{3}.
\end{equation}
In fact, by Theorem \ref{extremal-lag-cylinder}, no monotone Lagrangian torus in $\mathbb{CP}^2$, except the Clifford torus, can be squeezed into regions given by  (\ref{non-squeezing01}) and (\ref{non-squeezing02}). Moreover, by Theorem \ref{extremal-lag-toric}, no monotone Lagrangian torus in $\mathbb{CP}^2$ can be squeezed into regions given by the open ellipsoids $\operatorname{int}(E(a,b))$ of diagonal $\frac{1}{3}$.    
	\begin{figure}[h]
	\centering
	\includegraphics[width=6cm]{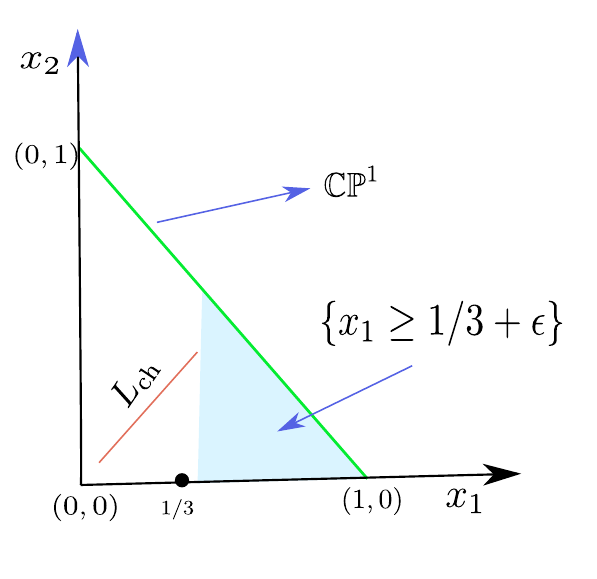}
\caption{Moment map image of $\mathbb{CP}^2$. The red line represents $L_{\mathrm{ch}}$.}\label{momentofball1}
\end{figure}
\end{remark}
The following class of examples explains that Conjecture \ref{extremal-lang1} does not generally hold for non-convex toric domains in all dimensions.
\begin{counterexample}
Consider the standard moment map $\mu:\mathbb{C}^{2n}\to \mathbb{R}^{n}_{\geq 0}$  defined by
\[\mu(z_1,z_2,\dots, z_n):= \pi(|z_1|^2,|z_2|^2,\dots, |z_n|^2).\]

The non-disjoint union of cylinders of diagonal $r>0$ in $\mathbb{C}^{2n}$ is the subset given by
	\[N^{2n}(r):=\mu^{-1}(\Omega),\]
	where
\[\Omega:=\big\{(x_1,x_2,\dots, x_n)\in \mathbb{R}^{n}_{\geq 0}: x_i\leq r \text{ for some } i\in \{1,2,\dots,n\}\big\}.\]
Figure \ref{nondistoric-inter1} dispicts $\Omega$ for $n=2$.

 The diagonal of $N^{2n}(r)$ as a toric domain is $r$. By {\cite[Theorem 4.37]{Pereira:2022aa}}, an extremal Lagrangian torus in  $(N^{2n}(r), \omega_{\mathrm{std}})$ has symplectic area equal to $r$.
	
	There are extremal Lagrangian tori in the non-disjoint union of cylinders $N^4(\frac{1}{2})$ that intersect the interior of $N^4(\frac{1}{2})$. Take a smoothly embedded loop on $\mathbb{CP}^1$ other than the equator such that it bounds a disk of area equal to $\ \frac{1}{2}$ (half of the total area). Lifting this loop to $S^3(1)$ via the Hopf fibration gives a Lagrangian torus of symplectic area $\frac{1}{2}$ in $S^3(1)\subset N^4(\frac{1}{2})$. This torus is extremal in $(N^4(\frac{1}{2}), \omega_{\mathrm{std}})$ and intersects the interior of $N(\frac{1}{2})$.
	
For $n\in \mathbb{Z}_{\geq 3}$, the lifts of the Vianna tori \cite{Vianna:2014aa, Chanda:2023aa} to $S^{2n-1}(1)$ produce infinitely many extremal Lagrangian tori in $(N^{2n}(\frac{1}{n}), \omega_{\mathrm{std}})$ that intersect the interior of $N^{2n}(\frac{1}{n})$.\qed
\end{counterexample}
\begin{figure}[h]
	\centering
	\includegraphics[width=6cm]{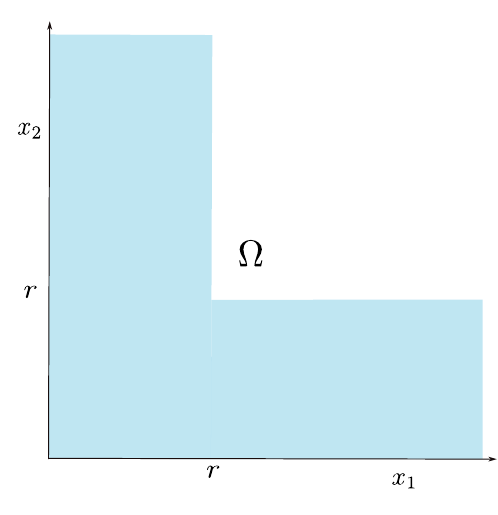}
\caption{The domain $\Omega$ for $n=2$}\label{nondistoric-inter1}
\end{figure}

{\cite[Theorem 1.16]{Cieliebak2018}} implies that every monotone Lagrangian torus in $(\mathbb{CP}^{n},\omega_{\mathrm{FS}})$ intersects every symplectically embedded  closed ball of capacity $\frac{n}{n+1} $. In the case of $\mathbb{CP}^{2}$, if the intersection occurs only at the boundary of the ball, then the Lagrangian is contained entirely in the boundary of the embedded ball due to  {\cite[Proposition 3.1]{MR4251089}}. The following theorem proves this is true for all $\mathbb{CP}^{n}$.
\begin{theorem}\label{intersection01}
	Let $L$ be a monotone Lagrangian torus in $(\mathbb{CP}^{n},\omega_{\mathrm{FS}})$. If there exists a symplectic embedding $\phi:B^{2n}(\frac{n}{n+1})\mapsto \mathbb{CP}^{n}\setminus L$, then $L$ lies entirely on the boundary $\partial \phi( B^{2n}(\frac{n}{n+1}))$. In particular, if a monotone Lagrangian torus does not intersect the standard open ball $B^{2n}(\frac{n}{n+1})$ given by the inclusion, then it lies entirely on $S^{2n-1}(\frac{n}{n+1})=\partial B^{2n}(\frac{n}{n+1})$.
\end{theorem}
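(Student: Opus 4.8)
The plan is to deduce Theorem \ref{intersection01} from the extremal‑torus results of this paper. First I would note that a \emph{monotone} Lagrangian torus in $(\mathbb{CP}^n,\omega_{\mathrm{FS}})$ is automatically \emph{extremal}: restricting the monotonicity relation $\int_A\omega_{\mathrm{FS}}=K\mu_L(A)$ to closed classes $A\in H_2(\mathbb{CP}^n)$, where $\mu_L=2c_1$ and $[\omega_{\mathrm{FS}}]=\tfrac1{n+1}c_1$, pins down $K=\tfrac1{2(n+1)}$; since the minimal Maslov number of a monotone Lagrangian torus is $2$, the minimal positive disk area of $L$ is $2K=\tfrac1{n+1}=\mathrm{c}_{L}(\mathbb{CP}^n,\omega_{\mathrm{FS}})$ by Theorem \ref{compute-lag}, so $L$ is extremal. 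Thus it suffices to prove: an extremal Lagrangian torus $L\subset(\mathbb{CP}^n,\omega_{\mathrm{FS}})$ that is disjoint from a symplectically embedded open ball $\phi(B^{2n}(\tfrac n{n+1}))$ lies entirely in $\phi(S^{2n-1}(\tfrac n{n+1}))$; the second assertion of the theorem is the special case $\phi=\mathrm{inclusion}$.

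\textbf{Setting up the target domain.}
Put $U:=\mathbb{CP}^n\setminus\phi(B^{2n}(\tfrac n{n+1}))$, the compact symplectic manifold with boundary $\partial U=\phi(S^{2n-1}(\tfrac n{n+1}))$, which contains $L$. By monotonicity of the Lagrangian capacity we get $\tfrac1{n+1}=\mathrm{c}_{L}(\mathbb{CP}^n,\omega_{\mathrm{FS}})\ge\mathrm{c}_{L}(U,\omega_{\mathrm{FS}})\ge\tfrac1{n+1}$, the last inequality because $L\subset U$ has area $\tfrac1{n+1}$; hence $\mathrm{c}_{L}(U,\omega_{\mathrm{FS}})=\tfrac1{n+1}$ and $L$ is extremal in $U$, and the goal becomes $L\subset\partial U$. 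In dimension four, McDuff's connectedness of the space of symplectic embeddings $B^{4}(c)\hookrightarrow\mathbb{CP}^2$ ($c<1$) lets me compose with a symplectomorphism so that $\phi$ is the standard toric embedding; then $U$ is the explicit trapezoidal toric domain with moment image $\{x\ge0:\ \tfrac23\le x_1+x_2\le1\}$, contained in $B^4(1)$ with the same diagonal, and one attacks it via Theorem \ref{extremal-lag-toric} — this is the route of \cite[Prop.~3.1]{MR4251089}. In dimension $\ge 6$, where uniqueness of ball embeddings is unavailable, one must handle a general $\phi$; it may help to pass to the Hopf lift $\widetilde L=P^{-1}(L)\subset S^{2n+1}(1)\subset \bar B^{2n+2}(1)$, an extremal Lagrangian torus of $\bar B^{2n+2}(1)$ (cf.\ the correspondence behind Corollary \ref{1-1coresp}) disjoint from the symplectic cone over $P^{-1}(\phi(B^{2n}(\tfrac n{n+1})))$, the claim $L\subset\partial U$ being equivalent to $\widetilde L$ lying on the lateral boundary of that cone.

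\textbf{The main obstacle.}
The heart of the matter is the rigidity statement ``$L$ extremal in $U$ $\Rightarrow$ $L\subset\partial U$,'' which is \emph{not} a verbatim consequence of Theorems \ref{extremal-lag-ball}, \ref{extremal-lag-cylinder} or \ref{extremal-lag-toric}: $U$ is neither a ball nor a cylinder, and its diagonal is strictly larger than $\tfrac1{n+1}$, so Theorem \ref{extremal-lag-toric} yields nothing about Lagrangian tori of area $\tfrac1{n+1}$ in $U$. I would establish it by re‑running the holomorphic‑curve count that proves Theorem \ref{extremal-lag-ball} and the Lagrangian‑capacity computation $\mathrm{c}_{L}(\mathbb{CP}^n,\omega_{\mathrm{FS}})=\tfrac1{n+1}$ in the presence of the embedded ball: assume some $p\in L$ lies in the interior of $U$, stretch the neck along $\phi(S^{2n-1}(\tfrac n{n+1}-\varepsilon))$ (a contact‑type hypersurface disjoint from $L$), and analyze the limiting holomorphic building of a line of $\mathbb{CP}^n$ through $p$. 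The ball $\phi(\bar B^{2n}(\tfrac n{n+1}-\varepsilon))$ absorbs capacity nearly $\tfrac n{n+1}$ on one side of the neck, which should force the building to contain a disk with boundary on $L$ of area strictly below $\tfrac1{n+1}$ together with finite‑energy planes inside the ball — contradicting either the minimal disk area $\tfrac1{n+1}$ of the extremal torus $L$ or the identity $\mathrm{c}_{L}(\mathbb{CP}^n,\omega_{\mathrm{FS}})=\tfrac1{n+1}$. Carrying out this neck‑stretching and controlling the bubbling — separating components that fall into the ball from those carried by $L$ — is the technical crux; the rest is the bookkeeping above.
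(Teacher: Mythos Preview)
Your reduction from ``monotone'' to ``extremal'' is fine, but the neck-stretching you propose has a conceptual gap. Stretching along $\phi(S^{2n-1}(\tfrac{n}{n+1}-\varepsilon))$ and feeding in a line through $p\in L$ produces an SFT building whose components are punctured spheres in the completions of $\mathbb{CP}^n\setminus\phi(\bar B)$ and of $\phi(\bar B)$, asymptotic to Reeb orbits on $\phi(\partial B)$; \emph{none} of these has boundary on $L$, because $L$ sits entirely in the outer piece and is not a boundary condition in this splitting. So no ``disk with boundary on $L$ of small area'' can arise this way. Worse, a line constrained only to pass through $p$ need not enter the ball with any definite area, so the ball does not ``absorb capacity $\tfrac{n}{n+1}$'' as you hope. (Your four-dimensional route via Theorem~\ref{extremal-lag-toric} also does not apply as stated: $U=\mathbb{CP}^2\setminus\phi(B^4(\tfrac23))$ contains the line at infinity and is not a toric domain in $\mathbb{C}^2$.)

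The paper's argument differs in two essential respects. First, it stretches along $S^*L$, the boundary of a Weinstein neighbourhood of $L$, exactly as in the proof of Theorem~\ref{extremal-lag-ball}; this is what makes the top-level planes compactify to disks with boundary on $L$. Second, it uses a degree-one sphere through \emph{two} points, $p_1=\phi(0)$ and $p_2\in L$. The constraint at $p_2$ lands on the bottom component $C_{\mathrm{bot}}\subset T^*L$ and, by the index count, forces at least two positive ends, hence at least two planes $D_1,D_2$ in $\mathbb{CP}^n\setminus L$. The constraint at $\phi(0)$ is inherited by one of them, say $D_1$; since $D_1$ passes through the centre of the embedded ball, the monotonicity lemma gives $\int D_1^*\omega_{\mathrm{FS}}\ge\tfrac{n}{n+1}-\varepsilon$. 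Combined with total area $1$ and the extremality bound $\int D_j^*\omega_{\mathrm{FS}}\ge\tfrac1{n+1}$, this forces exactly two planes with areas $\tfrac{n}{n+1}$ and $\tfrac{1}{n+1}$. After the Borman--Sheridan/gluing step one obtains a holomorphic disk $\bar D_1$ of area $\tfrac{n}{n+1}$ whose boundary passes through the assumed interior point $p$; applying the monotonicity lemma once in $\phi(B^{2n}(\tfrac{n}{n+1}-\varepsilon))$ and once in a Darboux ball of radius $\delta$ around $p$ yields $\tfrac{n}{n+1}\ge\tfrac{n}{n+1}-\varepsilon+\tfrac12\pi\delta^2$, a contradiction as $\varepsilon\to0$. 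The embedded ball thus enters only through the \emph{point constraint at its centre} and the area lower bound that provides --- not as a neck.
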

\begin{corollary}\label{intersection}
	All monotone Lagrangian tori in $(\mathbb{CP}^{2},\omega_{\mathrm{FS}})$ that are disjoint from the standard open ball $B^{4}(\frac{2}{3})$(given by the inclusion) intersect the monotone Clifford torus. More generally, every monotone Lagrangian torus that is disjoint from the interior of a symplectically embedded closed ball  $\phi:\bar{B}^{4}(\frac{2}{3})\mapsto \mathbb{CP}^{2}$ must intersect the monotone Lagrangian torus $\phi(S^1(\frac{1}{3})\times S^1(\frac{1}{3}))$.  
\end{corollary}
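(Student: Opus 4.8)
My plan is to transport the problem into a round ball, where Theorem~\ref{intersection01} and Corollary~\ref{extremal-cylinder} apply directly. It suffices to prove the general statement, since the first assertion is the special case in which $\phi$ is the standard inclusion $\bar{B}^{4}(\frac{2}{3})\hookrightarrow\mathbb{C}^2\subset\mathbb{CP}^2$; under this inclusion $\phi\big(S^1(\frac{1}{3})\times S^1(\frac{1}{3})\big)=\mu^{-1}(\frac{1}{3},\frac{1}{3})$ is precisely the monotone Clifford torus of $(\mathbb{CP}^2,\omega_{\mathrm{FS}})$.

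So let $L$ be a monotone Lagrangian torus in $(\mathbb{CP}^2,\omega_{\mathrm{FS}})$ disjoint from the interior of a symplectically embedded closed ball $\phi\colon\bar{B}^{4}(\frac{2}{3})\to\mathbb{CP}^2$. First I would restrict $\phi$ to the open ball, obtaining a symplectic embedding $B^{4}(\frac{2}{3})\to\mathbb{CP}^2\setminus L$, and apply Theorem~\ref{intersection01} with $n=2$; this gives $L\subset\partial\phi\big(B^{4}(\frac{2}{3})\big)=\phi\big(S^{3}(\frac{2}{3})\big)$, the last equality because $\phi$ extends to an embedding of the closed ball. Hence $L':=\phi^{-1}(L)$ is a Lagrangian torus contained in $S^{3}(\frac{2}{3})=\partial\bar{B}^{4}(\frac{2}{3})$.

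Next I would show that the symplectic area of $L'$ in $(\bar{B}^{4}(\frac{2}{3}),\omega_{\mathrm{std}})$ is at least $\operatorname{diagonal}(\bar{B}^{4}(\frac{2}{3}))=\frac{1}{3}$. Because $\phi^{*}\omega_{\mathrm{FS}}=\omega_{\mathrm{std}}$, composition with $\phi$ turns any disk in $(\bar{B}^{4}(\frac{2}{3}),L')$ into a disk in $(\mathbb{CP}^2,L)$ of the same symplectic area, so every positive area realized by a disk on $L'$ is also realized by a disk on $L$. Since $L$ is monotone it is extremal in $(\mathbb{CP}^2,\omega_{\mathrm{FS}})$ by Theorem~\ref{monoton=extremal}, so its symplectic area equals $\mathrm{c}_{L}(\mathbb{CP}^2,\omega_{\mathrm{FS}})=\frac{1}{3}$ by Theorem~\ref{compute-lag}; thus every positive-area disk on $L$, and therefore every positive-area disk on $L'$, has area $\geq\frac{1}{3}$. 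Taking the infimum over disks on $L'$ gives the claimed lower bound.

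Finally I would invoke Corollary~\ref{extremal-cylinder} with $a=b=\frac{2}{3}$ and $X^{4}_{\Omega}=E^{4}(\frac{2}{3},\frac{2}{3})=\bar{B}^{4}(\frac{2}{3})$: here $\frac{ab}{a+b}=\frac{1}{3}$, so that corollary forces $L'$ to intersect $\mu^{-1}(\frac{1}{3},\frac{1}{3})=S^1(\frac{1}{3})\times S^1(\frac{1}{3})$, and pushing forward by $\phi$ shows that $L$ intersects $\phi\big(S^1(\frac{1}{3})\times S^1(\frac{1}{3})\big)$, as desired. The whole argument is a matter of assembling earlier results, so I do not expect a serious obstacle; the only point requiring care is the area estimate of the third paragraph, where one must note that replacing $\mathbb{CP}^2$ by the smaller ball can only \emph{increase} the symplectic area of $L'$ (fewer relative homotopy classes are available), so the bound $\geq\frac{1}{3}$ needed to apply Corollary~\ref{extremal-cylinder} comes for free.
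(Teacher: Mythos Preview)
Your argument is correct and follows the same route as the paper: apply Theorem~\ref{intersection01} to land $L$ on the boundary sphere, observe that its symplectic area is at least $\tfrac{1}{3}$, and then invoke Corollary~\ref{extremal-cylinder} for the round ball $\bar{B}^{4}(\tfrac{2}{3})=E^{4}(\tfrac{2}{3},\tfrac{2}{3})$. The paper's proof only spells out the standard inclusion case, while you treat the general embedded ball directly via the pullback $L'=\phi^{-1}(L)$; this is a harmless (indeed slightly more complete) variation. One small simplification: you do not need the full strength of Theorem~\ref{monoton=extremal} for the area bound. Monotonicity alone forces every disk area on $L$ to be an integer multiple of $\tfrac{1}{n+1}=\tfrac{1}{3}$, so the minimal positive area is exactly $\tfrac{1}{3}$; this is the content of \cite[Corollary~1.7]{Cieliebak2018}, which the paper cites directly.
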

\begin{proof}[Proof of Corollary \ref{intersection}]
	By Theorem \ref{intersection01}, a monotone Lagrangian torus $L$ in $(\mathbb{CP}^{2},\omega_{\mathrm{FS}})$ that is disjoint from the standard open ball $B^{4}(\frac{2}{3})$ must lie entirely in $S^3(\frac{2}{3})=\partial B^{4}(\frac{2}{3})$. All monotone Lagrangian tori in $(\mathbb{CP}^{2},\omega_{\mathrm{FS}})$ have symplectic areas equal to $\frac{1}{3}$. So $L$ is extremal in $(B^{4}(\frac{2}{3}),\omega_{\mathrm{std}})$. The result follows from Corollary \ref{extremal-cylinder}.
\end{proof}
\begin{theorem}\label{monoton=extremal}
	Conjecture \ref{monotone-extremal-conjecture} holds for $(\mathbb{CP}^{2},\omega_{\mathrm{FS}})$, i.e.,  a Lagrangian torus in $(\mathbb{CP}^2,\omega_{\mathrm{FS}})$ is extremal if and only if  it is monotone.
\end{theorem}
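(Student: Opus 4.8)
The plan is to prove the two implications separately: ``monotone $\Rightarrow$ extremal'' is elementary, and the converse is where the three cited works enter.

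\emph{Monotone $\Rightarrow$ extremal.} Let $L\subset(\mathbb{CP}^2,\omega_{\mathrm{FS}})$ be a monotone Lagrangian torus, say $\int_A\omega_{\mathrm{FS}}=K\mu_L(A)$ with $K>0$. For the line class $\ell\in H_2(\mathbb{CP}^2)\subset H_2(\mathbb{CP}^2,L;\mathbb{Z})$ one has $\alpha_L(\ell)=\langle\omega_{\mathrm{FS}},\ell\rangle=1$ and $\mu_L(\ell)=2\langle c_1(\mathbb{CP}^2),\ell\rangle=6$, so $K=\tfrac16$. Since a monotone Lagrangian torus has minimal Maslov number $2$ (see \cite{Cieliebak2018}), the least positive value of $\mu_L$ is $2$, hence the symplectic area of $L$ is $2K=\tfrac13$; by Theorem \ref{compute-lag} this equals $\mathrm{c}_L(\mathbb{CP}^2,\omega_{\mathrm{FS}})$, so $L$ is extremal.

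\emph{Extremal $\Rightarrow$ monotone.} Let $L$ be extremal, so its symplectic area is $\tfrac13$. First I would set up the homological bookkeeping: because $[\omega_{\mathrm{FS}}]|_L=0$ one has $H_2(\mathbb{CP}^2,L;\mathbb{Z})\cong\mathbb{Z}^3$, fitting into $0\to\mathbb{Z}\ell\to H_2(\mathbb{CP}^2,L)\to H_1(L)\to 0$, and both $\alpha_L$ and $\mu_L$ factor through it; since the relative Hurewicz map $\pi_2(\mathbb{CP}^2,L)\to H_2(\mathbb{CP}^2,L)$ is onto, the symplectic area of $L$ equals the least positive element of the subgroup $\alpha_L(H_2(\mathbb{CP}^2,L))\subset\mathbb{R}$. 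Extremality forces this subgroup to be discrete and equal to $\tfrac13\mathbb{Z}$. Next I would invoke Dimitroglou Rizell--Goodman--Ivrii \cite{DimitroglouRizell2}: $L$ is Lagrangian isotopic to the Clifford torus, and transporting the three Maslov-$2$ disk classes $D_0,D_1,D_2$ of the Clifford torus along such an isotopy --- the Maslov homomorphism is locally constant in a Lagrangian isotopy, hence constant --- yields a basis $E_0,E_1,E_2$ of $H_2(\mathbb{CP}^2,L)$ with $\mu_L(E_i)=2$ and $E_0+E_1+E_2=\ell$. Therefore $\alpha_L(E_i)=m_i/3$ with $m_i\in\mathbb{Z}$ and $m_0+m_1+m_2=3$. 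Now $L$ is monotone if and only if $\alpha_L=\tfrac16\mu_L$, i.e.\ (on the basis $E_i$) if and only if $m_0=m_1=m_2=1$; so it suffices to prove $\alpha_L(E_i)>0$ for each $i$, which forces $m_i\ge 1$ and hence $m_i=1$.

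The crux is thus the positivity $\alpha_L(E_i)>0$, and this is where Cieliebak--Mohnke and Hind--Opshtein come in. The mechanism I have in mind: for suitable almost complex structures, $L$ bounds non-constant $J$-holomorphic disks of Maslov index $2$ --- this is the disk-filling underlying the bound $\mathrm{c}_L(\mathbb{CP}^2)\le\tfrac13$ of \cite{Cieliebak2018} --- and non-constant holomorphic disks have positive symplectic area; combining the squeezing/neck-stretching analysis of Hind--Opshtein \cite{Hind} with the Clifford model supplied by \cite{DimitroglouRizell2}, one identifies the classes realized this way as, up to the symmetries of the configuration, precisely $E_0,E_1,E_2$, whence $\alpha_L(E_i)>0$ and $L$ is monotone. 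I expect this last point --- pinning down \emph{which} Maslov-$2$ classes are filled by holomorphic disks, rather than merely that some class is filled --- to be the main obstacle; everything preceding it is formal.
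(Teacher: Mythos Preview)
Your ``monotone $\Rightarrow$ extremal'' direction is fine (the paper just cites \cite[Corollary~1.7]{Cieliebak2018}). The gap is in the converse, and it is precisely the step you flag as the main obstacle: you never establish $\alpha_L(E_i)>0$ for the transported Clifford classes. The mechanism you sketch does not work. Cieliebak--Mohnke gives you \emph{one} Maslov-$2$ class with a holomorphic disk of area $\tfrac13$, but there is no reason this class should be one of your $E_i$; it is some $aE_0+bE_1+cE_2$ with $a+b+c=1$, which tells you nothing about the individual $m_i$. And Hind--Opshtein's theorem is not a tool for identifying which classes are filled by disks: it is an obstruction saying that a Lagrangian torus with a prescribed area/Maslov profile cannot sit inside a ball of a given capacity. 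Your invocation of it is too vague to constitute an argument.

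The paper's route is genuinely different and avoids this difficulty. It uses \cite{DimitroglouRizell2} not to transport a basis along a Lagrangian isotopy, but to Hamiltonian isotope $L$ into the affine chart $B^4(3)\subset(\mathbb{CP}^2,3\omega_{\mathrm{FS}})$ (so areas are preserved). Then Cieliebak--Mohnke produces a single Maslov-$2$ class $b_1\in H_1(L;\mathbb{Z})$ of area $1$; one completes it to an integral basis $\{b_1,b_2\}$ with $\mu(b_2)=2$. If $L$ were not monotone, $b_2$ would have area $\neq 1$, and after possibly replacing $b_2$ by $2b_1-b_2$ one obtains a basis satisfying the hypotheses of \cite[Theorem~2]{Hind}, which then forbids $L\subset B^4(3)$ --- a contradiction. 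So Hind--Opshtein enters as a concrete squeezing obstruction on a \emph{two}-element basis of $H_1(L)$, not as a statement about three disk classes in $H_2(\mathbb{CP}^2,L)$.
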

\begin{proof}[Proof of Theorem \ref{monoton=extremal}]
	We have a symplectic inclusion
	\[ \bigl(B^{4}(3), \omega_{\mathrm{std}}\bigr)=\bigl(\mathbb{CP}^2\setminus \mathbb{CP}^{1},3\omega_{\mathrm{FS}}\bigr)\subset \bigl(\mathbb{CP}^2,3\omega_{\mathrm{FS}}\bigr)\]
	where $\mathbb{CP}^{1}$ is the hypersurface at infinity.	
	
	Let $L $ be an extremal Lagrangian torus in $(\mathbb{CP}^2,3\omega_{\mathrm{FS}}\bigr)$. By the conformality of the Lagrangian capacity {\cite[Section 1.2]{Cieliebak2018}} and Theorem \ref{compute-lag}, $L$ has symplectic area equal to $1$. Moreover, every smooth disk $u:(D^2,\partial D^2)
	\to (B^{4}(3), L)$ satisfies
	\[\int_{D^2}u^*\omega_{\mathrm{std}}\in \mathbb{Z}.\]
	
Recall that Hamiltonian isotopies preserve both the extremality and monotonicity of Lagrangian tori. By {\cite[Theorem C]{DimitroglouRizell2}}, after applying a Hamiltonian isotopy to $L$, we can assume that 
	\[ L\subset \bigl(B^{4}(3), \omega_{\mathrm{std}}\bigr)=\bigl(\mathbb{CP}^2\setminus \mathbb{CP}^{1},3\omega_{\mathrm{FS}}\bigr)\subset \bigl(\mathbb{CP}^2,3\omega_{\mathrm{FS}}\bigr).\]
	
Arguing by contradiction, suppose  that $L$ is not monotone in $(\mathbb{CP}^2,3\omega_{\mathrm{FS}}\bigr)$. Then $ H_1(L,\mathbb{Z})$ cannot admit integral basis $\{b_1, b_2\}$ of Maslov indices $2$ and both bounding disks of symplectic areas $1$.

By \cite{Cieliebak2018}, there is a smooth disk $u:(D^2,\partial D^2)
\to (B^{4}(3), L)$ such that $b_1:=[u(\partial D^2)]\in H_1(L,\mathbb{Z})$ has Maslov index $2$ and symplectic area $1$.
Choose  $b_2\in H_1(L,\mathbb{Z})$ so that the pair $\{b_1, b_2\}$ forms a basis of $H_1(L,\mathbb{Z})$. Since $L$ is orientable, $b_2$ has even Maslov index. After adding multiples of $b_1$ to $b_2$, we can assume that $b_2$ also has Maslov index $2$. Since $L$ is not monotone, $b_2$ does not bound a disk of symplectic area $1$. If $b_2$ bounds a disk of symplectic area greater than $1$, then $\{b_1, b_2\}$ forms a basis of $H_1(L,\mathbb{Z})$ that satisfies the hypothesis of {\cite[Theorem 2]{Hind}}. This is a contradiction to {\cite[Theorem 2]{Hind}} because $L\subset B^{4}(3)$. If $b_2$ bounds a disk of symplectic area strictly less than $1$, then we replace $b_2$ with $\bar{b}_2:=2b_1-b_2$. The pair $\{b_1, \bar{b}_2\}$ forms an integral basis of $H_1(L,\mathbb{Z})$ that satisfies the hypothesis of {\cite[Theorem 2]{Hind}} and hence a contraction to $L\subset B^{4}(3)$.

The converse follows from {\cite[Corollary 1.7]{Cieliebak2018}}.\qedhere

%
\end{proof}
This paper is organized as follows. In Section \ref{Chapter02}, we review the symplectic field theory framework and state the Gromov--Hofer compactness theorem for moduli spaces of punctured pseudo-holomorphic curves, adapted to the context of this document. Sections \ref{Chapter03} through \ref{puncturecountlio} present two variants of Gromov--Witten invariants for a symplectic manifold $(X,\omega)$. The first variant, due to Cieliebak--Mohnke \cite{Cieliebak_2007}, counts holomorphic spheres in $X$ satisfying the holomorphic curve equation $(du)^{0,1}=0$, subject to a generic local tangency constraint with respect to a local symplectic divisor. The second variant, introduced by McDuff--Siegel \cite{McDuff:2021aa}, counts punctured holomorphic spheres in a completed symplectic cobordism, also subject to a generic local tangency constraint with respect to a local symplectic divisor.

We prove Theorem \ref{extremal-lag-ball} in Section \ref{proof for ball}, Theorem \ref{intersection01} in Section \ref{proofof1.16}, Theorem \ref{extremal-lag-cylinder} in Section \ref{proofof1.8}, and Theorem \ref{extremal-lag-toric} in Section \ref{sectiontoricproof}. The proof of Theorem \ref{extremal-lag-cylinder} closely follows that of Theorem \ref{extremal-lag-ball}, so we provide only a sketch, as the details can be recovered from the earlier argument.

\section*{Acknowledgement}
The author received financial support from the Deutsche Forschungsgemeinschaft (DFG, German Research Foundation) under Germany's Excellence Strategy-The Berlin Mathematics Research Center MATH+ (EXC-2046/1, project ID: 390685689).




\section{Symplectic field theory framework}\label{Chapter02}
\subsection{Symplectic cobordisms}
Let $Y$ be a smooth $(2n-1)$-dimensional manifold. A contact form on $Y$ is a $1$-form $\lambda$ on $Y$ such that $d\lambda|_{\operatorname{Ker}(\lambda)}$ is non-degenerate, equivalently $\lambda\wedge (d\lambda)^{n-1}$ is a volume form on $Y$. The corresponding hyperplane distribution $\xi:=\operatorname{Ker}(\lambda)$ on $Y$ is called a (co-oriented) contact structure on $Y$ and the pair $(Y, \xi)$ is called a contact manifold. One can define contact structures without co-orientations. However, the contact structures that appear in this document are all co-oriented.

Let $(Y, \lambda)$ be a closed contact manifold. The Reeb vector field $R_\lambda$ of $\lambda$ is the unique vector field  on $Y$ characterized by 
\[d\lambda(R_\lambda,\cdot)=0 \text{ and } \lambda(R_\lambda)=1.\] 

A closed Reeb orbit of period $T>0$ is a smooth map $\gamma:\mathbb{R}/T\mathbb{Z}\to  Y$, modulo translations of the domain, such that $\gamma '(t)= R_\lambda(\gamma(t))$ for all $t\in \mathbb{R}/T\mathbb{Z}$. Let $\phi^t:Y\to Y$ be the flow of $R_{\lambda}$. The linearized flow induces a family of symplectic maps $d\phi^t:(\xi_p, d\lambda) \to (\xi_{\phi^t(p)}, d\lambda)$, where $\xi:=\operatorname{Ker}(\lambda)$ and $p\in Y$. A closed Reeb orbit $\gamma$  of period $T>0$ is non-degenerate \footnote{If $\gamma$ is nondegenerate, then its translates $\gamma(\cdot+\theta)$ are also non-degenerate. So the notion of nondegeneracy is well-defined.} if  the linearized return map $d\phi^T(\gamma(0)):(\xi_{\gamma(0)}, d\lambda) \to (\xi_{\gamma(0)}, d\lambda)$ has no eigenvalue equal to $1$. A contact form $\lambda$ on $Y$ is non-degenerate if all closed Reeb orbits on $Y$ are non-degenerate in this sense.

Let $(Y,\lambda)$ be a contact manifold. The symplectic manifold $(\mathbb{R}\times Y, d(e^r\lambda))$, where $r$ is the real-coordinate, is called the symplectization of $(Y,\lambda)$.

\begin{definition}[Symplectic cobordism]\label{cobordism}
Let $ (Y_{-}, \lambda_-)$ and $(Y_{+}, \lambda_+)$ be two $(2n-1)$-dimensional closed contact manifolds. A symplectic cobordism from  $ (Y_{-}, \lambda_-)$ to $(Y_{+}, \lambda_+)$ is a compact $2n$-dimensional symplectic manifold $(X, \omega)$ such that the following hold.
\begin{itemize}
\item The boundary $\partial X=\partial^- X \cup \partial ^+X$  admits an orientation-preserving diffeomorphism to $-Y_{-} \cup Y_{+}$. Here, the orientations on $Y_{+}$ and $Y_{-}$ come from their contact structures. The minus sign in front of $Y_-$ means that the orientation has been reversed. 
\item  The contact form $\lambda_-$ extends to a $1$-form (still denoted by $\lambda_-$) to a neighborhood of $\partial X_{-}$ such that $d\lambda_-=\omega$ on that neighbourhood. Similarly, the contact form $\lambda_+$ extends to a $1$-form (still denoted by $\lambda_+$) to a neighborhood of $\partial X_{+}$ such that $d\lambda_+=\omega$ on that neighborhood.
\end{itemize}
\end{definition}
In the definition of symplectic cobordism, we allow one or both of $Y_{-}$ and $Y_{+}$ to be empty.

\begin{example}
	Any closed symplectic manifold is a symplectic cobordism from the empty set to the empty set.  
\end{example}
\begin{example}
A Liouville domain is a pair $(X,\lambda)$, where $X$ is a compact oriented smooth manifold with (nonempty) boundary $\partial X$ and $\lambda$ is a $1$-form such that the exterior derivative $d\lambda$ is symplectic and the restriction of $\lambda$ to $\partial X$ is a positive\footnote{The vector field $V$ on $X$ defined by $d\lambda(V,\cdot)=\lambda(\cdot)$ points outwards along $\partial X$. Equivalently, the orientation induced by $\lambda$ on $\partial X$ agrees with the boundary orientation.} contact form. For every Liouville domain $(X,\lambda)$,  $(X,d\lambda)$ is a symplectic cobordism with $\partial^-X=\emptyset$ and $\partial^+X=\partial X$. An example of a Liouville domain is the standard ellipsoid $(E^{2n}(a_1,a_2,\dots, a_{n}),\lambda_{\mathrm{std}})$ defined by 
\[E^{2n}(a_1,a_2,a_3,\dots,a_n):=\bigg\{(z_1,\dots, z_{n})\in \mathbb{C}^{n}: \sum_{i=1}^{n}\frac{  \pi|z_i|^2}{a_i}\leq 1\bigg \}\] 
with 
\[\lambda_{\mathrm{std}}:=\frac{1}{2}\sum_{i=1}^{n}(x_idy_i-y_idx_i).\]
Here, $(E^{2n}(a_1,a_2,a_3,\dots,a_n),\omega_{\mathrm{std}}:=d\lambda_{\mathrm{std}})$ is a symplectic cobordism from the empty set to $(\partial E^{2n}, \lambda_{\mathrm{std}})$. 
\end{example}		
\begin{example}
Let $(X, \omega)$ be a closed $2n$-dimensional symplectic manifold. For each $S>0$, there exists a symplectic embedding $i:(E^{2n}(\epsilon,\epsilon S,\dots, \epsilon S), \omega_{\mathrm{std}})\to (X, \omega)$ for sufficiently small $\epsilon>0$. The manifold with boundary \[X\setminus \operatorname{int}(i(E^{2n}(\epsilon,\epsilon S,\dots, \epsilon S)))\] is a symplectic cobordism from $(\partial E^{2n}, \lambda_{\mathrm{std}})$ to the empty set.
\end{example}
\begin{example}
Let $(L, g)$ be an $n$-dimensional Riemannian manifold. The cotangent bundle $T^*L$ admits a canonical symplectic form denoted by $d\lambda_{\mathrm{can}}$. Let $(D^*L, d\lambda_{\mathrm{can}})$ be the symplectic unit codisk bundle of $L$ with respect to the metric $g$, i.e.,  
\[D^*L:=\Bigl\{v\in T^*L: \|v\|_g\leq 1 \Bigr\}.\]
	This is a symplectic cobordism from the empty set to the  unit cosphere bundle $(S^*L,\lambda_{\mathrm{can}})$ of $L$ given by 
	\[S^*L:=\Bigl \{v\in T^*L: \|v\|_g=1\Bigr\}.\]
\end{example}
It follows from the definition of symplectic cobordism that the vector field $V_+$ defined by $\omega(V_+,\cdot)=\lambda_+$ points outwards along $Y_{+}$, and the vector field $V_-$ defined by $\omega(V_-,\cdot)=\lambda_-$ points inwards along $Y_{-}$. Let $\psi^t_{V_+}:Y_{+}\to X$ be the backward flow of $V_+$ and $\psi^t_{V_-}:Y_{-}\to X$ be the forward flow of $V_-$. These flows yield symplectic embeddings 
\[\psi_{V_+}:((-\epsilon,0]\times Y_{+},d(e^r\lambda_+))\to (X,\omega),\, (r,p)\to  \psi_{V_+}(r,p):=\psi^r_{V_+}(p) \]
and 
\[\psi_{V_-}:([0,\epsilon)\times Y_{-},d(e^r\lambda_-))\to (X,\omega),\, (r,p)\to  \psi_{V_-}(r,p):=\psi^r_{V_-}(p) .\]
\begin{definition}[Symplectic completion]\label{completion}
	Let $(X, \omega)$ be a symplectic cobordism from $ (Y_{-}, \lambda_-)$ to $(Y_{+}, \lambda_+)$. The symplectic completion of $(X, \omega)$ is obtained by gluing the negative half-cylinder $((-\infty,0]\times  Y_-, d(e^r \lambda_-))$ to $X$ along $Y_-$ via $\psi_{V_-}$ and  the positive half-cylinder $[0,\infty)\times  Y_+, d(e^r \lambda_+))$ to $X$ along $ Y_+$ via $\psi_{V_+}$.  The symplectic form $\omega$ on $X$ extends to the symplectic form on the completion as 
	\begin{equation*}
		\widehat{\omega}:=
		\begin{cases}
			
			d(e^r\lambda_+ )& \text{on } [0,\infty)\times Y_+\\
			\omega & \text{on } X\\
			d(e^r\lambda_-) & \text{on } (-\infty,0]\times Y_-.
		\end{cases}
	\end{equation*}
We denote the symplectic completion of $(X,\omega)$ by $\widehat{X}$ and call it the completed symplectic cobordism.
\end{definition}
We will also need the following piecewise-smooth $2$-form on the symplectic completion $\widehat{X}$: 
\begin{equation}\label{deg-form}
	\tilde{\omega}:=
	\begin{cases}
		d\lambda_+ & \text{on } [0,\infty)\times Y_+\\
		\omega & \text{on } X\\
		d\lambda_- & \text{on } (-\infty,0]\times Y_-.
	\end{cases}
\end{equation}
\begin{example}\label{examplead}
	Let $(\mathbb{\bar{B}}^{2n}(1),\omega_{\mathrm{std}})$ be the closed unit ball in $\mathbb{C}^n$. It is a symplectic cobordism from the empty set to the unit sphere $\mathbb{S}^{2n-1}$ that carries the standard contact form $\lambda_{\mathrm{std}}:=\frac{1}{2}\sum_{i=1}^{n}(x_idy_i-y_idx_i)$. The symplectic completion of $(\mathbb{\bar{B}}^{2n}(1),\omega_{\mathrm{std}})$ is symplectomorphic to $(\mathbb{C}^n, \omega_{\mathrm{std}})$. An explicit symplectomorphism $\phi: \mathbb{C}^n\to \widehat{\mathbb{\bar{B}}^{2n}(1)}$ is given by
	\begin{equation*}
		\phi (z)=
		\begin{cases}
			z & \text{if } |z|\leq 1\\
			(\frac{1}{2}\log|z|,\frac{z}{|z|}) & \text{if } |z|\geq 1.
		\end{cases}
	\end{equation*}
\end{example}
\begin{definition}
	Let $(X,\omega)$ be a symplectic manifold. An embedded hypersurface $\Sigma$ in $X$ is called a contact type hypersurface if there exists a vector field $V$ defined on a neighborhood of $\Sigma$ that is transverse to $\Sigma$ and satisfies $\mathcal{L}_V\omega=\omega$, where $\mathcal{L}_V$ denotes the Lie derivative with respect to $V$. The vector field $V$ is called a Liouville vector field.
\end{definition}
\begin{remark}
The vector field $V$ induces a contact form $\lambda$ on $\Sigma$ defined by $\lambda:=\omega(V,\cdot)$.
\end{remark}
\begin{example}
The boundary of the standard ellipsoid $E^{2n}(a_1,a_2,\dots, a_{n})$ is a hypersurface of contact type in $(\mathbb{C}^n, \omega_{\mathrm{std}})$. The radial vector field in  $(\mathbb{C}^n, \omega_{\mathrm{std}})$ plays the role of $V$. More generally, the boundary of every starshaped domain in  $(\mathbb{C}^n, \omega_{\mathrm{std}})$ is a hypersurface of contact type for which the radial vector field is a Liouville vector field.
\end{example}
\begin{definition}[SFT-admissible almost complex structure]\label{SFT-admissible}
Let $(X, \omega)$ be a symplectic cobordism from $ (Y_{-}, \lambda_-)$ to $(Y_{+}, \lambda_+)$ and $\widehat{X}$ be its symplectic completion.	An almost complex structure $J$ on   $\widehat{X}$ is called  SFT-admissible if the following hold. 
\begin{itemize}
	\item $J$ is  compatible with the completed symplectic form $\widehat{\omega}$.
	\item There exists $k>0$ such that $J$ is translation invariant (in the $r$-direction) on $([k,\infty)\times  Y_+, d(e^r \lambda_+))$ and on $((-\infty,-k]\times  Y_-, d(e^r \lambda_-))$.
\item $J$ preserves $\xi_-:=\operatorname{Ker}(\lambda_-)$ and $\xi_+:=\operatorname{Ker}(\lambda_+)$ . Moreover, $J$ is cylindrical with respect to $Y_+$ and $Y_-$, i.e.,  $J$ maps $\partial_r$ to the Reeb vector fields on the cylindrical ends.
\end{itemize}
\end{definition}
\subsection{Punctured $J$-holomorphic nodal curves}
A punctured nodal Riemann surface is a tuple $(\Sigma,j, Z^+\cup Z^-,\Delta_{\mathrm{nd}}, \phi)$ where:
\begin{itemize}
	\item $(\Sigma,j)$ is a closed (not necessarily connected) Riemann surface with a finite number of pairwise distinct points organized in two collections $Z^+=(z^+_1,z^+_2, \dots, z^+_p)$ and $Z^-=(z^-_1,z^-_2, \dots, z^-_q)$.  We allow both sets $Z^+$ and $Z^-$ to be empty.
	\item $\Delta_{\mathrm{nd}}$ is a finite set consisting of an even number of points in $\Sigma\setminus (Z^+\cup Z^-)$ equipped with an involution $\phi:\Delta_{\mathrm{nd}} \to \Delta_{\mathrm{nd}}$. Each pair $\{z,\phi(z)\}$ for $z\in \Delta_{\mathrm{nd}}$ is called a node.
\end{itemize}
Let $\widehat{\Sigma}$ be the surface obtained by  identifying $z$ with  $\phi(z)$ for all $z\in \Delta_{\mathrm{nd}}$. We say the nodal Riemann surface is connected if $\widehat{\Sigma}$ is connected. The genus of $\widehat{\Sigma}$ is called the arithmetic genus of $(\Sigma,j, Z^+\cup Z^-,\Delta_{\mathrm{nd}}, \phi)$.

Let $(X, \omega)$ be a symplectic cobordism from $ (Y_{-}, \lambda_-)$ to $(Y_{+}, \lambda_+)$ and $J$ be an SFT-admissible almost complex structure on the completion $\widehat{X}$. A punctured $J$-holomorphic curve of genus $g$ in $(\widehat{X}, J)$ is a tuple $(u, \Sigma,j, Z^+\cup Z^-,\Delta_{\mathrm{nd}}, \phi)$ which consists of the following data: 
\begin{itemize}
	\item[(1)] A punctured nodal Riemann surface $(\Sigma,j, Z^+\cup Z^-,\Delta_{\mathrm{nd}}, \phi)$, where the two finite sets  $Z^+=(z^+_1,z^+_2, \dots, z^+_p)$ and $Z^-=(z^-_1,z^-_2, \dots, z^-_q)$  are allowed to be empty.
	
	\item[(2)] A map $u: \Sigma\setminus Z^+\cup Z^-\to \widehat{X}$ such that $du\circ j=J\circ du$ and  $u(z)=u(\phi(z))$ for each $z\in \Delta_{\mathrm{nd}}$.

	\item[(3)] There exist two tuples  of closed Reeb orbits $(\gamma^+_1,\gamma^+_2, \dots, \gamma^+_p)$ and $(\gamma^-_1,\gamma^-_2, \dots, \gamma^-_q)$ on $ (Y_{-}, \lambda_-)$ and $(Y_{+}, \lambda_+)$, respectively, such that the following hold. For every $i\in \{1,2,\dots, p\}$, if $(r, \theta)$ are polar coordinates near $z_i^+$ 
	then
	\[ u(r\exp(i\theta))\to (\infty, \gamma_i^+(T^+_i\theta/2\pi)) \]
uniformly in $\theta$ as $r\to \infty$, where $T^+_i$ is the period of the orbit $\gamma_i^+$. Similarly, for every $i\in \{1,2,\dots, q\}$, if $(r, \theta)$ are polar coordinates near $z_i^-$ 
	then
	\[ u(r\exp(i\theta))\to (-\infty, \gamma_i^+(-T^-_i\theta/2\pi))\]
uniformly in $\theta$ as $r\to -\infty$,  where $T^-_i$ is the period of the orbit $\gamma_i^-$.
\end{itemize}
In case of (2)--(3), we say the punctured $J$-holomorphic curve $u$ has positive ends on the Reeb orbits $(\gamma^+_1,\gamma^+_2, \dots, \gamma^+_p)$ and negative ends on the Reeb orbits $(\gamma^-_1,\gamma^-_2, \dots, \gamma^-_q)$. Sometimes we call $u$ positively asymptotic to the Reeb orbits $(\gamma^+_1,\gamma^+_2, \dots, \gamma^+_p)$ and negatively asymptotic to the Reeb orbits $(\gamma^-_1,\gamma^-_2, \dots, \gamma^-_q)$.  A punctured $J$-holomorphic nodal curve is called connected if the underlying nodal Riemann surface $(\Sigma,j, Z^+\cup Z^-,\Delta_{\mathrm{nd}}, \phi)$ is connected. A punctured $J$-holomorphic nodal curve is called smooth if the underlying nodal Riemann surface $(\Sigma,j, Z^+\cup Z^-,\Delta_{\mathrm{nd}},\phi)$ has no nodes, i.e., $\Delta_{\mathrm{nd}}$ is empty. A smooth connected punctured $J$-holomorphic curve is a  punctured $J$-holomorphic nodal curve where the underlying nodal Riemann surface $(\Sigma,j, Z^+\cup Z^-,\Delta_{\mathrm{nd}},\phi)$ is connected and has no nodes, i.e., $\Delta_{\mathrm{nd}}$ is empty. A smooth connected punctured $J$-holomorphic curve of genus zero with only one puncture is called an asymptotically cylindrical $J$-holomorphic plane. A smooth connected punctured $J$-holomorphic curve of genus zero with only two punctures is called an asymptotically cylindrical $J$-holomorphic cylinder or simply a $J$-holomorphic cylinder. 
\begin{example} Let  $(Y, \lambda)$ be a closed contact manifold. Let $\gamma$ be a closed Reeb orbit on $Y$ of period $T>0$. For every SFT-admissible almost complex structure $J$ on the symplectization $\mathbb{R}\times Y$, the map 
	$u:\mathbb{R}\times\mathbb{R}/\mathbb{Z}\to \mathbb{R}\times Y$
	defined by \[u(s,t):=(Ts,\gamma(Tt))\]
	is a punctured $J$-holomorphic curve of genus $0$ with two punctures, one negative and one positive, asymptotic to the Reeb orbit   $\gamma$. This is called the trivial cylinder over $\gamma$.
\end{example}
\begin{definition}\label{homologyclass}
	Let $(X, \omega)$ be a symplectic cobordism from $ (Y_{-}, \lambda_-)$ to $(Y_{+}, \lambda_+)$  and $J$ be an SFT-admissible almost complex structure on the completion $\widehat{X}$.  Let $u: \Sigma\setminus (Z^+\cup Z^-)\to \widehat{X}$ be a punctured $J$-holomorphic curve with positive asymptotics 
	$\Gamma^+=(\gamma^+_1,\gamma^+_2, \dots, \gamma^+_p)$ and negative asymptotics $\Gamma^-=(\gamma^-_1,\gamma^-_2, \dots, \gamma^-_q)$. Let $\overline{X}$ denote the compact manifold obtained from the symplectic completion $\widehat{X}$ by attaching copies of $\{\infty\}\times Y_+$ and $\{-\infty\}\times Y_-$ to its cylindrical ends. Let $\overline{\Sigma}$ be the compact oriented surface with boundary obtained by adding a circle to $\Sigma\setminus (Z^+\cup Z^-)$ for each puncture. The behavior of $u$ near the punctures implies that $u$ can be extended to a continuous map:
	\[\bar{u}: \overline{\Sigma} \to \overline{X}.\]
Let $\Phi: \overline{X}\to X$ be the deformation retraction that collapses the cylindrical ends  to $\{0\}\times Y_{\pm}$.  We say that the punctured $J$-holomorphic curve $u: \Sigma\setminus (Z^+\cup Z^-\to \widehat{X})$ represents the relative homology class $A\in H_2(X, \Gamma^+\cup \Gamma^-; \mathbb{Z})$ if the push-forward of the fundamental class $[\overline{\Sigma}]$ of $\bar{\Sigma}$ (rel. boundary) by the map
\[\Phi \circ \bar{u}:  \overline{\Sigma} \to X\]
equals $A$. In this case, we write $[u]=A$.	
\end{definition}
\subsection{Holomorphic buildings}
Let $(X,\omega)$ be a connected symplectic cobordism with no negative boundary\footnote{Here, we also allow $\partial X=\emptyset$ in which case $X$ is just a closed symplectic manifold.}, i.e., $\partial X=\partial ^+X$ (cf. Definition \ref{cobordism}).  Given a closed connected contact type hypersurface $Y$ in  $(X,\omega)$, i.e., $Y$ is a closed codimension-$1$ connected smooth submanifold in the interior of $X$ such that there exists a vector field $V$ defined on a neighborhood of $Y$ that is transverse to $Y$ and satisfies $\mathcal{L}_V\omega=\omega$, where $\mathcal{L}_V$ denotes the Lie derivative with respect to $V$. The vector field $V$ induces a contact form $\lambda$ on $Y$ defined by $\lambda:=\omega(V,\cdot)$. Suppose that $Y$ separates $X$ into two pieces, i.e., $X\setminus Y$ has two connected components, say $X_+$ and $X_-$. We assume that the component $(X_+, \omega^+:=\omega|_{X_+})$ is a symplectic cobordism with negative boundary $(Y, \lambda)$ and positive boundary $(\partial^+ X_+=\partial X,\lambda_+)$. The other connected component, denoted by $(X_-, \omega^-:=\omega|_{X_-})$, is a symplectic cobordism with positive boundary $(Y, \lambda)$ and empty negative boundary. Choose SFT-admissible almost complex structures $J_{\partial^+ X_+}$, $J_+,J_Y$, and $J_-$ on $(\mathbb{R}\times\partial^+ X_+,d(e^r\lambda_+),d\lambda_+)$, $(\widehat{X}_+,\hat{\omega}^+, \tilde{\omega}^+), (\mathbb{R}\times Y,d(e^r\lambda),d\lambda)$ and $(\widehat{X}_-,\hat{\omega}^-, \tilde{\omega}^-)$, respectively. We assume that $J_+$ and $J_-$ agree with $J_Y$ on the cylindrical ends and also  $J_+$ agrees with $J_{\partial^+ X_+}$.

Let $N_+$ be a nonnegative integer and consider the split completed cobordism $\bigsqcup (\widehat{X}_N,\hat{\Omega}_N,\tilde{\Omega}_N, J_N)$ defined by
\begin{equation*}
	(\widehat{X}_N,\hat{\Omega}_N,\tilde{\Omega}_N, J_N):=
	\begin{cases}
		(\mathbb{R}\times\partial^+ X_+,d(e^r\lambda_+),d\lambda_+, J_{\partial^+ X_+}) & \text{for } N\in \{M+1,M+2,\dots,N_+\},\\
		(\widehat{X}_+,\hat{\omega}^+,\tilde{\omega}^+, J_+) & \text{for } N=M,\\
		
		(\mathbb{R}\times Y,d(e^r\lambda),d\lambda, J_Y)& \text{for } N\in \{1,2,\dots,M-1\},\\
		(\widehat{X}_-, \hat{\omega}^-, \tilde{\omega}^-, J_-) & \text{for } N=0.\\
	\end{cases}
\end{equation*}
Given nonnegative integers $g$ and $N_+$,  a \textbf{holomorphic building of height\footnote{In our convention, unlike \cite{MR2026549}, the $N=0$ level of the building is regarded as the underground (basement or bottom) level and therefore does not contribute to the height.} $N_+$ and genus $g$} in $\bigsqcup (\widehat{X}_N,\hat{\Omega}_N,\tilde{\Omega}_N, J_N)$ is a tuple $\mathbb{H}=(u, \Sigma,j,\Gamma^+,\Delta_{\mathrm{br}}, \Delta_{\mathrm{nd}}, \phi, L)$ where:

\begin{itemize}
	\item $(\Sigma,j,\Delta_{\mathrm{br}}\cup \Gamma^+, \Delta_{\mathrm{nd}}, \phi)$ is a punctured nodal Riemann surface of arithmetic genus $g$ with punctures at $\Delta_{\mathrm{br}}\cup \Gamma^+$. Here, $\Delta_{\mathrm{br}}$, $\Gamma^+ $, and $\Delta_{\mathrm{nd}}$ are disjoint finite sets. Each of $\Delta_{\mathrm{br}}$ and $\Delta_{\mathrm{nd}}$ has an even number of points on $\Sigma$. The involution $\phi:\Delta_{\mathrm{nd}}\cup \Delta_{\mathrm{br}} \to \Delta_{\mathrm{nd}}\cup \Delta_{\mathrm{nd}}$ preserves  $\Delta_{\mathrm{br}}$ and $\Delta_{\mathrm{nd}}$. Each pair $\{z,\phi(z)\}$ for $z\in \Delta_{\mathrm{nd}}$ is called a \textbf{node} of $u$. Each pair $\{z,\phi(z)\}$ for $z\in \Delta_{\mathrm{br}}$ is called a \textbf{breaking pair} of $u$. The set $\Gamma^+$ describing the positive punctures is allowed to be empty; for example, this will be the case if $\partial^+ X_+=\emptyset$.
	\item $L$ is a locally constant function
	\[L:\Sigma\to \{0,\dots, N_+\}\]
	that attains every value in $\{ 0,\dots, N_+\}$ except possibly $0$. Moreover, $L$ satisfies:
	\begin{enumerate}
		\item $L(z)=L(\phi(z))$ for every $z\in \Delta_{\mathrm{nd}}$;
		\item  For every $z\in \Delta_{\mathrm{br}}$, the breaking pair  $\{z,\phi(z)\}$ can be labeled as $\{z^+,z^-\}$ such that	$L(z^+)-L(z^-)=1$;
		\item $L(z)=N_+$ for every $z\in \Gamma_+$.
	\end{enumerate}
	\item $u$ is a $(j, J_N)$-holomorphic curve
\[u:\Sigma\setminus (\Delta_{\mathrm{br}}\cup \Gamma_+)\to \bigsqcup (\widehat{X}_N,\hat{\Omega}_N,\tilde{\Omega}_N, J_N) \]
		
that maps $L^{-1}(N)$ into $\widehat{X}_N$ for each $N$ and has positive punctures at $\Gamma_+$ asymptotic to closed Reeb orbits on $\partial^+ X_+$.  Moreover, for every $z\in \Delta_{\mathrm{nd}}$ we have
	\[u(z)=u(\phi(z)).\]
For each breaking pair $\{z^+,z^-\}$ labeled such that $L(z^+)=L(z^-)+1$,  $u$ has a positive puncture at $z^+$ and a negative puncture at $z^-$ asymptotic to the same Reeb orbit.  
\end{itemize}
For each $N\in \{0, \dots,N_+\}$ define 
\[\Sigma^N:=(\Sigma\setminus (\Delta_{\mathrm{br}}\cup \Gamma_+))\cap L^{-1}(N). \]
Denote the restriction of $u$ to the subset $\Sigma^N$ by 
\begin{equation*}
	u^N:\Sigma^N\to (\widehat{X}_N, J_N):=
	\begin{cases}
		(\mathbb{R}\times\partial^+ X_+, J_{\partial^+ X_+}) & \text{for } N\in \{M+1,M+2,\dots,N_+\},\\
		(\widehat{X}_+, J_+) & \text{for } N=M,\\
		
		(\mathbb{R}\times Y, J_Y)& \text{for } N\in \{1,2,\dots,M-1\},\\
		(\widehat{X}_-, J_-) & \text{for } N=0.\\
	\end{cases}
\end{equation*}
The tuple $(u^N, \Sigma^N,j, (\Delta_{\mathrm{br}}\cup \Gamma_+)\cap L^{-1}(N),\Delta_{\mathrm{nd}}\cap L^{-1}(N), \phi)$ defines a punctured $J_N$-holomorphic nodal curve  in $(\widehat{X}_N, J_N)$ with punctures (positive/negative or both) at $(\Delta_{\mathrm{br}}\cup \Gamma_+)\cap L^{-1}(N)$, and nodes at  $\Delta_{\mathrm{nd}}\cap L^{-1}(N)$. We call $u^N$ the \textbf{$N$-th level} of the building $\mathbb{H}$. We call $u^N$ the \textbf{top level}, a \textbf{middle level}, or the \textbf{bottom level} if $N=N_+$, $0<N<N_+$, and $N=0$, respectively. For  $N<N_+$, the positive punctures of $u^N$ are in bijection with the negative punctures of $u^{N+1}$ and are asymptotic to the same set of Reeb orbits. The only unpaired punctures are $\Gamma_+$. If $(\gamma_1,\dots,\gamma_l)$ are Reeb orbits corresponding to the positive punctures $\Gamma_+$, then we say the holomorphic building $\mathbb{H}$ has positive ends on $(\gamma_1,\dots,\gamma_l)$. Sometimes we will write $\mathbb{H}=(u^0,u^1,\dots,u^{N_+})$ for simplicity to denote a holomorphic building. Figure \ref{holo-building} below depicts a holomorphic building of height $2$ and genus $1$ with a single positive end (puncture) and no negative ends.

\begin{figure}[h]
	\centering
	\includegraphics[width=12.2cm]{}
	\caption{A holomorphic building of height $2$ and genus $1$ with one positive end. }\label{holo-building}
\end{figure}


A holomorphic building of height $N_+$ with $k$ \textbf{marked points} is a tuple 
\[\mathbb{H}=(u, \Sigma,j,\Delta_{\mathrm{br}}\cup\Gamma_+, \Delta_{\mathrm{nd}} , \Theta,\phi, L),\]
where $(u, \Sigma,j,\Delta_{\mathrm{br}}\cup\Gamma_+, \Delta_{\mathrm{nd}},\phi, L)$ is a holomorphic building of height $N_+$ and $\Theta$ is a finite ordered set of $k$ distinct points on $\Sigma\setminus (\Gamma_+\cup\Delta_{\mathrm{nd}}\cup \Delta_{\mathrm{br}}).$ A holomorphic building $(u, \Sigma,j,\Delta_{\mathrm{br}}, \Delta_{\mathrm{nd}} , \Theta,\phi, L)$ with marked points is \textbf{stable} if:
\begin{enumerate}
	\item There is no $N\in\{0,1,\dots,N_+\}$ for which the $N$th level $u^N$ consists entirely of a disjoint union of trivial cylinders over closed Reeb orbits without marked points or nodes on them.
	\item For each $N\in\{0,1,\dots,N_+\}$, $\Sigma^N$ does not contain any sphere with
	less than three special points (punctures, nodal or marked points), nor a torus
	without special points, on which $u^N$ is constant.
\end{enumerate}

By definition, a holomorphic building $\mathbb{H}$ consists of smooth connected punctured curves lying in different levels of a split completed symplectic cobordism. Each such curve is called a \textbf{curve component} of the building. Every holomorphic building $\mathbb{H}$ can be described as a finite \textbf{graph} where the vertices are the curve components and there is an edge between two curve components if they share a node or an asymptotic Reeb orbit. If $\mathbb{H}$ has genus $0$, then the corresponding graph is a tree.
\begin{definition}[Homology class of a holomorphic building {\cite[Section 2.6]{Cieliebak2018}}]\label{homologyclassofbuilding}
Consider a symplectic cobordism $(X,\omega)$ and a closed contact type separating hypersurface $Y\subset X$. As mentioned above, consider the
 split completed cobordism $\bigsqcup (\widehat{X}_N,\hat{\Omega}_N,\tilde{\Omega}_N, J_N)$, where
\begin{equation*}
	(\widehat{X}_N,\hat{\Omega}_N,\tilde{\Omega}_N, J_N):=
	\begin{cases}
		(\mathbb{R}\times\partial^+ X_+,d(e^r\lambda_+),d\lambda_+, J_{\partial^+ X_+}) & \text{for } N\in \{M+1,M+2,\dots,N_+\},\\
		(\widehat{X}_+,\hat{\omega}^+,\tilde{\omega}^+, J_+) & \text{for } N=M,\\
		
		(\mathbb{R}\times Y,d(e^r\lambda),d\lambda, J_Y)& \text{for } N\in \{1,2,\dots,M-1\},\\
		(\widehat{X}_-, \hat{\omega}^-, \tilde{\omega}^-, J_-) & \text{for } N=0.\\
	\end{cases}
\end{equation*}
Let $\overline{X}_N$ denote the compact manifold obtained from the symplectic completion $\widehat{X}_N$ by attaching copies of $\{\infty\}\times Y$ and $\{-\infty\}\times Y$ to its positive and negative cylindrical ends, respectively. By gluing the positive boundary components of  $\overline{X}_N$ to the negative boundary components of $\overline{X}_{N+1}$, we get a compact topological space 
\[\overline{X}:=\overline{X}_0\cup_{Y}\cdots\cup_Y \overline{X}_{N_+}.\]
The space $\overline{X}$ is homeomorphic to $X$. So we can identify homology class in  $\overline{X}$ with homology classes in $X$. 

Given a holomorhphic building $\mathbb{H}=(u, \Sigma,j,\Gamma^+\cup \Gamma^-,\Delta_{\mathrm{br}}, \Delta_{\mathrm{nd}}, \phi, L)$ in $\bigsqcup (\widehat{X}_N,\hat{\Omega}_N,\tilde{\Omega}_N, J_N)$. As in Definition \ref{homologyclass}, the $N$-th level $u^N:\Sigma^N\to (\widehat{X}_N, J_N)$ yeilds a continous map 
\[\bar{u}^N:\bar{ \Sigma}^N\to \overline{X}_N.\]
 By definition of holomorphic building, the positive boundary components of $\bar{u}^N$ correspond to the negative boundary components of $\bar{u}^{N+1}$. Gluing the positive boundary components of $\bar{u}^N$ with the negative boundary components of   $\bar{u}^{N+1}$ for all $N$, we get a continuous map 
\[\bar{u}:\bar{\Sigma}\to \overline{X}.\]
 We say the holomorphic building $\mathbb{H}$ represents the relative homology class $A\in H_2(X, \Gamma^+\cup \Gamma^-, \mathbb{Z})$ and write $[\mathbb{H}]=A$, if $[\bar{u}]=A$ in the sense of Definition \ref{homologyclass}.
 
\end{definition}
\subsection{Neck-stretching and Gromov--Hofer compactness} 
Consider a closed connected contact type hypersurface $Y$ in a closed symplectic manifold $(X,\omega)$, i.e., $Y$ is a closed smooth submanifold of codimension $1$ such that there exists a vector field $V$ defined on a neighborhood of $Y$ that is transverse to $Y$ and satisfies $\mathcal{L}_V\omega=\omega$, where $\mathcal{L}_V$ denotes the Lie derivative with respect to $V$. The vector field $V$ induces a contact form $\lambda$ on $Y$ defined by $\lambda:=\omega(V,\cdot)$. Suppose that $Y$ separates $X$ into two pieces, i.e., $X\setminus Y$ has two connected components. One of these components,  denoted by $(X_+, \omega^+=\omega|_{X_+})$, is a symplectic cobordism with negative boundary $(Y, \lambda)$ and empty positive boundary. The other connected component,  denoted by $(X_-, \omega^-:=\omega|_{X_-})$,  is a symplectic cobordism with positive boundary $(Y, \lambda)$ and empty negative boundary.  Choose a symplectic collar neighborhood $((-\delta,\delta)\times Y, d(e^r \lambda))$ of $Y$ in $(X,\omega)$. For $n\geq \delta$ and a suitable diffeomorphism $f:(-n,n)\to (-\delta,\delta)$ with $f'(t)>0$. By replacing   $((-\delta,\delta)\times Y, d(e^r \lambda))$  with $((-n,n)\times Y, d(e^f \lambda))$, we get a symplectic manifold $(X_n, \omega_n)$ that is symplectomorphic to the original $(X,\omega)$. Let $J_n$ be an almost complex structure on $(X_n, \omega_n)$ that agrees with a fixed $\omega$-compatible $J$ on $X$ outside the collar. Moreover, assume $J_n$ restricted to the neck $((-n,n)\times Y, d(e^f \alpha))$ agrees with a fixed  SFT-admissible almost complex structure $J_Y$ on  $((-\infty,\infty)\times Y, d(e^r \alpha))$. As $n \to \infty$, the family $J_n$ degenerates to SFT-admissible almost complex structures $J_+, J_-$ on the symplectic completions $(\widehat{X}_{+},\widehat{\omega}^+)$ and $(\widehat{X}_{-},\widehat{\omega}^-)$, respectively,  that agree with $J$ away from the cylindrical ends and with $J_Y$ on the cylindrical ends $\mathbb{R}_{\pm}\times Y$, and it degenerates  to $J_Y$ on the symplectization $(\mathbb{R}\times  Y,d(e^r\lambda))$. Furture detials can be found  in {\cite[Pages 13-14]{Cieliebak_2005}} and  {\cite[Section 3.4]{MR2026549}}.

\begin{theorem}[{\cite[Theorem 2.9]{Cieliebak_2005, Cieliebak2018}}]\label{sft}
In the setting above, suppose the contact type hypersurface $Y$ in $(X,\omega)$ is such that all closed Reeb orbits on it are non-degenerate (or Morse--Bott). For a nonnegative integer $N_+$, consider the split completed cobordism $\bigsqcup (\widehat{X}_N,\hat{\Omega}_N,\tilde{\Omega}_N, J_N)$, where
\begin{equation*}
	(\widehat{X}_N,\hat{\Omega}_N,\tilde{\Omega}_N, J_N):=
	\begin{cases}
		
		(\widehat{X}_+,\hat{\omega}^+,\tilde{\omega}^+, J_+) & \text{for } N=N_+,\\
		
		(\mathbb{R}\times Y,d(e^r\lambda),d\lambda, J_Y)& \text{for } N\in \{1,2,\dots,N_+-1\},\\
		(\widehat{X}_-, \hat{\omega}^-, \tilde{\omega}^-, J_-) & \text{for } N=0.\\
	\end{cases}
\end{equation*}
	
	Let $J_n$ be a family of almost complex structures on $(X,\omega)$ realized through neck-stretching along $Y$ in $(X,\omega)$. Every sequence of genus $g$ closed $J_n$-holomorphic curves  $u_n:(\Sigma,j)\to (X_n,\omega_n, J_n)$ in a fixed homology class $A\in H_2(X, \mathbb{Z})$ admits a subsequence, still denoted by $u_n$, that converges to a holomorphic building $\mathbb{H}=(u, \Sigma,j,\Delta_{\mathrm{br}}, \Delta_{\mathrm{nd}}, \phi, L)$ of height $N_+$ and genus $g$ in  $\bigsqcup (\widehat{X}_N,\hat{\Omega}_N,\tilde{\Omega}_N, J_N)$, for some integer $N_+\geq 0$, in the following sense:
	there is a sequence of orientation preserving diffeomorphisms  $\psi_n: (\Sigma,j)\to (\Sigma,j)$ and numbers $-n=r_n^{(0)}<r_n^{(1)}<\dots< r_n^{(N_+)}=0$ such that:
	\begin{enumerate}
		\item $\psi_n^*j\to j$ in $C^{\infty}_{loc}$ on $\Sigma\setminus  \Delta_{\mathrm{br}}$;
		\item For each $N\in\{0,1,\dots,N_+-1\}$, $r_n^{(N+1)}-r_n^{(N)}\to \infty$;
		\item For each $N\in\{0,1,\dots,N_+\}$, $u_n^N\circ \psi^{-1}_n\to u^N$ in $C^{\infty}_{loc}$ on $\Sigma^N$, where $u_n^N$ denotes the restriction of $u_n$ to $\Sigma^N$ shifted by $-r_n^{(N)}$;
		\item $\int_{\Sigma}u_n^*\omega_n \to \int_{\Sigma^{N_+}}(u^{N_+})^*\widehat{\omega}^+ $;
		\item The  holomorphic building $\mathbb{H}$ represents the homology class $A$, i.e., $[\mathbb{H}]=A\in H_2(X, \mathbb{Z})$ (cf. Definition \ref{homologyclassofbuilding}).
		
	\end{enumerate}
	
\end{theorem}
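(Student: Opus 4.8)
The plan is to prove this by the standard Gromov--Hofer compactness scheme for neck-stretching, in the formulation of Bourgeois--Eliashberg--Hofer--Wysocki--Zehnder and Cieliebak--Mohnke. First I would secure a uniform energy bound. Because $\omega_n$ is symplectomorphic to $\omega$ and $[u_n]=A$ is fixed, the naive area $\int_\Sigma u_n^*\omega_n=\langle[\omega],A\rangle$ is independent of $n$; what local compactness actually needs is a uniform bound on the \emph{Hofer energy}, in which the contribution of the neck $((-n,n)\times Y,d(e^f\lambda))$ is measured by $\sup_\varphi\int u_n^*d(\varphi(r)\lambda)$ over increasing $\varphi$ of total variation $1$. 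Exactness of $d(e^f\lambda)$ on the collar, together with the fixed total area, bounds this quantity by a constant depending only on $A$ and $(X,\omega)$. This uniform Hofer-energy bound is the single analytic input driving the rest of the argument.

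Next I would run the bubbling-off analysis. After passing to a subsequence and precomposing with suitable diffeomorphisms $\psi_n$, the domains converge in the Deligne--Mumford sense to a nodal surface; away from a finite bubbling set where $|du_n|$ blows up, $u_n\circ\psi_n^{-1}$ converges in $C^\infty_{loc}$. At each bubbling point one rescales by the conformal factor and extracts, via monotonicity and the energy bound, a nonconstant finite-energy curve: either a sphere or plane in $\widehat X_\pm$, or a punctured sphere in the symplectization $\mathbb{R}\times Y$, according to whether the rescaled sequence stays in a compact region or drifts off along the cylindrical direction. The ``soft rescaling'' (thick--thin) argument ensures no energy is lost. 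In parallel one must control the part of $u_n$ sitting inside the neck: using the monotonicity lemma and the uniform bound on the $d\lambda$-energy there, the neck portion breaks into at most finitely many levels, giving the heights $-n=r_n^{(0)}<\dots<r_n^{(N_+)}=0$ with $r_n^{(N+1)}-r_n^{(N)}\to\infty$ and $C^\infty_{loc}$-convergence of the $-r_n^{(N)}$-shifted $N$-th slab to a curve $u^N$ in $\widehat X_N$; the integer $N_+$ is bounded by the Hofer energy divided by the shortest Reeb period. Near each puncture one invokes Hofer's asymptotic analysis of finite-energy half-cylinders: under the non-degeneracy or Morse--Bott hypothesis on the Reeb flow of $Y$, each such half-cylinder converges (exponentially in the non-degenerate case) to a trivial cylinder over a closed Reeb orbit, and the orbits on matching ends of consecutive levels agree. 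Gluing the levels along the breaking pairs $\Delta_{\mathrm{br}}$ and attaching the bubble trees at the nodes $\Delta_{\mathrm{nd}}$ assembles the desired holomorphic building $\mathbb{H}$ of height $N_+$ and genus $g$; connectedness and stability are then imposed by reglueing the cut surface and discarding the forbidden components (trivial cylinders carrying no marked points or nodes, and constant unstable spheres or tori).

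Finally I would check the five assertions. Items (1)--(3) are precisely Deligne--Mumford convergence of the domains together with the level-by-level $C^\infty_{loc}$-convergence established above. Item (4) holds because all $\omega_n$-area outside the top level escapes into the neck and symplectization --- where $\widehat\omega$ and $\tilde{\omega}$ differ --- and in the limit reappears as $d\lambda$-periods recorded by the lower levels, so only the top-level $\widehat\omega^+$-area survives as $\lim_n\int_\Sigma u_n^*\omega_n$. For item (5) one uses that $\overline X_0\cup_Y\cdots\cup_Y\overline X_{N_+}$ is homeomorphic to $X$ and that the $C^0$-limit of $u_n$ on each level glues to a continuous map $\bar u:\bar\Sigma\to\overline X$ whose push-forward of $[\bar\Sigma]$ is the limit of $[u_n]=A$, in the sense of Definition \ref{homologyclassofbuilding}. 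The hard part will be the neck analysis of the previous paragraph: quantitatively ruling out that energy ``sneaks around'' a very long cylinder (the annulus and long-cylinder lemmas) and --- in the Morse--Bott setting relevant to the round ball and the ellipsoids considered later --- carrying out the finer asymptotic analysis needed to identify the limiting Reeb orbit inside its Morse--Bott family.
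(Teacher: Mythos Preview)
The paper does not give its own proof of this theorem: it is stated with a citation to \cite[Theorem 2.9]{Cieliebak_2005, Cieliebak2018} and invoked as a black box. Your sketch is the standard Bourgeois--Eliashberg--Hofer--Wysocki--Zehnder / Cieliebak--Mohnke compactness argument and is consistent with what those references contain, so there is nothing to compare against here.
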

\begin{lemma}{\cite[Lemma 2.6]{Cieliebak_2005}}\label{nonconstantholclass}
Suppose	a nonconstant holomorphic building $\mathbb{H}$ represents the homology class $A\in H_2(X,\mathbb{Z})$ (cf. Definition \ref{homologyclassofbuilding}), then 
\[\langle [\omega],A\rangle>0.\]
\end{lemma}
\begin{remark}
The theorem above is stated when $(X,\omega)$ is a closed symplectic manifold. We will need it in the proof of Theorem \ref{extremal-lag-ball} and Theorem  \ref{intersection01}. In the proof of Theorem \ref{extremal-lag-toric} in Section \ref{sectiontoricproof}, we will need a statement for the case where $(X,\lambda)$ is a Liouville domain, $Y$ is a separating contact type hypersurface in $\operatorname{int}(X)$, and $u_n$ is a sequence of holomorphic planes asymptotic to some fixed Reeb orbit on $\partial X$ in the symplectic completion $\widehat{X}$. The above theorem also holds in this case  \cite{MR2026549}. In particular, the following holds.

  Let $\gamma$ be a closed Reeb orbit on $\partial X$. Let $J_n$ be a sequence of SFT-admissible almost complex structures on the completed cobordism $\widehat{X}$ obtained via stretching along the contact type hypersurface $Y$. Let  $u_n$ be a sequence of $J_n$-holomorphic planes in $\widehat{X}$ asymptotic to $\gamma$. Then a subsequence of $u_n$ convergence to a holomorphic building  $\mathbb{H}=(u^0,,u^1,\dots,u^{N_+})$ in the split completed cobordism $\bigsqcup (\widehat{X}_N,\hat{\Omega}_N,\tilde{\Omega}_N, J_N)$, for some nonnegative integers $N_+$ and $M$, where
\begin{equation*}
	(\widehat{X}_N,\hat{\Omega}_N,\tilde{\Omega}_N, J_N):=
	\begin{cases}
		(\mathbb{R}\times\partial X,d(e^r\lambda),d\lambda, J_{\partial X}) & \text{for } N\in \{M+1,M+2\dots,N_+\},\\
		(\widehat{X}_+,\hat{\omega}^+,\tilde{\omega}^+, J_+) & \text{for } N=M,\\
		
		(\mathbb{R}\times Y,d(e^r\lambda_Y),d\lambda_Y, J_Y)& \text{for } N\in \{1,2,\dots,M-1\},\\
		(\widehat{X}_-, \hat{\omega}^-, \tilde{\omega}^-, J_-) & \text{for } N=0.\\
	\end{cases}
\end{equation*}
The holomorphic building $\mathbb{H}=(u^0,,u^1,\dots,u^{N_+})$ has only one positive (unpaired) end on $\gamma$, i.e., $u^{N_+}$ has a single positive puncture which is asymptotic to $\gamma$. Moreover, the holomorphic building has genus zero. For details; see  \cite{MR2026549}.
\end{remark}
\section{Gromov--Witten invariants with local tangency constraints}\label{Chapter03}
In this section, we explain three variants of Gromov--Witten invariants for a symplectic manifold $(X,\omega)$. The first variant, coming from the work of Cieliebak--Mohnke \cite{Cieliebak_2007}, counts spheres in $X$ that satisfy the holomorphic curve equation $(du)^{0,1}=0$ and carry a generic local tangency constraint with respect to a real codimension $2$ submanifold $D$ (also known as a local divisor). We mention some cases where these invariants have been computed.  The second variant, introduced by Tonkonog \cite{Tonkonog:2018aa}, is defined to be the count of spheres in $X$ satisfying a Hamiltonianly perturbed holomorphic curve equation $(du-  X_H \otimes \beta )^{0,1}=0$ and carrying a generic local tangency constraint with respect to a local divisor. The third variant, coming from the work of McDuff--Siegel \cite{McDuff:2021aa}, counts punctured holomorphic spheres in a completed symplectic cobordism satisfying a generic local tangency constraint with respect to a local divisor. We do not claim any original result in this section but instead produce an adaptation to our context of the existing works.
\subsection{Local tangency constraint}
Consider a closed symplectic manifold $(X, \omega)$ and a point $p\in X$. Let $O(p)$ denote a small unspecified neighborhood of $p$ in $X$. Let $D\subset O(p)$ denote a local real codimension-$2$ submanifold containing the point $p$. We define 
\begin{equation*}
	\mathcal{J}_D(X,\omega):=\left\{J: 
	\begin{array}{l}
		\text{$J$ is a smooth  $\omega$-compatible almost complex structure on $X$},\\
		\text{$J$ is integrable in $O(p)$} ,\\
		\text{$D$ is $J$-holomorphic.} \\
		
	\end{array}
	\right\}.
\end{equation*}
\begin{definition}\label{tangdef}
Let $z_0\in \mathbb{CP}^1$, $J \in \mathcal{J}_{D}(X,\omega)$ and  $u:(\mathbb{CP}^1,i)\mapsto (X,J)$ be a $J$-holomorphic sphere  with $u(z_0)=p$. Choose a holomorphic function $g:O(p)\to \mathbb{C}$ such that $g(p)=0$, $dg(p)\neq 0$,  and $D=g^{-1}(0)$. Choose a holomorphic coordinate chart $h:\mathbb{C}\to \mathbb{CP}^1$ such that $h(0)=z_0$. For $k\in \mathbb{Z}_{\geq 1}$, we say  the curve $u$ satisfies the tangency constraint $\ll \mathcal{T}_D^{k-1}p\gg$ at $z_0$  if the function $g\circ u\circ h: \mathbb{C}\to \mathbb{C}$ satisfies
\[\frac{d^i}{d^iz}(g\circ u\circ h)\bigg|_{z=0}=0, \]
for all  $i=0,1,\dots,k-1.$
\end{definition}
 We denote the maximal such $k$ by $\operatorname{Ord}(u,D,z_0)$, which could be infinite, and call it the contact order of $u$ with $D$ at $p$. This notion of tangency does not depend on the choice of the functions $h$ and $g$. It only depends on the germ of $D$ near $p$; see {\cite[Section $6$]{Cieliebak_2007}} for a proof. 

By {\cite[Lemma 7.1]{Cieliebak_2007}},  the tangency constraint $\ll \mathcal{T}_D^{k-1}p\gg$ can be interpreted as a local intersection number of the image of $u$ with the divisor $D$ as follows: Choose\footnote{The symplectic form $\omega$ is exact on $O(p)$, so $u$ cannot be contained in the divisor $D$ by Stokes' theorem.} a small ball $B$ around $z_0$ such that $ u^{-1}(D)\cap B=\{z_0\}$. Smoothly perturb $u|_{B}$ away from $\partial B$ to make it transverse to $D$. The signed count of transverse intersections of $u|_{B}$ with $D$ equals $k=\operatorname{Ord}(u,D,z_0)$.

We are interested in defining Gromov--Witten-type invariants that count curves carrying a local tangency constraint with respect to a local divisor. For this, it is crucial to understand what can happen to the constraint $\ll \mathcal{T}_D^{k-1}p\gg$ as a curve satisfying this constraint degenerates to a nodal curve in the sense of Gromov {\cite[Definition 5.2.1]{MR2954391}}. The following lemma describes that we do not lose the constraint $\ll \mathcal{T}_D^{k-1}p\gg$; it just gets distributed over the non-constant components attached to a ghost in the nodal configuration.

\begin{lemma}[{\cite[Lemma 7.2]{Cieliebak_2007}}]\label{tang}
Let $J \in \mathcal{J}_{D}(X,\omega)$ and $u_n: \mathbb{CP}^1\to (X,J)$  be a sequence of $J$-holomorphic spheres carrying the constraint $\ll \mathcal{T}_D^{k-1}p\gg$ for each $n\in \mathbb{Z}_{\geq 1}$. Suppose that $u_n$ degenerates to a nodal configuration, denoted by $u$, in the Gromov topology. Suppose the constrained marked point lies on a ghost component $\bar{u}$ in $u$. Let $\{u_i\}_{i=1,2,\dots q}$ be the non-constant components of $u$ that are attached to $\bar{u}$ or attached to $\bar{u}$ through ghost components. Let $z_i$ be the special point of $u_i$ that realizes the node with $\bar{u}$ or the node with a ghost component attached to $\bar{u}$ through ghost components. Then we have 
	\[\sum_{i=1}^{q}\operatorname{ Ord}(u_i,z_i, D)\geq k.\] 
\end{lemma}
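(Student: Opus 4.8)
The plan is to localize the constraint near $p$ and reduce everything to the argument principle for a single holomorphic function. Since $J$ is integrable on $O(p)$ and $D$ is $J$-holomorphic, write $D=g^{-1}(0)$ for a holomorphic $g\colon O(p)\to\mathbb{C}$ with $g(p)=0$, $dg(p)\neq 0$ (as in Definition~\ref{tangdef}); then for any $J$-holomorphic $v$ with image in $O(p)$ the composition $g\circ v$ is genuinely holomorphic. By {\cite[Lemma 7.1]{Cieliebak_2007}}, for such $v$ the contact order $\operatorname{Ord}(v,D,z)$ equals the order of vanishing of $g\circ v$ at $z$, equivalently the local intersection number of $v$ with $D$ at $z$; in particular all such local intersection numbers are strictly positive integers. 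Finally, $\omega=d\lambda$ is exact on $O(p)$, so no non-constant $J$-holomorphic curve is contained in $D$, and each $u_i^{-1}(D)$ is discrete.

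Next I would read off the structure of the limit. Let $\mathcal{G}$ be the maximal connected union of ghost components of $u$ containing $\bar u$ (that is, $\bar u$ together with all ghost components joined to it through ghost components); the components $u_1,\dots,u_q$ of the statement are exactly the non-constant components attached to $\mathcal{G}$, and $z_i$ is the node point of $u_i$ on that edge. Because $u_n(z_0^{(n)})=p$ and ghost components are constant, passing to the Gromov limit forces $\bar u\equiv p$, hence all of $\mathcal{G}$ is constant equal to $p$; in particular $u_i(z_i)=p$ for every $i$. For each $i$ fix a small closed disk $D_i$ around $z_i$ in the domain of $u_i$ with $(g\circ u_i)^{-1}(0)\cap D_i=\{z_i\}$, small enough that $u_i(D_i)\subset O(p)$, so that $\operatorname{Ord}(u_i,z_i,D)=\#\big((g\circ u_i)^{-1}(0)\cap D_i\big)$ counted with multiplicity.

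Using Gromov convergence, for $n$ large let $\Omega_n\subset\mathbb{CP}^1$ be the region of the domain of $u_n$ that \emph{shadows} $\mathcal{G}$ together with the necks emanating from it, truncated at the $q$ circles $\partial D_i^{(n)}$ that shadow the circles $\partial D_i$ in the domains of the $u_i$; thus $\Omega_n$ is a compact surface with boundary $\bigsqcup_{i=1}^{q}\partial D_i^{(n)}$. Then: (i) for $n$ large $u_n(\Omega_n)\subset O(p)$ — on the thick parts of $\mathcal{G}$ and on the necks $u_n$ is $C^0$-close to $p$, and on the shadows of the $D_i$ it is $C^0$-close to $u_i(D_i)\subset O(p)$ — so $g\circ u_n$ is a well-defined holomorphic function on $\Omega_n$, nowhere zero on $\partial\Omega_n$ since $u_n|_{\partial D_i^{(n)}}\to u_i|_{\partial D_i}$ in $C^\infty$ and $g\circ u_i$ does not vanish on $\partial D_i$; and (ii) the constrained marked point $z_0^{(n)}$ lies on the shadow of $\bar u$, hence in $\Omega_n$, and $g\circ u_n$ vanishes there to order $\geq k$ by hypothesis. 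By the argument principle,
\[
\#\big((g\circ u_n)^{-1}(0)\cap\Omega_n\big)
\;=\;\frac{1}{2\pi i}\oint_{\partial\Omega_n} d\log(g\circ u_n)
\;=\;\sum_{i=1}^{q}\frac{1}{2\pi i}\oint_{\partial D_i^{(n)}} d\log(g\circ u_n),
\]
each $\partial D_i^{(n)}$ carrying the boundary orientation induced by $\Omega_n$; since $z_i$ sits on the side of $\partial D_i^{(n)}$ facing $\mathcal{G}$, this is the orientation computing $\operatorname{Ord}(u_i,z_i,D)$. By (ii) and positivity of zero orders the left-hand side is $\geq k$ for $n$ large; by the $C^\infty$-convergence on the boundary circles and continuity of the winding number, for $n$ large the right-hand side equals $\sum_{i=1}^{q}\operatorname{Ord}(u_i,z_i,D)$. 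Hence $\sum_{i=1}^{q}\operatorname{Ord}(u_i,z_i,D)\geq k$, which is the assertion (this is in essence the proof of {\cite[Lemma 7.2]{Cieliebak_2007}}).

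The analytic ingredients — the argument principle, Hurwitz-type persistence of zero counts under $C^\infty$-convergence, and positivity of local intersection multiplicities of a $J$-holomorphic curve with a $J$-holomorphic divisor when $J$ is integrable — are all standard. The main obstacle is the Gromov-compactness bookkeeping: constructing $\Omega_n$ with the stated properties, in particular guaranteeing that its boundary circles sit in the thick parts of the $u_i$ (where $g\circ u_n$ stays bounded away from $0$) and not in the necks (where $u_n\to p\in D$, so that the boundary winding number would be undefined), and verifying that the induced orientations make the $q$ boundary terms add up to $\sum_i\operatorname{Ord}(u_i,z_i,D)$ with the correct sign.
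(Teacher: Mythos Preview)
The paper does not supply its own proof of this lemma; it is stated with a citation to \cite[Lemma 7.2]{Cieliebak_2007} and then used as a black box. Your argument is correct and is precisely the proof from that reference: localize via the holomorphic defining function $g$ of $D$, use Gromov convergence to carve out a region $\Omega_n$ in the domain of $u_n$ shadowing the ghost cluster with boundary circles in the thick parts of the $u_i$, and compare the zero count of $g\circ u_n$ inside $\Omega_n$ (which is $\geq k$ because the constrained marked point sits there) with the sum of boundary winding numbers (which converge to $\sum_i\operatorname{Ord}(u_i,z_i,D)$). Your discussion of the orientation on $\partial D_i^{(n)}$ is also right: since the neck replacing $z_i$ lies on the $\Omega_n$ side, the boundary orientation from $\Omega_n$ agrees with the one computing $\operatorname{Ord}(u_i,z_i,D)$. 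The caveats you flag at the end (keeping $\partial\Omega_n$ in the thick parts so that $g\circ u_n$ is bounded away from zero there, and the $C^0$-smallness of $u_n$ on the ghost shadow and necks) are exactly the points that need care, but they follow from the standard description of Gromov convergence and pose no real obstacle.
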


\subsection{Moduli spaces of rational curves with local tangency constraints}
This section defines moduli spaces consisting of rational curves carrying a local tangency constraint  $\ll \mathcal{T}_D^{k-1}p\gg$. We state some results regarding their regularity without providing proofs.

Let $k\in \mathbb{Z}_{\geq 1}$, $A\in H_2(X,\mathbb{Z})$, and  $J\in \mathcal{J}_{D}(X,\omega)$. Define 
\begin{equation*}
	\mathcal{M}^{J}_{X,A} \ll \mathcal{T}_D^{k-1}p\gg:=\left\{
	\begin{array}{l}
		u:(\mathbb{CP}^1,i)\to (X,J),\\
		du\circ i=J\circ du ,\\
		u(0)=p \text{  and satisfies $\ll \mathcal{T}_D^{k-1}p\gg$ at $0$}, \\
		
		u_{*}[\mathbb{CP}^1]=A\\
		
\end{array}
	\right\}\bigg/\sim.
\end{equation*}
where $u_1\sim u_2$ if and only if  $u_1=u_2\circ \phi$ for some $\phi \in \operatorname{Aut(\mathbb{CP}^1,0)}$. We denote the analogous moduli space of unparametrized somewhere injective curves by  $\mathcal{M}^{J,s}_{X,A} \ll \mathcal{T}_D^{k-1}p\gg $.

\begin{theorem}[{\cite[Proposition 6.9]{Cieliebak_2007}}]\label{smoothness}
Let $(X,\omega)$ be a closed symplectic manifold of dimension $2n$. For generic $J\in \mathcal{J}_{D}(X,\omega)$, the moduli space $\mathcal{M}^{J,s}_{X,A} \ll \mathcal{T}_D^{k-1}p\gg $ is a smooth oriented manifold of dimension
	\[2(n-3)+2c_1(A)-2n+2-2(k-1)=2c_1(A)-2-2k. \]
	Moreover, any two generic $J_0, J_1\in \mathcal{J}_{D}(X,\omega)$ can be connected by a path $\{J_t\}_{t\in[0,1]}$ such that the parametric moduli space 
	\[\mathcal{M}^{\{J_t\},s}_{X,A} \ll \mathcal{T}_D^{k-1}p\gg:=\bigg\{(t,u): t\in [0,1], u\in \mathcal{M}^{J_t,s}_{X,A} \ll \mathcal{T}_D^{k-1}p\gg\bigg\} \]
	is a smooth oriented manifold with boundary
	\[\partial \mathcal{M}^{\{J_t\},s}_{X,A} \ll \mathcal{T}_D^{k-1}p\gg=\mathcal{M}^{J_0,s}_{X,A} \ll \mathcal{T}_D^{k-1}p\gg\cup\, \mathcal{M}^{J_1,s}_{X,A} \ll \mathcal{T}_D^{k-1}p\gg.\]
The boundary orientation agrees with the orientation of $\mathcal{M}^{J_1,s}_{X,A} \ll \mathcal{T}_D^{k-1}p\gg $ and is opposite to the orientation of $\mathcal{M}^{J_0,s}_{X,A} \ll \mathcal{T}_D^{k-1}p\gg$.
\end{theorem}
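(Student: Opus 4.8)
The plan is to run the standard universal--moduli--space and Sard--Smale transversality scheme of McDuff--Salamon, adapted so that the almost complex structures stay in the constrained class $\mathcal{J}_D(X,\omega)$ and so that the local tangency condition $\ll \mathcal{T}_D^{k-1}p\gg$ at $0$ is built in as an extra finite-codimension cutting condition. First I would fix a separable Banach manifold $\mathcal{J}^\ell\subset\mathcal{J}_D(X,\omega)$ of almost complex structures of Floer class $C^\varepsilon$ (or $C^\ell$), whose tangent space at $J$ consists of infinitesimal variations $Y\in\operatorname{End}(TX,J,\omega)$ that vanish on a smaller neighborhood $O'(p)\Subset O(p)$ of $p$; any such deformation automatically preserves integrability near $p$ and $J$-holomorphicity of $D$. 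Then I would form the universal moduli space
\[
\widetilde{\mathcal{M}}=\bigl\{(u,J): J\in\mathcal{J}^\ell,\ u\in W^{m,q}(\mathbb{CP}^1,X)\ \text{somewhere injective},\ \bar\partial_J u=0,\ u_*[\mathbb{CP}^1]=A,\ u(0)=p,\ u\ \text{satisfies}\ \ll \mathcal{T}_D^{k-1}p\gg\ \text{at}\ 0\bigr\},
\]
with $m$ large, realized as the zero set of a Fredholm section of a Banach bundle over (maps $\times\ \mathcal{J}^\ell$) whose fibre collects $\bigl(\bar\partial_J u,\ \operatorname{ev}_0(u)-p,\ \tfrac{d^i}{dz^i}(g\circ u\circ h)\big|_{0}\ \text{for}\ 1\le i\le k-1\bigr)$.

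The heart of the matter is to show $\widetilde{\mathcal{M}}$ is a Banach manifold, i.e.\ that the vertical differential
\[
(u',Y)\ \longmapsto\ \bigl(D_u u'+\tfrac12 Y(u)\,du\circ j,\ u'(0),\ \text{linearized jet maps applied to}\ u'\bigr)
\]
is surjective at every $(u,J)\in\widetilde{\mathcal{M}}$. For the Cauchy--Riemann component one argues as usual: $u$ is non-constant (somewhere injectivity), and since $\omega$ is exact on $O(p)$ the image $u(\mathbb{CP}^1)$ is not contained in $O(p)$ by Stokes, so $u$ has an injective point $z^{\ast}$ with $u(z^{\ast})\notin\overline{O'(p)}$; a variation $Y$ supported in a small ball around $u(z^{\ast})$ is admissible in $\mathcal{J}^\ell$ and, by the standard unique-continuation/density argument, already surjects onto the finite-dimensional cokernel of $D_u$. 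For the point and jet components one uses precisely that $J$ is \emph{integrable} on $O(p)$: in holomorphic coordinates there $g\circ u$ is an honest holomorphic function of $z$ near $0$, and one can prescribe the finite jet of $u$ at $0$ freely using cutoff deformations of $u$ supported in $O(p)\setminus O'(p)$ together with surjectivity of $D_u$ onto a complement, so that $(\operatorname{ev}_0,\text{jets})$ maps onto $\mathbb{C}^n\oplus\mathbb{C}^{k-1}$. (The $i=0$ condition $g(u(0))=0$ is automatic from $u(0)=p\in D$, which is why only $k-1$ extra complex conditions appear.) Reconciling the \emph{constrained} perturbation space $\mathcal{J}_D$ with the need for transversality simultaneously away from $p$ and at $p$ is the main technical point; the rest is bookkeeping.

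Granting this, the Sard--Smale theorem applied to $\pi\colon\widetilde{\mathcal{M}}\to\mathcal{J}^\ell$ shows that for $J$ in a residual subset of $\mathcal{J}^\ell$ the fibre $\mathcal{M}^{J,s}_{X,A}\ll \mathcal{T}_D^{k-1}p\gg=\pi^{-1}(J)$ is a smooth manifold, whose dimension is the Fredholm index of the above operator. By Riemann--Roch on the genus-$0$ domain the unconstrained moduli space $\mathcal{M}^{J,s}_{X,A}$ has dimension $2(n-3)+2c_1(A)$; adding one marked point ($+2$), imposing $\operatorname{ev}_0=p$ ($-2n$), and the $k-1$ further complex jet conditions ($-2(k-1)$) yields $2(n-3)+2c_1(A)-2n+2-2(k-1)=2c_1(A)-2-2k$, as claimed. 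A standard elliptic-bootstrapping argument removes the auxiliary $C^\varepsilon$ class and gives the statement for generic smooth $J\in\mathcal{J}_D(X,\omega)$. Orientability holds because the leading term of the linearized operator is complex-linear and the tangency conditions are cut out by complex-linear jet maps, so the determinant line of the family of linearizations is orientable and the moduli space is canonically oriented.

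Finally, for the cobordism statement I would repeat the whole scheme in the parametrized setting over a path $\{J_t\}_{t\in[0,1]}$ with $J_0,J_1$ regular. The parametric universal space (now also allowing the path to vary inside $\mathcal{J}_D$) is a Banach manifold by exactly the same transversality input, Sard--Smale produces a residual set of regular paths, and for such a path the set $\mathcal{M}^{\{J_t\},s}_{X,A}\ll \mathcal{T}_D^{k-1}p\gg$ is a smooth oriented manifold with boundary $\mathcal{M}^{J_1,s}_{X,A}\ll \mathcal{T}_D^{k-1}p\gg\ \sqcup\ \bigl(-\mathcal{M}^{J_0,s}_{X,A}\ll \mathcal{T}_D^{k-1}p\gg\bigr)$, of dimension one higher, which is the asserted cobordism. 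I expect the only genuinely delicate point to be, as noted, the transversality argument with the restricted perturbation space $\mathcal{J}_D$ — everything downstream is the routine machinery of Fredholm theory and Sard--Smale.
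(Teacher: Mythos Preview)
The paper does not supply its own proof of this theorem: it is stated with attribution to \cite[Proposition~6.9]{Cieliebak_2007} and no proof environment follows. So there is nothing in the paper to compare your argument against line by line.

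That said, your sketch is the correct standard argument and is in the spirit of what Cieliebak--Mohnke actually do in the cited reference: set up a universal moduli space over a Banach manifold of almost complex structures in $\mathcal{J}_D(X,\omega)$, use somewhere injectivity and the fact that the image cannot be contained in the (exact) neighborhood $O(p)$ to find injective points away from $p$ where admissible variations $Y$ act, deduce surjectivity of the universal linearization, and apply Sard--Smale. Your handling of the jet constraints is right --- integrability of $J$ near $p$ makes the $(k-1)$ higher-order tangency conditions honest complex-linear finite-codimension cuts, and the $i=0$ condition is indeed absorbed by $u(0)=p$. The dimension count, the orientation via the complex-linear leading symbol and complex jet maps, and the parametric cobordism are all standard consequences once transversality is in hand. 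The one place where more care is warranted than you indicate is the claim that one can ``prescribe the finite jet of $u$ at $0$ freely'' while keeping the Cauchy--Riemann equation satisfied: in practice one shows surjectivity of the combined map $(D_u,\text{jet at }0)$ by a unique-continuation argument (a nonzero element of the cokernel would force a holomorphic function to vanish to infinite order), rather than by directly constructing local deformations, but this is exactly the content of the cited proposition.
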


\subsection{Gromov--Witten invariants with local tangency constraints}
Following \cite{Cieliebak_2005}, we explain that curves with local tangency constraints lead to a definition of Gromov--Witten-type invariants.
\begin{definition}
Let $(X,\omega)$ be a closed $2n$-dimensional symplectic manifold. We say $X$ is monotone if there exist $k>0$ such that
\[\ c_1(X)=k[\omega], \]
where $c_1(X)$ is the first Chern class of $X$ and $[\omega]$ denotes the cohomology class of the symplectic form $\omega$. 
\end{definition}
\begin{definition}
Let $(X,\omega)$ be a closed $2n$-dimensional symplectic manifold. We say $X$ is semipositive if for every $A\in \pi_2(X)$ 
\[\big(\omega(A)>0,\ c_1(A)\geq 3-n \big)\text{ implies } c_1(A)\geq 0. \]
\end{definition}
Semipositive closed symplectic manifolds occur in abundance. Examples of such manifolds in various dimensions are given below.
\begin{itemize}
\item The complex projective space $(\mathbb{CP}^n,\omega_{\mathrm{FS}})$ is semipositive for every $n\in \mathbb{Z}_{\geq 1}$.
\item Every closed symplectic manifold of dimension less or equal to $6$ is semipositive.

\item Let $\omega_i$ be an area form on $\mathbb{CP}^1$. Then $\mathbb{CP}^1 \times \mathbb{CP}^1 \times \mathbb{CP}^1 \times \mathbb{CP}^1$ with $\omega_1\oplus\omega_2\oplus\omega_3\oplus\omega_4$ is seminpostive.
\item For $n>4$, $\mathbb{CP}^1 \times \mathbb{CP}^1 \times \dots\times \mathbb{CP}^1$ with symplectic form $\omega_1\oplus\omega_2\oplus\dots \oplus\omega_n$ is not semipositive if not all $\omega_i$ give the same volume to $\mathbb{CP}^1$.	
\end{itemize}
\begin{proposition}\label{positive}
	A closed semipositive symplectic manifold $(X,\omega)$ does not possess somewhere injective $J$-holomorphic spheres of negative Chern number for $J$ appearing in a generic $1$-parameter family of $\omega$-compatible almost complex structures.
\end{proposition}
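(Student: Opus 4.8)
The plan is to run the standard dimension-counting argument for moduli spaces of simple $J$-holomorphic spheres in a generic $1$-parameter family, exactly as in McDuff--Salamon's treatment of semipositivity. First I would fix a generic path $\{J_t\}_{t\in[0,1]}$ of $\omega$-compatible almost complex structures and, for a homology class $A\in H_2(X;\mathbb{Z})$ with $\omega(A)>0$, consider the parametrized moduli space $\mathcal{M}^{\{J_t\},s}(A)$ of pairs $(t,u)$ where $u$ is a simple $J_t$-holomorphic sphere representing $A$. By the standard transversality theorem for simple curves in a generic $1$-parameter family, this space is a smooth manifold of dimension $1 + \dim_{\mathbb{R}}\mathcal{M}^{J}(A) = 1 + 2n + 2c_1(A) - 6 = 2n + 2c_1(A) - 5$. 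Reparametrizing by $\mathrm{PSL}(2,\mathbb{C})$ (dimension $6$), the space of unparametrized simple spheres in the family has dimension $2n + 2c_1(A) - 11$; more to the point, evaluating at a point gives an evaluation image of dimension $2n + 2c_1(A) - 5 - 4 = 2n + 2c_1(A) - 9$ after quotienting, but the cleanest bookkeeping is: the image in $X$ of the family of simple $J_t$-spheres in class $A$ has dimension at most $2n + 2c_1(A) - 4$ (the $+1$ from the parameter, $+2$ from a free marked point, $-6$ from reparametrization, applied to the Fredholm index $2n+2c_1(A)-6$, and then $+2$ again for the image — I would lay this out carefully with the evaluation map $\mathrm{ev}:\mathcal{M}^{\{J_t\},s}_{0,1}(A)\to X$).

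The key step is then: if $c_1(A) < 0$, I claim the image of $\mathrm{ev}$ cannot be all of $X$, in fact has positive codimension, so a generic point $p\in X$ lies on no simple $J_t$-sphere of negative Chern number for any $t$. Concretely, the relevant expected dimension of the space of simple spheres through a fixed generic point, allowing $t$ to vary, is $2c_1(A) - 2 + 2(n - \text{(codim of point condition)})$; the upshot of the semipositivity hypothesis is precisely the numerology that forces $c_1(A)\geq 0$ whenever such a curve can survive the genericity. I would invoke semipositivity in the contrapositive: if $\omega(A)>0$ and a simple $J_t$-sphere in class $A$ exists for generic $\{J_t\}$, the dimension formula $2c_1(A)+2n-6 \geq -1$ (nonnegativity of the parametrized moduli dimension, since it is nonempty) gives $c_1(A)\geq (5-2n)/2$, hence $c_1(A)\geq 3-n$ (using integrality and that $c_1(A)$ is an integer, together with $\omega(A)>0$), and then semipositivity yields $c_1(A)\geq 0$, a contradiction with $c_1(A)<0$.

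The main obstacle — and where I would spend the most care — is making the transversality statement for \emph{simple} spheres in a \emph{$1$-parameter} family precise and correctly tracking the $+1$ from the parameter through all the quotients, so that the final inequality comes out as $c_1(A)\geq 3-n$ rather than something off by one; this is the usual subtlety in semipositivity arguments (cf. McDuff--Salamon, Ch.~6). A secondary point to address is that the proposition only asserts the \emph{nonexistence} of negative-Chern-number simple spheres for $J$ in a generic $1$-parameter family, which is slightly weaker than a statement about a single generic $J$ — so I do not need to worry about multiply-covered curves beyond noting that the underlying simple curve also has negative Chern number (since $c_1$ of a cover is a positive multiple of $c_1$ of the underlying curve), which reduces the general case to the simple case. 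I would close by remarking that this is the content of {\cite[Chapter 6]{MR2954391}} adapted verbatim, so no genuinely new argument is required.
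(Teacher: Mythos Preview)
Your proposal is correct and follows essentially the same approach as the paper: both arguments use transversality for simple spheres in a generic $1$-parameter family to obtain $c_1(A)\geq 3-n$, then invoke semipositivity. The paper's packaging is slightly more direct---it observes that $\dim\ker(D_u)\geq 6$ from reparametrizations and $\dim\operatorname{coker}(D_u)\leq 1$ from the generic $1$-parameter condition, so the index $2n+2c_1(A)\geq 5$---whereas you phrase the same inequality as nonnegativity of the parametrized moduli dimension; your digression into evaluation maps and images in $X$ is unnecessary for the statement as written, and the remark about reducing multiply-covered curves to simple ones is not needed since the proposition concerns only somewhere injective spheres.
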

\begin{proof}
Let  $J$ be an $\omega$-compatible almost complex structure on $X$ and 
	\[u:\mathbb{CP}^1\to X\] be a somewhere injective $J$-holomorphic sphere. Let $D_u$ denote the linearization of the nonlinear Cauchy Riemann operator at $u$. Note that $\operatorname{dim}(\operatorname{Aut}(\mathbb{CP}^1))=6$
	implies $\operatorname{dim}(\operatorname{ker}(D_u))\geq 6$.  If $\operatorname{dim}(\operatorname{coker}(D_u))\leq 1$, then
\[\operatorname{ind}(u):=2n+2c_1(u_{*}[\mathbb{CP}^1])=\operatorname{dim}(\operatorname{ker}(D_u))-\operatorname{dim}(\operatorname{coker}(D_u))\geq 5 .\]
This means
	\[c_1(u_{*}[\mathbb{CP}^1])\geq 3-n.\]
Since $(X, \omega)$ is semipositive, we have $c_1(u_{*}[\mathbb{CP}^1])\geq 0$. If $J$ belongs to a generic $1$-parameter family of $\omega$-compatible almost complex structures, then $\operatorname{dim}(\operatorname{coker}(D_u))\leq 1 $ holds by a standard transversality arguments \cite{MR2954391}.\qedhere
\end{proof}

\begin{theorem}\label{gromov-witten-tang}
Let $(X,\omega)$ be a semipositive closed symplectic manifold. Let $A\in H_2(X,\mathbb{Z})$ with $c_1(A)\geq 2$. The following hold.
	\begin{itemize}
		\item For generic $J\in \mathcal{J}_{D}(X,\omega)$, the moduli space $\mathcal{M}^{J}_{X, A} \ll \mathcal{T}_D^{c_1(A)-2}p\gg $ is a compact oriented smooth manifold of dimension $0$.
		\item The signed count
		\[\mathcal{N}^{}_{X, A}\ll \mathcal{T}^{c_1(A)-2}\gg:=\# \mathcal{M}^{J}_{X,A} \ll \mathcal{T}_D^{c_1(A)-2}p\gg\]
does not depend on the choice of $J\in \mathcal{J}_{D}(X,\omega)$, $p\in X$, and the local  divisor $D$. 
	\end{itemize}
\end{theorem}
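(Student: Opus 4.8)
The plan is to follow the construction of Cieliebak--Mohnke, combining the transversality statement of Theorem~\ref{smoothness} with a Gromov-compactness argument whose bubbling is controlled by the tangency-distribution Lemma~\ref{tang} together with semipositivity (Proposition~\ref{positive}). First I would settle the dimension count and simplicity. The constraint $\ll\mathcal{T}_D^{c_1(A)-2}p\gg$ is the case $k=c_1(A)-1$ of Theorem~\ref{smoothness}, so for generic $J\in\mathcal{J}_D(X,\omega)$ the moduli space of somewhere injective curves $\mathcal{M}^{J,s}_{X,A}\ll\mathcal{T}_D^{c_1(A)-2}p\gg$ is a smooth oriented $0$-manifold. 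Next, for generic $J$ the full moduli space $\mathcal{M}^{J}_{X,A}\ll\mathcal{T}_D^{c_1(A)-2}p\gg$ coincides with it: a non-simple element factors as $u=v\circ\varphi$ with $v$ somewhere injective in a class $B$, $A=mB$, $m\geq 2$, and $\deg\varphi=m$; since $c_1(A)\geq 2$ and, by semipositivity, $c_1(B)\geq 0$, we have $c_1(B)\geq 1$, and comparing the ramification index $e\leq m$ of $\varphi$ at the constrained point with the contact orders gives $e\cdot\operatorname{Ord}(v,D,\varphi(0))\geq c_1(A)-1$, hence $\operatorname{Ord}(v,D,\varphi(0))\geq c_1(B)$. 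Then $v$ lies in a moduli space of somewhere injective curves carrying a tangency constraint of order at least $c_1(B)-1$, which by Theorem~\ref{smoothness} has dimension at most $2c_1(B)-2-2c_1(B)=-2$ and is therefore empty for generic $J$; a countable intersection of these genericity conditions over the finitely many sub-classes and orders is still generic. Thus $\mathcal{M}^{J}_{X,A}\ll\mathcal{T}_D^{c_1(A)-2}p\gg$ is a $0$-dimensional oriented manifold.

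For compactness, take $u_n\in\mathcal{M}^{J}_{X,A}\ll\mathcal{T}_D^{c_1(A)-2}p\gg$ and pass to a Gromov limit $u_\infty$, a stable nodal sphere. If $u_\infty$ is not a single smooth sphere, the constrained marked point sits on some component; collapsing the maximal tree of ghost components through which it is attached, Lemma~\ref{tang} produces non-constant components $u_1,\dots,u_q$ with $\sum_i\operatorname{Ord}(u_i,z_i,D)\geq c_1(A)-1$. For generic $J$ each $u_i$ is simple with $c_1([u_i])\geq 0$ by Proposition~\ref{positive}, and $\sum_i c_1([u_i])\leq c_1(A)$; summing the dimension formula of Theorem~\ref{smoothness} over the constrained components and the (easier) count for the remaining unconstrained simple bubbles, while noting that ghost trees contribute no parameters, yields a configuration of total expected dimension strictly negative, which is impossible for generic $J$ (a bubble with $c_1<0$ is excluded outright by Proposition~\ref{positive}, one with $c_1=0$ forces a further strict drop, and the same bookkeeping along a generic path $\{J_t\}$ handles codimension-$\leq 1$ degenerations). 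Hence $u_\infty$ is a smooth element of the moduli space, so $\mathcal{M}^{J}_{X,A}\ll\mathcal{T}_D^{c_1(A)-2}p\gg$ is compact, a finite set of points each carrying the orientation from Theorem~\ref{smoothness}, and $\mathcal{N}_{X,A}\ll\mathcal{T}^{c_1(A)-2}\gg:=\#\,\mathcal{M}^{J}_{X,A}\ll\mathcal{T}_D^{c_1(A)-2}p\gg$ is well defined.

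For invariance, join two generic triples $(J_0,p_0,D_0)$ and $(J_1,p_1,D_1)$ by a generic path $(J_t,p_t,D_t)$; by Theorem~\ref{smoothness} the parametric moduli space of simple curves is a smooth oriented $1$-dimensional cobordism, and the multiple-cover exclusion and the compactness estimate above (now for a $1$-parameter family) show this cobordism is compact with oriented boundary $\mathcal{M}^{J_1,p_1,D_1}\!-\mathcal{M}^{J_0,p_0,D_0}$. Hence the two signed counts coincide, simultaneously proving independence of $J$, of $p$, and of the local divisor $D$.

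The main obstacle is the compactness bookkeeping: one must verify that \emph{every} nodal degeneration --- nested ghost trees, several bubbles jointly absorbing the tangency constraint, and bubbles detaching from the constrained component --- violates a dimension inequality once Lemma~\ref{tang} and semipositivity are inserted, and that the multiply covered strata (where the underlying simple curve inherits only a weaker constraint yet still lands in a negative-dimensional moduli space) drop out for generic parameters. Making this estimate close uniformly, and again in the $1$-parameter family used for invariance, is the delicate part; everything else is the standard somewhere-injective transversality and orientation package already recorded in Theorem~\ref{smoothness}.
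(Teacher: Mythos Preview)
Your overall strategy matches the paper's: transversality from Theorem~\ref{smoothness}, exclusion of multiple covers by showing the underlying simple curve lies in a negative-dimensional stratum, and compactness via Lemma~\ref{tang} combined with semipositivity. Your argument ruling out multiply covered elements of the moduli space is essentially the paper's, arriving at the same endpoint (the underlying simple curve $v$ satisfies a tangency constraint forcing it into a moduli space of virtual dimension $-2$); the invocation of semipositivity for $c_1(B)\geq 1$ is actually unnecessary there, since $c_1(B)=c_1(A)/m$ is a positive integer automatically.

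The genuine gap is in your compactness paragraph. You assert that ``for generic $J$ each $u_i$ is simple with $c_1([u_i])\geq 0$''. This is false: even for generic $J$, components of a Gromov limit can be multiply covered, and Proposition~\ref{positive} only bounds the Chern number of the underlying \emph{simple} curve. The paper closes this as follows. After removing the ghost tree carrying the marked point one obtains $q$ connected nodal pieces $C_i$ in classes $A_i$, with $u_i\subset C_i$. Replace each $C_i$ by its underlying simple nodal configuration $\bar{C}_i$ in class $\bar{A}_i$; writing $A_i=\sum_j m_j\bar{A}_{i,j}$ and $\bar{A}_i=\sum_j\bar{A}_{i,j}$, Proposition~\ref{positive} gives $c_1(\bar{A}_{i,j})\geq 0$, hence the key inequality $c_1(\bar{A}_i)\leq c_1(A_i)$. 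This yields $\operatorname{ind}(\bar{C}_i)\leq 2c_1(A_i)-2-2\operatorname{Ord}(u_i,z_i,D)$ and, summing, $\sum_i\operatorname{ind}(\bar{C}_i)\leq -2(q-1)$. For $q\geq 2$ this is already negative; for $q=1$ one uses that a node drops the index of the simple nodal stratum by at least $2$. Your final paragraph correctly flags multiply covered bubbles as ``the delicate part'', but the replacement $C_i\rightsquigarrow\bar{C}_i$ together with $c_1(\bar{A}_i)\leq c_1(A_i)$ is exactly the missing ingredient --- and it is precisely where semipositivity does its work in the nodal analysis, not in the blanket claim that bubbles are simple.
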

\begin{proof}
By Theorem \ref{smoothness}, it is enough to prove that for generic $J\in  \mathcal{J}_{D}(X,\omega)$ the moduli space $ \mathcal{M}^{J}_{X, A} \ll \mathcal{T}_D^{c_1(A)-2}p\gg $ is compact and consists of somewhere injective curves.   Let $u\in  \mathcal{M}^{J}_{X, A} \ll \mathcal{T}_D^{c_1(A)-2}p\gg $ and suppose on the contrary that $u$ is not somewhere injective.  Then we can write  $u=\bar{u}\circ \phi$ for some  $m$-fold branched cover  $\phi:\mathbb{CP}^1\to \mathbb{CP}^1 $ and a simple $J$-holomorphic sphere $\bar{u}$, where $m$ is a postive integer. The sphere $\bar{u}$ represents the homology class $A/m$. Next we prove that $\bar{u}$ satisfies the constraint $\ll \mathcal{T}_D^{k-1}p\gg $ for some integer $k\geq \lceil (c_1(A)-2)/m\rceil$.  Let $u$ satisfy the constraint $\ll \mathcal{T}_D^{c_1(A)-2}p\gg$  at $z_{u}\in \mathbb{CP}^1$ and define $z_{\bar{u}}:=\phi(z_{u})$.  Let $l=\operatorname{Ord}(\phi, D, z_{u})$. Note that $\leq m$. Choose holomorphic coordinates around $z_{\bar{u}}\in \mathbb{CP}^1$ identifying $z_{\bar{u}}$ with $0\in \mathbb{C}$. Since the almost complex structure $J$ on $X$ is integrable near $p$ and $D$ is a complex hypersurface, we can choose holomorphic coordinates around $p\in X$ that identifies $p$ with $0 \in \mathbb{C}^{n}$ and $D$ with $\{0\}\times \mathbb{C}^{n-1}$ near the origin. Let $\pi_1:\mathbb{C}^{n}\to \mathbb{C}$ be the projection defined by $\pi_1(z_1,\dots,z_n)=z_1$. Expressing $\phi$, $\bar{u}$, and $u$ in these coordinates, we get
\[\phi(z)=z^l,\]
\begin{equation}\label{branchtan}
\pi_1\circ \bar{u}(z)=\sum_{j=0}^{\infty}b_jz^j,
\end{equation}
and 
 \[\pi_1\circ u(z)=\pi_1\circ \bar{u}(z)\circ \phi(z)=\sum_{j=0}b_jz^{lj}.\]
Since $u$ satisfies the constraint $\ll \mathcal{T}_D^{c_1(A)-2}p\gg$, we have 
\[\frac{d^i}{d^iz}(\pi_1\circ u)\bigg|_{z=0}=0, \]
for all  $i=0,1,\dots,c_1(A)-2$. This implies $b_0=0$ and
\[\sum_{j=1}^{\infty}lj(lj-1)(lj-2)\cdots(lj+1-i)b_jz^{lj-i}\bigg|_{z=0}=0,\]
for all  $i=0,1,\dots,c_1(A)-2$. If for a given $j$ there exists some $i\in\{0,1,\dots, c_1(A)-2\}$ such that $jl-i=0$, then $b_j=0$. But this is the case for every $j=1,2,\dots, \lfloor (c_1(A)-2)/l\rfloor$; one can take $i=lj$.  Therefore, $b_j=0$ for every $j=1,2,\dots, \lfloor (c_1(A)-1)/l\rfloor$. This reduces (\ref{branchtan}) to 
\[\pi_1\circ \bar{u}(z)=\sum_{j=k}^{\infty}b_jz^j,\]
for some integer $k\geq \lfloor (c_1(A)-2)/l\rfloor$+1. Thus, $\bar{u}$ satisfies the constraint  $\ll \mathcal{T}_D^{k-1}p\gg $ for some integer $k\geq  \lfloor \frac{c_1(A)-2}{l}\rfloor+1\geq \lfloor \frac{c_1(A)-2}{m}\rfloor+1$ as $m\geq l$.

The index of $\bar{u}$ is given by
\[\operatorname{ind}(\bar{u})=2\frac{c_1(A)}{m}-2-2k\leq 2\frac{c_1(A)}{m}-4-2\bigg\lfloor \frac{c_1(A)-2}{m}\bigg\rfloor.\]
This implies
\[\operatorname{ind}(\bar{u})\leq -4+\frac{2}{m}\leq -2,\]
for $m>1$. Since $J$ is generic and $\bar{u}$ is simple, $\operatorname{ind}(\bar{u})<0$ cannot happen. So we must have $m=1$, i.e., $u$ is somewhere injective.
	
For compactness, suppose on contrary that a sequence $u_n$  in $\mathcal{M}^{J}_{X, A} \ll \mathcal{T}_D^{c_1(A)-2}p\gg $ degenerates to a nodal configuration $u$  in the sense of Gromov {\cite[Definition 5.2.1]{MR2954391}}.  We prove that such a degeneration is a phenomenon of codimension at least $2$ and hence cannot happen when  $J\in \mathcal{J}_{D}(X,\omega)$ is generic or when $J$ appears in a generic $1$-parameter family of almost complex structures. 

The constrained marked point of $u_n$ is inherited by a component of the limiting nodal configuration $u$. Suppose the constrained marked point lies on a ghost component, denoted by $\bar{u}$. Let $\{u_i\}_{i=1,2,\dots, q}$ be the non-constant components of $u$ that are attached to $\bar{u}$ or attached to $\bar{u}$ through ghost components. Let $z_i$ be the special point of $u_i$ that realizes the node with $\bar{u}$ or the node with a ghost component attached to $\bar{u}$ through ghost components. By Lemma \ref{tang}, we have 
	\[\sum_{i=1}^{q}\operatorname{ Ord}(u_i,z_i, D)\geq c_1(A)-1.\] 
	
Deleting $\bar{u}$ from $u$ yields at least $q$ connected nodal configurations that are mutually disjoint in the sense that no two of them share a node. Let $C_i$ be the connected nodal configuration that contains $u_i$. Let $C_i$ represent the homology class $A_i$, i.e., the sum of the homology classes represented by the components of $C_i$ is equal to $A_i$. By {\cite[Proposition 6.1.2]{MR2954391}},
 every $C_i$ has an underlying simple nodal configuration $\bar{C}_i$  that is obtained by replacing every multiply-covered component of $C_i$ by its underlying somewhere injective component. Let  $\bar{C}_i$ represent the homology class $\bar{A}_i$. Then, for each $i$, we have 
	\[A_i= \sum_{j=1}^{l}m_j\bar{A}_{i,j}\]
	where $l, m_j $ are some nonnegative integers (that depend on $i$) and $\bar{A}_i=\sum_{j=1}^{l}\bar{A}_{i,j}$. By Proposition \ref{positive}, we have
\[c_1(\bar{A}_i)= \sum_{j=1}^{l}c_1(\bar{A}_{i,j})\leq \sum_{j=1}^{l}m_jc_1(A_{i,j})=c_1(A_i).\]
This implies
	\[\operatorname{ind}(\bar{C}_i)\leq 2c_1(A_i)-2-2\operatorname{ Ord}(u_i,z_i, D).\]
Hence
	\[\sum_{i=1}^{q}\operatorname{ind}(\bar{C}_i)\leq 2\sum_{i=1}^{q}c_1(A_i)-2q-2\sum_{i=1}^{q}\operatorname{ Ord}(u_i,z_i, D).\]
This implies
\[\sum_{i=1}^{q}\operatorname{ind}(\bar{C}_i)\leq 2c_1(A)-2q-2(c_1(A)-1)=-2(q-1).\]
	If  $q\geq 2$, then 
\[\sum_{i=1}^{q}\operatorname{ind}(\bar{C}_i)\leq -2 .\]
which cannot happen when $J$ is generic. 
If $q=1$, since $\bar{C}_1$ has a node, and each node reduces the index by at least $2$ by {\cite[Theorem 6.2.6]{MR2954391}}. Thus, $\operatorname{ind}(\bar{C}_1)<0$ which is a contradiction as $J$ is generic.
\end{proof}
\begin{example}
The standard complex structure on $(\mathbb{CP}^n,\omega_{\mathrm{FS}})$, denoted by $J_{\mathrm{std}}$, is regular by  {\cite[Proposition 7.4.3]{MR2954391}}. Also,  for the homology class $[\mathbb{CP}^1]$ in $\mathbb{CP}^n$. we have $c_1([\mathbb{CP}^1])=n+1$. So 
for any $d\in \mathbb{N}$ we have 
\[\mathcal{N}_{\mathbb{CP}^n, d[\mathbb{CP}^1]}\ll \mathcal{T}^{dn+d-2}\gg=\# \mathcal{M}^{J_{\mathrm{std}}}_{\mathbb{CP}^n,d[\mathbb{CP}^1]} \ll \mathcal{T}_D^{dn+d-2}p\gg.\]

The complex structure $J_{\mathrm{std}}$ induces a complex structure on the moduli space $\mathcal{M}^{J_{\mathrm{std}}}_{\mathbb{CP}^n,d[\mathbb{CP}^1]} \ll \mathcal{T}_D^{dn+d-2}p\gg$. This complex structure is compatible with its canonical orientation. Therefore, each element in $\mathcal{M}^{J_{\mathrm{std}}}_{\mathbb{CP}^n,d[\mathbb{CP}^1]} \ll \mathcal{T}_D^{dn+d-2}p\gg$ carries a positive orientation; see {\cite[Remark 3.2.5--6]{MR2954391}} for details. So the signed count $\mathcal{N}_{\mathbb{CP}^n, d[\mathbb{CP}^1]}\ll \mathcal{T}_D^{dn+d-2}\gg$ equals the unsigned count, i.e.,  the number of elements in $\mathcal{M}^{J_{\mathrm{std}}}_{\mathbb{CP}^n,d[\mathbb{CP}^1]} \ll \mathcal{T}_D^{dn+d-2}p\gg$.
\end{example}
\begin{theorem}[{\cite[Proposition 3.4]{Cieliebak2018} }]\label{count}
For every $n\in \mathbb{Z}_{\geq 1}$, we have
	\[\mathcal{N}_{\mathbb{CP}^n, [\mathbb{CP}^1]}\ll \mathcal{T}^{n-1}\gg=\# \mathcal{M}^{J_{\mathrm{std}}}_{\mathbb{CP}^n,[\mathbb{CP}^1]} \ll \mathcal{T}_D^{n-1}p\gg=(n-1)!.\]
	.
\end{theorem}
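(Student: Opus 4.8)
The plan is to compute the count directly with the integrable complex structure $J_{\mathrm{std}}$ — for which regularity is already in hand (see the Example preceding the statement) — together with a conveniently generic choice of the local divisor $D$; recall that by Theorem \ref{gromov-witten-tang} the number $\mathcal{N}_{\mathbb{CP}^n,[\mathbb{CP}^1]}\ll\mathcal{T}^{n-1}\gg$ is independent of $p$, of $J\in\mathcal{J}_D(\mathbb{CP}^n,\omega_{\mathrm{FS}})$, and of the germ of $D$. First I would identify the ambient moduli space: every $J_{\mathrm{std}}$-holomorphic sphere representing $[\mathbb{CP}^1]$ is a projective line, and imposing $u(0)=p$ and dividing by $\operatorname{Aut}(\mathbb{CP}^1,0)$ identifies $\mathcal{M}^{J_{\mathrm{std}}}_{\mathbb{CP}^n,[\mathbb{CP}^1]}$, before the tangency constraint, with the space of lines through $p$, that is, with $\mathbb{P}(T_p\mathbb{CP}^n)\cong\mathbb{CP}^{n-1}$; the coordinate is the direction $v$ of the line $\ell_v$ through $p$. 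By Theorem \ref{smoothness} the constrained moduli space has the expected dimension $2c_1([\mathbb{CP}^1])-2-2n=2(n+1)-2-2n=0$.

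Next I would turn the constraint $\ll\mathcal{T}_D^{n-1}p\gg$ into explicit equations in $v$. Choose holomorphic coordinates centered at $p$, a holomorphic $g$ with $g(p)=0$, $dg_p\neq0$ and $D=g^{-1}(0)$; in the affine chart $\ell_v$ is the linear path $t\mapsto tv$, so $\psi_v(t):=g(\ell_v(t))$ has $\psi_v^{(j)}(0)=D^jg_p(v^{\otimes j})$, the $j$-th Taylor form of $g$ at $p$ evaluated on $v$. Hence $\ell_v$ satisfies $\ll\mathcal{T}_D^{n-1}p\gg$ precisely when $D^jg_p(v^{\otimes j})=0$ for $j=1,2,\dots,n-1$. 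Since $D^jg_p(v^{\otimes j})$ is homogeneous of degree $j$ in $v$, it is a section of $\mathcal{O}_{\mathbb{CP}^{n-1}}(j)$, and $\mathcal{M}^{J_{\mathrm{std}}}_{\mathbb{CP}^n,[\mathbb{CP}^1]}\ll\mathcal{T}_D^{n-1}p\gg$ is the zero locus of the resulting section of $\bigoplus_{j=1}^{n-1}\mathcal{O}_{\mathbb{CP}^{n-1}}(j)$.

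For a generic choice of the germ of $D$ — equivalently, a generic choice of the Taylor jet of $g$ at $p$, which Theorem \ref{gromov-witten-tang} permits — this section is transverse, so its zero locus is a reduced $0$-dimensional complex subscheme of $\mathbb{CP}^{n-1}$; there is no compactness problem because a curve in the primitive class $[\mathbb{CP}^1]$ cannot break. Its number of points is the Euler number
\[
\int_{\mathbb{CP}^{n-1}}c_{n-1}\Bigl(\textstyle\bigoplus_{j=1}^{n-1}\mathcal{O}_{\mathbb{CP}^{n-1}}(j)\Bigr)=\int_{\mathbb{CP}^{n-1}}\prod_{j=1}^{n-1}(jH)=(n-1)!\int_{\mathbb{CP}^{n-1}}H^{n-1}=(n-1)!,
\]
with $H=c_1(\mathcal{O}(1))$; equivalently it is the B\'ezout number $1\cdot2\cdots(n-1)$ of a complete intersection of hypersurfaces of degrees $1,2,\dots,n-1$ in $\mathbb{CP}^{n-1}$. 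Because $J_{\mathrm{std}}$ is integrable and $D$ is a complex hypersurface, the canonical orientation of this moduli space agrees with the complex one (as in the preceding Example), so each point contributes $+1$ and the signed count is $(n-1)!$.

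I expect the only real subtlety to be a matter of bookkeeping rather than geometry: one must check that the pair $(J_{\mathrm{std}},D)$ with $D$ generic is a legitimate input for the invariant of Theorem \ref{gromov-witten-tang}, i.e.\ that the constrained moduli space is cut out transversally as a space of curves-with-tangency, and not merely that the scheme-theoretic zero locus computed above is reduced. After unwinding the definition of $\ll\mathcal{T}_D^{n-1}p\gg$ as a vanishing of jets, this reduces to surjectivity of the linearized jet-evaluation on $\ker D_{\ell_v}$, which is exactly the transversality of the section of $\bigoplus_{j=1}^{n-1}\mathcal{O}(j)$ used above.
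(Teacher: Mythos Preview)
Your argument is correct and is precisely the standard computation: lines through $p$ form a $\mathbb{CP}^{n-1}$, the tangency constraint $\ll\mathcal{T}_D^{n-1}p\gg$ becomes the vanishing of the homogeneous Taylor forms $a_1,\dots,a_{n-1}$ of $g$ (of degrees $1,\dots,n-1$), and for a generic germ of $D$ B\'ezout gives $(n-1)!$ transverse solutions, each counted with sign $+1$ by integrability. Note that the paper does not supply its own proof of this statement---it is quoted from \cite[Proposition~3.4]{Cieliebak2018}---and your write-up is essentially the argument given there; the only point worth tightening is the final remark on regularity: since the unconstrained moduli of degree-one maps through $p$ is already cut out transversally for $J_{\mathrm{std}}$, transversality of the section of $\bigoplus_{j=1}^{n-1}\mathcal O(j)$ is exactly what is needed to conclude that $(J_{\mathrm{std}},D)$ is an admissible pair for Theorem~\ref{gromov-witten-tang}, as you say.
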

\begin{corollary}\label{simplecount}
	Let $(X, \Omega)=(\mathbb{CP}^n\times T^{2m}, \omega_{\mathrm{FS}}\oplus \omega_{\mathrm{std}})$, where $T^{2m}$ is the standard ${2m}$-torus and $L:=[\mathbb{CP}^1\times\{*\}]\in H_2(X,\mathbb{Z})$. For generic $p \in X$ and generic $J\in \mathcal{J}_{D}(X,\Omega)$, the moduli space $\mathcal{M}^{J}_{X, L} \ll \mathcal{T}_D^{n-1}p\gg $ has  $(n-1)!$ elements, counted with signs.
\end{corollary}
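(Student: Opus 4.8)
The plan is to reduce the computation to the count in $\mathbb{CP}^n$ furnished by Theorem \ref{count}, via a split almost complex structure. First I would check the hypotheses of Theorem \ref{gromov-witten-tang} for $(X,\Omega)$. Semipositivity is automatic: since $\pi_2(T^{2m})=0$, the inclusion induces an isomorphism $\pi_2(\mathbb{CP}^n)\cong\pi_2(X)$ under which $c_1(X)$ restricts to $c_1(\mathbb{CP}^n)$, which is strictly positive on every spherical class of positive symplectic area. Moreover $c_1(L)=c_1([\mathbb{CP}^1])=n+1\geq 2$ and $c_1(L)-2=n-1$, so the tangency order appearing in the statement is the one for which Theorem \ref{gromov-witten-tang} produces a $0$-dimensional moduli space and a well-defined signed count $\mathcal{N}_{X,L}\ll\mathcal{T}^{n-1}\gg$ independent of the (generic) data $J,p,D$. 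It therefore suffices to evaluate this count for one convenient choice.

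I would choose $J:=J_{\mathrm{std}}\oplus j_T$ with $j_T$ an $\omega_{\mathrm{std}}$-compatible complex structure on $T^{2m}$, the point $p=(p_1,p_2)$, and the split local divisor $D:=D_1\times T^{2m}$, where $D_1\subset O(p_1)$ is a local complex hypersurface through $p_1\in\mathbb{CP}^n$; then $J$ is integrable near $p$ and $D$ is $J$-holomorphic, so $J\in\mathcal{J}_D(X,\Omega)$. The key observation is that any $J$-holomorphic sphere $u=(v,w)$ representing $L$ has constant $T^{2m}$-component: $w\colon\mathbb{CP}^1\to T^{2m}$ is $j_T$-holomorphic and nullhomologous, so its energy $\int_{\mathbb{CP}^1}w^*\omega_{\mathrm{std}}$ vanishes and $w$ is constant. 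Hence $u(0)=p$ forces $w\equiv p_2$ and $v(0)=p_1$, and $v$ is a degree-one rational curve in $\mathbb{CP}^n$. Taking the local defining function of $D$ near $p$ to be pulled back from that of $D_1$ near $p_1$, the constraint $\ll\mathcal{T}_D^{n-1}p\gg$ on $u$ is precisely $\ll\mathcal{T}_{D_1}^{n-1}p_1\gg$ on $v$. This yields a bijection
\[\mathcal{M}^{J}_{X,L}\ll\mathcal{T}_D^{n-1}p\gg\ \cong\ \mathcal{M}^{J_{\mathrm{std}}}_{\mathbb{CP}^n,[\mathbb{CP}^1]}\ll\mathcal{T}_{D_1}^{n-1}p_1\gg,\]
and Theorem \ref{count} identifies the right-hand side as a set of $(n-1)!$ elements.

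It remains to justify that this split $J$ legitimately computes $\mathcal{N}_{X,L}\ll\mathcal{T}^{n-1}\gg$ with the correct signs. Via the bijection just described, compactness of $\mathcal{M}^{J}_{X,L}\ll\mathcal{T}_D^{n-1}p\gg$ follows from the compactness of the $\mathbb{CP}^n$-moduli space (part of Theorem \ref{count}, equivalently {\cite[Proposition 3.4]{Cieliebak2018}}), since any Gromov limit of the former has torus-component $\equiv p_2$ and so lives in $\mathbb{CP}^n\times\{p_2\}$. For regularity, the linearized operator at $u=(v,\mathrm{const})$ splits as the sum of the linearized operator of $v$ in $\mathbb{CP}^n$---surjective because $J_{\mathrm{std}}$ is regular for lines by {\cite[Proposition 7.4.3]{MR2954391}}---and the $\bar\partial$-operator on sections of the trivial bundle $w^*T(T^{2m})\cong\mathbb{CP}^1\times\mathbb{C}^m$, which has vanishing cokernel; the jet equation cutting out the tangency condition involves only the $\mathbb{CP}^n$-factor and is transverse there. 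Consequently the moduli space for any $J'\in\mathcal{J}_D(X,\Omega)$ sufficiently close to $J$ is diffeomorphic to this one, so its count agrees with that for a generic $J'$, i.e.\ with $\mathcal{N}_{X,L}\ll\mathcal{T}^{n-1}\gg$. Finally, since $J$ is integrable near $p$ and $D$ is complex, the moduli space inherits a complex structure compatible with its canonical orientation, so each element counts with sign $+1$; therefore $\mathcal{N}_{X,L}\ll\mathcal{T}^{n-1}\gg=(n-1)!$. The main obstacle is exactly this last bookkeeping---ensuring compactness and transversality hold for the non-generic split $J$ so that it computes the invariant---once that is granted, the rest is a direct reduction to the established count in $\mathbb{CP}^n$.
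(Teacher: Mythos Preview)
Your proposal is correct and follows essentially the same route as the paper: choose the split almost complex structure $J_{\mathrm{std}}\oplus j_T$, observe that the $T^{2m}$-component of any sphere in class $L$ is constant so the moduli space identifies with $\mathcal{M}^{J_{\mathrm{std}}}_{\mathbb{CP}^n,[\mathbb{CP}^1]}\ll\mathcal{T}_{D_1}^{n-1}p_1\gg$, and invoke Theorem \ref{count}. The only cosmetic difference is in how you pass from the split $J$ to a generic one: you argue via regularity and compactness of the split moduli space plus stability under small perturbations, whereas the paper runs a global cobordism argument along a path $\{J_t\}$ from $J_0=J_{\mathrm{std}}\oplus J_m$ to a generic $J_1$, citing Theorem \ref{smoothness} for the cobordism structure and \cite[Theorem 3.2]{Faisal:2024aa} for its compactness.
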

\begin{proof}
Let $J_{\mathrm{std}}$ be the standard complex structure on $\mathbb{CP}^n$ and $J_{\mathrm{m}}$ be an integrable almost complex structure on $T^{2m}$. A map $u:\mathbb{CP}^1\to X$ written as $u=(u_1,u_2)$ is $J_{\mathrm{std}}\oplus J_{\mathrm{m}}$-holomorphic if and only $u_1:\mathbb{CP}^1 \to \mathbb{CP}^n$  is  $J_{\mathrm{std}}$-holomorphic and $u_2:\mathbb{CP}^1 \to T^{2m}$ is $J_{\mathrm{m}}$-holomorphic. Since $\pi_2(T^{2m})=0$, the holomorphic map $u_2$ has zero symplectic energy, i.e.,
	\[\int_{\mathbb{CP}^1}u_2^{*}\omega_{\mathrm{std}}=0.\]
This means $u_2$ is constant. This, together with  {\cite[Proposition 7.4.3, Lemma 3.3.2]{MR2954391}} implies that the split almost complex structure $J_{\mathrm{std}}\oplus J_{\mathrm{m}}$ is regular in the sense that the linearized Cauchy--Riemann operator at every $J_{\mathrm{std}}\oplus J_{\mathrm{m}}$-holomorphic sphere in the homology class $L$ is surjective.

Let $p=(p_1,p_2)\in \mathbb{CP}^n\times T^{2m}$ and	$D_1$ be a local divisor in $(\mathbb{CP}^n,J_{\mathrm{std}})$ at $p_1$. For the divisor $D:=D_1\times T^{2m}$ at $p$, a $J_{\mathrm{std}}\oplus J_{\mathrm{m}}$-holomorphic sphere $u=(u_1,u_2)$ in the homology class $L$ satisfies the constraint $\ll \mathcal{T}_D^{n-1}p\gg$ if and only if $u_1\in \mathcal{M}^{J_\mathrm{std}}_{\mathbb{CP}^n, [\mathbb{CP}^1]} \ll \mathcal{T}_{D_1}^{n-1}p_1\gg $ and  $u_2=p_2$. Thus,  $\mathcal{M}^{J_{\mathrm{std}}\oplus J_{\mathrm{m}}}_{X, L} \ll \mathcal{T}_D^{n-1}p\gg $ has exactly $(n-1)!$ elements by Theorem \ref{count} and each element is counted positively.
	
Choose a path of $\Omega$-compatible almost complex structures $\{J_t\}_{t\in [0,1]}$ such that $J_0=J_{st}\oplus J_m$ and $J_1=J$, where $J$ is the given generic $\Omega$-compatible almost complex structure. By Theorem \ref{smoothness}, the parametric moduli space $\mathcal{M}^{\{J_t\}}_{X, L} \ll \mathcal{T}_D^{n-1}p\gg$ is an oriented $1$-dimensional cobordism between its ends at $t=0,1$. Moreover, this cobordism is compact by {\cite[Theorem 3.2]{Faisal:2024aa}}.  So
\[\# \mathcal{M}^{J_1}_{X, L} \ll \mathcal{T}_D^{n-1}p\gg=\# \mathcal{M}^{J_0}_{X, L} \ll \mathcal{T}_D^{n-1}p\gg=(n-1)!. \qedhere\]
\end{proof}
\begin{theorem}[\cite{MR4332489}]
For every $d\in \mathbb{Z}_{\geq 1}$, we have
	\[0<\mathcal{N}_{\mathbb{CP}^2, d[\mathbb{CP}^1]}\ll \mathcal{T}^{3d-2}\gg.\]
\end{theorem}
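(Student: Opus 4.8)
The plan is to reduce the inequality to a pure \emph{existence} statement for holomorphic spheres, and then settle that existence by a parameter count reinforced with a normal-bundle argument. The reduction is in fact already contained in the paper: by the Example following Theorem~\ref{gromov-witten-tang}, the standard complex structure $J_{\mathrm{std}}$ on $\mathbb{CP}^2$ is regular for this moduli problem, the induced complex structure on $\mathcal{M}^{J_{\mathrm{std}}}_{\mathbb{CP}^2, d[\mathbb{CP}^1]}\ll \mathcal{T}_D^{3d-2}p\gg$ is compatible with its canonical orientation, and hence $\mathcal{N}_{\mathbb{CP}^2, d[\mathbb{CP}^1]}\ll \mathcal{T}^{3d-2}\gg$ equals the \emph{unsigned} number of elements of this compact $0$-dimensional moduli space. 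So it suffices to prove that, for some general point $p$ and some general local divisor $D$ at $p$ (the answer being independent of these by Theorem~\ref{gromov-witten-tang}), there is at least one $J_{\mathrm{std}}$-holomorphic rational curve of degree $d$ through $p$ whose branch at $p$ has contact order exactly $3d-1$ with $D$.

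First I would fix convenient data: a general point $p\in\mathbb{CP}^2$ and, for $D$, the germ at $p$ of a general smooth plane curve $E$ of degree at least $3$ (which is $J_{\mathrm{std}}$-holomorphic, as required). A germ of a \emph{line} cannot be used to detect the invariant through $\mathcal{M}^{J_{\mathrm{std}}}$: B\'ezout bounds the intersection multiplicity at $p$ of a degree-$d$ curve with a line by $d<3d-1$, so that moduli space would be empty although the invariant is not --- this is exactly why genericity of $D$ is needed in Theorem~\ref{gromov-witten-tang} --- whereas for $E$ of degree $\geq 3$ a degree-$d$ curve may meet $E$ at $p$ to order as high as $3d$. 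Now count parameters. Rational plane curves of degree $d$ through $p$ form a family of dimension $3d-2$ (the Severi variety of rational degree-$d$ plane curves has dimension $3d-1$, and passage through $p$ imposes one condition); demanding that the branch at $p$ osculate the fixed germ $E$ to order $3d-1$, i.e.\ that the two branches share the same $(3d-2)$-jet at $p$, imposes a further $3d-2$ analytic conditions. Thus the expected dimension of the solution set is $0$, and the content is to show it is nonempty: equivalently, that the jet-evaluation map sending a pair (rational degree-$d$ curve $C$, smooth point $q\in C$) to the $(3d-2)$-jet at $q$ of the branch of $C$ is dominant onto the space of such jets (a $3d$-dimensional variety, matching the $3d$-dimensional source).

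Dominance I would get from a normal-bundle computation. For the normalization $\nu:\mathbb{CP}^1\to\mathbb{CP}^2$ of a rational curve of degree $d$, the normal sheaf $N_\nu$ is the line bundle of degree $\deg\nu^*T\mathbb{CP}^2-\deg T\mathbb{CP}^1=3d-2$ on $\mathbb{CP}^1$, so $N_\nu\cong\mathcal{O}(3d-2)$ and $h^0(N_\nu)=3d-1$; since $h^0(\mathcal{O}(3d-2)(-(3d-1)q))=h^1(\mathcal{O}(-1))=0$, the sections of $N_\nu$ surject onto the full $(3d-2)$-jet of normal deformations at an arbitrary point $q$, and combined with the $1$-parameter freedom of moving $q$ along $C$ this shows the differential of the jet-evaluation map is onto at a general $(C,q)$, hence the map is dominant. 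Consequently, for general $D$ the constrained moduli space is finite and nonempty, which by the first paragraph forces $\mathcal{N}_{\mathbb{CP}^2, d[\mathbb{CP}^1]}\ll \mathcal{T}^{3d-2}\gg\geq 1$. An alternative route to nonemptiness, closer to the rest of the paper, is to degenerate by a neck-stretching as in Theorem~\ref{sft}, express a constrained degree-$d$ curve through configurations of lower-degree constrained curves while tracking how the order $3d-1$ distributes over the pieces (Lemma~\ref{tang}), and induct on $d$ from the base case $\mathcal{N}_{\mathbb{CP}^2, [\mathbb{CP}^1]}\ll \mathcal{T}^{1}\gg=1$ of Theorem~\ref{count}. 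The hard part in either approach is precisely this nonemptiness/dominance step: the dimension bookkeeping is immediate, but ruling out that the jet map is everywhere degenerate (respectively, that all limiting configurations are excluded) is where the geometry of rational plane curves actually enters; once it is settled, positivity is automatic. We also note that the same identification exhibits $\mathcal{N}_{\mathbb{CP}^2, d[\mathbb{CP}^1]}\ll \mathcal{T}^{3d-2}\gg$ as an honest enumerative count of rational curves osculating a curve germ, coinciding with the one-point genus-zero descendant Gromov--Witten invariant of $\mathbb{CP}^2$ carrying a $\psi^{3d-2}$ class at the point insertion.
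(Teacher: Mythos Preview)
The paper does not supply a proof of this statement; it is quoted from [MR4332489] (McDuff--Siegel), so there is no in-paper argument to compare against directly. Your approach---reduce to nonemptiness of $\mathcal{M}^{J_{\mathrm{std}}}_{\mathbb{CP}^2, d[\mathbb{CP}^1]}\ll\mathcal{T}_D^{3d-2}p\gg$ for generic $D$ via positivity of the integrable count, then prove nonemptiness by showing the jet-evaluation map is dominant using $N_\nu\cong\mathcal{O}_{\mathbb{CP}^1}(3d-2)$ and $H^0\bigl(N_\nu(-(3d-1)q)\bigr)=0$---is a sound and self-contained algebraic-geometry argument. It differs in spirit from [MR4332489], where these numbers are obtained explicitly (hence shown positive) through their identification with one-point descendant Gromov--Witten invariants of $\mathbb{CP}^2$, a connection you also mention at the end. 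Your existence argument is more elementary for the bare positivity statement; the descendant route buys the actual values, not just a lower bound of $1$.

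One point deserves tightening. The Example you invoke cites only {\cite[Proposition~7.4.3]{MR2954391}}, which gives regularity of $J_{\mathrm{std}}$ for the \emph{unconstrained} moduli problem; it does not by itself justify regularity for the tangency-constrained one. Your normal-bundle computation $H^1\bigl(N_\nu(-(3d-1)q)\bigr)=H^1(\mathcal{O}_{\mathbb{CP}^1}(-1))=0$ is exactly what supplies constrained regularity at curves \emph{immersed} at the marked point, so the argument is self-contained there. To conclude that the $J_{\mathrm{std}}$-count with generic $D$ genuinely equals the invariant, you still need (routine) dimension counts---in the style of the proof of Theorem~\ref{gromov-witten-tang}, but achieving transversality by varying $D$ rather than $J$---to rule out, for generic $D$: curves ramified at $p$, multiple covers, and nodal degenerations in the Gromov compactification. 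Your observation that a \emph{line} germ cannot serve as $D$ for $d\geq 2$ (by B\'ezout) is correct and is precisely why the genericity of $D$ is essential in this integrable computation.
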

The invariant $\mathcal{N}_{X, A}\ll \mathcal{T}^{c_1(A)-2}\gg$ can be defined in a more general setting. let  $\mathcal{M}^{J,s}_{X,A,m} \ll \mathcal{T}_D^{k-1}p\gg$ be the moduli space of unparameterized simple $J$-holomorphic spheres in $X$ in the homology class $A$  with $m$ distinct (ordered) marked points and carrying the tangency constraint $\ll \mathcal{T}_D^{k-1}p\gg$, i.e.,
\begin{equation*}
	\mathcal{M}^{J,s}_{X,A,m} \ll \mathcal{T}_D^{k-1}p\gg:=\left\{
	\begin{array}{l}
	(u,0,z_1,\dots,z_m),\\
	z_1,\dots,z_m\in \mathbb{CP}^1\setminus 0,\\
		u:(\mathbb{CP}^1,i)\to (X,J),\\
		du\circ i=J\circ du ,\\
		u(0)=p \text{  and satisfies $\ll \mathcal{T}_D^{k-1}p\gg$ at $0$}, \\
		
		u_{*}[\mathbb{CP}^1]=A,\\
		\text{$u$ is somewhere injective}
		
\end{array}
	\right\}\bigg/\operatorname{Aut(\mathbb{CP}^1,0)}.
\end{equation*}
We have a well-defined evaluation map 
\[\operatorname{ev}:\mathcal{M}^{J,s}_{X,A,m} \ll \mathcal{T}_D^{k-1}p\gg\to \overbrace{X\times X\times\cdots\times X}^\text{ $m$ times }.\] 
\begin{theorem}[{\cite[Proposition 2.2.2]{MR4332489}}]\label{counttang}
Let $(X,\omega)$ be a semipositive symplectic manifold. For generic $J\in \mathcal{J}_{D}(X,\omega)$, the evaluation map 
\[\operatorname{ev}:\mathcal{M}^{J,s}_{X,A,m} \ll \mathcal{T}_D^{k-1}p\gg\to  X\times X\times\cdots\times X\]
is a pseudocycle of dimension $2c_1(A)-2-2k+2m$. Moreover, this pseudocycle is independent (up to pseudocycle bordism {\cite[Definition 6.5.1]{MR2954391}}) of the generic $J\in \mathcal{J}_{D}(X,\omega)$, the point $p$, and the local divisor $D$ needed to define the constraint  $\ll \mathcal{T}_D^{k-1}p\gg$.
\end{theorem}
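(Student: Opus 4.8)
To establish Theorem \ref{counttang}, the plan is to combine three facts already at our disposal: the transversality statement for tangency-constrained spheres (Theorem \ref{smoothness}), the compactness bookkeeping carried out in the proof of Theorem \ref{gromov-witten-tang}, and the standard mechanism that promotes the evaluation map on a moduli space of simple $J$-holomorphic spheres to a pseudocycle {\cite[Chapter 6]{MR2954391}}. I would first handle smoothness and the dimension. The forgetful map
\[\mathcal{M}^{J,s}_{X,A,m}\ll\mathcal{T}_D^{k-1}p\gg\ \longrightarrow\ \mathcal{M}^{J,s}_{X,A}\ll\mathcal{T}_D^{k-1}p\gg\]
that drops the $m$ auxiliary marked points is, on the locus of somewhere injective curves, a fiber bundle whose fiber over $[u]$ is the configuration space of $m$ ordered distinct points in $\mathbb{CP}^1\setminus\{0\}$, the residual group $\operatorname{Aut}(\mathbb{CP}^1,0)$ acting freely because $u$ is simple. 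By Theorem \ref{smoothness} the base is, for generic $J\in\mathcal{J}_D(X,\omega)$, a smooth oriented manifold of dimension $2c_1(A)-2-2k$; since the fibers are complex manifolds of real dimension $2m$, the total space is a smooth oriented manifold of dimension $2c_1(A)-2-2k+2m$, and $\operatorname{ev}$ is smooth on it. No new transversality is needed, as the marked points impose no equations, and the orientation is the product of the determinant-line orientation of Theorem \ref{smoothness} with the complex orientations of the marked-point factors.

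The real content is the pseudocycle property. Compactness of $\overline{\operatorname{ev}(\mathcal{M}^{J,s}_{X,A,m}\ll\mathcal{T}_D^{k-1}p\gg)}$ is immediate from Gromov compactness, every curve in class $A$ having symplectic area $\omega(A)$. It remains to show the $\Omega$-limit set
\[\Omega_{\operatorname{ev}}\ :=\ \bigcap_{K\ \mathrm{compact}}\ \overline{\operatorname{ev}\!\left(\mathcal{M}^{J,s}_{X,A,m}\ll\mathcal{T}_D^{k-1}p\gg\ \setminus\ K\right)}\]
is covered by the image of a smooth map from a manifold of dimension at most $2c_1(A)-4-2k+2m$. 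I would take a sequence in the moduli space escaping every compact subset and extract, by Gromov compactness, a limiting stable map. If some auxiliary marked points collide or run into the constrained point $0$, then $\operatorname{ev}$ accumulates on a diagonal of $X^m$ (real codimension $2n\geq 2$), or on $\operatorname{ev}$ of a tangency-constrained moduli space with strictly fewer free marked points (dimension $2c_1(A)-2-2k+2(m-1)$); either way this is a codimension $\geq 2$ contribution.

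The remaining case is genuine bubbling, and here I would run the index bookkeeping from the compactness part of the proof of Theorem \ref{gromov-witten-tang}, now also recording where the $m$ auxiliary marked points land. The three inputs are: semipositivity, via Proposition \ref{positive}, which prevents multiply covered or negative-Chern components from raising the expected dimension; Lemma \ref{tang}, which guarantees that the contact order $k$ with $D$ is not lost in the limit but is distributed over the non-constant components meeting the ghost that carries the constrained marked point; and the fact that every node of a limiting configuration costs $2$ dimensions. A dimension count with these inputs — identical in spirit to the one already carried out for $m=0$ in the proof of Theorem \ref{gromov-witten-tang}, with $k-1$ in place of $c_1(A)-2$ and $2m$ added for the free marked points — shows that every stratum of the Gromov compactification recording a bubbled limit is a manifold of dimension at most $2c_1(A)-4-2k+2m$ mapping into $\Omega_{\operatorname{ev}}$, which is exactly the pseudocycle condition.

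Finally, for independence of the auxiliary data I would connect two generic triples $(J_0,p_0,D_0)$ and $(J_1,p_1,D_1)$ by a generic path; the parametric form of Theorem \ref{smoothness} makes the associated parametric moduli space a smooth oriented cobordism, and running the preceding compactness analysis one dimension higher identifies its evaluation map with a pseudocycle bordism in the sense of {\cite[Definition 6.5.1]{MR2954391}} between the two evaluation pseudocycles. The step I expect to be the main obstacle is precisely this bubbling case: one must make sure that the redistribution of the tangency constraint over a bubble tree supplied by Lemma \ref{tang} cannot be undone by a clever placement of the $m$ free marked points, nor by multiply covered, negative-Chern components conspiring to push a boundary stratum up to codimension $1$; it is semipositivity together with the index inequality already used in the proof of Theorem \ref{gromov-witten-tang} that rule this out.
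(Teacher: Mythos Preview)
The paper does not actually prove Theorem \ref{counttang}; it is stated as a citation of {\cite[Proposition 2.2.2]{MR4332489}} and used as a black box in what follows. So there is no in-paper proof to compare your proposal against.

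That said, your outline is the correct one and is essentially how the cited result is established: transversality for simple curves with tangency constraints (Theorem \ref{smoothness}) gives smoothness and the dimension formula, and the boundary strata are controlled exactly as you describe --- marked-point collisions land on diagonals, and genuine bubbling is handled by combining Lemma \ref{tang} (tangency order is not lost, only redistributed), Proposition \ref{positive} (semipositivity forces non-negative Chern numbers on simple components), and the node-costs-two-dimensions principle from {\cite[Theorem 6.2.6]{MR2954391}}. One small point worth making explicit: in the proof of Theorem \ref{gromov-witten-tang} the moduli space is zero-dimensional and the conclusion is that degenerations \emph{do not occur}; here you need the slightly weaker but more general conclusion that each boundary stratum has dimension at most two less than the top stratum, which is what the same index computation actually yields before specializing to $k=c_1(A)-1$. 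Your final paragraph correctly identifies where the delicacy lies.
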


Bordism classes of pseudocycles are equivalent to
integral homology classes; see {\cite[Theorem 3.3]{Schwarz:aa}}. So by Theorem \ref{counttang}, the pseudocycle $\operatorname{ev}$ determines a unique singular homology class
\[\big[\mathcal{\bar{M}}^{J,s}_{X,A,m} \ll \mathcal{T}_D^{k-1}p\gg\big]\in H_{2c_1(A)-2-2k+2m}(X^{\times m},\mathbb{Z})\]
which is independent of the choices of $J\in \mathcal{J}_{D}(X,\omega)$, the point $p$,  and the local divisor $D$.

Consider cohomology classes $\gamma_1,\dots, \gamma_m \in H^*(X,\mathbb{Z})$ of total degree $2c_1(A)-2-2k+2m$. We have a well-defined invariant given by the homological intersection number  defined by
\[\mathcal{N}_{X, A}\ll \mathcal{T}^{k-1}, \gamma_1, \gamma_2,\dots,\gamma_m\gg:=\operatorname{PD}\big(\pi_1^*\gamma_1\cup \pi_2^*\gamma_2\cup\dots\cup \pi_m^*\gamma_m\big)\cdot \big[\mathcal{\bar{M}}^{J,s}_{M,A,m} \ll \mathcal{T}_D^{k-1}p\gg\big]\in \mathbb{Z}. \]
Here $\pi_j: X\times X\times\cdots\times X\mapsto X$ denotes projection onto the $j$th-factor and $\operatorname{PD}$ denotes the Poincar$\acute{\text{e}}$ duality isomorphism.
We can recover the invariant $\mathcal{N}_{X, A}\ll \mathcal{T}^{c_1(A)-2}\gg$ defined in Theorem \ref{gromov-witten-tang} as follows. Choose $m=1$, $k=c_1(A)-1$, and a class $\gamma\in H^2(X,\mathbb{Z})$ such that $\gamma(A)\ne 0$. By the divisor axiom of Gromov--Witten theory {\cite[Proposition 7.5.7]{MR2954391}}, we have
\begin{equation}\label{tang-invariant1000}
	\mathcal{N}_{X, A}\ll \mathcal{T}^{c_1(A)-2}\gg=\frac{1}{\gamma(A)}\mathcal{N}_{X, A,1}\ll \mathcal{T}^{c_1(A)-2},  \gamma\gg\, \in \mathbb{Z}.
\end{equation}

\subsection{Enumerative descendants}\label{enumerative}
Let $(X,\omega)$ be a compact monotone symplectic manifold, and let $d$ be a positive integer. Following {\cite[Section 2.3]{Tonkonog:2018aa}} and \cite{Cieliebak_2007}, we introduce a Gromov--Witten-type invariant, denoted $\langle \psi_{d-2}p\rangle_{X,d} ^\bullet$, referred to as enumerative decendant of  $(X,\omega)$.  The definition goes as follows.

Fix positive integers $d, k,$  and $N$. Choose a closed smooth oriented divisor $\Sigma$ in $X$ that is Poincar$\acute{\text{e}}$ dual to $Nc_1(X)$. Choose a point $p\in X\setminus \Sigma$ and local divisor $D\subset X\setminus \Sigma$ containing $p$.  Let $ \mathcal{J}_{D}(X,\omega, \Sigma)$ be the space of all  $J\in \mathcal{J}_{D}(X,\omega)$ that preserve the tangent spaces of $\Sigma$, i.e., $\Sigma$ is $J$-holomorphic. Let $J\in \mathcal{J}_{D}(X,\omega, \Sigma)$. Consider the moduli space
\begin{equation}\label{hamilto-perturb}
	\mathcal{M}^{J}_{X, Nd} ( \psi_{k-1}p, \Sigma)_d:=\left\{
	\begin{array}{l}
	(u,	0, z_1,z_2,\dots ,z_{Nd}),\\
		0, z_1,z_2,\dots ,z_{Nd}\in \mathbb{CP}^1 \text{ with } z_i\neq z_j \text{ for }i\neq j,\\
		u:(\mathbb{CP}^1,i)\to (X,J),\\
		(du-K)^{0,1}=0, \ 	c_1([u])=d ,\\
		u(0)=p \text{  and satisfies $\ll \mathcal{T}_D^{k-1}p\gg$ at $0$}, \\
		u(z_i)\in \Sigma,\text{ for all } i=1,2,\dots ,Nd.
\end{array}
	\right\}\Bigg/\sim.
\end{equation}
The coherent perturbation $K$ in the holomorphic curve equation is given by $K=X_H\otimes\beta$ (cf. {\cite[Section 6.3--4, and Page 274]{Cieliebak2018}}), where
\begin{itemize}
\item  $X_H$ is the Hamiltonian vector field of a time-independent Hamiltonian $H:M\to \mathbb{R}$ that is supported near the constrained point $p$. Moreover, $H$ is chosen so that $X_H$ is transverse to the local divisor $D$.
\item $\beta$ is a $1$-form on $\mathbb{CP}^1$ supported in an annulus around the point $0$. The annulus depends on the stable curve, i.e., on the position of the distinct marked points $z_1,z_2,\dots,z_{Nd}$. It is chosen so that the points $z_i$ sit in the complement of the annulus on the side opposite to where $0$ sits.  
\end{itemize}
Note that every solution $u$ of $(du-  X_H \otimes \beta )^{0,1}=0$ is purely $J$-holomorphic near $0$, so the tangency constraint $\ll \mathcal{T}_D^{k-1}p\gg$ at $0$ is well-defined. Define $\langle \psi_{d-2}p\rangle_{X,d} ^\bullet$ to be the signed count 
\begin{equation}\label{gravi-invariant}
	\langle \psi_{d-2}p\rangle_{X,d} ^\bullet:=\frac{1}{(Nd)!}\#\mathcal{M}^{J}_{X, Nd} ( \psi_{d-2}p, \Sigma)_d.
\end{equation}
A non-constant $J$--holomorphic sphere $u$ in $X$ with $c_1([u])=d$ has precisely $dN$ intersection points with $\Sigma$ due to the positivity of intersection. There are $(Nd)!$ ways to order these intersection points without repetition to produce the distinct order marked point $z_1,z_2,\dots, z_{Nd}$. Therefore, we divide the right-hand side of (\ref{gravi-invariant}) by $(Nd)!$.

By {\cite[Theorem 1.2--3]{Cieliebak_2007}} or {\cite[Section 2.3]{Tonkonog:2018aa}}, the count defined by (\ref{gravi-invariant}) does not depend on $p, D, J$, $\Sigma$, and the perturbation $K$. 

\begin{remark}
For a monotone closed symplectic manifold $(X,\omega)$, the count $\mathcal{N}_{X,A}\ll \mathcal{T}^{c_1(A)-2}\gg$, defined in (\ref{tang-invariant1000}), and the count $\langle \psi_{d-2}p\rangle_{X,d}^{\bullet}$, defined in (\ref{gravi-invariant}), are, in general, not equal.

For instance, in the case $X=\mathbb{CP}^2$, {\cite[Table 5.1]{MR4332489}} compares the values $\langle \psi_{3d-2}p\rangle_{\mathbb{CP}^2,3d}^{\bullet}$ and $\mathcal{N}_{\mathbb{CP}^2,d[\mathbb{CP}^1]}\ll \mathcal{T}^{3d-2}\gg$. In that reference, the authors denote the one-point gravitational descendant Gromov--Witten invariant of $\mathbb{CP}^2$ by
\[
\operatorname{GW}_{\mathbb{CP}^2,dL}\leqslant \psi^{3d-2}p \geqslant,
\]
which is denoted by $\langle \psi_{3d-2}\mathrm{pt}\rangle_{\mathbb{CP}^2,3d}$ in \cite{Tonkonog:2018aa}.  Moreover, by {\cite[Lemma 1.3]{Tonkonog:2018aa}}, the quantity
\[
(3d-2)!\operatorname{GW}_{\mathbb{CP}^2,dL}\leqslant \psi^{3d-2}p \geqslant
\]
is equal to the count $\langle \psi_{3d-2}p\rangle_{\mathbb{CP}^2,3d}^{\bullet}$. It follows from {\cite[Table 5.1]{MR4332489}} that the counts $\langle \psi_{3d-2}p\rangle_{\mathbb{CP}^2,3d}^{\bullet}$ and $\mathcal{N}_{\mathbb{CP}^2,d[\mathbb{CP}^1]}\ll \mathcal{T}^{3d-2}\gg$ are different for certain values of $d$; see {\cite[Remark 5.5]{Siegel_2025}} for further details on this comparison.
\end{remark}

However, for $(X,\omega)=(\mathbb{CP}^n,\omega_{\mathrm{FS}})$ and the indecomposable homology class $A=[\mathbb{CP}^1]\in H_2(\mathbb{CP}^n,\mathbb{Z})$, we have the following.

\begin{theorem}[{\cite[Theorem 1.2--3]{Cieliebak_2007}}]\label{countlast}
For every $n\in \mathbb{Z}_{\geq 1}$, we have
	\[\langle \psi_{n-1}p\rangle_{\mathbb{CP}^n, n+1} ^\bullet=\mathcal{N}_{\mathbb{CP}^n, [\mathbb{CP}^1]}\ll \mathcal{T}^{n-1}\gg=(n-1)!.\]
\end{theorem}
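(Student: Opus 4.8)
The second equality is exactly Theorem \ref{count}, so the plan is to prove the first one, namely $\langle \psi_{n-1}p\rangle_{\mathbb{CP}^n,[\mathbb{CP}^1], n+1}^\bullet=\mathcal{N}_{\mathbb{CP}^n, [\mathbb{CP}^1]}\ll \mathcal{T}^{n-1}\gg$; both sides make sense because $\mathbb{CP}^n$ is monotone. First I would put the two invariants on the same footing. Fix $N\in\mathbb{Z}_{\geq 1}$, a symplectic divisor $\Sigma$ Poincar\'e dual to $Nc_1(\mathbb{CP}^n)$, a point $p\in\mathbb{CP}^n\setminus\Sigma$, and a local divisor $D\subset\mathbb{CP}^n\setminus\Sigma$ through $p$. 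By (\ref{tang-invariant1000}) with $A=[\mathbb{CP}^1]$ and $c_1(A)=n+1$, the right-hand side equals $\frac{1}{(N(n+1))!}\#\,\mathcal{M}^{J}_{\mathbb{CP}^n,[\mathbb{CP}^1], N(n+1)}(\mathcal{T}_D^{n-1}p, \Sigma)$, a signed count of honest $J$-holomorphic spheres in the class $[\mathbb{CP}^1]$ through $p$ with tangency $\ll \mathcal{T}_D^{n-1}p\gg$ at $0$ and with $N(n+1)$ ordered marked points on $\Sigma$. By the defining formula (\ref{gravi-invariant}), the left-hand side is the same count, except that the Cauchy--Riemann equation is replaced by the Hamiltonianly perturbed equation $(du-X_H\otimes\beta)^{0,1}=0$ with $K=X_H\otimes\beta$ supported near $p$. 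Thus the two moduli problems differ only in the choice of the perturbation $K$.

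Next I would interpolate. Put $K_s:=sK$ for $s\in[0,1]$, so that $K_0=0$ recovers the honest equation while $K_1=K$. For generic $J\in\mathcal{J}_D(\mathbb{CP}^n,\omega,\Sigma)$ and generic $H$, the transversality machinery of \cite{Cieliebak_2007} makes the parametric moduli space $\bigcup_{s\in[0,1]}\{s\}\times\mathcal{M}^{J}_{\mathbb{CP}^n, N(n+1)}(\psi_{n-1}p,\Sigma)_{n+1}$, with perturbation $K_s$ on the slice over $s$, a smooth oriented $1$-manifold whose boundary is the disjoint union of the $s=0$ and $s=1$ fibres. Granting that this cobordism is compact, its two signed boundary counts agree, which gives $\langle \psi_{n-1}p\rangle_{\mathbb{CP}^n,[\mathbb{CP}^1], n+1}^\bullet=\frac{1}{(N(n+1))!}\#\,\mathcal{M}^{J}_{\mathbb{CP}^n,[\mathbb{CP}^1], N(n+1)}(\mathcal{T}_D^{n-1}p, \Sigma)=\mathcal{N}_{\mathbb{CP}^n, [\mathbb{CP}^1]}\ll \mathcal{T}^{n-1}\gg$; combined with Theorem \ref{count} this finishes the proof. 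Alternatively one may simply quote the perturbation-independence of $\langle\,\cdot\,\rangle^\bullet_{X,d}$ noted right after (\ref{gravi-invariant}); see \cite{Cieliebak_2007} and \cite{Tonkonog:2018aa}.

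The hard part will be the compactness of the parametric moduli space, i.e.\ ruling out degenerations of the spheres as $s$ runs over $[0,1]$. Several features of $(\mathbb{CP}^n,[\mathbb{CP}^1])$ should make this tractable. Since $[\mathbb{CP}^1]$ generates $H_2(\mathbb{CP}^n;\mathbb{Z})$ and every effective class is a positive multiple of it, a sphere in the class $[\mathbb{CP}^1]$ cannot break off a non-constant bubble: bubbling would transfer the whole class onto the bubble and leave a constant principal component, which for $n\geq 2$ cannot satisfy $\ll \mathcal{T}_D^{n-1}p\gg$. The perturbation $K_s$ is compactly supported near $p$, so the energy of solutions is a priori bounded by $\omega([\mathbb{CP}^1])$ plus a fixed constant, and Gromov compactness applies (there is no neck here, hence no Reeb breaking). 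Positivity of intersection forces every limiting sphere to meet $\Sigma$ transversally in exactly $N(n+1)$ points, so no marked point can be lost into a ghost bubble meeting $\Sigma$, and any ghost bubble carrying the constrained point would, by the additivity of tangency orders (Lemma \ref{tang}), force the tangency order with $D$ of the unique non-constant component at the attaching node to be at least $n$ --- a condition of codimension at least $2$ on a line through $p$, by the index counts underlying Theorem \ref{smoothness} and Theorem \ref{gromov-witten-tang}, hence invisible along a generic $1$-parameter family. Domain degenerations (collisions among the $z_i$) alter only the domain and not the map, and are handled by Deligne--Mumford stability since $1+N(n+1)\geq 3$. Finally I would settle the base case $n=1$ directly: the constraint $\ll \mathcal{T}_D^{0}p\gg$ merely asks the curve to pass through $p$, and there is a unique line in $\mathbb{CP}^1$ in the class $[\mathbb{CP}^1]$ through a given point, so both sides equal $1=0!$.
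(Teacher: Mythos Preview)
The paper does not give its own proof of this theorem: it is stated as a citation to \cite[Theorem~1.2--3]{Cieliebak_2007}, and the only relevant in-text argument is the sentence after (\ref{gravi-invariant}) that the count $\langle\psi_{d-2}p\rangle^\bullet_{X,d}$ is independent of the perturbation $K$. Setting $K=0$ then identifies the enumerative descendant with the tangency count via (\ref{tang-invariant1000}), and Theorem~\ref{count} supplies the value $(n-1)!$. Your proposal is exactly this argument, with the perturbation-independence spelled out as a cobordism rather than quoted; you even note this shortcut explicitly. So your approach and the paper's are the same, yours is just more detailed.

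Two small sharpenings of your compactness discussion. First, a ghost bubble carrying the constrained marked point at $p$ would have only two special points (the marked point and the node), since $p\notin\Sigma$ forbids any $z_i$ from landing on it; such a ghost is unstable and is simply absent from the Deligne--Mumford compactification, so the codimension argument via Lemma~\ref{tang} is not needed there. Second, a collision $z_i=z_j$ of $\Sigma$-marked points forces a non-transverse intersection of $u$ with $\Sigma$, which is codimension $\geq 2$ by positivity of intersection and hence avoided along a generic $1$-parameter family; this is a cleaner way to phrase your ``domain degenerations'' point.
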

\section{Punctured spheres with local tangency constraints in Liouville domains}\label{puncturecountlio}
A Liouville domain is a pair $(X,\lambda)$, where $X$ is a compact oriented smooth manifold with boundary $\partial X$ and $\lambda$ is a $1$-form such that the exterior derivative $d\lambda$ is symplectic and the restriction of $\lambda$ to $\partial X$ is a positive\footnote{The vector field $V$ on $X$ defined by $d\lambda(V,\cdot)=\lambda(\cdot)$ points outwards along $\partial X$.} contact form. The Reeb vector field $R_\lambda$ of $\lambda|_{\partial X}$ is the unique vector field on $\partial X$ defined by 
\[d\lambda(R_\lambda,\cdot)|_{\partial X}=0 \text{ and } \lambda(R_\lambda)=1.\] 
 
A closed Reeb orbit of period $T>0$ is a smooth map $\gamma:\mathbb{R}/T\mathbb{Z}\to \partial X$, modulo translations of the domain, such that $\gamma '(t)= R_\lambda(\gamma(t))$ for all $t\in \mathbb{R}/T\mathbb{Z}$. Let $\phi^t:\partial X\to \partial X$ be the flow of $R_{\lambda}$. Its linearization induces a family of symplectic maps $d\phi^t:(\xi, d\lambda|_{\partial X}) \to (\xi, d\lambda|_{\partial X})$, where $\xi:=\operatorname{Ker}(\lambda|_{\partial X})$. A closed Reeb orbit $\gamma$  of period $T>0$ is nondegenerate \footnote{If $\gamma$ is non-degenerate, then its translates $\gamma(\cdot+\theta)$ are also nondegenerate. So the notion of non-degeneracy is well-defined.} if the linearized return map $d\phi^T(\gamma(0)):(\xi_{\gamma(0)}, d\lambda|_{\partial X}) \to (\xi_{\gamma(0)}, d\lambda|_{\partial X})$ has no eigenvalues equal to $1$. A Liouville domain $(X,\lambda)$ is nondegenerate if all closed Reeb orbits on $\partial X$ are nondegenerate.

Let $(X,\lambda)$ be a nondegenerate Liouville domain and let $\widehat{X}$ be its symplectic completion (cf. Definition \ref{completion}). Pick a point $p\in \operatorname{Int}(X)$ and a germ of smooth symplectic divisor $D$ containing $p$. We define  $\mathcal{J}_D(X,\lambda)$ to be the set of all SFT--admissible almost complex structures on the completion $\widehat{X}$ that are integrable near $p$ and preserve the tangent spaces of $D$. Let  $\gamma$ be a closed Reeb orbit on $\partial X$, $k\in \mathbb{Z}_{\geq 1}$, $J\in \mathcal{J}_D(X,\lambda)$, and  $A\in H_2(X,\gamma, \mathbb{Z})$. Define  
\begin{equation*}
	\mathcal{M}^{J,s}_{\widehat{X}, A}(\gamma)\ll \mathcal{T}_D^{k-1}p\gg:=\left\{
	\begin{array}{l}
		u:(\mathbb{C},i) \to (\widehat{X},J),\\
		du\circ i=J\circ du  ,\\
		u \text{ is asymptotic to  $\gamma$ at $\infty$,} \\
	
		u \text{ satisfies $\ll \mathcal{T}_D^{k-1}p\gg$ at $0$,}\\
		u \text{ is somewhere injective.}
	\end{array}
	\right\}\Bigg/\operatorname{Aut}(\mathbb{C},0).
\end{equation*}
By standard arguments {\cite[Proposition 3.1]{Cieliebak2018}}, the moduli space $\mathcal{M}^{J,s}_{\widehat{X}, A}(\gamma)\ll \mathcal{T}_D^{k-1}p\gg$ is a finite dimensional  oriented manifold for generic $J\in\mathcal{J}_D(X,\lambda)$. If its virtual dimension is zero, then for generic $J$, the moduli space is a set of discrete signed points.  In general, the signed count
\[\# \mathcal{M}^{J,s}_{\widehat{X}, A}(\gamma)\ll \mathcal{T}_D^{k-1}p\gg\]
can fail to be well-defined and independent of the auxiliary choices of $J$, $p$, and $D$. Firstly, the space $\mathcal{M}^{J,s}_{\widehat{X}, A}(\gamma)\ll \mathcal{T}_D^{k-1}p\gg$ may not be compact. Secondly, even if the moduli space is compact, the count $\# \mathcal{M}^{J,s}_{\widehat{X}, A}(\gamma)\ll \mathcal{T}_D^{k-1}p\gg$ may depend on the choice of data $(J, p, D)$ because non-trivial holomorphic buildings can appear as we vary $(J, p, D)$ in a $1$-parameter family. McDuff--Siegel observed that for the signed count $\# \mathcal{M}^{J,s}_{\widehat{X}, A}(\gamma)\ll \mathcal{T}_D^{k-1}p\gg$ to be well-defined and to be independent of the auxiliary choice of $(J, p, D)$, it is enough that the moduli space  $\mathcal{M}^{J,s}_{\widehat{X}, A}(\gamma)\ll \mathcal{T}_D^{k-1}p\gg$ satisfies a condition called ``formal perturbation invariance''. Roughly speaking, under this condition, non-trivial holomorphic buildings can appear as we vary the auxiliary data in a $1$-parameter family, but such buildings do not interfere with the signed count in a generic situation.
\begin{definition}[cf. {\cite[Definition 2.4.1]{McDuff:2021aa}}]\label{perturbationinvariance}
Let $\mathcal{M}^{J,s}_{\hat{X}, A}(\gamma)\ll \mathcal{T}_D^{k-1}p\gg$ be a moduli space of zero virtual dimension. We say $\mathcal{M}^{J,s}_{\hat{X}, A}(\gamma)\ll \mathcal{T}_D^{k-1}p\gg$ is perturbation invariant if there exists a generic SFT-admissible almost complex structure $J_{\partial X}$ on $\mathbb{R}\times \partial X$ such that the following holds. Given any family $\{J_t\}_{t\in [0,1]}\subset \mathcal{J}_D(X,\lambda)$ of SFT-admissible almost complex structures on $\widehat{X}$ satisfying $J_t|_{[0,\infty)\times \partial X}=J_{\partial X}$ for all $t$, if  $\mathbb{H}$ is a holomorphic building that arises as a result of degeneration in the parametric moduli space $\mathcal{M}^{\{J_t\},s}_{\hat{X}, A}(\gamma)\ll \mathcal{T}_D^{k-1}p\gg$ and satisfies the following:
\begin{itemize}
\item[(H1)] Each nonconstant smooth connected component $C$ of $\mathbb{H}$ in $\widehat{X}$ is a multiple cover of a somewhere injective curve $\bar{C}$ with  $\operatorname{ind}(\bar{C})\geq -1$.
\item[(H2)] Each nonconstant smooth connected component $C$ of $\mathbb{H}$ in the symplectization $\mathbb{R}\times \partial X$ is a multiple cover of some somewhere injective curve $\bar{C}$ which is either a trivial cylinder or satisfies $\operatorname{ind}(\bar{C})\geq 1$.
\end{itemize}
Then, one of the following holds.
\begin{itemize}
	\item[(C1)] $\mathbb{H}\in \mathcal{M}^{\{J_t\},s}_{\hat{X}, A}(\gamma)\ll \mathcal{T}_D^{k-1}p\gg$, i.e., $\mathbb{H}$ consists of a single smooth connected  component.
	\item[(C2)] $\mathbb{H}$ has only two levels. The bottom level in $\widehat{X}$ consists of a single smooth connected somewhere injective curve $C_{\mathrm{bot}}$ with $\operatorname{ind}(C_{\mathrm{bot}})=-1$. The top level in $\mathbb{R}\times \partial X$ consists of some trivial cylinders and a somewhere injective index $1$ component $C_{\partial X}$. Moreover, the connected component of the moduli space of somewhere injective curves containing $C_{\partial X}$, denoted by $\mathcal{M}^{J_{\partial X},s}_{ C_{\partial X}}$, is regular and satisfies  $\# (\mathcal{M}^{J_{\partial X},s}_{ C_{\partial X}}/\mathbb{R})=0$. 
\end{itemize}
\end{definition}
\begin{theorem}[cf. {\cite[Proposition 2.4.2]{McDuff:2021aa}}]\label{perturbation-invariance}
Let $(X,\lambda)$ be a Liouville domain with nondegenerate contact boundary $(\partial X,\lambda|_{\partial X})$. Suppose the moduli space  $\mathcal{M}^{J,s}_{\hat{X}, A}(\gamma)\ll \mathcal{T}_D^{k-1}p\gg$ has zero virtual dimension and is perturbation invariant (cf. Definition \ref{perturbationinvariance}). Then, for generic $J\in \mathcal{J}_D(X,\lambda)$, the moduli space  $\mathcal{M}^{J,s}_{\hat{X}, A}(\gamma)\ll \mathcal{T}_D^{k-1}p\gg$ is a smooth compact oriented zero-dimensional manifold  and the signed count    
	\[\# \mathcal{M}^{J,s}_{\hat{X}, A}(\gamma)\ll \mathcal{T}_D^{k-1}p\gg\]
	does not depend on the choice of $J$ provided that  $\mathcal{M}^{J,s}_{\hat{X}, A}(\gamma)\ll \mathcal{T}_D^{k-1}p\gg$ is regular.
\end{theorem}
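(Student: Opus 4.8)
The plan is to follow the scheme of McDuff--Siegel {\cite[Proposition 2.4.2]{McDuff:2021aa}}. Two things must be shown: that for generic $J\in\mathcal{J}_D(X,\lambda)$ (restricting on the cylindrical end to the fixed generic $J_{\partial X}$ of Definition~\ref{perturbationinvariance}) the moduli space $\mathcal{M}^{J,s}_{\widehat X,A}(\gamma)\ll\mathcal{T}_D^{k-1}p\gg$ is a compact oriented zero-dimensional manifold, so that the signed count is defined; and that this count does not change if $J$ is replaced by another regular choice. For the second point I would join two regular $J_0,J_1$ by a generic path $\{J_t\}_{t\in[0,1]}\subset\mathcal{J}_D(X,\lambda)$ all of whose members restrict on the end to $J_{\partial X}$, and show that the parametric moduli space $\mathcal{M}^{\{J_t\},s}_{\widehat X,A}(\gamma)\ll\mathcal{T}_D^{k-1}p\gg$ is a compact oriented $1$-manifold with boundary $\mathcal{M}^{J_1,s}_{\widehat X,A}(\gamma)\ll\mathcal{T}_D^{k-1}p\gg\,\sqcup\,\bigl(-\mathcal{M}^{J_0,s}_{\widehat X,A}(\gamma)\ll\mathcal{T}_D^{k-1}p\gg\bigr)$, whence the two counts agree.

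\smallskip\noindent\emph{Transversality and orientations.} Restricting to somewhere-injective curves, the standard Sard--Smale argument for punctured pseudoholomorphic curves with a local tangency constraint---as in {\cite[Proposition 3.1]{Cieliebak2018}} and {\cite[Section 3]{McDuff:2021aa}}---shows that for generic $J$ the linearized Cauchy--Riemann operator, with the $2(k-1)$ tangency conditions adjoined, is surjective; since the virtual dimension is $0$ by hypothesis, the moduli space is a smooth oriented $0$-manifold, the orientation coming from coherent orientations of the determinant lines in the SFT sense. The same argument applied over a generic path $\{J_t\}$ (with $J_{\partial X}$ held fixed) produces a smooth oriented $1$-manifold whose boundary is as described above.

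\smallskip\noindent\emph{Compactness --- the main obstacle.} Let $u_n$ (resp.\ $(t_n,u_n)$) be a sequence in the (parametric) moduli space. By the Gromov--Hofer compactness theorem in the completed-Liouville-domain form recorded in the Remark after Theorem~\ref{nonconstantholclass}, a subsequence converges to a stable holomorphic building $\mathbb{H}=(u^0,\dots,u^{N_+})$ of arithmetic genus zero with a single unpaired positive end on $\gamma$, representing $A$, whose bottom level lies in $\widehat X$ and whose higher levels lie in $\mathbb{R}\times\partial X$; by the punctured analogue of Lemma~\ref{tang} the constraint $\ll\mathcal{T}_D^{k-1}p\gg$ persists in the limit, distributed over the nonconstant components reaching $p$ through the ghost tree. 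I would then check that $\mathbb{H}$ satisfies hypotheses (H1)--(H2) of Definition~\ref{perturbationinvariance}: for generic $J$ (resp.\ along a generic path) every simple curve underlying a component in $\widehat X$ has index $\geq 0$ (resp.\ $\geq -1$), and for generic $J_{\partial X}$ every simple curve underlying a component in the symplectization is a trivial cylinder or has index $\geq 1$. The perturbation-invariance hypothesis then forces $\mathbb{H}$ to be of type (C1) or (C2). When $J$ is a single generic structure, type (C2) is excluded, as it requires a simple index-$(-1)$ curve in $\widehat X$ which does not exist; hence $\mathbb{H}$ is of type (C1), i.e.\ a single somewhere-injective plane lying in $\mathcal{M}^{J,s}_{\widehat X,A}(\gamma)\ll\mathcal{T}_D^{k-1}p\gg$ with no loss of area, so the moduli space is sequentially compact and therefore finite. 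Along a generic path, the only residual degeneration is type (C2): a two-level building whose $\widehat X$-level is a single somewhere-injective curve $C_{\mathrm{bot}}$ of index $-1$ and whose symplectization level is a union of trivial cylinders together with a somewhere-injective index-$1$ curve $C_{\partial X}$ with $\#(\mathcal{M}^{J_{\partial X},s}_{C_{\partial X}}/\mathbb{R})=0$. I would conclude with the gluing analysis of McDuff--Siegel: near such a configuration the compactified parametric moduli space is a $1$-manifold with boundary, and since the regular symplectization component $\mathcal{M}^{J_{\partial X},s}_{C_{\partial X}}/\mathbb{R}$ contributes algebraically zero, these (C2) ends either do not occur or cancel in pairs, so the parametric moduli space is a compact oriented cobordism from $\mathcal{M}^{J_0,s}_{\widehat X,A}(\gamma)\ll\mathcal{T}_D^{k-1}p\gg$ to $\mathcal{M}^{J_1,s}_{\widehat X,A}(\gamma)\ll\mathcal{T}_D^{k-1}p\gg$, and the signed counts coincide.

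\smallskip\noindent The technical heart is this compactness step: turning the formal index bookkeeping of Definition~\ref{perturbationinvariance} into a genuine proof that in the virtual-dimension-zero situation every higher-codimension stratum of the compactified moduli space carrying curves through $p$ is empty, and that the one surviving boundary phenomenon (C2) is algebraically invisible. The transversality input, the SFT coherent orientations, and the one-dimensional cobordism argument are all standard once this is in place.
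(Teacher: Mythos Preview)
The paper does not supply its own proof of this theorem: it is stated with attribution to McDuff--Siegel {\cite[Proposition 2.4.2]{McDuff:2021aa}}, and the subsequent Remark merely sketches the cobordism idea (generic path $\{J_t\}$, parametric moduli space as a $1$-cobordism, non-trivial buildings forced into form (C2), and the assertion that such buildings do not affect the count). Your proposal is a correct and more detailed expansion of exactly this argument---transversality for simple curves, SFT compactness producing buildings satisfying (H1)--(H2), the dichotomy (C1)/(C2), exclusion of (C2) for a single generic $J$, and the algebraic cancellation of (C2) ends along a path via $\#(\mathcal{M}^{J_{\partial X},s}_{C_{\partial X}}/\mathbb{R})=0$---so your approach coincides with the one the paper defers to.
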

\begin{remark}
Consider two regular almost complex structures $J_0, J_1\in \mathcal{J}_D(X,\lambda)$. By standard transversality techniques, for a  family $\{J_t\}_{t\in [0,1]}\subset \mathcal{J}_D(X,\lambda)$ connecting $J_0$ and $J_1$, the parametric moduli space $\mathcal{M}^{\{J_t\},s}_{\hat{X}, A}(\gamma)\ll \mathcal{T}_D^{k-1}p\gg$ is a  oriented $1$-dimensional cobordism and any non-trvial holomorphic building $\mathbb{H}$ that can appear in its compactification satisfies (H1) and (H2). By perturbation invariance, $\mathbb{H}$ must be as described in (C2). Theorem \ref{perturbation-invariance} implies that non-trivial buildings of this form  do not interfere with the count, i.e., we have
\[\# \mathcal{M}^{J_0,s}_{\hat{X}, A}(\gamma)\ll \mathcal{T}_D^{k-1}p\gg=\# \mathcal{M}^{J_1,s}_{\hat{X}, A}(\gamma)\ll \mathcal{T}_D^{k-1}p\gg.\]
\end{remark}
\subsection{The case of convex toric domains}\label{roudningfullyconvex}
An essential class of Liouville domains is that of convex toric domains in the standard symplectic vector space $(\mathbb{C}^n,\omega_{\mathrm{std}})$.  These are defined as follows. Consider the standard moment map $\mu:\mathbb{C}^{n}\to \mathbb{R}^{n}_{\geq 0}$ given by
\[\mu(z_1,z_2,\dots, z_n):= \pi(|z_1|^2,\dots, |z_n|^2).\]
A toric domain in $\mathbb{C}^{n}$ is a subset $X^{2n}_{\Omega}$ of $\mathbb{C}^{n}$ that can be written as $X^{2n}_{\Omega}=\mu^{-1}(\Omega)$, for some domain $\Omega \subset \mathbb{R}^{n}_{\geq 0}$. A convex toric domain is a toric domain $X^{2n}_{\Omega}$ for which the set
\[\widehat{\Omega}:=\{(x_1,x_2,\dots,x_n)\in \mathbb{R}^n: (|x_1|,|x_2|,\dots,|x_n|)\in \Omega\}\] 
is a convex subset of $\mathbb{R}^{n}$. We define the diagonal of a toric domain $X^{2n}_{\Omega}$ to be the number 
\[\operatorname{diagonal}(X^{2n}_{\Omega}):=\sup\{a>0:(a,a,\dots,a)\in \Omega\}.\]

For example, let $\Omega$ be the $n$-simplex in $\mathbb{R}^{n}_{\geq 0}$ with vertices $(0,\dots,0), (a_1,\dots,0),$ $(0,a_2, \dots,0)$ and $(0, \dots,a_n)$ where $0<a_1\leq a_2\leq a_3\leq \dots\leq a_n<\infty$. Then, $X^{2n}_{\Omega}$ is the standard $2n$-dimensional ellipsoid  defined by
\[E^{2n}(a_1,a_2,a_3,\dots,a_n):=\bigg\{(z_1,\dots, z_{n})\in \mathbb{C}^{n}: \sum_{i=1}^{n}\frac{  \pi|z_i|^2}{a_i}\leq 1\bigg \}.\] 

A compact convex toric domain  $X^{2n}_{\Omega}$ inherits the standard symplectic structure from  $(\mathbb{C}^n,\omega_{\mathrm{std}}=d\lambda_{\mathrm{std}})$. When $\partial X^{2n}_{\Omega}$ is smooth, the pair $(X^{2n}_{\Omega},d\lambda_{\mathrm{std}})$ is an exact symplectic cobordism with  positive contact type boundary $(\partial X^{2n}_{\Omega},\lambda_{\mathrm{std}}|_{\partial X^{2n}_{\Omega}})$ and no negative boundary. In other words, $(X^{2n}_{\Omega},\lambda_{\mathrm{std}})$ is a Liouville domain. Given a closed Reeb orbit $\gamma$ on $\partial X^{2n}_{\Omega}$, we define its action by 
\[\operatorname{\mathcal{A}}_{\Omega}(\gamma):=\int_{\gamma}\lambda_{\mathrm{std}}\in (0,\infty).\]
In this section, we assume that the convex toric domain $X_{\Omega}^{2n}$ is compact, unless stated otherwise.

In the spirit of Theorem \ref{perturbation-invariance}, for a given positive integer $k$ and a suitable closed Reeb orbit $\gamma$ on the boundary $\partial X^{2n}_{\Omega}$, we would like to have an invariant for $X^{2n}_{\Omega}$ that counts elements in  $\mathcal{M}^{J,s}_{X_{\Omega}^{2n}}(\gamma)\ll \mathcal{T}_D^{k-1}p\gg$. To make the moduli space rigid, $\gamma$ must have a high Conley--Zehnder index: by {\cite[Equation 2.2.1]{McDuff:2021aa}}, the expected dimension of the moduli space under consideration is given by
\[\operatorname{ind}\big(\mathcal{M}^{J,s}_{X_{\Omega}^{2n}}(\gamma)\ll \mathcal{T}^{k-1}p\gg\big)=n-3+\operatorname{CZ}^{\tau_{\mathrm{ext}}}(\gamma)+2-2n-2(k-1),\]
where $\tau_{\mathrm{ext}}$ is the homotopy class of symplectic trivializations of \[(\operatorname{Ker}(\lambda_{\mathrm{std}}|_{\partial X_{\Omega}^{2n}}),d \lambda_{\mathrm{std}}|_{\partial X_{\Omega}^{2n}} )\]
 along $\gamma$ that extend to a capping disk of $\gamma$. It follows that $\mathcal{M}^{J,s}_{X_{\Omega}^{2n}}(\gamma)\ll \mathcal{T}_D^{k-1}p\gg$ is rigid if and only if $\operatorname{CZ}^{\tau_{\mathrm{ext}}}(\gamma)=2k+n-1$. A priori the boundary $\partial X^{2n}_{\Omega}$ might not possess such an orbit $\gamma$. Using the ``rounding procedure'' explained in {\cite[Section 2.2]{MR3868228}}, one can produce closed Reeb orbits on $\partial X^{2n}_{\Omega}$ of the right action and the right Conley--Zehnder index by perturbing the boundary by a small amount. We explain this procedure below.
 
\textbf{The rounding procedure:} We perturb the toric domain to create useful Reeb dynamics. We do this in two steps. In the first step, we analyze the Reeb dynamics on the perturbed domain, and in the second step, we further perturb the domain to make the Reeb flow nondegenerate. Here, we are interested in four-dimensional compact convex toric domains $X^{4}_{\Omega}$. In this case, the domain $\Omega$ consists of points $(x_1,x_2)$ satisfying $0 \leq x_1\leq a$ and $0 \leq x_2\leq f(x_1)$ for some function  $f:[0,a]\to [0,b]$ which is non-increasing and concave, for some $a,b\in \mathbb{R}_{> 0}$. Following {\cite[Section 2.2]{MR3868228}}, we describe the rounding procedure for a four-dimensional compact convex toric domain, but it works for all dimensions.

\textbf{Step 1:} A priori, the boundary $\partial \Omega$ of $\Omega$ may not be smooth.  Replace $\Omega$ by a $C^0$-small perturbation, denoted by $\Omega^{\mathrm{FR}}$, such that $\Omega\subset \Omega^{\mathrm{FR}}$ and $\partial \Omega^{\mathrm{FR}}$ is  a smooth embedded curve in $\mathbb{R}_{\geq 0}^{2}$. More precisely, we perturb $\Omega$ to $\Omega^{\mathrm{FR}}$  such that $\partial \Omega^{\mathrm{FR}}$ is the graph of some smooth function $g:[0, a]\to [0,b]$ satisfying the following properties.
\begin{itemize}
	\item $g(0)=b$, and $g(a)=0$.
	\item $g$ is strictly decreasing and strictly concave down, i.e., $g''(x)<0$ for all $x\in [0,a]$.
	\item $-v\leq g'(0)<0$, and $ g'(1)<-1/v$ for some small $v>0$.
\end{itemize} 
The domain $X^{4}_{\Omega^{\mathrm{FR}}}:=\mu^{-1}(\Omega^{\mathrm{FR}})$ is a $C^0$-small perturbation of  $X^{4}_{\Omega}$ and is a convex toric domain. Next, we explain the Reeb dynamics on $(\partial X^{4}_{\Omega^{\mathrm{FR}}}, \lambda_{\mathrm{std}}).$
\begin{proposition}\label{Reebvectorfield0}
Let $G:\partial \Omega^{\mathrm{FR}}\to \mathbb{S}^1$ be the Gauss map. Then $G$ is a smooth embedding into $\mathbb{S}^1\cap \mathbb{R}^2_{>0}$, and for each $w\in \partial \Omega^{\mathrm{FR}}$ we have
\[\max_{v\in \Omega^{\mathrm{FR}}}\langle G(w),v\rangle=\langle G(w),w\rangle,\]
where $\langle \cdot,\cdot\rangle$ is the standard inner product on $\mathbb{R}^{2}$.
\end{proposition}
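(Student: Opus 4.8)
The plan is to reduce the statement to elementary planar convex geometry, exploiting the explicit description of $\partial\Omega^{\mathrm{FR}}$ as the graph of $g$. First I would parametrize the smooth outer boundary by $x\mapsto w(x):=(x,g(x))$ for $x\in[0,a]$. Since $g$ is strictly concave, $g'$ is strictly decreasing, and combined with $g'(0)<0$ this yields $g'(x)<0$ for every $x\in[0,a]$; hence the outward unit normal to $\Omega^{\mathrm{FR}}$ at $w(x)$ is
\[G(w(x))=\frac{1}{\sqrt{1+g'(x)^2}}\,\bigl(-g'(x),\,1\bigr),\]
both of whose components are strictly positive, so $G$ indeed takes values in $\mathbb{S}^1\cap\mathbb{R}^2_{>0}$.

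Next I would verify that $G$ is a smooth embedding. Smoothness is immediate since $g$ is smooth and the denominator never vanishes. For injectivity, if $G(w(x))=G(w(x'))$, then comparing second components forces $(-g'(x),1)=(-g'(x'),1)$, so $g'(x)=g'(x')$, and strict monotonicity of $g'$ gives $x=x'$. For the immersion property, the auxiliary curve $x\mapsto(-g'(x),1)$ has velocity $(-g''(x),0)$ with $g''(x)<0$ by strict concavity, and this velocity is never proportional to the position vector $(-g'(x),1)$ because their determinant equals $-g''(x)\neq 0$; hence the radial projection to $\mathbb{S}^1$ is an immersion. An injective immersion from the compact interval $[0,a]$ is an embedding, which settles the first assertion.

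It remains to prove the supporting-hyperplane identity. Conceptually this is just the fact that the tangent line to the convex set $\Omega^{\mathrm{FR}}$ at a smooth boundary point $w$ is a supporting line with outer normal $G(w)$; I would make this explicit. Write $w=(x_0,g(x_0))$ and take any $v=(x_1,x_2)\in\Omega^{\mathrm{FR}}$, so $0\le x_2\le g(x_1)$. Then
\[\sqrt{1+g'(x_0)^2}\,\bigl(\langle G(w),v\rangle-\langle G(w),w\rangle\bigr)=\bigl(g(x_1)-g(x_0)-g'(x_0)(x_1-x_0)\bigr)+\bigl(x_2-g(x_1)\bigr)\le 0,\]
since the first bracket is $\le 0$ by the tangent-line estimate for the concave function $g$ and the second is $\le 0$ because $x_2\le g(x_1)$. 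Taking $v=w$ produces equality, so $\max_{v\in\Omega^{\mathrm{FR}}}\langle G(w),v\rangle=\langle G(w),w\rangle$ (and strict concavity shows $w$ is in fact the unique maximizer).

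I do not expect a genuine obstacle: the only points requiring a little care are (i) deducing $g'<0$ on all of $[0,a]$ from the stated normalization conditions, so that the image of $G$ lies in the \emph{open} first quadrant, and (ii) making clear that ``$\partial\Omega^{\mathrm{FR}}$'' in the statement refers to the smooth outer arc $\{(x,g(x)):x\in[0,a]\}$, the portions of the topological boundary lying on the coordinate axes being irrelevant for the Reeb dynamics on $\partial X^4_{\Omega^{\mathrm{FR}}}$.
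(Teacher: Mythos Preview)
Your proof is correct and follows essentially the same approach as the paper: both parametrize $\partial\Omega^{\mathrm{FR}}$ as the graph of $g$, write $G$ explicitly, and exploit the strict concavity of $g$. The only minor difference is in the maximization step: the paper first argues by scaling that the maximum must lie on the graph of $g$ and then differentiates $x\mapsto\langle G(w_0),(x,g(x))\rangle$, whereas you go straight to the tangent-line inequality $g(x_1)\le g(x_0)+g'(x_0)(x_1-x_0)$ and bound $\langle G(w),v\rangle-\langle G(w),w\rangle$ directly for all $v\in\Omega^{\mathrm{FR}}$; your version is slightly more streamlined and also supplies the details of the embedding claim that the paper leaves implicit.
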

\begin{proof}
The Gauss map $G:\partial \Omega^{\mathrm{FR}}\to \mathbb{S}^1$ is given by 
\[G(w)=\frac{(-g'(x),1)}{\sqrt{1+g'(x)^2}}, \text{ where we write } w=(x,g(x)) \text{ for some $x\in [0,a]$}.\]
By the properties of $g$, clearly  $G$ is a smooth embedding into $\mathbb{S}^1\cap \mathbb{R}_{>0}$.

Note that the function $v\to \langle G(w),v\rangle$ has no maximum in $\Omega^{\mathrm{FR}}\setminus \partial \Omega^{\mathrm{FR}}$; for every $v\in \Omega^{\mathrm{FR}}\setminus \partial \Omega^{\mathrm{FR}}$, $(1+t)v \in \Omega^{\mathrm{FR}}\setminus \partial \Omega^{\mathrm{FR}}$ for small $t>0$ and $\langle G(w),(1+t)v\rangle> \langle G(w),v\rangle>0$. So we look for  maxima on $\partial \Omega^{\mathrm{FR}}$. 

Write $w_0=(x_0,g(x_0))$ for some $x_0\in [0,a]$. Define $l:[0,a]\to \mathbb{R}$ by 
\[l(x):=\langle G(w_0), (x,g(x))\rangle.\]
We have 
\[l'(x)=\langle G(w_0), (1,g'(x)\rangle =\frac{g'(x)-g'(x_0)}{\sqrt{1+g'(x_0)^2}}.\]
We have that $l'(x)\leq 0$ for $x\geq x_0$ and  $l'(x)\geq 0$ for $x\leq x_0$. Moreover, $l'(x)=0$ if and only if $x=x_0$. So, $l$ has a unique global maximum at $w_0=(x_0,g(x_0))$. Thus, 
\[\max\big\{\langle G(w),v\rangle: v\in  \Omega^{\mathrm{FR}}\big\}=\langle G(w),w\rangle,\]
for every $w\in \partial \Omega^{\mathrm{FR}}$.
\end{proof}
\begin{proposition}\label{Reebvectorfield}
Let $(\mu_1=\pi|z_1|^2,\theta_1)$ and $(\mu_2=\pi|z_2|^2,\theta_2)$ be  polar coordinates on the first and second factor in $\mathbb{C}^2$, respectively.
The Reeb vector field of $\lambda_{\mathrm{std}}$ on $\partial X^{4}_{\Omega^{\mathrm{FR}}}$ is given by 
\[R_{\lambda_{\mathrm{std}}}(z)=\frac{2\pi}{\langle G(w),w\rangle}  \sum_{w_i\neq 0}v_i\frac{\partial}{\partial \theta_i}, \]
for  $z\in \partial X^{4}_{\Omega^{\mathrm{FR}}}$, $w=\mu(z)=(w_1,w_2)$, and $G(w)=(v_1,v_2)$. 
\end{proposition}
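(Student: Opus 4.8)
The plan is to pass to action--angle coordinates and then verify directly the two defining properties of the Reeb vector field. First I would record that on the $j$-th factor of $\mathbb{C}^2$ one has $x_j\,dy_j-y_j\,dx_j=\tfrac{1}{\pi}\mu_j\,d\theta_j$, so that
\[\lambda_{\mathrm{std}}=\frac{1}{2\pi}\sum_{j=1}^{2}\mu_j\,d\theta_j,\qquad \omega_{\mathrm{std}}=d\lambda_{\mathrm{std}}=\frac{1}{2\pi}\sum_{j=1}^{2}d\mu_j\wedge d\theta_j.\]
Over the open arc of $\partial\Omega^{\mathrm{FR}}$ lying in $\mathbb{R}^2_{>0}$, the hypersurface $\partial X^4_{\Omega^{\mathrm{FR}}}=\mu^{-1}(\partial\Omega^{\mathrm{FR}})$ is a $T^2$-bundle over an interval, and at a point $z$ with $w:=\mu(z)$ its tangent space is spanned by $\partial_{\theta_1},\partial_{\theta_2}$ together with one lift of the tangent line $T_w(\partial\Omega^{\mathrm{FR}})$; in particular, for every $v\in T_z(\partial X^4_{\Omega^{\mathrm{FR}}})$ the vector $d\mu(v)\in\mathbb{R}^2$ is tangent to $\partial\Omega^{\mathrm{FR}}$ at $w$, hence orthogonal to the Gauss normal $G(w)$. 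That $G(w)$ is indeed the \emph{outward} unit normal, and that $\langle G(w),w\rangle>0$, is exactly what Proposition~\ref{Reebvectorfield0} provides.

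Next I would set $R:=\frac{2\pi}{\langle G(w),w\rangle}\big(v_1\partial_{\theta_1}+v_2\partial_{\theta_2}\big)$ on the torus-bundle region, where $G(w)=(v_1,v_2)$, and check the two Reeb equations. Since $R$ is a combination of the $\partial_{\theta_j}$ it is tangent to the fibres of $\mu$, hence to $\partial X^4_{\Omega^{\mathrm{FR}}}$, and $d\mu_j(R)=0$. Then
\[\lambda_{\mathrm{std}}(R)=\frac{1}{2\pi}\sum_{j}\mu_j\,d\theta_j(R)=\frac{1}{\langle G(w),w\rangle}\sum_{j}w_jv_j=1,\]
and, using $d\mu_j(R)=0$,
\[\iota_R\,d\lambda_{\mathrm{std}}=-\frac{1}{2\pi}\sum_{j}d\theta_j(R)\,d\mu_j=-\frac{1}{\langle G(w),w\rangle}\sum_{j}v_j\,d\mu_j,\]
which annihilates every $v\in T_z(\partial X^4_{\Omega^{\mathrm{FR}}})$ because $\sum_j v_j\,d\mu_j(v)=\langle G(w),d\mu(v)\rangle=0$ by the orthogonality noted above. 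Since the Reeb vector field is the unique vector field on the contact boundary satisfying $\iota_R\big(d\lambda_{\mathrm{std}}|_{\partial X^4_{\Omega^{\mathrm{FR}}}}\big)=0$ and $\lambda_{\mathrm{std}}(R)=1$, this $R$ is the Reeb vector field; on the torus-bundle region $w_1,w_2>0$, so $R$ is exactly $\frac{2\pi}{\langle G(w),w\rangle}\sum_{w_i\neq0}v_i\partial_{\theta_i}$.

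Finally I would handle the two points of $\partial\Omega^{\mathrm{FR}}$ on the coordinate axes, say $w=(0,b)$ and $w=(a,0)$. There $\partial X^4_{\Omega^{\mathrm{FR}}}$ is still a smooth hypersurface: near $w=(0,b)$ it is the smooth circle bundle $\{\pi|z_2|^2=g(\pi|z_1|^2)\}$ over a disk in the $z_1$-plane (using $g>0$ on $[0,a)$), and similarly near $w=(a,0)$; hence the Reeb vector field is smooth there. Only the term with $w_i\neq0$ survives in the asserted formula, and this term extends smoothly across these codimension-two loci; since it agrees with $R$ on the dense complement, the identity holds on all of $\partial X^4_{\Omega^{\mathrm{FR}}}$. (One may also check it by hand: on $\{\mu_1=0\}$ one has $\lambda_{\mathrm{std}}=\tfrac{b}{2\pi}\,d\theta_2$ and $\langle G(w),w\rangle=bv_2$, so $R=\tfrac{2\pi}{b}\partial_{\theta_2}$ trivially satisfies both Reeb equations.) The only mildly delicate step is this last smoothness argument at the axis points; everything else is a short, obstruction-free computation.
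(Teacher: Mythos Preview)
Your proof is correct and follows essentially the same approach as the paper: both pass to action--angle coordinates, characterize $T_z(\partial X^4_{\Omega^{\mathrm{FR}}})$ via orthogonality of $d\mu(v)$ to $G(w)$, and then verify the two defining Reeb equations (the paper solves for $R$, you guess-and-check, which is the same computation). Your treatment of the axis points via smooth extension is slightly more explicit than the paper's, which simply writes the general formula with the sum restricted to $w_i\neq 0$.
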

\begin{proof}
Let $w\in \partial \Omega^{\mathrm{FR}}$ and $z \in \mu^{-1}(w)$. We want to determine the tangent space $T_z \partial X^{4}_{\Omega^{\mathrm{FR}}}$. Note that  $v\in T_z \partial X^{4}_{\Omega^{\mathrm{FR}}}$ if and only if $\langle d\mu(v),G(w)\rangle=0$, where $\mu=(\mu_1,\mu_2)$ is the moment map. 

For $z\in (\mathbb{C}\setminus \{0\})\times (\mathbb{C}\setminus \{0\})$, any $v\in T_z\mathbb{C}^2$ can be written as 
\[v= \bigg(a_1\frac{\partial}{\partial \mu_1}+a_2\frac{\partial}{\partial \mu_2}\bigg)+\bigg(b_1\frac{\partial}{\partial \theta_{1}}+b_2\frac{\partial}{\partial \theta_{2}}\bigg).\]
Write $G(w)=(v_1,v_2)$ and $w=(w_1,w_2)$. If none of the $w_i$ is zero, then  $\langle d\mu(v),G(w)\rangle=0$ implies that 
\[T_z \partial X^{4}_{\Omega^{\mathrm{FR}}}=\bigg\{\bigg(a_1\frac{\partial}{\partial u_1}+a_2\frac{\partial}{\partial u_2}\bigg)+\bigg(b_1\frac{\partial}{\partial \theta_{1}}+b_2\frac{\partial}{\partial \theta_{2}}\bigg):a_1v_1+a_2v_2=0, a_1,a_2,b_1,b_2\in \mathbb{R} \bigg\},\]
or equivalently  
\[T_z \partial X^{4}_{\Omega^{\mathrm{FR}}}=\bigg\{a\bigg(\frac{\partial}{\partial u_1}-\frac{v_1}{v_2}\frac{\partial}{\partial u_2}\bigg)+\bigg(b_1\frac{\partial}{\partial \theta_{1}}+b_2\frac{\partial}{\partial \theta_{2}}\bigg): a,b_1,b_2\in \mathbb{R} \bigg\}.\]
In general, for any $w=(w_1,w_2)\in \partial \Omega^{\mathrm{FR}}$ we have 
 \[T_z \partial X^{4}_{\Omega^{\mathrm{FR}}}=\big(\bigoplus_{w_i=0} \mathbb{C}\big)\bigoplus\bigg\{\sum_{w_i\neq 0}\bigg(a_i\frac{\partial}{\partial u_1}+b_i\frac{\partial}{\partial \theta_{1}}\bigg): \sum_{w_i\neq 0}a_iv_i=0 \bigg\}.\]
 
Next we determine the Reeb vector field $R_{\lambda_{\mathrm{std}}}$ on $\partial X^{4}_{\Omega^{\mathrm{FR}}}$. In polar coordinates, we have
 \[\lambda_{\mathrm{std}}=\frac{1}{2\pi}(\mu_1d\theta_1+\mu_2d\theta_2).\]
If a vector field $V\in T \partial X^{4}_{\Omega^{\mathrm{FR}}}$ satisfies  
 \[\lambda_{\mathrm{std}}(V)=1, \text{ and } d\lambda_{\mathrm{std}}(V,\cdot)|_{T \partial X^{4}_{\Omega^{\mathrm{FR}}}}=0,\]
then for every $z\in \mu^{-1}(w)$ with $w_1,w_2\neq 0$ we have
 \[V(z)=2\pi \bigg(b_1\frac{\partial}{\partial \theta_{1}}+b_2\frac{\partial}{\partial \theta_{2}}\bigg), \text{ where } b_1 w_1+b_2w_2=1, \text{ and }  b_1v_2-v_1b_2=0.\]
Thus, 
\[R_{\lambda_{\mathrm{std}}}(z)=V(z)=\frac{2\pi}{\langle G(w),w\rangle}  \bigg(v_1\frac{\partial}{\partial \theta_1}+v_2\frac{\partial}{\partial \theta_2}\bigg). \]
In general, we have
\[R_{\lambda_{\mathrm{std}}}(z)=\frac{2\pi}{\langle G(w),w\rangle}  \sum_{w_i\neq 0}v_i\frac{\partial}{\partial \theta_i}.\qedhere \]
\end{proof}
\begin{proposition}\label{Reebvectorfield3}
Consider $w\in \partial \Omega^{\mathrm{FR}}$ such that $w_1,w_2\neq 0$. If $G(w)=c(l,m)$ for some $(l,m)\in \mathbb{Z}^2_{\geq 1}$ and $c\in \mathbb{R}_{\geq 0}$, then the Lagrangian product torus $\mu^{-1}(w)$  is foliated by a Morse--Bott $S^1$-family of closed Reeb orbits. The closed Reeb orbits in $\mu^{-1}(w)$ are the $\operatorname{gcd}(l,m)$-fold covers of the underlying simple Reeb orbits. Moreover, we have
\[\operatorname{\mathcal{A}}_{\Omega^{\mathrm{FR}}}(\gamma)=\int_{\gamma}\lambda_{\mathrm{std}}|_{\partial X^4_{\Omega^{\mathrm{FR}}}}=\max_{v\in \Omega^{\mathrm{FR}}}\langle v,(l,m)\rangle\]
for every orbit $\gamma$ in the family $\mu^{-1}(w)$. If $G(w)\neq c(l,m)$ for some $(l,m)\in \mathbb{Z}^2_{\geq 1}$ and $c\in \mathbb{R}_{\geq 0}$, then $\mu^{-1}(w)$  contains no closed Reeb orbit. 
\end{proposition}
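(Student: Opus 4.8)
The plan is to reduce the statement to the elementary dynamics of a linear flow on the $2$-torus $\mu^{-1}(w)$, feeding in the explicit Reeb vector field from Proposition~\ref{Reebvectorfield}. Since $w_1,w_2\neq 0$, the fibre $\mu^{-1}(w)$ is the Lagrangian torus $\{\mu_1=w_1,\ \mu_2=w_2\}$ on which $(\theta_1,\theta_2)$ are global angular coordinates, and by Proposition~\ref{Reebvectorfield} the restriction of $R_{\lambda_{\mathrm{std}}}$ to it is the \emph{constant} vector field $\tfrac{2\pi}{\langle G(w),w\rangle}\bigl(v_1\,\partial_{\theta_1}+v_2\,\partial_{\theta_2}\bigr)$ with $G(w)=(v_1,v_2)$; so the Reeb flow on $\mu^{-1}(w)$ is the linear flow on $T^2$ of constant velocity parallel to $G(w)$. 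Proposition~\ref{Reebvectorfield0} gives $G(w)\in\mathbb{R}^2_{>0}$, hence $v_1,v_2>0$, and a classical fact about linear flows on $T^2$ is that every orbit is closed when the velocity direction is proportional to an integer vector and otherwise every orbit is dense and none is closed. As $|G(w)|=1$, proportionality to an integer vector is exactly $G(w)=c(l,m)$ with $(l,m)\in\mathbb{Z}^2_{\geq 1}$ and $c>0$; this already gives the stated dichotomy, and in the rational case every point of $\mu^{-1}(w)$ lies on a closed Reeb orbit while in the other case there is no closed orbit in $\mu^{-1}(w)$.

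Next I describe these orbits and their actions in the rational case. Write $d=\gcd(l,m)$ and $(l,m)=d(l_0,m_0)$ with $\gcd(l_0,m_0)=1$. The flow lines wind $(l_0,m_0)$ times around $T^2$, are embedded, foliate $\mu^{-1}(w)$, and so constitute an $S^1$-family of simple closed Reeb orbits; an orbit whose total $\theta$-winding is the given vector $(l,m)$ is then precisely the $d$-fold multiple cover of one of these. For any closed orbit $\gamma\subset\mu^{-1}(w)$ of total winding $(l,m)$, the identities $\mu_i\equiv w_i$ on $\mu^{-1}(w)$ and $\lambda_{\mathrm{std}}=\tfrac{1}{2\pi}(\mu_1\,d\theta_1+\mu_2\,d\theta_2)$ give $\mathcal{A}_{\Omega^{\mathrm{FR}}}(\gamma)=\int_\gamma\lambda_{\mathrm{std}}=w_1 l+w_2 m=\langle w,(l,m)\rangle$; and since $G(w)=c(l,m)$ with $c>0$, Proposition~\ref{Reebvectorfield0} converts this into $\langle w,(l,m)\rangle=\tfrac1c\langle G(w),w\rangle=\tfrac1c\max_{v\in\Omega^{\mathrm{FR}}}\langle G(w),v\rangle=\max_{v\in\Omega^{\mathrm{FR}}}\langle v,(l,m)\rangle$, which is the asserted formula.

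The one point requiring a genuine computation — and where I expect the only real difficulty — is the Morse--Bott nondegeneracy of the $S^1$-family. I would work in coordinates $(x,\theta_1,\theta_2)$ on the open locus $\{w_1,w_2\neq 0\}$ of $\partial X^4_{\Omega^{\mathrm{FR}}}$, with $x\mapsto(x,g(x))$ parametrizing $\partial\Omega^{\mathrm{FR}}$; since the Reeb flow preserves $\mu$, Proposition~\ref{Reebvectorfield} shows the time-$T$ return map, for $T$ the common period of the family, has the form $(x,\theta_1,\theta_2)\mapsto(x,\theta_1+\rho_1(x),\theta_2+\rho_2(x))$ for smooth functions $\rho_1,\rho_2$, so its linearization at a point of $\mu^{-1}(w)$ fixes $\partial_{\theta_1},\partial_{\theta_2}$ and sends $\partial_x\mapsto\partial_x+\rho_1'(x_*)\,\partial_{\theta_1}+\rho_2'(x_*)\,\partial_{\theta_2}$. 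Because $(v_1(x):v_2(x))=(-g'(x):1)$ and $g$ is strictly concave by Step~1 of the rounding construction, the slope $v_2/v_1=-1/g'(x)$ is strictly monotone in $x$, so the vector $(v_1,v_2)/\langle G,w\rangle$ is not stationary at $x_*$ and hence $(\rho_1'(x_*),\rho_2'(x_*))\neq(0,0)$; therefore $\ker(d\phi^T-\mathrm{Id})=\mathrm{span}(\partial_{\theta_1},\partial_{\theta_2})=T\mu^{-1}(w)$, which is exactly the Morse--Bott condition. The delicate part is precisely to verify that these off-diagonal entries do not vanish, and the geometric input that makes it work is the strict concavity of $g$ built into the perturbed domain $\Omega^{\mathrm{FR}}$.
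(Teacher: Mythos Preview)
Your proof is correct and follows essentially the same line as the paper: both reduce to the explicit Reeb vector field of Proposition~\ref{Reebvectorfield}, identify the flow on $\mu^{-1}(w)$ as a linear flow on $T^2$, and compute the action via Proposition~\ref{Reebvectorfield0}. The only notable difference is that you actually \emph{verify} the Morse--Bott nondegeneracy using the strict concavity of $g$, whereas the paper simply asserts it; your treatment is in this sense more complete.
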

\begin{proof}
Consider $w\in \partial \Omega^{\mathrm{FR}}$. When $w_1,w_2\neq 0$, $\mu^{-1}(w)$ is a Lagrangian product torus that lies entirely on $\partial X^{4}_{\Omega^{\mathrm{FR}}}$. The Reeb vector field $R_{\lambda_{\mathrm{std}}}$ is tangent to $\mu^{-1}(w)$; this follows from the fact that the $2$-form $d\lambda_{\mathrm{std}}|_{\partial X^4_{\Omega^{\mathrm{FR}}}}$ has maximal rank and  $\mu^{-1}(w)$ is Lagrangian, or directly from Proposition \ref{Reebvectorfield}. If for some pair $(l,m)\in \mathbb{Z}^2_{\geq 1}$ the unit normal vector at $w_{l,m}:=w \in \partial X^{4}_{\Omega^{\mathrm{FR}}} $ is parallel to  $(l,m)$, then by Proposition \ref{Reebvectorfield}  there is a Morse--Bott $S^1$-family of closed Reeb orbits sweeping out the Lagrangian torus $\mu^{-1}(w_{l,m})\subset \partial X^{4}_{\Omega^{\mathrm{FR}}}$. Moreover, the closed Reeb orbits in $\mu^{-1}(w_{l,m})$ are the $\operatorname{gcd}(l,m)$-fold covers of the underlying simple Reeb orbits.

The action $\operatorname{\mathcal{A}}_{\Omega^{\mathrm{FR}}}(\cdot)$ is constant over the $S^1$-family of closed Reeb orbits $\mu^{-1}(w_{l,m})$; given $\gamma_0, \gamma_1 \in \mu^{-1}(w_{l,m})$ and a homotopy $u:[0,1]\times S^1\to \mu^{-1}(w_{l,m})$ interpolating between $\gamma_0$ and $\gamma_1$, since $\mu^{-1}(w_{l,m})$ is Lagrangian, by Stokes' theorem  we have 
\[\operatorname{\mathcal{A}}_{\Omega^{\mathrm{FR}}}(\gamma_1)- \operatorname{\mathcal{A}}_{\Omega^{\mathrm{FR}}}(\gamma_0)=\int_{[0,1]\times S^1}u^{*}d\lambda_{\mathrm{std}}|_{\partial X^4_{\Omega^{\mathrm{FR}}}}=0.\]
By Proposition \ref{Reebvectorfield} and Proposition \ref{Reebvectorfield0}, we have
\[\operatorname{\mathcal{A}}_{\Omega^{\mathrm{FR}}}(\gamma)=\int_{\gamma}\lambda_{\mathrm{std}}|_{\partial X^4_{\Omega^{\mathrm{FR}}}}=\max_{v\in \Omega^{\mathrm{FR}}}\langle v,(l,m)\rangle\]
for every orbit $\gamma \in \mu^{-1}(w_{l,m})$. \qedhere
\end{proof}
 
\textbf{Step 2:} Given $K>0$, following {\cite[Section 2.3]{Bourgeois2002}}, we perturb  $(X^{4}_{\Omega^{\mathrm{FR}}}, \lambda_{\mathrm{std}})$ slightly to make all closed Reeb orbits of action up to $K$ nondegenerate. Let $S\subset \partial X^{4}_{\Omega^{\mathrm{FR}}}$ be the union of the images of $S^1$-families of simple closed Reeb orbits of action less or equal to $K$. More precisely, by Proposition \ref{Reebvectorfield3}, 
\[S=\bigcup_{w_{l,m}\in \Omega^{\mathrm{FR}}} \mu^{-1}(w_{l,m}),\]
where the union is taken over all points $w_{l,m} \in \Omega^{\mathrm{FR}}$ for which there exist coprime positive integers $(l,m)$ such that $G(w_{l,m})=c (l,m)$ for some $c>0$, and $\max_{v\in \Omega^{\mathrm{FR}}}\langle v,(l,m)\rangle\leq K$. The Reeb flow induces an $S^1$-action on $S$, and the quotient $S/S^1=\cup \mu^{-1}(w_{l,m})/S^1 $ is a disjoint union of finitely many circles. Choose a smooth function $\bar{f}:\partial X^{4}_{\Omega^{\mathrm{FR}}}\to \mathbb{R}$ supported in a small neighborhood of $S$ that descends to a Morse function $f: S/S^1\to \mathbb{R}$ which is perfect on each component $\mu^{-1}(w_{l,m})/S^1$ of $S/S^1$. By {\cite[Lemma 2.3]{Bourgeois2002}}, using the smooth function $\bar{f}$ one can perturb the domain $X^{4}_{\Omega^{\mathrm{FR}}}$ such that after the perturbation each $S^1$-family  of closed Reeb orbits $\mu^{-1}(w_{l,m})$ in $S/S^1$ is resolved into two nondegenerate closed Reeb orbits corresponding to the critical points of $f|_{\mu^{-1}(w_{l,m})}$.  By {\cite[Lemma 2.4]{Bourgeois2002}}, one of these two orbits is elliptic and the other is hyperbolic. Let $e_{l,m}$ denote the elliptic orbit and $h_{l,m}$ denote the hyperbolic one coming from $\mu^{-1}(w_{l,m})$. We assume we have perturbed $ X^{4}_{\Omega^{\mathrm{FR}}}$ as above and still denote by $ X^{4}_{\Omega^{\mathrm{FR}}}$ the perturbed domain.

All the above orbits on $ X^{4}_{\Omega^{\mathrm{FR}}}$ are contractible.  For each closed Reeb orbit $\gamma$, we choose the symplectic trivialization (unique up to homotopy), denoted by $\tau_{\mathrm{ext}}$, of the contact distribution \[(\operatorname{Ker}(\lambda_{\mathrm{std}}|_{\partial X_{\Omega^{\mathrm{FR}}}^4}),d \lambda_{\mathrm{std}}|_{\partial X_{\Omega^{\mathrm{FR}}}^4} )\] along $\gamma$ that extends to a capping disk of $\gamma$ in $\partial X^{4}_{\Omega^{\mathrm{FR}}}$. By {\cite[Sec. 2.2, Ineq. (2-18)]{MR3868228}}, for the Reeb orbits $e_{l,m}$ and $h_{l,m}$ described above we have  
\begin{equation}\label{CZI}
	\operatorname{CZ}^{\tau_{\mathrm{ext}}}(e_{l,m})=2(l+m)+1,
\end{equation}
\[\operatorname{CZ}^{\tau_{\mathrm{ext}}}(h_{l,m})=2(l+m).\]

There are two different contact actions associated with the Reeb orbits $e_{l,m}$ and $h_{l,m}$.
\begin{itemize}
	\item The actions before perturbing the domain $ X^{4}_{\Omega^{\mathrm{FR}}}$ to a non-degenerate domain, which by Proposition \ref{Reebvectorfield3} are given by 
	\begin{equation}
		\operatorname{\mathcal{A}}_{\Omega^{\mathrm{FR}}}(e_{l,m})=\operatorname{\mathcal{A}}_{\Omega^{\mathrm{FR}}}(h_{l,m})=\max_{v\in \Omega^{\mathrm{FR}}}\langle v,(l,m)\rangle.
	\end{equation}
\item The actions, denoted by $\operatorname{\mathcal{\tilde{A}}}(e_{l,m})$ and $\operatorname{\mathcal{\tilde{A}}}(h_{l,m})$, after perturbing $X^{4}_{\Omega^{\mathrm{FR}}}$ (``perturbed actions''). By construction, the actions $\operatorname{\mathcal{\tilde{A}}}(e_{l,m})$ and $\operatorname{\mathcal{\tilde{A}}}(h_{l,m})$ are not equal---in fact, $\operatorname{\mathcal{\tilde{A}}}(e_{l,m})$ is greater than $\operatorname{\mathcal{\tilde{A}}}(h_{l,m})$; see {\cite[Lemma 2.3]{Bourgeois2002}}. 
\end{itemize}

 By {\cite[Section 2.3]{Bourgeois2002}}, the perturbation of $X^{4}_{\Omega^{\mathrm{FR}}}$ in Step $2$ can be chosen arbitrary small in $C^1$-norm, so the perturbed action $\operatorname{\mathcal{\tilde{A}}}(\cdot)$ is a small perturbation of $\operatorname{\mathcal{A}}_{\Omega^{\mathrm{FR}}}(\cdot)$. From now on, we assume that the perturbation of $X^{4}_{\Omega^{\mathrm{FR}}}$ is very small and so we will not differentiate among  $\operatorname{\mathcal{\tilde{A}}}(\cdot)$ and $\operatorname{\mathcal{A}}_{\Omega^{\mathrm{FR}}}(\cdot)$. This completes our description of the rounding procedure. 
  
 By {\cite[Lemma 2.4]{Choi_2014}}, the assignment $\Omega\to \max_{v\in \Omega}\langle v,(l,m)\rangle$ is continuous with respect to the Hausdorff metric on compact sets $\Omega$ of $\mathbb{R}^2$. So we have that  
\[\lim_{\Omega^{\mathrm{FR}}\to \Omega}\max_{v \in \Omega^{\mathrm{FR}}}\langle v ,(l,m)\rangle = \max_{v \in \Omega}\langle v,(l,m)\rangle.\]
In particular, if we choose the rounded domain $\Omega^{\mathrm{RF}}$ very close to $\Omega$ in $C^0$-norm, then
\[\max_{v\in \Omega^{\mathrm{FR}}}\langle v,(l,m)\rangle \approx \max_{v\in \Omega}\langle v,(l,m)\rangle.\]

Next, we discuss some consequences of the rounding procedure for the computation of the  Gutt--Hutchings capacities of toric domains and the count of $J$-holomorphic planes with local tangency constraints. First, we briefly recall the definition of the Gutt--Hutchings capacities.

 Let $(X,\lambda)$ be a connected Liouville domain. We consider its symplectic cohomology $\operatorname{SH}(X,\mathbb{Q})$  with coefficients in the field of rational numbers $\mathbb{Q}$. Roughly speaking, the underlying cochain complex $(\operatorname{CF} (X,\mathbb{Q}),\partial)$ is a $\mathbb{Q}$-vector space with one generator for each critical point of a chosen Morse function on $X$, and two generators $\hat{\gamma}$ and $\check{\gamma}$ for each periodic Reeb orbit $\gamma$ on $\partial X$. The differential $\partial$ counts Floer cylinders interpolating between the input and output orbits.

The cochain complex $\operatorname{CF} (X,\mathbb{Q})$ carries a natural increasing action filtration
\[
\mathcal{F}_{\leq a}\operatorname{CF} (X,\mathbb{Q}) \subset \mathcal{F}_{\leq a'}\operatorname{CF} (X,\mathbb{Q}), \qquad 0 \leq a < a'.
\]
The subcomplex $\mathcal{F}_{\leq 0}\operatorname{CF} (X,\mathbb{Q}) \subset \operatorname{CF} (X,\mathbb{Q})$, generated by action-zero elements, corresponds to the Morse cochain complex of $X$ over $\mathbb{Q}$. In particular, there is a canonical action-zero element
\[
e \in \mathcal{F}_{\leq 0}\operatorname{CF} (X,\mathbb{Q}),
\]
given by the minimum of the Morse function, which represents the unit on the chain level.

The complex $\operatorname{CF} (X,\mathbb{Q})$ has the structure of an $S^1$-complex; that is, there is a sequence of operations
\[
\delta_i: \operatorname{CF} (X,\mathbb{Q}) \to \operatorname{CF} (X,\mathbb{Q})\]
of degree $1-2i$, with $\delta_0=\partial$ (the Floer differential), satisfying the relation
\[
\sum_{i+j=k}\delta_i\circ\delta_j=0
\]
for all $k\in \mathbb{Z}_{\geq 0}$.

The $S^1$-equivariant symplectic cohomology of $X$ with coefficients in $\mathbb{Q}$, denoted by $\operatorname{SH_{S^1}}(X,\mathbb{Q})$, is defined to be the cohomology of the cochain complex
\[
\bigl(\operatorname{CF_{S^1}}(X,\mathbb{Q}),\partial_{S^1}\bigr)
:=
\left(
\operatorname{CF}(X,\mathbb{Q})\otimes_{\mathbb{Q}[u]}\mathbb{Q}[u^{-1}],
\sum_{i=0}^{\infty}\delta_i u^i
\right),
\]
where $\mathbb{Q}[u^{-1}]$ denotes the $\mathbb{Q}[u]$-module
$
\mathbb{Q}[u,u^{-1}]/u\mathbb{Q}[u].$

For $k\in \mathbb{Z}_{\geq 1}$, the $k$-th Gutt--Hutchings capacity of $X$, introduced in \cite{MR3868228}, is defined by

\begin{equation}\label{Gutt--Hutchingscapacities}
c_k^{\mathrm{GH}}(X)
:=
\inf\left\{
a \,\middle|\,
\partial_{S^1}(x)=u^{-k+1}e
\text{ for some }
x\in \mathcal{F}_{\leq a}\operatorname{CF_{S^1}}(X,\mathbb{Q})
\right\}.
\end{equation}

Putting things together, we have the following. 
\begin{theorem}[{\cite[Theorem 1.6, Section 2.2]{MR3868228}}]\label{GH}
Let $X_{\Omega}^4$ be any four-dimensional compact convex toric domain. Given any integer $k\in \mathbb{Z}_{\geq 1}$, the rounding $X_{\Omega^{\mathrm{FR}}}^4$ can be arranged so that
\begin{equation}\label{final}
c^{\mathrm{GH}}_k(X_{\Omega^{\mathrm{FR}}}^4)=\min_{
		(l',m')\in \mathbb{Z}^2_{\geq 0}\atop
		l'+m'=k }  \max_{v\in \Omega^{\mathrm{FR}}}\langle v,(l',m')\rangle=\min_{\gamma} \operatorname{\mathcal{A}}_{\Omega^{\mathrm{FR}}}(\gamma),
\end{equation}
where the minimum on the right-hand side is taken over elliptic Reeb orbits $\gamma$ on $\partial X_{\Omega^{\mathrm{FR}}}^4$ with $\operatorname{CZ}^{\tau_{\mathrm{ext}}}(\gamma)=2k+1$, and $c^{\mathrm{GH}}_k(X_{\Omega^{\mathrm{FR}}}^4)$ denotes the $k$th Gutt-Hutchings capacity of $X_{\Omega^{\mathrm{FR}}}^4$. Moreover, the rounding $\Omega^{\mathrm{FR}}$ can be chosen arbitrary close to $\Omega$ in $C^0$-norm and 
\[c^{\mathrm{GH}}_k(X_{\Omega}^4)=\lim_{\Omega^{\mathrm{FR}}\to \Omega}c^{\mathrm{GH}}_k(X_{\Omega^{\mathrm{FR}}}^4)= \min_{
		(l',m')\in \mathbb{Z}^2_{\geq 0}\atop
		l'+m'=k }  \max_{v\in \Omega}\langle v,(l',m')\rangle.\]
\end{theorem}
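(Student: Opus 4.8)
The strategy is to compute $c^{\mathrm{GH}}_k$ for the smooth nondegenerate convex toric domain $X^{4}_{\Omega^{\mathrm{FR}}}$ produced by the rounding procedure, where the Reeb dynamics are completely understood, and then to pass to the limit $\Omega^{\mathrm{FR}}\to\Omega$. The analytic heart of the argument, which I would cite from \cite{MR3868228} (and which rests on the computation of positive $S^{1}$-equivariant symplectic homology of convex toric domains), is the following: $X^{4}_{\Omega^{\mathrm{FR}}}$ is ``dynamically convex enough'' that its positive $S^{1}$-equivariant symplectic homology has the same shape as that of the ball --- one generator on each diagonal $k\ge 1$ --- so that $c^{\mathrm{GH}}_k(X^{4}_{\Omega^{\mathrm{FR}}})$ is a spectral number equal to the smallest $d\lambda_{\mathrm{std}}$-action of a good closed Reeb orbit $\gamma$ on $\partial X^{4}_{\Omega^{\mathrm{FR}}}$ whose degree matches $k$, i.e.\ with $\operatorname{CZ}^{\tau_{\mathrm{ext}}}(\gamma)=2k+n-1=2k+1$.

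Granting this, the first two equalities in (\ref{final}) reduce to a bookkeeping exercise with the Reeb orbits enumerated in Propositions \ref{Reebvectorfield0}--\ref{Reebvectorfield3} together with the Step 2 perturbation. After the perturbation, the closed Reeb orbits on $\partial X^{4}_{\Omega^{\mathrm{FR}}}$ are the elliptic orbits $e_{l,m}$ and hyperbolic orbits $h_{l,m}$ arising from the Morse--Bott tori $\mu^{-1}(w_{l,m})$ over the rational directions $(l,m)$ in the image of the Gauss map $G$, together with the iterates of the two axis orbits lying over the endpoints of the graph $\partial\Omega^{\mathrm{FR}}$. By (\ref{CZI}) the hyperbolic orbits $h_{l,m}$ (and all their iterates) have even Conley--Zehnder index, so they never contribute in degree $2k+1$; the orbits $e_{l,m}$ satisfy $\operatorname{CZ}^{\tau_{\mathrm{ext}}}(e_{l,m})=2(l+m)+1$, and a short index computation at the two corners of $\partial\Omega^{\mathrm{FR}}$ shows that an iterate of an axis orbit has index $2k+1$ precisely for the iterate realising the boundary vector $(k,0)$ or $(0,k)$. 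Hence, choosing the rounding parameter $v>0$ small enough (depending on $k$) that every $(l',m')\in\mathbb{Z}^{2}_{\ge 1}$ with $l'+m'=k$ is visible on $\partial\Omega^{\mathrm{FR}}$, the good Reeb orbits of index $2k+1$ are --- apart from orbits of strictly larger action --- exactly the elliptic orbits indexed by $(l',m')\in\mathbb{Z}^{2}_{\ge 0}$ with $l'+m'=k$, and by Proposition \ref{Reebvectorfield3} each such orbit $\gamma$ has action $\operatorname{\mathcal{A}}_{\Omega^{\mathrm{FR}}}(\gamma)=\max_{v\in\Omega^{\mathrm{FR}}}\langle v,(l',m')\rangle$. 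Taking the minimum gives $c^{\mathrm{GH}}_k(X^{4}_{\Omega^{\mathrm{FR}}})=\min_{l'+m'=k}\max_{v\in\Omega^{\mathrm{FR}}}\langle v,(l',m')\rangle=\min_{\gamma}\operatorname{\mathcal{A}}_{\Omega^{\mathrm{FR}}}(\gamma)$, the last minimum taken over elliptic orbits of index $2k+1$ as claimed.

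For the limit statement I would argue by monotonicity and continuity. Since $\Omega\subset\Omega^{\mathrm{FR}}$ there is a symplectic inclusion $X^{4}_{\Omega}\hookrightarrow X^{4}_{\Omega^{\mathrm{FR}}}$, whence $c^{\mathrm{GH}}_k(X^{4}_{\Omega})\le c^{\mathrm{GH}}_k(X^{4}_{\Omega^{\mathrm{FR}}})$; conversely $(1-\delta)\Omega^{\mathrm{FR}}\subset\Omega$ once $\Omega^{\mathrm{FR}}$ is $C^{0}$-close enough to $\Omega$, and conformality of $c^{\mathrm{GH}}_k$ gives $(1-\delta)\,c^{\mathrm{GH}}_k(X^{4}_{\Omega^{\mathrm{FR}}})\le c^{\mathrm{GH}}_k(X^{4}_{\Omega})$. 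Squeezing as $\Omega^{\mathrm{FR}}\to\Omega$ yields $c^{\mathrm{GH}}_k(X^{4}_{\Omega})=\lim_{\Omega^{\mathrm{FR}}\to\Omega}c^{\mathrm{GH}}_k(X^{4}_{\Omega^{\mathrm{FR}}})$, and since by {\cite[Lemma 2.4]{Choi_2014}} each map $\Omega\mapsto\max_{v\in\Omega}\langle v,(l',m')\rangle$ is continuous for the Hausdorff metric, so is the finite minimum $\min_{l'+m'=k}\max_{v\in\Omega}\langle v,(l',m')\rangle$; passing to the limit in the formula just proved gives $c^{\mathrm{GH}}_k(X^{4}_{\Omega})=\min_{l'+m'=k}\max_{v\in\Omega}\langle v,(l',m')\rangle$. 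The only genuinely hard step is the structural input invoked at the start --- the computation of positive $S^{1}$-equivariant symplectic homology for convex toric domains, which is what pins $c^{\mathrm{GH}}_k$ to the minimal-action good orbit of index $2k+1$; the remainder is the rounding dictionary and a soft continuity argument.
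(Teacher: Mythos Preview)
Your proposal is correct and follows essentially the same approach as the paper: the paper does not give a separate proof block for this theorem but rather assembles it from the preceding discussion --- the rounding procedure (Steps~1--2), the Reeb dynamics of Propositions~\ref{Reebvectorfield0}--\ref{Reebvectorfield3}, the Conley--Zehnder formula~(\ref{CZI}), and the continuity input from \cite[Lemma~2.4]{Choi_2014} --- together with the cited structural result \cite[Theorem~1.6, Section~2.2]{MR3868228} for the capacity computation; your sketch makes this assembly explicit and adds the clean monotonicity/conformality squeezing for the limit, which the paper leaves implicit.
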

\begin{remark}\label{simpleorbit}
Let $X_{\Omega}^4$ be any four-dimensional compact convex toric domain. Given an integer $k\in \mathbb{Z}_{\geq 1}$, by Theorem \ref{GH} and the discussion preceding it, there exist a rounding $\Omega^{\mathrm{FR}}$ that can be chosen arbitrary close to $\Omega$ in $C^0$-norm, some $(l,m)\in \mathbb{Z}^2_{\geq 0}$ with $l+m=k$, and an elliptic Reeb orbit $e_{l,m}$ on $\partial X_{\Omega^{\mathrm{FR}}}^4$ such that
\[\operatorname{\mathcal{A}}_{\Omega^{\mathrm{FR}}}(e_{l,m})=\max_{v\in \Omega^{\mathrm{FR}}}\langle v,(l,m)\rangle =c^{\mathrm{GH}}_k(X_{\Omega^{\mathrm{FR}}}^4).\]
Let $\bar{l}=l/\operatorname{gcd}(l,m)$, $\bar{m}=l/\operatorname{gcd}(l,m)$, and $\bar{k}=k/\operatorname{gcd}(l,m)$. By the above discussion, $e_{l,m}$ is the $\operatorname{gcd}(l,m)$-fold cover of the simple elliptic orbit $e_{\bar{k}}:=e_{\bar{l},\bar{m}}$ and 
\[\operatorname{\mathcal{A}}_{\Omega^{\mathrm{FR}}}(e_{\bar{k}})\leq \operatorname{\mathcal{A}}_{\Omega^{\mathrm{FR}}}(e_{l,m})=c^{\mathrm{GH}}_k(X_{\Omega^{\mathrm{FR}}}^4).\]
\end{remark}
\begin{theorem}[{\cite[Section 5, Proposition 5.6.1]{McDuff:2021aa}}]\label{puctured-dsik}
Let $X_{\Omega}^4$ be any four-dimensional compact convex toric domain. Choose a point $p\in \operatorname{int}(X_{\Omega}^4)$ and local divisor $D$ containing $p$. Given an integer $k\in \mathbb{Z}_{\geq 1}$,  let $\{X_{\Omega^{\mathrm{FR}}}^4$, $\bar{l}$, $\bar{m}$, $\bar{k}$, $e_{\bar{k}}\}$ be the corresponding data described in Remark \ref{simpleorbit}, with $e_{\bar{k}}$ a simple elliptic Reeb orbit. We have the following. 
\begin{itemize}
		\item The moduli space $\mathcal{M}^{J}_{X_{\Omega^{\mathrm{FR}}}^4}(e_{\bar{k}})\ll \mathcal{T}_D^{\bar{k}-1}p\gg$ has Fredholm index equal to zero and is formally perturbation invariant.
		\item For generic $J \in  \mathcal{J}_D(X_{\Omega^{\mathrm{FR}}}^4,\lambda_{\mathrm{std}})$, the signed count 
		\[\#\mathcal{M}^{J}_{X_{\Omega^{\mathrm{FR}}}^4}(e_{\bar{k}})\ll \mathcal{T}_D^{\bar{k}-1}p\gg\]
		is positive and agrees with the unsigned count.
		\item The $\tilde{\omega}_{\mathrm{std}}$-energy of $u \in \mathcal{M}^{J}_{X_{\Omega^{\mathrm{FR}}}^4}(e_{\bar{k}})\ll \mathcal{T}_D^{\bar{k}-1}p\gg$ satisfies 
		\[E_{\tilde{\omega}_{\mathrm{std}}}(u)=\operatorname{\mathcal{A}}_{\Omega^{\mathrm{FR}}}(e_{\bar{k}})\leq   c^{\mathrm{GH}}_k(X_{\Omega^{\mathrm{FR}}}^4)\approx c^{\mathrm{GH}}_k(X_{\Omega}^4), \]
		where $\tilde{\omega}_{\mathrm{std}}$ is the piecewise smooth $2$-form defined in (\ref{deg-form}). 
	\end{itemize}
\end{theorem}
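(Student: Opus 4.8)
The plan is to establish the three assertions in turn: the index claim is a direct specialization of the dimension formula quoted from \cite{McDuff:2021aa}; the formal perturbation invariance and the positivity of the count are obtained by adapting the arguments of \cite[Section 5]{McDuff:2021aa} to the present setup; and the energy identity is a Stokes-theorem computation. For the index, I specialize
\[\operatorname{ind}\big(\mathcal{M}^{J,s}_{X_{\Omega^{\mathrm{FR}}}^{4}}(\gamma)\ll \mathcal{T}^{k-1}p\gg\big)=n-3+\operatorname{CZ}^{\tau_{\mathrm{ext}}}(\gamma)+2-2n-2(k-1)\]
to $n=2$, $k=\bar{k}$, and $\gamma=e_{\bar{k}}=e_{\bar{l},\bar{m}}$; since $e_{\bar{k}}$ is a \emph{simple} elliptic orbit, \eqref{CZI} gives $\operatorname{CZ}^{\tau_{\mathrm{ext}}}(e_{\bar{l},\bar{m}})=2(\bar{l}+\bar{m})+1=2\bar{k}+1$, and the index equals $-1+(2\bar{k}+1)+2-4-2(\bar{k}-1)=0$, so the moduli space has expected dimension zero.

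The bulk of the work is the formal perturbation invariance. Let $\{J_t\}_{t\in[0,1]}\subset\mathcal{J}_D(X^4_{\Omega^{\mathrm{FR}}},\lambda_{\mathrm{std}})$ be a generic path with a fixed generic cylindrical end $J_{\partial X}$, and let $\mathbb{H}$ be a building arising in the compactification of the parametric moduli space which satisfies (H1)--(H2) of Definition \ref{perturbationinvariance}. By Gromov--Hofer compactness (Theorem \ref{sft}, and the Liouville-domain version recorded in the Remark after Theorem \ref{nonconstantholclass}), $\mathbb{H}$ has genus $0$, a single unpaired positive end asymptotic to $e_{\bar{k}}$, and total $\tilde{\omega}_{\mathrm{std}}$-energy equal to $\operatorname{\mathcal{A}}_{\Omega^{\mathrm{FR}}}(e_{\bar{k}})$; moreover the constraint $\ll\mathcal{T}_D^{\bar{k}-1}p\gg$ persists in the limit and, should the constrained point land on a ghost tree, redistributes over the attached non-constant components as in Lemma \ref{tang}. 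I would then run the index and action bookkeeping of \cite[Section 5]{McDuff:2021aa}: the building occurs in a $1$-parameter family, so after the usual node/breaking corrections and after passing to underlying somewhere injective curves (via \cite[Proposition 6.1.2]{MR2954391} and Proposition \ref{positive}-type estimates for covers) the relevant indices sum to $1$; the rounding procedure of Section \ref{roudningfullyconvex} makes the full list of Reeb orbits on $\partial X^4_{\Omega^{\mathrm{FR}}}$ explicit, so the action filtration together with \eqref{CZI} pins down the Conley--Zehnder indices of every orbit that can appear as an end of a level of $\mathbb{H}$; combining this with (H1) ($\operatorname{ind}\geq -1$ in $\widehat{X}$) and (H2) (trivial cylinder or $\operatorname{ind}\geq 1$ in the symplectization) forces $\mathbb{H}$ either to consist of a single plane (case (C1)), or to split as a bottom index $-1$ somewhere injective plane in $\widehat{X}$ together with a top level of trivial cylinders and one somewhere injective index $1$ curve $C_{\partial X}$ in $\mathbb{R}\times\partial X$ whose moduli space quotient has signed count $0$ (case (C2)). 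This dichotomy is precisely formal perturbation invariance, and Theorem \ref{perturbation-invariance} then makes $\#\mathcal{M}^{J}_{X^4_{\Omega^{\mathrm{FR}}}}(e_{\bar{k}})\ll\mathcal{T}_D^{\bar{k}-1}p\gg$ well-defined and independent of the generic $J$. I expect the control of multiply covered components and of ghost trees here to be the principal obstacle, to be handled line by line as in \cite{McDuff:2021aa}.

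It remains to compute the count and the energy. For positivity, I would choose a representative $J$ equal to the standard integrable structure on $\mathbb{C}^2$ near $p$ and compatible with the toric fibration elsewhere; the planes in $\mathcal{M}^{J}_{X^4_{\Omega^{\mathrm{FR}}}}(e_{\bar{k}})\ll\mathcal{T}_D^{\bar{k}-1}p\gg$ are then restrictions of complex-algebraic curves through $p$ with the prescribed contact order along $D$ and the prescribed asymptotics, so---exactly as for $(\mathbb{CP}^n,\omega_{\mathrm{FS}})$ in Theorem \ref{count} and Corollary \ref{simplecount}---the complex structure orients the moduli space positively, whence the signed count equals the unsigned count; strict positivity follows from a direct enumeration in this model, or from a cobordism comparison with the corresponding count for an ellipsoid of the same diagonal. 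For the energy, note that $\tilde{\omega}_{\mathrm{std}}=d\lambda_{\mathrm{std}}$ on all of $\widehat{X^4_{\Omega^{\mathrm{FR}}}}$, so for a plane $u:\mathbb{C}\to\widehat{X^4_{\Omega^{\mathrm{FR}}}}$ asymptotic to $e_{\bar{k}}$,
\[E_{\tilde{\omega}_{\mathrm{std}}}(u)=\int_{\mathbb{C}}u^{*}d\lambda_{\mathrm{std}}=\lim_{R\to\infty}\int_{\partial D_R}u^{*}\lambda_{\mathrm{std}}=\operatorname{\mathcal{A}}_{\Omega^{\mathrm{FR}}}(e_{\bar{k}}),\]
and Remark \ref{simpleorbit} gives $\operatorname{\mathcal{A}}_{\Omega^{\mathrm{FR}}}(e_{\bar{k}})\leq\operatorname{\mathcal{A}}_{\Omega^{\mathrm{FR}}}(e_{l,m})=c^{\mathrm{GH}}_k(X^4_{\Omega^{\mathrm{FR}}})$, which by Theorem \ref{GH} is $C^0$-close to $c^{\mathrm{GH}}_k(X^4_{\Omega})$.
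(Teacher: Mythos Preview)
The paper does not give its own proof of this theorem: it is stated as a result from \cite[Section~5, Proposition~5.6.1]{McDuff:2021aa}, and the only in-paper justification is the Remark immediately following it, which explains the third bullet point (first equality by Stokes, ``$\leq$'' by Remark~\ref{simpleorbit}, ``$\approx$'' by Theorem~\ref{GH}). Your treatment of the index computation and of the energy identity matches this exactly, and your deferral of the perturbation-invariance argument to \cite{McDuff:2021aa} is precisely what the paper does.

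One comment on your positivity sketch: the argument via ``restrictions of complex-algebraic curves through $p$'' for a toric-compatible $J$ is not quite how \cite{McDuff:2021aa} proceeds, and is delicate to make precise in the completed cobordism (the curves are punctured planes asymptotic to Reeb orbits, not closed curves in a projective variety). The positivity in \cite{McDuff:2021aa} comes instead from an automatic-transversality/orientation argument for immersed index-zero curves with elliptic asymptotics---this is the content of their Proposition~5.2.2, which the present paper invokes later in Lemma~\ref{non-venishing}. Your alternative ``cobordism comparison with an ellipsoid of the same diagonal'' is closer to the actual route. This is a minor point of emphasis rather than a gap, since you explicitly defer to \cite{McDuff:2021aa} for the details.
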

\begin{remark}
	In the third bullet point in the above theorem, the first equality is due to Stokes' theorem. The ``$\leq$'' follows from Remark \ref{simpleorbit}, and ``$\approx$''  is due to Theorem \ref{GH}.
\end{remark}
\begin{remark}\label{somewhere-injective}
By Remark \ref{simpleorbit}, the closed Reeb orbit $e_{\bar{k}}$ in Theorem \ref{puctured-dsik} is simple. So, the moduli space $\mathcal{M}^{J}_{X_{\Omega^{\mathrm{FR}}}^4}(e_{\bar{k}})\ll \mathcal{T}_D^{\bar{k}-1}p\gg$ consists of only somewhere injective $J$-holomorphic planes. 
\end{remark} 
\begin{remark}\label{elliptic}
By Remark \ref{simpleorbit}, the closed Reeb orbit $e_{\bar{k}}$ in Theorem \ref{puctured-dsik} is elliptic.
\end{remark} 

\section{Punctured spheres with local tangency constraints in cotangent bundles}\label{intersect}
This section reviews essential facts useful for analyzing punctured spheres in cotangent bundles. Following \cite{Tonkonog:2018aa}, we define linear operations on the positive $S^1$-equivariant symplectic cohomology of the cotangent bundles of closed manifolds $L$ admitting a metric of non-positive sectional curvature. These operations are defined in terms of counts of punctured spheres carrying a local tangency constraint at a generic point. We will need these operations in order to count certain holomorphic buildings in terms of the  ``Borman--Sheridan class'' in Section \ref{Borman-}.
\begin{theorem}[{\cite[Lemma 2.2]{Cieliebak2018}}]\label{perturb-metric}
Let $L$ be an $n$-dimensional closed manifold that admits a metric $\tilde{g}$ of non-positive sectional curvature. For every $c>0$, there exists a $C^2$-small perturbation, denoted by  $g$, of $\tilde{g}$  in the space of Riemannian metrics on $L$ such that, with respect to $g$, every closed geodesic $\gamma$ of length less than or equal to $c$ is non-contractible, non-degenerate (as a critical point of the energy functional) and satisfies 
	\[0\leq \operatorname{\mu}(\gamma)\leq n-1,\]
	where $\operatorname{\mu}(\gamma)$ denotes the Morse index of $\gamma$. In a metric of negative curvature, every closed geodesic is non-contractible and non-degenerate of Morse index $0$.
\end{theorem}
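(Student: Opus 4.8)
The plan is to separate the two kinds of statement: the properties that already hold for the non-positively curved metric $\tilde g$, which follow from the Cartan--Hadamard theorem and the second variation formula, and the non-degeneracy, which requires a perturbation supplied by the classical bumpy metric theorem. A compactness and semicontinuity argument then guarantees that the perturbation can be taken $C^2$-small without destroying the other two conclusions.

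First I would record what $\tilde g$ already gives. By Cartan--Hadamard the universal cover $(\widetilde L,\tilde g)$ is diffeomorphic to $\mathbb R^n$ with every $\exp_p$ a diffeomorphism, so it carries no non-constant periodic geodesic; lifting, $L$ has no contractible closed geodesic at all. The Hessian of the energy functional on the free loop space at a closed geodesic $\gamma$ of length $\ell$ is the index form
\[ I_\gamma(J,J)=\int_0^{\ell}\bigl(|\nabla_t J|^2-\langle R(J,\gamma')\gamma',J\rangle\bigr)\,dt , \]
and $K\le 0$ makes the curvature term non-positive, so $I_\gamma\ge 0$ and $\mu(\gamma)=0$. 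If $J$ lies in $\ker I_\gamma$ (a periodic Jacobi field), integrating $\langle\nabla_t^2 J,J\rangle$ by parts over the period forces $\nabla_t J\equiv 0$ and hence $R(J,\gamma')\gamma'\equiv 0$, so $\ker I_\gamma$ consists of parallel fields annihilated by the curvature operator, a space of dimension at most $n$; hence $\mu(\gamma)+\operatorname{null}(\gamma)\le n$. If the curvature is strictly negative, $R(J,\gamma')\gamma'=0$ forces $J$ parallel to $\gamma'$, so $\ker I_\gamma=\mathbb R\gamma'$ is one-dimensional and every closed geodesic is already non-degenerate of index $0$; this is precisely the last sentence of the theorem, and no perturbation and no length cutoff are needed in that case.

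Next I would produce the perturbation. The closed geodesics of $\tilde g$ of length $\le c$ are parametrized by a compact set $\mathcal G\subset T^1L$: their lengths lie between the (positive) systole and $c$, and the set of vectors fixed by the geodesic flow at some time in a compact interval is closed. Using continuous dependence of geodesics on the metric, Arzela--Ascoli, and the fact that the systole cannot drop to $0$ under $C^2$-convergence (a short closed geodesic would have to lie in a geodesically convex ball, which has none), I obtain a $C^2$-neighborhood $\mathcal U$ of $\tilde g$ such that every $g\in\mathcal U$ has no contractible closed geodesic of length $\le c$, and every closed geodesic of $g$ of length $\le c$ lies $C^\infty$-close to one of $\tilde g$ of length $\le c+1$ and therefore — positive eigenvalues of the index form staying positive — satisfies $\mu(\gamma)+\operatorname{null}(\gamma)\le n$. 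Finally, by the bumpy metric theorem the $C^\infty$-metrics all of whose closed geodesics are non-degenerate form a residual set, so I can choose $g$ from that residual set inside $\mathcal U$. Then every closed geodesic $\gamma$ of $g$ of length $\le c$ has $\operatorname{null}(\gamma)=1$, is non-contractible, and satisfies $0\le\mu(\gamma)\le n-1$; this $g$ is the required $C^2$-small perturbation.

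The curvature computation is soft; the genuine care lies in the third paragraph. The hard part will be the uniformity and semicontinuity bookkeeping: showing that \emph{every} length-$\le c$ closed geodesic of the perturbed metric is controlled by the fixed compact family $\mathcal G$ of $\tilde g$, that short closed geodesics neither collapse to points nor migrate into contractible classes under the perturbation, and that the Morse index can jump by at most $\operatorname{null}(\gamma)-1\le n-1$. If one prefers to avoid the full bumpy metric theorem, the same conclusion can be reached by perturbing $\tilde g$ by hand inside disjoint tubular neighborhoods of the length-$\le c$ closed geodesics — which occupy only finitely many free homotopy classes, since only finitely many conjugacy classes of $\pi_1(L)$ have translation length $\le c$ — and arguing transversality for the resulting Jacobi operators; the analytic content is the same.
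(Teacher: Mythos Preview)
The paper does not give its own proof of this statement; it is quoted from \cite[Lemma~2.2]{Cieliebak2018} and used as a black box. Your argument is correct and is the standard one behind the cited lemma: Cartan--Hadamard plus the second variation formula give non-contractibility and $\mu(\gamma)+\operatorname{null}(\gamma)\le n$ for $\tilde g$, the bumpy metric theorem supplies a non-degenerate perturbation, and compactness of the set of length-$\le c$ closed geodesics together with continuity of the Jacobi spectrum under $C^2$-perturbation transfers the bound to $g$, yielding $\mu(\gamma)\le n-1$ once $\operatorname{null}(\gamma)=1$.
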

\begin{remark}
We will apply Theorem \ref{perturb-metric} in Section \ref{proof for ball} when  $L$ is the standard $n$-torus with flat metric.
\end{remark}
 The following theorem will be useful in our arguments in the upcoming sections. 
\begin{theorem}[{\cite[Corollary 3.3]{Cieliebak2018}}] \label{count-positive-punctures}
Let $L$ be an $n$-dimensional manifold that admits a metric of non-positive sectional curvature. For $c>0$, equip $L$ with a metric such that all closed geodesics of length less than or equal to  $c$ are non-contractible and non-degenerate and have Morse index at most $n-1$. Pick a point $p\in T^*L$, a germ of a symplectic hypersurface $D$ that contains the point $p$, and a positive integer $k$.
For a generic SFT-admissible almost complex structure $J$ on the cotangent bundle $(T^*L, d\lambda_{\mathrm{can}})$ that is integrable near $p$ and preserves $D$, every (not necessarily simple) non-constant punctured $J$-holomorphic sphere in $T^*L$ which is asymptotic at the punctures to geodesics of length at most $c$ and satisfies the tangency constraint  $\ll \mathcal{T}_D^{k-1}p\gg$ has at least $k+1$ positive punctures.
\end{theorem}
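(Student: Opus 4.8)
The plan is to establish the lower bound on the number of positive punctures by a dimension count: for a somewhere injective curve, the moduli space of punctured spheres asymptotic to geodesics of length $\leq c$ and satisfying $\ll \mathcal{T}_D^{k-1}p\gg$ should have negative virtual dimension once the number of positive punctures drops below $k+1$, and genericity of $J$ then forbids such curves from existing; multiply covered curves are handled by passing to the underlying simple curve much as in the proof of Theorem \ref{gromov-witten-tang}.

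First I would recall the index formula for a punctured $J$-holomorphic sphere $u$ in $T^*L$ with $p_+$ positive punctures asymptotic to geodesics $\gamma_1,\dots,\gamma_{p_+}$. Since $L$ admits a metric of non-positive curvature, every such geodesic is non-contractible, and the cotangent bundle has $c_1 = 0$ and trivial $\pi_2$, so the relative homology class contributes nothing to the index; the only contributions come from the Conley--Zehnder indices of the asymptotics (computed with the fiberwise trivialization) and the Euler characteristic of the punctured sphere. Using that the chosen metric has all closed geodesics of length $\leq c$ non-degenerate with Morse index $\mu(\gamma_i) \leq n-1$, the Conley--Zehnder index of each $\gamma_i$ is $\mathrm{CZ}(\gamma_i) = n - \mu(\gamma_i) \leq$ (some bound) — I would pin down the exact normalization from the references, but the key point is that each positive puncture contributes at most roughly $n-1$ (or $2(n-1)$ after accounting for conventions) to the index. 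With $k-1$ derivatives of the tangency constraint costing $2(k-1)$ and the point constraint costing $2n$, the virtual dimension of $\mathcal{M}^{J,s}$ becomes (Euler characteristic term depending on $p_+$) $+ \sum \mathrm{CZ}(\gamma_i) - 2n - 2(k-1)$, and one checks this is strictly negative whenever $p_+ \leq k$. Then, for generic SFT-admissible $J$ integrable near $p$ and preserving $D$, transversality (as in {\cite[Proposition 3.1]{Cieliebak2018}}) implies the moduli space of somewhere injective such curves is empty, so a simple non-constant curve must have at least $k+1$ positive punctures.

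To remove the simplicity hypothesis, suppose $u = \bar u \circ \phi$ with $\phi$ an $m$-fold branched cover of $\mathbb{CP}^1$ (punctured appropriately) and $\bar u$ somewhere injective. As in Theorem \ref{gromov-witten-tang}, the tangency constraint $\ll \mathcal{T}_D^{k-1}p\gg$ on $u$ forces $\bar u$ to satisfy $\ll \mathcal{T}_D^{k'-1}p\gg$ with $k' \geq \lceil (k-1)/m\rceil + 1 \geq \lceil k/m \rceil$, roughly; meanwhile the positive punctures of $u$ map onto positive punctures of $\bar u$, with each puncture of $\bar u$ covered at most $m$ times, so $p_+(u) \geq p_+(\bar u)$ is not immediate — rather $p_+(u) \leq m \cdot p_+(\bar u)$. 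Applying the simple case to $\bar u$ gives $p_+(\bar u) \geq k' + 1 \geq \lceil k/m\rceil + 1$. I would then combine these: if all punctures of $\bar u$ were covered with total multiplicity accounting, one gets $p_+(u) \geq p_+(\bar u) \geq \lceil k/m\rceil + 1$, and a short arithmetic argument (distinguishing $m=1$ from $m \geq 2$, and using that at least one branch point or the structure of the covering forces enough distinct punctures upstairs) yields $p_+(u) \geq k+1$ in all cases. The cleanest route may be to argue directly on $u$ without reducing multiplicity: run the transversality argument for simple curves, and for multiply covered $u$ use the index inequality $\mathrm{ind}(\bar u) \geq -1$ together with the distribution of the tangency order and asymptotic constraints, exactly paralleling the codimension estimates in the proof of Theorem \ref{gromov-witten-tang}.

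The main obstacle I anticipate is getting the Conley--Zehnder / Morse index bookkeeping exactly right — in particular, confirming that the hypothesis $\mu(\gamma) \leq n-1$ translates into precisely the index bound needed so that the threshold falls at $k+1$ rather than $k$ or $k+2$, and making sure the relative first Chern class / trivialization contributions genuinely vanish for non-contractible geodesics in $T^*L$ (this uses crucially that the curves are spheres, so there is no genus term, and that the asymptotics are non-contractible so no capping disk issues arise in the naive index formula). The multiply covered case is the usual technical nuisance but, as in Theorem \ref{gromov-witten-tang}, it should succumb to the same pattern of argument; the genuinely delicate point is the interplay between how the branched cover $\phi$ distributes the tangency order and how it distributes the asymptotic punctures, since unlike the closed case a branch point at a puncture behaves differently from one over $p$. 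I would isolate that combinatorial lemma first and verify the arithmetic carefully before writing the rest.
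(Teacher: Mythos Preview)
The paper does not give its own proof of this statement; it is quoted from \cite[Corollary~3.3]{Cieliebak2018}. The closest the paper comes is Lemma~\ref{countendsincotangentbundle}, which establishes the special case $k=n$ for the curve $C_{\mathrm{bot}}$ arising in the neck-stretching limit. There the argument is exactly the index estimate you outline for the simple case, but the non-simple case is handled not by reduction to the underlying simple curve but by the Hamiltonian perturbation $K = X_H\otimes\beta$ that $C_{\mathrm{bot}}$ carries: for generic $H$ this perturbation guarantees transversality for \emph{all} solutions (multiple covers of unperturbed curves do not persist as perturbed solutions), so $\operatorname{ind}(C_{\mathrm{bot}})\geq 0$ holds directly. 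This sidesteps your branched-cover bookkeeping entirely.

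Your index bookkeeping contains a wrong formula that would derail the computation: the relation is not $\operatorname{CZ}(\gamma)=n-\mu(\gamma)$. By Theorem~\ref{special-trivalization} one has $\operatorname{CZ}^\tau(\gamma_c)+\mu^{\kappa(\tau)}(c)=\mu(c)$, and the key point is that when you sum over all punctures the Maslov terms $\mu^{\kappa(\tau)}(c_i)$ cancel because the curve furnishes a null-homology of $\sum\gamma_i$; hence $\sum_i\operatorname{CZ}^\tau(\gamma_i)=\sum_i\mu(c_i)\leq l(n-1)$, and the bound $\operatorname{ind}\leq 2(l-k-1)$ follows. This is exactly how Lemma~\ref{countendsincotangentbundle} runs.

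Your strategy for the non-simple case is viable, but the sketch you give is not: from $p_+(\bar u)\geq k'+1$ with $k'\approx k/m$ one only obtains $p_+(u)\geq p_+(\bar u)\gtrsim k/m+1$, far short of $k+1$. What makes it work is Riemann--Hurwitz. If $b$ is the ramification index of $\phi$ at the constrained marked point, then counting ramification over the punctures of $\bar u$ and over the tangency point gives $p_+(u)\geq m\bigl(p_+(\bar u)-2\bigr)+b+1$; combining this with $p_+(\bar u)\geq\lceil k/b\rceil+1$ (the simple case applied to $\bar u$, which inherits tangency order $\geq\lceil k/b\rceil$) and the elementary inequality $m\lceil k/b\rceil-m+b\geq k$ for $1\leq b\leq m$ yields $p_+(u)\geq k+1$. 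You correctly flagged this as the crux; it does go through, but it is not the ``short arithmetic argument'' you anticipate, and it is precisely the step that the Hamiltonian-perturbation route in the paper's applications avoids.
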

Given an oriented closed Riemannian manifold $(L,g)$, the metric $g$ induces a bundle isomorphism $\hat{g}:TL\to T^*L$ given by 
\[\hat{g}(p,v):=(p,g_p(v,\cdot)).\]
For $T>0$, an arc-length parametrized closed curve $c:\mathbb{R}/T\mathbb{Z}\to L$ is a geodesic (of length $T$) if and only if its lift to the unit cosphere bundle $\gamma_c:=\hat{g}\circ c: \mathbb{R}/T\mathbb{Z}\to (S^*L,\lambda|_{TS^*L}) $ defined by 
\[\hat{g}\circ c(t):=(c(t),g_{c(t)}(c'(t),\cdot))\]
is a closed Reeb orbit of period $T$ on $(S^*L,\lambda|_{TS^*L})$; see {\cite[Section 2.3]{Frauenfelder_2018}} for details.

Let $i: L\hookrightarrow T^*L$ denote the inclusion of $L$ as the zero-section. A closed curve  $c:\mathbb{R}/T\mathbb{Z}\to L$ and a symplectic trivialisation $\kappa$ of $(i\circ c)^*TT^*L$  determine a loop of Lagrangian subspaces of $(\mathbb{R}^{2n}, \omega_{\mathrm{std}})$ given by
\[L_t:=\kappa_t\circ di(T_{c(t)}L).\]
We denote the Maslov index of $L_t$ by $\mu^\kappa(c)$. The following theorem was originally proved in \cite{Viterbo1990}, and restatements can be found in \cite{Cieliebak2018} and \cite{Pereira:2022aa}.
\begin{theorem}[{\cite[Section 3]{Viterbo1990}}, {\cite[Lemma 2.1]{Cieliebak2018}}, {\cite[Theorem 3.28]{Pereira:2022aa}}]\label{special-trivalization}
	
	Let $\gamma_c: \mathbb{R}/T\mathbb{Z}\to S^*L $ be the closed Reeb orbit (possibly Morse--Bott) corresponding to the closed geodesic  $c: \mathbb{R}/T\mathbb{Z}\to L$. The following holds.
\begin{enumerate}
\item Every symplectic trivialization $\tau$ of $\gamma_c^*\xi$, where $\xi=(\operatorname{Ker}(\lambda_{\mathrm{can}}|_{TS^*L}), d\lambda_{\mathrm{can}}|_{TS^*L})$,  induces a symplectic trivialization $\kappa(\tau)$ of $(i\circ c)^*TT^*Q$ such that 
		\[\operatorname{CZ}^\tau(\gamma_c)+\mu^{\kappa(\tau)}(c)=\mu(c),\]
		where $\mu(c)$ is the Morse index of $c$. 
\item 	In the above, one can adjust $\tau$ so that 
		\[\mu^{\kappa(\tau)}(c)=0.\]
\end{enumerate}
\end{theorem}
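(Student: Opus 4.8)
\emph{Plan of proof.} The plan is to translate the asserted identity into a comparison of Maslov-type indices attached to the linearized cogeodesic flow along $c$, and then to recognize one of those indices as $\mu(c)$ via the Morse index theorem. Using the Levi--Civita connection of $g$, identify $T_{(q,p)}(T^*L)\cong T_qL\oplus T^*_qL$ (horizontal $\oplus$ vertical); under this splitting the linearization $d\phi^t$ of the cogeodesic flow along $\gamma_c$ carries initial data $(w,\alpha)$ to $\big(J(t),\,\hat g(\nabla_t J(t))\big)$, where $J$ is the Jacobi field along $c$ with $J(0)=w$ and $\nabla_t J(0)=\hat g^{-1}\alpha$. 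The vertical subbundle $V=\{0\}\oplus T^*L$ is Lagrangian, and the Morse index theorem identifies $\mu(c)$ with the Robbin--Salamon index of the Lagrangian path $t\mapsto d\phi^t(V_{c(0)})$ relative to the moving vertical $V_{c(t)}$, i.e.\ with a signed count of conjugate instants along $c$ (after a $C^2$-small perturbation, or in the Morse--Bott form, makes $\gamma_c$ nondegenerate). This is the only classical input I would borrow.

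Next I would carry out the bookkeeping that separates this index into a contact part and a Lagrangian-loop part. Along $\gamma_c\subset S^*L$ the bundle $T(T^*L)$ splits (after a homotopy of trivializations) as $\mathbb{R}\langle R_{\lambda_{\mathrm{can}}}\rangle\oplus\mathbb{R}\langle Y\rangle\oplus\xi$, where $Y$ is the radial Liouville field: on the symplectic plane $\langle R,Y\rangle$ the linearized flow is a unipotent shear, which is index-neutral, while on $\xi$ it is the linearized Reeb flow, whose index relative to a trivialization $\tau$ of $\gamma_c^*\xi$ is by definition $\operatorname{CZ}^\tau(\gamma_c)$. Extending $\tau$ across the $\langle R,Y\rangle$-plane produces a symplectic trivialization $\kappa(\tau)$ of $(i\circ c)^*T(T^*L)$, and in this trivialization the tangent spaces $di(T_{c(t)}L)$ become precisely the Lagrangian loop $L_t$ defining $\mu^{\kappa(\tau)}(c)$. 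Using homotopy invariance and the concatenation/cocycle additivity of the Robbin--Salamon index, one rewrites the index-rel-vertical of $d\phi^t$ as $\operatorname{CZ}^\tau(\gamma_c)+\mu^{\kappa(\tau)}(c)$; together with the previous step this gives $\operatorname{CZ}^\tau(\gamma_c)+\mu^{\kappa(\tau)}(c)=\mu(c)$, which is part (1).

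For part (2): replacing $\tau$ by $A\cdot\tau$ for a loop $A:\mathbb{R}/T\mathbb{Z}\to\operatorname{Sp}(2n-2)$ shifts $\operatorname{CZ}^\tau(\gamma_c)$ by $2\deg(A)$ and, consistently with part (1), shifts $\mu^{\kappa(\tau)}(c)$ by $-2\deg(A)$. The Maslov class of the loop $L_t$ equals the obstruction to trivializing the Lagrangian subbundle $di(c^*TL)$ of the (automatically trivial) symplectic bundle $c^*T(T^*L)$, and this class is even whenever $c^*TL$ is orientable --- in particular whenever $L$ is orientable, e.g.\ $L=T^n$, the case needed in the applications. Choosing $A$ to cancel it forces $\mu^{\kappa(\tau)}(c)=0$ and hence $\operatorname{CZ}^\tau(\gamma_c)=\mu(c)$. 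The hard part is not conceptual but is the index bookkeeping: reconciling the Conley--Zehnder/Robbin--Salamon normalizations (including the Morse--Bott case, which is exactly what arises for closed geodesics on a flat torus, where $\gamma_c$ comes in an $S^1$-family), matching the free-loop-space Morse index with the conjugate-point count, checking that the $\langle R,Y\rangle$-shear contributes nothing, and fixing the sign with which $\mu^{\kappa(\tau)}(c)$ enters --- each of these is an easy place to be off by one or by a sign.
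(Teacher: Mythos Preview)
The paper does not supply its own proof of this theorem: it is stated with citations to Viterbo, Cieliebak--Mohnke, and Pereira, and used as a black box. So there is no ``paper's proof'' to compare against; the question is whether your outline reconstructs the standard argument correctly.

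Your plan for part~(1) is the standard one and is sound. The identification of the linearized cogeodesic flow with Jacobi data, the Morse index theorem recast as a Robbin--Salamon index relative to the vertical, the splitting $T(T^*L)|_{\gamma_c}\simeq\langle R,Y\rangle\oplus\xi$ with the $\langle R,Y\rangle$-block contributing zero, and the additivity step that isolates $\operatorname{CZ}^\tau$ plus the Maslov index of $di(TL)$ in the induced trivialization --- all of this matches the route taken in the cited references (Viterbo's original argument, reformulated in Cieliebak--Mohnke). You are right that the delicate content is normalization bookkeeping rather than geometry, and your list of pitfalls (Morse--Bott conventions, the unipotent shear, the sign of $\mu^{\kappa(\tau)}$) is accurate.

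For part~(2) your argument is correct under the orientability hypothesis you flag: the parity of $\mu^{\kappa(\tau)}(c)$ is well-defined (changing $\tau$ shifts it by $2\deg(A)$), and it vanishes precisely when $c^*TL$ is trivial, which holds whenever $L$ is orientable. Since every application in the paper has $L=T^n$, this suffices. The statement as written does not explicitly assume orientability, but the cited sources also work in the orientable (indeed, torus) setting, so you are not losing anything the paper actually uses.
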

\subsection{Positive $S^1$-equivariant  symplectic cohomology of cotangent bundles}
Let $(X,\lambda)$ be a non-degenerate Liouville domain. In this document, we are interested in the case where $X$ is the unit codisk bundle $D^*T^n$ of the torus $T^n$ with a Riemannian metric $g$ coming from a small perturbation of the flat metric as described in Theorem \ref{perturb-metric}. An interesting invariant assigned to $(X,\lambda)$ is the positive $S^1$-equivariant  symplectic cohomology, denoted by $\operatorname{SH^*_{S^1,+}}(X)$. There are two definitions of this invariant in the current literature. The first one, the older one, is using the Hamiltonian Floer theory framework; see {\cite{MR3868228, Pereira:2022ab, MR3671507}} for a reminder in this framework. The second one uses the symplectic field theory (SFT) framework and is called the  \textit{linearized contact homology}  \cite{MR3671507}. The definition using the SFT-framework is more straightforward, modulo the virtual perturbation techniques needed to achieve transversality.  We briefly recall the  SFT-definition. Let 
\[\operatorname{CF^*_{S^1,+}}(X)\]
be the $\mathbb{Q}$-vector space generated by the good\footnote{Let $\gamma$ be a closed Reeb orbit and $\bar{\gamma}$ be the underlying simple closed Reeb orbit. The closed Reeb orbit $\gamma$ is good if $\operatorname{CZ}^\tau(\gamma)$ and $\operatorname{CZ}^\tau(\bar{\gamma})$ have the same parity for some trivialization $\tau$. Recall that the parity of $\operatorname{CZ}^\tau(\cdot)$ does not depend on $\tau$. } Reeb orbits on $(\partial X, \lambda)$. Let $J$ be an SFT-admissible almost complex structure on $\widehat{X}$ such that $J|_{[0,\infty)\times \partial X}$ is the restriction of some SFT-admissible almost complex structure $J$ on the symplectization $(\mathbb{R}\times \partial X, d(e^r\lambda))$. The differential $\partial:\operatorname{CF^*_{S^1,+}}(X) \to \operatorname{CF^*_{S^1,+}}(X)$ is defined by 
\[\partial \gamma:=\sum_{\eta} \langle \partial \gamma, \eta \rangle \eta , \]
where $\langle \partial \gamma, \eta \rangle$ is the count of index $1$ punctured $J$-holomorphic spheres (without asymptotic markers) in the symplectization $\mathbb{R}\times \partial X$ with one positive puncture asymptotic to $\gamma$, a negative puncture asymptotic to $\eta$, and some additional negative punctures which are augmented by asymptotically cylindrical $J$-holomorphic planes in $X$. 

There are two important things to point out here.
\begin{itemize}
\item[-] To achieve $\partial \circ  \partial=0$ and make the chain complex $\operatorname{CF^*_{S^1,+}}(X)$ independent (up to chain homotopy) on the choice of the almost complex structure $J$ one requires a suitable virtual perturbation scheme to define the curve count involved.
\item[-] Suppose there are no closed Reeb orbits on $\partial X$ that are contractible in $X$; for example, this is the case when $(X,\lambda)$ is the unit codisk bundle of a closed Riemannian manifold that admits a metric of non-positive sectional curvature as in this case there are no contractible closed geodesics. The differential $\partial$ counts pure holomorphic cylinders, which are unbranched by the Riemann--Hurwitz formula, interpolating between the input and output closed Reeb orbits. It follows from the work of Wendl {\cite[Theorem B]{Wendl2023}} that these cylinders are generically transversally cut out; {\cite[Theorem B]{Wendl2023}} covers the case of unbranched covers of closed $J$-holomorphic spheres in closed symplectic manifolds, but this theorem can be carried out in the SFT-framework as well. So, one does not require any virtual perturbation scheme to define $\operatorname{CF^*_{S^1,+}}(X)$; in particular, this is the case for the unit codisk bundle $D^*T^n$ of the torus $T^n$ \cite{MR2475400, MR3671507} in which we are interested in this paper.
\end{itemize}
We define $\operatorname{SH^*_{S^1,+}}(X)$ to be the homology of the chain complex $(\operatorname{CF^*_{S^1,+}}(X), \partial)$.

From now on, we assume that $(X,\lambda)$ is the unit codisk bundle $D^*L$ of a closed Riemannian manifold $L$ of dimension $n$ that admits a metric of non-positive sectional curvature. Up to an arbitrary high length (action) truncation, by Theorem \ref{perturb-metric}, we assume that every closed geodesic (closed Reeb orbit) has Morse index (Conley--Zehnder index) between $0$ and $n-1$. From now on, we assume that all the generators of the chain complex $\operatorname{CF^*_{S^1,+}}(D^*L)$ have actions smaller than a fixed number $A>0$. The grading on $\operatorname{CF^*_{S^1,+}}(D^*L)$ used in this document is the one used in \cite{Tonkonog:2018aa}. This is given by 
\begin{equation}\label{grading}
|\gamma_c|:=n-1-\mu(c),
\end{equation}
where $\mu(c)$ is the Morse index of the closed geodesic $c$ that lifts to the generator  $\gamma_c\in \operatorname{CF^*_{S^1,+}}(D^*L)$. For example, $\operatorname{CF^0_{S^1,+}}(D^*L)$ is generated by closed Reeb orbits that project to closed geodesics of Morse index $n-1$.

There is an isomorphism between the symplectic completion of $(D^*L, d\lambda_{\mathrm{can}})$ and the full cotangent bundle $(T^*L,d\lambda_{\mathrm{can}})$. Choose a point $p$ on the zero-section $L$ in  $T^*L$, a local divisor $D$ containing $p$, and a SFT-admissible almost complex structure on $(T^*L,d\lambda_{\mathrm{can}})$ that is integrable near $p$ such that $D$ is holomorphic.  For a choice of $k\geq 2$ generators  $\gamma_1, \dots, \gamma_k \in \operatorname{CF^0_{S^1,+}}(D^*L)$, define 
\begin{equation}\label{Hamil-perturb}
	\mathcal{M}^J_{T^*L}(\gamma_1, \dots, \gamma_k)\ll \psi_{k-2} p\gg :=\left\{
	\begin{array}{l}
		(u,	z_0, z_1,\dots ,z_k),\\
		z_0, z_1,\dots ,z_{k}\in \mathbb{CP}^1,\\
		u:\mathbb{CP}^1\setminus\{ z_1,\dots ,z_k\} \to (T^*L,J),\\
		(du-  X_H\otimes \beta)^{0,1}=0,\\
		u(z_0)=p \text{  and satisfies $\ll \mathcal{T}_D^{k-2}p\gg$ at $z_0$}, \\
		u \text{ is asymptotic to  $\gamma_i$ at $z_i$ for $i=1,\dots,k$.} 
		
	\end{array}
	\right\}\bigg/\sim.
\end{equation}
Here, the perturbation $X_H\otimes \beta$ in the holomorphic curve equation is chosen as in Section \ref{enumerative}. These moduli spaces are generically transversally cut out and are, moreover, rigid. Following {\cite[Section 4.4]{Tonkonog:2018aa}}, these rigid moduli spaces yield linear maps \[\langle \cdot|\cdots|\cdot \rangle: \operatorname{CF^0_{S^1,+}}(D^*L)^{\otimes k}\to \mathbb{Z}\] defined by
\[\langle \gamma_1|\gamma_2|\dots|\gamma_k \rangle:=\# 	\mathcal{M}^J_{T^*L}(\gamma_1, \dots, \gamma_k)\ll \psi_{k-2}p\gg\, \in \mathbb{Z}\] 
whenever $\gamma_i$ are generators of $\operatorname{CF^0_{S^1,+}}(D^*L)$ and extend linearly to the full complex. In fact, by {\cite[Theorem 4.2]{Tonkonog:2018aa}}, the maps $\langle \cdot|\cdots|\cdot \rangle$  belong to a $2$-family of linear maps $\{\psi_{m-1}^k\}_{m\geq 1, k\geq 2}$ that defines, for each integer $m\geq 1$, an $L_\infty$-algebra structure on the full complex $\operatorname{CF^*_{S^1,+}}(X)$, where $X$ belongs to a more general class of Liouville domains. These linear operations are called \textit{gravitational descendants} in {\cite{Tonkonog:2018aa}}.

By {\cite[Proposition 4.4]{Tonkonog:2018aa}}, for each $k\geq 2$ the maps 
 $\langle \cdot|\cdots|\cdot \rangle: \operatorname{CF^0_{S^1,+}}(D^*L)^{\otimes k}\to \mathbb{Z}$
descend to cohomological level operations 
\begin{equation}\label{operations}
\langle \cdot|\cdots|\cdot \rangle: \operatorname{SH^0_{S^1,+}}(D^*L)^{\otimes k}\to \mathbb{Z}
\end{equation}
that are independant of the choices of  $p$, $D$, and compactly supported homotopies of $J$---the auxiliary data needed to define the moduli spaces (\ref{Hamil-perturb}). We will need the operations $\langle \cdot|\cdots|\cdot \rangle$ to count certain holomorphic buildings in terms of the ``Borman--Sheridan class'' in Section \ref{Borman-}.
			
\section{Proof of Theorem \ref{extremal-lag-ball}}\label{proof for ball}
In this section, we prove that every extremal Lagrangian torus in the standard symplectic unit ball $(\bar{B}^{2n}(1),\omega_{\mathrm{std}})$  lies entirely on the boundary $\partial B^{2n}(1)$.
\subsection{Preparing a geometric setup}
Let $L\subset (\bar{B}^{2n}(1),\omega_{\mathrm{std}})$ be an extremal Lagrangian torus that intersects the interior of the ball $\bar{B}^{2n}(1)$.  We aim to arrive at a contradiction using the monotonicity property of the area of pseudoholomorphic curves (cf. Lemma \ref{monotoncity1}).

The complex projective space $\mathbb{CP}^n$ gets a symplectic form from the standard unit  sphere $(S^{2n-1},\omega_{\mathrm{std}})\subseteq (\mathbb{C}^{n},\omega_{\mathrm{std}})$ under symplectic reduction. This symplectic form is known as the Fubini--Study form, and we denote it by $\omega_{\mathrm{FS}}$. Each complex line in $\mathbb{CP}^n$ has sympletic area $\pi$ with respect to $\omega_{\mathrm{FS}}$. Let $\epsilon>0$ and set $\omega_{\mathrm{FS,\epsilon}}:=\frac{(1+\epsilon)}{\pi} \omega_{\mathrm{FS}}$. We have a symplectic inclusion
\[L\subset \bigl(B^{2n}(1+\epsilon), \omega_{\mathrm{std}}\bigr)=\bigl(\mathbb{CP}^n\setminus \mathbb{CP}^{n-1},\omega_{\mathrm{FS,\epsilon}}\bigr)\subset \bigl(\mathbb{CP}^n,\omega_{\mathrm{FS,\epsilon}}\bigr),\]
where $\mathbb{CP}^{n-1}$ is the hypersurface at infinity. We now see $L$ as a Lagrangian torus in the complex projective space $(\mathbb{CP}^n,\omega_{\mathrm{FS,\epsilon}})$. This enables us to apply a powerful neck-stretching argument of Cieliebak and Mohnke from \cite{Cieliebak2018}.

Pick a flat metric $g_0$ on $L$. For $a>0$, denote by $g_a$ the scaling of $g_0$ by $c$, i.e., $g_a:=a g_0$. By Weinstein neighborhood theorem, there exists a constant $a>0$ such that  we can symplectically embed the codisk bundle of $g_a$-radius $2$ of $L$ into $(B^{2n}(1+\epsilon/2), \omega_{\mathrm{std}})$.  By Theorem \ref{perturb-metric}, there exists a metric $g$ on  $L$ obtained by a small perturbation  of $g_a$ such that, for $g$, every closed geodesic $\gamma$ of length less than or equal to $(1+\epsilon)$ is non-contractible, non-degenerate (as a critical point of the energy functional), and satisfies
\[0\leq \operatorname{\mu}(\gamma)\leq n-1,\]
where $\operatorname{\mu}(\gamma)$ denotes the Morse index of $\gamma$. Let $(D^*L, d\lambda_{\mathrm{can}})$ be the symplectic unit codisk bundle of $L$ with respect to the perturbed metric $g$, i.e.,  
\[D^*L=\bigl\{v\in T^*L: \|v\|_g\leq 1 \bigr\}\]
and $\lambda_{\mathrm{can}}$ is the canonical $1$-form on $T^*L$. The unit cosphere bundle $\Pi:(S^*L,\lambda_{\mathrm{can}})\to L$ given by 
\[S^*L:=\Bigl \{v\in T^*L: \|v\|_g=1\Bigr\}\]
is a contact type boundary of $(D^*L,  d\lambda_{\mathrm{can}})$. So $(D^*L,  d\lambda_{\mathrm{can}})$ is a Liouville domain, and its symplectic completion is naturally isomorphic to the full cotangent space $(T^*L, d\lambda_{\mathrm{can}})$.

Since $g$ can be chosen to be a small perturbation of $g_a$,  we can identify a neighborhood of $L$ in $B^{2n}(1+\epsilon)$  with $D^*L$ via a symplectic embedding 
\[\Phi: (D^*L,d\lambda_{\mathrm{can}})\to (B^{2n}(1+\epsilon/2), \omega_{\mathrm{std}})\]
such that
\[\Phi|_{L}=\operatorname{Id}_L.\]

We will simply denote $(\Phi(D^*L),\Phi_*(d\lambda_{\mathrm{can}}))$ by $(D^*L,d\lambda_{\mathrm{can}})$ and the contact type hypersurface \[(\Phi(S^*L),\Phi_*\lambda_{\mathrm{can}}|_{S^*L}) \subset(\mathbb{CP}^{n},\omega_{\mathrm{FS,\epsilon}})\]
by $(S^*L,\lambda_{\mathrm{can}}|_{S^*L})$.

By our assumption, the Lagrangian torus $L$ intersects the open ball $B^{2n}(1)$. Fix a point $p\in L\cap B^{2n}(1)$. 
Let  $D^{2n}(\delta)$ be the closed disk of radius $\delta>0$ centered at the origin in $\mathbb{R}^{2n}$. For sufficiently small $\delta>0$, choose a symplectic embedding $\phi:D^{2n}(\delta)\to B^{2n}(1)$ such that 
\[\phi(0)=p,\ \phi^{-1}(L)=D^{2n}(\delta)\cap \mathbb{R}^n.\]
For an illustration of this geometric setup, see Figure \ref{geometric-setup-ball}.

\begin{figure}[h]
	\centering
	\includegraphics[width=8cm]{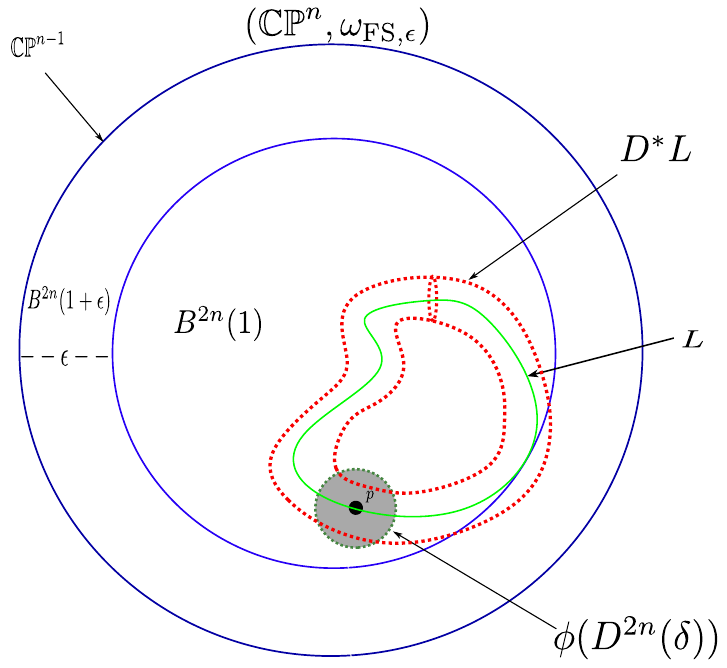}
	\caption{Geometric Setup}\label{geometric-setup-ball}
\end{figure}
The following monotonicity lemma is crucial to our argument.
\begin{lemma}{\cite[Lemma 3.4]{Cieliebak-Ekholm-Latsche-compactness2010}}\label{monotoncity1}
	Consider a symplectic manifold $(X, \omega)$ and a closed Lagrangian submanifold $L$. Fix a point $x\in X$ (which is allowed to lie on $L$) and an open neighborhood $U$ of $x$ in $X$. Let $J_{\mathrm{fix}}$ be an $\omega$-compatible almost complex structure on $U$.  Let $J$ denote any $\omega$-compatible almost complex structure on $X$ such that $J|_{U}=J_{\mathrm{fix}}$. Let $(S,j)$ be a closed connected Riemann surface with boundary $\partial S$. There exists a constant $C>0$ depending on $(X, \omega, J_{\mathrm{fix}},x,U)$ with the following property: For any non-constant continuous map $u:(S,\partial S)\mapsto (M,L)$ which is $(j,J)$-holomorphic on the interior, passing through the point $x\in X$ such that $u|_{u^{-1}(U)}:u^{-1}(U)\to U$ is a proper map, we have
	\[\int_{S}u^*\omega\geq C>0.\]
\end{lemma}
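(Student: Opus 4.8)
\textbf{Proof plan for Lemma~\ref{monotoncity1} (the monotonicity lemma).}
The plan is to reduce the statement to the standard monotonicity inequality for $J$-holomorphic curves with free boundary on a Lagrangian, which is purely local near the point $x$. First I would fix a small geodesic ball (with respect to an auxiliary Riemannian metric compatible with $\omega$ and $J_{\mathrm{fix}}$) of radius $\rho>0$ around $x$, contained in $U$ and small enough that on it the symplectic form and the almost complex structure are $C^0$-close to a standard model; if $x\in L$, shrink $\rho$ further so that $L\cap B_\rho(x)$ is, in suitable coordinates, a totally geodesic flat Lagrangian plane, which is arranged by choosing the metric to be Weinstein-adapted to $L$. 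The point of working only inside $U$, where $J=J_{\mathrm{fix}}$ is fixed once and for all, is precisely that the resulting constant $C$ then depends only on $(X,\omega,J_{\mathrm{fix}},x,U)$ and not on the global $J$.

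The key steps, in order, are as follows. (1) Set $\Sigma:=u^{-1}(B_\rho(x))$; since $u$ is proper as a map $u^{-1}(U)\to U$ and $u(x_0)=x$ for some $x_0\in S$, the component $\Sigma_0$ of $\Sigma$ containing $x_0$ is a compact surface whose boundary maps into $\partial B_\rho(x)\cup(L\cap B_\rho(x))$, and the part of $\partial\Sigma_0$ mapping to $\partial B_\rho(x)$ is nonempty (otherwise $u$ would be constant by the open mapping property / unique continuation, contradicting non-constancy — here one also uses that $L$ is Lagrangian so $u^{*}\omega$ cannot concentrate along the boundary). (2) Apply the mean-value / isoperimetric inequality for minimal-type surfaces: for a $J$-holomorphic curve through the center of a ball of radius $r\le\rho$, with boundary either on the sphere $\partial B_r(x)$ or on the Lagrangian $L$, the symplectic area (which equals the Riemannian area up to a bounded factor, by $\omega$-compatibility of $J$) satisfies $\operatorname{Area}(u\cap B_r(x))\ge c\, r^2$ for a constant $c$ depending only on the geometry of $U$. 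This is the classical monotonicity estimate; for the boundary case one uses the reflection trick (double the curve across $L$ using the totally geodesic structure) to reduce to the interior case. (3) Take $r=\rho$ and convert back to $\omega$: $\int_S u^{*}\omega \ge \int_{\Sigma_0}u^{*}\omega = \operatorname{Area}_\omega(u\cap B_\rho(x)) \ge C:=c\rho^2>0$.

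The main obstacle is step (2) in the boundary case $x\in L$: one must ensure the reflection across $L$ produces a genuine (pseudo-)holomorphic double that still satisfies a monotonicity inequality with a uniform constant. This requires choosing the metric near $x$ so that $L$ is totally geodesic and $J$ is suitably symmetric under the reflection, or alternatively invoking the Lagrangian boundary monotonicity lemma directly from the literature (e.g.\ the version in the cited reference, or Sikorav-type estimates). Once the local model is set up correctly, everything else is soft: the properness hypothesis is exactly what guarantees $\Sigma_0$ is compact with controlled boundary, and non-constancy together with $L$ being Lagrangian rules out the degenerate case where $u$ never reaches $\partial B_\rho(x)$. I would present the argument by citing the standard monotonicity inequality and emphasizing only the two points that make it applicable here: the locality (so $C$ depends only on $J_{\mathrm{fix}}$ on $U$) and the properness (so the intersection with $U$ behaves like an honest curve-with-boundary through $x$).
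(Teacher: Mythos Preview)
The paper does not supply its own proof of this lemma: it is quoted verbatim from the cited reference \cite{Cieliebak-Ekholm-Latsche-compactness2010} and used as a black box, so there is no argument in the paper to compare against. Your outline is the standard route to such monotonicity estimates---localize to a small ball in $U$ where $J=J_{\mathrm{fix}}$, use properness to get a compact piece of curve through $x$ with boundary on $\partial B_\rho(x)\cup L$, and invoke the mean-value/monotonicity inequality (with the reflection trick to handle the Lagrangian boundary when $x\in L$)---and it is essentially how the result is proved in the literature. The emphasis you place on locality (so that $C$ depends only on $J_{\mathrm{fix}}|_U$) and on properness (so that the restricted curve has controlled boundary) is exactly right and matches how the lemma is applied later in the paper, where $C$ is taken to be $\tfrac12\pi\delta^2$ for the standard model on a Darboux ball of radius $\delta$.
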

We will apply this lemma to the case $(X,L, \omega, J_{\mathrm{fix}},x,U)=(\mathbb{CP}^n,L,\omega_{\mathrm{FS,\epsilon}}, J_{\mathrm{std}},p,\phi(D^{2n}(\delta)) )$. In this case, we can choose $C=\frac{1}{2}\pi\delta^2$.

\subsection{Stretching the neck}
The contact type hypersurface 
\[(S^*L,\lambda_{\mathrm{can}}|_{S^*L}) \subset(\mathbb{CP}^{n},\omega_{\mathrm{FS,\epsilon}})\]
separates $\mathbb{CP}^{n}$ into two symplectic cobordisms
\[\bigl(\mathbb{CP}^{n}\setminus D^*L, \omega_{\mathrm{FS,\epsilon}}\bigr). \]
and 
\[\bigl(D^*L, d\lambda_{\mathrm{can}}\bigr).\]
Stretching the neck along $(S^*L,\lambda_{\mathrm{can}}|_{S^*L})$ in $\mathbb{CP}^n$ gives the split cobordism 
$\bigsqcup_{N=0}^{2} (\widehat{X}_N,\widehat \Omega_N)$, where 
\begin{equation*}
(\widehat{X}_N,\widehat \Omega_N):=
\begin{cases}
		\bigl(\widehat{\mathbb{CP}^{n}\setminus D^*L},\widehat{\omega}_{\mathrm{FS,\epsilon}}\bigr)\cong \bigl(\mathbb{CP}^{n}\setminus L,\omega_{\mathrm{FS,\epsilon}}\bigr) & \text{for } N=2,\\
		
		\bigl(\mathbb{R}\times S^*L,d(e^r \lambda_{\mathrm{can}}|_{S^*L})\bigr)& \text{for } N=1,\\
		\bigl(\widehat{D^*L}, \widehat{d\lambda}_{\mathrm{can}}\bigr)\cong \bigl(T^*L, d\lambda_{\mathrm{can}}\bigr)  & \text{for } N=0.\\
\end{cases}
\end{equation*}
Here
\begin{equation}\label{2-form}
	\widehat{\omega}_{\mathrm{FS,\epsilon}}:=
	\begin{cases}
\omega_{\mathrm{FS,\epsilon}}& \text{on } \mathbb{CP}^{n}\setminus D^*L,\\
		d(e^r\lambda_{\mathrm{can}}|_{S^*L}) & \text{on } (-\infty,0]\times  S^*L
\end{cases}
\end{equation}
and
\begin{equation*}
	\widehat{d\lambda}_{\mathrm{can}}:=
	\begin{cases}
		
		d(e^r\lambda_{\mathrm{can}}|_{S^*L})& \text{on } [0,\infty)\times S^*L,\\
		d\lambda_{\mathrm{can}} & \text{on } D^*L.
	\end{cases}
\end{equation*}

Choose a point $q\in L$ and a generic compatible almost complex structure  $J$ on $\mathbb{CP}^n$ that is integrable near $q$. We assume that $J$ restricted to a small neighborhood of the hypersurface $\mathbb{CP}^{n-1}$ at infinity is the standard complex structure $J_{\mathrm{std}}$ so that the hypersurface $\mathbb{CP}^{n-1}$ is $J$-holomorphic. More precisely, we assume that $J=J_{\mathrm{std}}$  on $\mathbb{CP}^{n}\setminus \bar{B}^{2n}(1+\epsilon/2)$. Moreover, $J=(\Phi|_{D^*L})_* J_{\mathrm{bot}}$ for some generic SFT-admissible almost complex structure $J_{\mathrm{bot}}$ on $(T^*L, d\lambda_{can})$.


Let $J_k$ be a family of almost complex structures on $\mathbb{CP}^n$ obtained via stretching $J$ along the hypersurface $S^*L$. We assume that $J_k$ constantly equals $J$ near the point $q$. As $k\to \infty$, $(\mathbb{CP}^n,J_k)$ splits into the cylindrical almost complex manifolds $(\mathbb{CP}^n\setminus L,J_{\infty})$, $(T^*L,J_{\mathrm{bot}})$ and $(\mathbb{R}\times S^*L,J_{\mathrm{cyl}})$. In particular, $J_\infty$ has the following property that will be crucial in our proof later:

\begin{itemize}
	\item $J_\infty$ restricted to a small neighborhood of the hypersurface $\mathbb{CP}^{(n-1)}$ at infinity is the standard complex structure $J_{\mathrm{std}}$. In particular, the hypersurface $\mathbb{CP}^{n-1}$ is $J_\infty$-holomorphic.
\end{itemize}

Fix a germ of a complex hypersurface $D$ near $q$. The rigid moduli space $\mathcal{M}^{J_k}_{\mathbb{CP}^n, [\mathbb{CP}^1]} \ll \mathcal{T}_D^{n-1}q\gg$ has a non-vanishing signed count of elements  for each $k$ by Theorem \ref{count}. Choose a sequence $u_k\in \mathcal{M}^{J_k}_{\mathbb{CP}^n, [\mathbb{CP}^1]} \ll \mathcal{T}_D^{n-1}q\gg$. By Theorem \ref{countlast}, we assume that each curve $u_k$ carries a Hamiltonian perturbation around the point $q$ as described in (\ref{hamilto-perturb}). By Theorem \ref{sft}, as $k \to \infty$, the sequence $u_k$ degenerates to a holomorphic building $\mathbb{H}=(\textbf{u}^0, \textbf{u}^1,\dots, \textbf{u}^{N_+})$ in $\bigsqcup_{N=0}^{N=N_+} (\widehat{X}_N,\widehat \Omega_N,\tilde{ \Omega}_N,J_N)$, for some integer $N_+\geq 0$, where 
\begin{equation*}
	(\widehat{X}_N,\widehat \Omega_N,\tilde{ \Omega}_N,J_N):=
	\begin{cases}
(\mathbb{CP}^{n}\setminus L, \omega_{\mathrm{FS,\epsilon}},\tilde{\omega}_{\mathrm{FS,\epsilon}}, J_{\infty}) & \text{for } N=N_+,\\
		
		(\mathbb{R}\times S^*L,d(e^r \lambda_{\mathrm{can}}),d\lambda_{\mathrm{can}}|_{S^*L}, J_{\mathrm{cyl}})& \text{for } N\in \{1,\dots,N_+-1\},\\
		(T^*L, d\lambda_{\mathrm{can}},\tilde{d\lambda}_{\mathrm{can}}, J_{\mathrm{bot}}) & \text{for } N=0.\\
	\end{cases}
\end{equation*}
Here, we use the symplectic identifications 
\[\bigl(\widehat{D^*L}, \widehat{d\lambda}_{\mathrm{can}}\bigr)\cong (T^*L, d\lambda_{\mathrm{can}}),\]
and  
\begin{equation*}
	\bigl(\widehat{\mathbb{CP}^{n}\setminus D^*L},\widehat{\omega}_{\mathrm{FS,\epsilon}}\bigr)\cong (\mathbb{CP}^{n}\setminus L,\omega_{\mathrm{FS,\epsilon}}).
\end{equation*}
Moreover,  $\widehat{\omega}_{\mathrm{FS,\epsilon}}$ is defined in (\ref{2-form}) and $\tilde{\omega}_{\mathrm{FS,\epsilon}}$ is the $2$-form 
\begin{equation*}
	\tilde{\omega}_{\mathrm{FS,\epsilon}}:=
	\begin{cases}
\omega_{\mathrm{FS,\epsilon}}& \text{on } \mathbb{CP}^{n}\setminus D^*L,\\
		d\lambda_{\mathrm{can}}|_{S^*L} & \text{on } (-\infty,0]\times  S^*L.
\end{cases}
\end{equation*}

Since the building $\mathbb{H}$ is the limit of a sequence of genus zero rigid pseudoholomorphic curves in the homology class $[\mathbb{CP}^1]$, it has genus zero and represents the homology class $[\mathbb{CP}^1]$. Moreover, it has index zero, i.e., the sum of the indices of its curve components is equal to zero.

For some positive integer $k_N$, we write $\textbf{u}^N=(u^N_1, \dots, u^N_{k_N})$, where $u_i^N$ are the smooth connected punctured curves 
in the level $(\widehat{X}_N,\widehat \Omega_N,\tilde{ \Omega}_N,J_N)$ that constitute  the level $\textbf{u}^N$ of the building $\mathbb{H}$. Since the building $\mathbb{H}$ is the limit of a sequence of pseudoholomorphic spheres in the  homology class $[\mathbb{CP}^1]$, we have the following bound on the energy of $\mathbb{H}$ :
\begin{equation}\label{engery1}
	E(\mathbb{H}):=\sum_{N=0}^{N_+}\sum_{i=1}^{k_N}\int_{u_i^N}\tilde{ \Omega}_N=\sum_{i=1}^{k_{N^+}}\int (u_i^{N^+})^*\omega_{\mathrm{FS,\epsilon}}= \omega_{\mathrm{FS,\epsilon}}([\mathbb{CP}^1])=(1+\epsilon).
\end{equation}
If $\gamma$ is a closed Reeb orbit appearing as an asymptote of some curve component in the top level of the building,  say $u_i^{N^+}$, then 
\[(1+\epsilon)\geq \int (u_i^{N^+})^*\omega_{\mathrm{FS,\epsilon}}\geq\int_{(u_i^{N^+})^{-1}((-\infty,0]\times S^*L))}d(e^r\lambda_{\mathrm{can}})=\int_{(u_i^{N^+})^{-1}(\{0\}\times S^*L)}\lambda_{\mathrm{can}}\geq \int_{\gamma} \lambda_{\mathrm{can}}, \]
where the last inequality follows from the positivity of $d\lambda_{\mathrm{can}}|_{S^*L}$ on $J$-holomorphic curves in $(-\infty,0]\times S^*L$ (cf. {\cite[Lemma 2.6]{Cieliebak2018}}).
This, in particular, means that every closed Reeb orbit appearing in the holomorphic building $\mathbb{H}$ has an action at most  $(1+\epsilon)$ and is, therefore,  non-contractible, non-degenerate, and projects to a closed geodesic of Morse index between $0$ and $n-1$.
\subsection{Analyzing the resultant holomorphic building $\mathbb{H}$}\label{Analyze}
This section proves that, if $\epsilon>0$ is sufficiently small, then we have the following.
\begin{itemize}
	\item  There are no symplectization levels in  $\mathbb{H}$, i.e., the levels \[\textbf{u}^1, \textbf{u}^2,  \dots,\textbf{u}^{N_+-1}\] are all empty.
	\item The bottom level $\textbf{u}^0$ that sits in $(T^*L, J_{\mathrm{bot}})$ consists of a smooth connected $J_{\mathrm{bot}}$-holomorphic asymptotically cylindrical sphere $C_{\mathrm{bot}}$ with exactly $n+1$ positive punctures. Moreover, it inherits the tangency constraint  $\ll \mathcal{T}_D^{n-1}q\gg$ and the Hamiltonian perturbation supported around $q$.
	\item  The  top level $\textbf{u}^{N_+}$ that sits in $(\widehat{\mathbb{CP}^{n}\setminus D^*L}, J_{\infty})$ consists of $n+1$ asymptotically cylindrical somewhere injective $J_\infty$-holomorphic planes $u_1,u_2,\dots, u_n, u_{\infty}$ with negative ends on $S^*L$. The planes $u_1,u_2,\dots, u_n$ lie in the complement of the complex hypersurface $\mathbb{CP}^{n-1}$ and $u_{\infty}$ has a simple intersection with $\mathbb{CP}^{n-1}$.  Figure \ref{holomorphic building} illustrates the building $\mathbb{H}$.
	\end{itemize}
\begin{figure}[h]
	\centering
	\includegraphics[width=8cm]{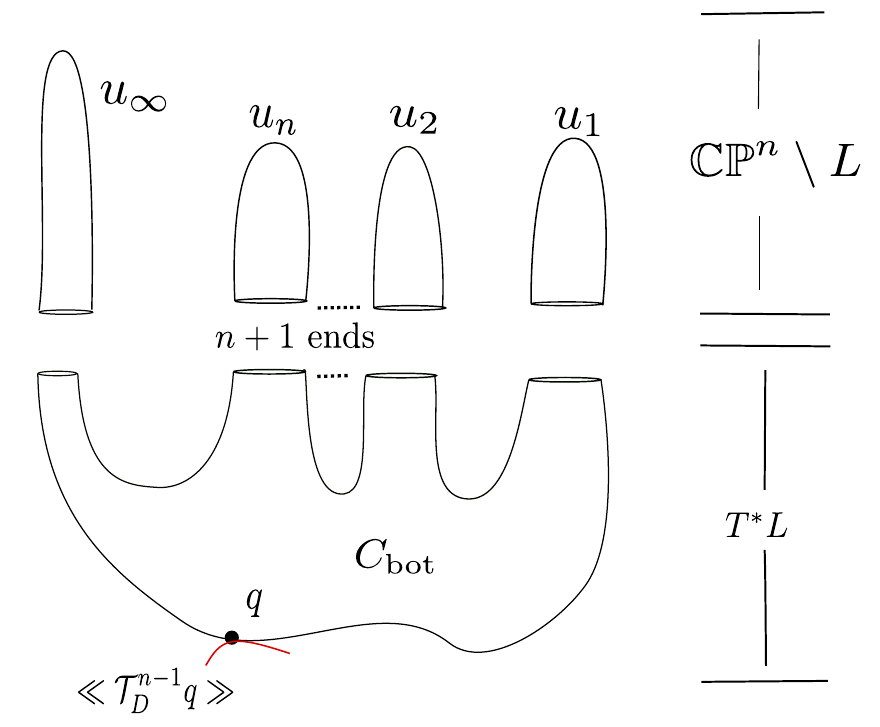}
	\caption{The holomorphic building $\mathbb{H}$}\label{holomorphic building}
\end{figure}
We establish this in the following sequence of lemmas.
\begin{lemma}
	The holomorphic building $\mathbb{H}$ has no node between its non-constant curve components. In particular, the building does not have a node at the constrained marked point $q$.
\end{lemma}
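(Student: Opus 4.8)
The plan is to argue by contradiction using the genus-zero, index-zero constraints together with the monotonicity lemma. Suppose the building $\mathbb{H}$ has a node joining two non-constant curve components. Since $\mathbb{H}$ has genus zero, the associated graph is a tree, so removing such a node disconnects the building into two sub-buildings, each representing a relative homology class whose sum equals $[\mathbb{CP}^1]$ modulo contributions absorbed by Reeb orbits at breaking punctures. The first step is to argue that each non-constant curve component of $\mathbb{H}$ carries positive $\tilde\Omega$-energy: this follows from Theorem \ref{nonconstantholclass} (in its Liouville-domain/SFT form, valid by the remark after that theorem) applied to each sub-building, or directly from the fact that a non-constant punctured holomorphic curve in a symplectization or in $(T^*L,d\lambda_{\mathrm{can}})$ or in $(\mathbb{CP}^n\setminus L,\omega_{\mathrm{FS},\epsilon})$ has positive energy by Stokes. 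Hence splitting off a genuine sub-building at a node costs a definite amount of energy.

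The second step handles the node at the constrained marked point $q$ specifically. If the constrained marked point lay on a non-constant component $\bar u$, that component would pass through $q$ and, because $J$ (hence $J_\infty$, $J_{\mathrm{bot}}$) is integrable near $q$ and equals the fixed $J_{\mathrm{std}}$-type structure on the neighborhood $\phi(D^{2n}(\delta))$ of $q$, the monotonicity Lemma \ref{monotoncity1} applies: the portion of $\bar u$ inside that neighborhood has $\tilde\Omega$-area at least $C=\tfrac12\pi\delta^2>0$. Together with the energy bound $E(\mathbb{H})=1+\epsilon$ from (\ref{engery1}) this is not yet a contradiction, but the point of this lemma is the structural claim that \emph{no node} occurs between non-constant components at all; so I would instead combine the node-splitting with an index count. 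Each node drops the total index by at least $2$ (cf.\ \cite[Theorem 6.2.6]{MR2954391} in the closed case, and its SFT analogue: a node is a codimension-$2$ phenomenon), while the total index of $\mathbb{H}$ is exactly $0$, matching the rigid count. A node between two non-constant (somewhere injective, or multiply covered with the index-positivity bounds available here from Proposition \ref{positive} / Theorem \ref{perturbation-invariance}) components would force the index of some sub-building to be strictly negative, which is impossible for a building built from $J_k$-generic data. This is the argument one expects to carry out.

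Concretely, the key steps in order: (1) observe $\mathbb{H}$ is a genus-zero building, so its component graph is a tree and any node is separating; (2) show each non-constant component has strictly positive $\tilde\Omega$-energy (Stokes / Theorem \ref{nonconstantholclass}); (3) record that $\mathbb{H}$ represents $[\mathbb{CP}^1]$ with total index $0$ and is a limit of rigid curves, so every sub-configuration must have nonnegative index after passing to underlying simple curves (using semipositivity of $\mathbb{CP}^n$ and Proposition \ref{positive}, plus the transversality for planes/cylinders in $T^*L$ from Wendl's theorem cited earlier); (4) if a node joined two non-constant components, delete it, bound below the index of each resulting piece, and add the $-2$ contributed by the node to reach total index $\le -2<0$, contradicting (3); (5) deduce in particular that the constrained marked point cannot sit on a non-constant component attached through a node to another non-constant component—but since the constrained point must be \emph{somewhere}, and a ghost bubble carrying it would, by Lemma \ref{tang}, distribute the order-$n-1$ tangency onto attached non-constant components forcing extra index loss, the same inequality rules this out; hence the constrained marked point lies on a (unique) non-constant component with no node, and there are no nodes among non-constant components.

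The main obstacle I anticipate is bookkeeping the index/energy budget carefully across the three different types of levels (the closed-curve behavior in $\mathbb{CP}^n\setminus L$, the cylindrical symplectization $\mathbb{R}\times S^*L$, and $T^*L$), making sure that (a) breaking orbits, which all have action $\le 1+\epsilon$ and Conley–Zehnder index controlled by the perturbed-metric estimate, contribute the expected amounts to the Fredholm index, and (b) multiply covered components are handled—either they are trivial cylinders (excluded by stability unless they carry special points), or one invokes the index-positivity hypotheses (H1)–(H2) / Proposition \ref{positive} to replace them by simple curves without gaining index. Once the index ledger is set up so that "one node $\Rightarrow$ total index $\le -2$", the lemma follows, and the subsequent lemmas in Section \ref{Analyze} (no symplectization levels, exactly $n+1$ planes upstairs, etc.) will build on this no-node statement.
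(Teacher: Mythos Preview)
Your approach via index counting is substantially more complicated than the paper's, and there are real concerns about whether it goes through. The paper's argument is a two-line topological/area argument that you have missed entirely: since $\mathbb{H}$ has genus zero, a node between non-constant components splits the building into two sub-buildings $A_1$, $A_2$, each of which glues up (in the sense of Definition~\ref{homologyclassofbuilding}) to a \emph{closed} $2$-cycle in $\mathbb{CP}^n$ with $[A_1]+[A_2]=[\mathbb{CP}^1]$. By Theorem~\ref{nonconstantholclass} both have $\omega_{\mathrm{FS},\epsilon}(A_i)>0$. But the total intersection number of $\mathbb{H}$ with the $J_\infty$-holomorphic hypersurface $\mathbb{CP}^{n-1}$ is $+1$, so by positivity of intersection exactly one piece, say $A_2$, meets $\mathbb{CP}^{n-1}$. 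Then $A_1$ lies in the affine chart $B^{2n}(1+\epsilon)$ where $\omega_{\mathrm{FS},\epsilon}=d\lambda_{\mathrm{std}}$ is exact, and since $A_1$ is a closed cycle, Stokes gives $\omega_{\mathrm{FS},\epsilon}(A_1)=0$ --- contradiction. No index bookkeeping, no transversality of multiply covered punctured curves, no monotonicity lemma.

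Your proposed route has specific problems. Step~(4) as written is incoherent: if the building has total index $0$ and a node contributes $-2$, the remaining pieces must sum to $+2$, not to $\le -2$. More seriously, the ``nodes are codimension two'' slogan is imported from the closed-curve theory of \cite{MR2954391} and does not transfer cleanly to SFT buildings; you would need to set up the stratified structure of the compactified building moduli space and prove the analogous codimension estimate, which the paper never does. Your appeal to Proposition~\ref{positive} is for closed spheres in a semipositive manifold and says nothing about punctured curves in $\mathbb{CP}^n\setminus L$ or $T^*L$; and Wendl's automatic transversality, which you invoke for $T^*L$, is a four-dimensional result, while here $\dim T^*L=2n$ is arbitrary. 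The monotonicity Lemma~\ref{monotoncity1} plays no role in this lemma at all. The key idea you are missing --- and which makes the argument short --- is to exploit the intersection number with $\mathbb{CP}^{n-1}$ together with exactness of $\omega_{\mathrm{FS},\epsilon}$ on its complement.
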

\begin{proof}
	Since $\mathbb{H}$ has genus zero, any node between non-constant components decomposes the building $\mathbb{H}$ into two pieces $A_1$ and $A_2$. These are represented by non-constant holomorphic curves (lying possibly at different levels). So, by {\cite[Lemma 2.6]{Cieliebak_2005}}, we have $\omega_{\mathrm{FS,\epsilon}}(A_1)>0, \omega_{\mathrm{FS,\epsilon}}(A_2)>0$. The building $\mathbb{H}$ has exactly one intersection with the $J_\infty$-holomorphic hypersurface $\mathbb{CP}^{n-1}$ at infinity because of the positivity of intersection and the fact that $\mathbb{H}$ is the result of breaking of a holomorphic sphere that has intersection number $+1$ with $\mathbb{CP}^{n-1}$. Therefore, one of these spheres, say $A_1$, lies in the complement of $\mathbb{CP}^{n-1}$. But since $\omega_{\mathrm{FS,\epsilon}}$ is exact in the complement of $\mathbb{CP}^{n-1}$, we must have $\omega_{\mathrm{FS,\epsilon}}(A_1)=0$ by Stokes' theorem. This is a contradiction.
\end{proof}
\begin{lemma}\label{countendsincotangentbundle}
	Let $C_{\mathrm{bot}}$ be the non-constant smooth punctured sphere in $T^*L$ that carries the tangency constraint $\ll \mathcal{T}_D^{n-1}q\gg$ at $q$. Then $C_{\mathrm{bot}}$ has at least $n+1$ positive asymptotically cylindrical ends.
\end{lemma}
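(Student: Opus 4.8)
The claim is that the curve component $C_{\mathrm{bot}}$ living in $(T^*L, J_{\mathrm{bot}})$ and carrying the tangency constraint $\ll \mathcal{T}_D^{n-1}q\gg$ at $q$ must have at least $n+1$ positive asymptotically cylindrical ends. The plan is to invoke Theorem~\ref{count-positive-punctures} (stated as {\cite[Corollary 3.3]{Cieliebak2018}}), which asserts exactly this kind of lower bound: for a generic SFT-admissible almost complex structure on $(T^*L, d\lambda_{\mathrm{can}})$ integrable near $q$ and preserving $D$, every non-constant punctured $J$-holomorphic sphere in $T^*L$ asymptotic at its punctures to closed geodesics of length at most $c$ and satisfying $\ll \mathcal{T}_D^{k-1}q\gg$ has at least $k+1$ positive punctures. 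Here $k-1 = n-1$, so $k = n$ and the theorem yields $\geq n+1$ positive punctures.

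First I would verify that the hypotheses of Theorem~\ref{count-positive-punctures} are met in our situation. The metric $g$ on $L = T^n$ was chosen (via Theorem~\ref{index-estemate}/Theorem~\ref{perturb-metric}) so that all closed geodesics of length $\leq 1+\epsilon$ are non-contractible, non-degenerate, and of Morse index between $0$ and $n-1$; in particular $L$ carries a metric of non-positive sectional curvature. The almost complex structure $J_{\mathrm{bot}}$ is a generic SFT-admissible almost complex structure on $T^*L$, integrable near $q$ and preserving the local divisor $D$, exactly as required. Next I would check the asymptotic hypothesis: from the energy bound (\ref{engery1}) and the action estimate following it, every closed Reeb orbit appearing anywhere in the building $\mathbb{H}$ — and hence every orbit asymptotic to a puncture of $C_{\mathrm{bot}}$ — has action at most $1+\epsilon$, so it projects to a closed geodesic on $L$ of length at most $1+\epsilon \leq c$. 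Thus all punctures of $C_{\mathrm{bot}}$ are asymptotic to geodesics within the allowed length range.

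The remaining point is that $C_{\mathrm{bot}}$ is a \emph{sphere} (genus zero) and that the punctures in question are \emph{positive}. Genus zero is inherited from the fact that $\mathbb{H}$ is the Gromov--Hofer limit of genus zero curves $u_k$, so each of its smooth components, $C_{\mathrm{bot}}$ included, has genus zero. That the asymptotically cylindrical ends of a curve component sitting in the bottom level $(T^*L, J_{\mathrm{bot}})$ are all positive is forced by the structure of a holomorphic building: the bottom level has no negative end available (the negative boundary of $T^*L$ is empty), so every puncture of $C_{\mathrm{bot}}$ must be positive, matching either a negative puncture of a symplectization level above it or nothing. Finally, $C_{\mathrm{bot}}$ is non-constant because it carries the tangency constraint $\ll \mathcal{T}_D^{n-1}q\gg$ together with the Hamiltonian perturbation around $q$; more concretely, $\omega_{\mathrm{FS,\epsilon}}$ (equivalently the relevant exact form $d\lambda_{\mathrm{can}}$ on $T^*L$) would force a constant component to be a ghost, but a ghost cannot satisfy a genuine tangency constraint of positive order. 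Assembling these observations, Theorem~\ref{count-positive-punctures} with $k = n$ applies directly and gives the desired bound of at least $n+1$ positive punctures.

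The main obstacle is purely bookkeeping rather than conceptual: one must be careful that the constrained marked point $q$ really does lie on $C_{\mathrm{bot}}$ and not on a ghost bubble attached to it (this is handled by the preceding lemma, which rules out nodes between non-constant components and in particular a node at $q$), and that the Hamiltonian perturbation $X_H \otimes \beta$ near $q$ does not affect the asymptotic or index analysis — but by construction $u_k$ is purely $J_k$-holomorphic away from a small neighborhood of $q$, so the limit $C_{\mathrm{bot}}$ is a genuine $J_{\mathrm{bot}}$-holomorphic punctured sphere away from $q$ and Theorem~\ref{count-positive-punctures} (which is stated for the perturbed equation in {\cite[Corollary 3.3]{Cieliebak2018}}) applies verbatim.
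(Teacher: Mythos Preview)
Your proposal is correct but takes a different route from the paper's own proof. The paper proves the lemma by an explicit index computation: it writes down the Fredholm index of $C_{\mathrm{bot}}$ with $l$ positive ends, uses Theorem~\ref{special-trivalization} to convert the Conley--Zehnder indices of the asymptotic orbits into Morse indices of the underlying geodesics, applies the bound $\mu(c_i)\leq n-1$ coming from the choice of metric, and obtains $\operatorname{ind}(C_{\mathrm{bot}})\leq 2(l-(n+1))$; transversality via the generic Hamiltonian perturbation then forces $\operatorname{ind}(C_{\mathrm{bot}})\geq 0$ and hence $l\geq n+1$. You instead invoke Theorem~\ref{count-positive-punctures} as a black box, verifying its hypotheses carefully. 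This is legitimate --- indeed, the paper itself uses exactly your shortcut later in Section~\ref{sectiontoricproof} for the toric case --- and Theorem~\ref{count-positive-punctures} is, at bottom, the same index computation packaged as a general statement. The trade-off is that your argument is shorter but opaque about \emph{why} the bound holds, while the paper's direct computation is self-contained and also sets up the sharper conclusion (Lemma~\ref{index-estemate}) that $\operatorname{ind}(C_{\mathrm{bot}})=0$ and each $\mu(c_i)=n-1$, which is needed shortly afterward.
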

\begin{proof} Suppose $C_{\mathrm{bot}}$ has positive ends on the Reeb orbits $\gamma_1,\dots,\gamma_{l}$, for some positive integer $l$. Choose a trivialization $\tau$ that extends to $C_{\mathrm{bot}}^*TT^*L$. In this trivialization the Fredholm index of $C_{\mathrm{bot}}$ (cf.\cite[Proposition 3.1]{Cieliebak2018}), taking the constraint $\ll \mathcal{T}_D^{n-1}q\gg$ into consideration, is given by
\[\operatorname{ind}(C_{\mathrm{bot}})=(n-3)(2-l)+\sum_{i=1}^{l}\operatorname{CZ^{\tau}}(\gamma_i)-2n+2-2(n-1).\]
From Theorem \ref{special-trivalization}, it follows that	
\[\sum_{i=1}^{l}\operatorname{CZ}^\tau(\gamma_i)+\sum_{i=1}^{l}\mu^{\kappa(\tau)}(c_i)=\sum_{i=1}^{l}\mu(c_i),\]
where $\mu(c_i)$ denotes the Morse index of the geodesic $c_i$ that lifts to $\gamma_i$ and  $\mu^{\kappa(\tau)}(c_i)$ denotes its Maslov index. Since $C_{\mathrm{bot}}$ is a null-homology of cycles $\gamma_1,\dots,\gamma_{l}$, we have \[\sum_{i=1}^{l}\mu^{\kappa(\tau)}(c_i)=0.\]
So
	\[\sum_{i=1}^{l}\operatorname{CZ_{\tau}}(\gamma_i)=\sum_{i=1}^{l}\mu(c_i) \leq
	l(n-1).\]
This implies
	\[\operatorname{ind}(C_{\mathrm{bot}})\leq 2(l-(n+1)).\]
On the other hand, the curve $C_{\mathrm{bot}}$ carries a Hamiltonian perturbation $H$ supported around the point $q$ which ensures its transversality. For generic $H$, we have $\operatorname{ind}(C_{\mathrm{bot}})\geq 0$. For this, it is necessary that $l\geq n+1$.
\end{proof}
\begin{lemma}\label{count-disks}
	The building  $\mathbb{H}$ has at least $n+1$ negatively asymptotically cylindrical $J_{\infty}$-holomorphic planes in the top level  $\mathbb{CP}^n\setminus L$.
\end{lemma}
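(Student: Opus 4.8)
The plan is to combine the tree structure of the building $\mathbb{H}$ with the lower bound on the number of positive ends of $C_{\mathrm{bot}}$. By Lemma~\ref{countendsincotangentbundle}, the bottom‑level curve $C_{\mathrm{bot}}$ has $l\geq n+1$ positive punctures, asymptotic to closed Reeb orbits $\gamma_1,\dots,\gamma_l$ on $S^*L$. Since $\mathbb{H}$ has genus zero and, by the preceding lemma, has no node between its non‑constant curve components (and, as is standard in this situation, no ghost components at all), its curve components together with the edges recording matched asymptotic Reeb orbits form a finite tree $\mathcal{G}$, in which every edge joins a positive puncture of a component in some level $N$ to a negative puncture of a component in level $N+1$; moreover no component in $T^*L$ has a negative puncture and no component in the top level $\mathbb{CP}^n\setminus L$ has a positive puncture. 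Deleting the vertex $C_{\mathrm{bot}}$ from $\mathcal{G}$ yields exactly $l$ pairwise disjoint subtrees $T_1,\dots,T_l$, with $T_j$ attached to $C_{\mathrm{bot}}$ along $\gamma_j$, so it will be enough to show that each $T_j$ contains a negatively asymptotically cylindrical $J_\infty$‑holomorphic plane lying in the top level $\mathbb{CP}^n\setminus L$.

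First I would record three exclusions. \emph{(i)} There is no $J_{\mathrm{bot}}$‑holomorphic plane in $T^*L$: such a plane would provide a nullhomotopy of its asymptotic closed Reeb orbit on $S^*L$, which by the discussion following \eqref{engery1} has action at most $1+\epsilon$ and hence projects to a closed geodesic of length at most $1+\epsilon$; but all such geodesics are non‑contractible in $L$ by the choice of metric in Theorem~\ref{index-estemate}. \emph{(ii)} Every non‑constant component $C$ in a symplectization level $\mathbb{R}\times S^*L$ has at least one positive puncture: since $d\lambda_{\mathrm{can}}$ is non‑negative on $J_{\mathrm{cyl}}$‑complex lines, Stokes' theorem gives $0\le\int_C d\lambda_{\mathrm{can}}=\sum\int_{\gamma^+}\lambda_{\mathrm{can}}-\sum\int_{\gamma^-}\lambda_{\mathrm{can}}$, so a component with no positive puncture has no negative puncture either and is a constant curve, which is excluded by stability. \emph{(iii)} Combining \emph{(i)}, \emph{(ii)} and the fact that components in $T^*L$ carry only positive punctures, any component of $\mathbb{H}$ that is a plane with a single \emph{negative} puncture must lie in the top level $\mathbb{CP}^n\setminus L$.

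Then I would run a pruning argument on $\mathcal{G}$. Call a leaf \emph{bad} if its unique puncture is positive, i.e.\ if its neighbour lies one level above it; a bad leaf is never adjacent to $C_{\mathrm{bot}}$ (since $C_{\mathrm{bot}}$ lies in the lowest level, the puncture of any component along an edge running down to $C_{\mathrm{bot}}$ is negative), and deleting a bad leaf together with its edge keeps $\mathcal{G}$ a tree and destroys none of the $l$ edges at $C_{\mathrm{bot}}$. Iterating this deletion terminates, after finitely many steps, at a subtree $\mathcal{G}'$ with no bad leaf, still containing $C_{\mathrm{bot}}$ with all $l$ of its edges. Every leaf of $\mathcal{G}'$ then has a negative puncture and is therefore, by \emph{(iii)}, a plane in the top level; and since $C_{\mathrm{bot}}$ still has degree $l$ in $\mathcal{G}'$, the tree $\mathcal{G}'$ has at least $l$ leaves, one in each of the $l$ subtrees of $\mathcal{G}'$ meeting $C_{\mathrm{bot}}$. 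This yields $l\geq n+1$ pairwise distinct $J_\infty$‑holomorphic planes in $\mathbb{CP}^n\setminus L$, all of which are components of $\mathbb{H}$. The step I expect to be the main obstacle is exactly this last one: it is easy to see that each subtree $T_j$ reaches the top level, but its topmost component need not be a plane, and the role of the pruning is to shed the superfluous negative punctures and isolate honest planes while verifying that the pruning never erodes the ends of $C_{\mathrm{bot}}$, which is what keeps the count at $n+1$.
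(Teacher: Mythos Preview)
Your proof is correct and follows essentially the same strategy as the paper: use that $\mathbb H$ is a tree, that $C_{\mathrm{bot}}$ has $\geq n+1$ edges by Lemma~\ref{countendsincotangentbundle}, and that non-contractibility of the asymptotic Reeb orbits forces each of the subtrees emanating from $C_{\mathrm{bot}}$ to contain a top-level plane. The paper's version is terser---it simply asserts that each subtree $C_i$, being a topological plane bounding a non-contractible orbit, must contain a $J_\infty$-plane in $\mathbb{CP}^n\setminus L$---while your exclusions (i)--(iii) and pruning spell this out carefully; note however that your pruning is in fact vacuous, since the same non-contractibility argument you used in (i) also rules out planes with a single positive puncture in the symplectization levels, so there are no ``bad leaves'' to begin with and every leaf of the original tree already lies in the top level.
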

\begin{proof}
	The underlying graph of the building $\mathbb{H}$ is a tree since the building $\mathbb{H}$ has genus zero. By Lemma \ref{countendsincotangentbundle}, at least $n+1$ edges emanate from the vertex $C_{\mathrm{bot}}$ in the underlying graph. We order these edges from
	$1,2, \dots, n+1,\dots, m$, for some positive integer $m\geq n+1$. Let $C_i$ be the union of the curves that constitute the subtree emanating from the vertex  $C_{\mathrm{bot}}$ along the $i$th edge. Since the building has genus zero, all $C_i$ are topological planes. The ends of $C_{\mathrm{bot}}$ are on non-contractible Reeb orbits, so each $C_i$ must have a curve component in the top level $\mathbb{CP}^n\setminus L$, which is a $J_\infty$-holomorphic plane negatively asymptotic to a Reeb orbit on $S^*L$. So each $C_i$ corresponds to a negatively asymptotically cylindrical $J_{\infty}$-holomorphic plane in $\mathbb{CP}^n\setminus L$. The underlying graph of the building $\mathbb{H}$ is a tree implying that we have at least $n+1$ negatively asymptotically cylindrical $J_{\infty}$-holomorphic planes, denoted by $u_1,u_2,\dots, u_{n}, u_{\infty}$, in $\mathbb{CP}^n\setminus L$. 
\end{proof}

\begin{lemma}\label{count-intersections}
	Let $u_1,u_2,\dots,u_n, u_{n+1}, \dots, u_{m}$ be all of the negatively asymptotically cylindrical $J_{\infty}$-holomorphic planes in the top level  $\mathbb{CP}^n\setminus L$ of $\mathbb{H}$. At most one of these can intersect the hypersurface $\mathbb{CP}^{n-1}$ at infinity, and the intersection number will be $+1$. In particular, at least $n$ planes $u_1,u_2,\dots,u_n$ lie in the affine symplectic chart $B^{2n}(1+\epsilon)$.
\end{lemma}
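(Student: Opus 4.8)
The plan is to exploit positivity of intersection together with the homological bookkeeping already in place. Recall that the whole building $\mathbb{H}$ is the Gromov--Hofer limit of spheres $u_k$ in the class $[\mathbb{CP}^1]$, each of which meets the $J$-holomorphic hypersurface $\mathbb{CP}^{n-1}$ at infinity in exactly one point with local intersection number $+1$ (the curves $u_k$ lie in $\mathcal{M}^{J_k}_{\mathbb{CP}^n,[\mathbb{CP}^1]}\ll\mathcal{T}_D^{n-1}q\gg$, and $[\mathbb{CP}^1]\cdot[\mathbb{CP}^{n-1}]=1$). First I would observe that since $J_\infty$ agrees with $J_{\mathrm{std}}$ near $\mathbb{CP}^{n-1}$, the hypersurface $\mathbb{CP}^{n-1}$ is $J_\infty$-holomorphic and is disjoint from $L$, so it makes sense to speak of intersections of the top-level components of $\mathbb{H}$ with $\mathbb{CP}^{n-1}$. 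None of the lower-level pieces (in $\mathbb{R}\times S^*L$ or in $T^*L$) can meet $\mathbb{CP}^{n-1}$, since those levels are disjoint from a neighborhood of infinity; hence all intersections of $\mathbb{H}$ with $\mathbb{CP}^{n-1}$ occur among the top-level planes $u_1,\dots,u_m$.

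Next I would invoke the standard fact that homological intersection numbers are preserved under the neck-stretching degeneration: the total count of intersections of $\mathbb{H}$ with $\mathbb{CP}^{n-1}$, counted with multiplicity, equals $[\mathbb{CP}^1]\cdot[\mathbb{CP}^{n-1}]=1$. Concretely, by the compactness theorem (Theorem \ref{sft}) the building represents the class $[\mathbb{CP}^1]\in H_2(\mathbb{CP}^n,\mathbb{Z})$, and since $\mathbb{CP}^{n-1}$ is a $J_\infty$-holomorphic (hence symplectic) hypersurface meeting $\mathbb{H}$ in finitely many points, positivity of intersection forces each such intersection point to contribute a strictly positive integer, with the integers summing to $1$. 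Therefore there is exactly one intersection point, it has local multiplicity $+1$, and it lies on a single top-level plane. Relabel the planes so that this distinguished plane is $u_{\infty}$ (it may be that $u_\infty$ is one of the $n+1$ planes produced in Lemma \ref{count-disks}, which is consistent with the notation $u_1,\dots,u_n,u_\infty$ used there). All remaining planes $u_1,\dots,u_{m-1}$ have zero intersection number with $\mathbb{CP}^{n-1}$, and again by positivity of intersection this means they are literally disjoint from $\mathbb{CP}^{n-1}$, i.e.\ their images lie in $\mathbb{CP}^n\setminus\mathbb{CP}^{n-1}=B^{2n}(1+\epsilon)$. Since $m\ge n+1$ by Lemma \ref{count-disks}, at least $n$ of them, say $u_1,\dots,u_n$, lie in the affine chart $B^{2n}(1+\epsilon)$, which is the claim.

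The one point requiring a little care — the main obstacle — is justifying that a top-level plane with zero \emph{homological} intersection number with $\mathbb{CP}^{n-1}$ is in fact \emph{geometrically} disjoint from it. This is exactly positivity of intersection for $J_\infty$-holomorphic curves against a $J_\infty$-holomorphic hypersurface: each geometric intersection point contributes a strictly positive local intersection index, so a vanishing total index forces no intersection points at all. One must note that no top-level plane can be contained in $\mathbb{CP}^{n-1}$: the planes are asymptotically cylindrical with negative ends on non-contractible Reeb orbits of $S^*L$, while $\mathbb{CP}^{n-1}$ is disjoint from the neck region, so a plane contained in $\mathbb{CP}^{n-1}$ would have no asymptotic end, a contradiction. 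With that ruled out, positivity of intersection applies verbatim and the argument closes.
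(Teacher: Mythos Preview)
Your proposal is correct and follows essentially the same approach as the paper: the hypersurface $\mathbb{CP}^{n-1}$ is $J_\infty$-holomorphic, the building represents $[\mathbb{CP}^1]$ with homological intersection number $+1$ with $\mathbb{CP}^{n-1}$, and positivity of intersection forces exactly one top-level component to meet it, with local multiplicity $+1$. Your write-up is in fact more careful than the paper's terse proof---you explicitly rule out lower-level intersections, justify that zero homological intersection implies geometric disjointness, and exclude the possibility that a plane is contained in $\mathbb{CP}^{n-1}$---but the underlying argument is the same.
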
\begin{proof}
	Recall that the hypersurface at infinity $\mathbb{CP}^{n-1}$ is $J_{\infty}$-holomorphic, $\mathbb{H}$ is the limit of a sequence of pseudo-holomorphic spheres that has homological intersection number with $\mathbb{CP}^{n-1}$ equal to $+1$, and intersections between complex objects count positively. Therefore, exactly one curve component of $\mathbb{H}$  in the top level $\mathbb{CP}^n\setminus L$ intersects the hypersurface $\mathbb{CP}^{n-1}$.
\end{proof}
\begin{lemma}\label{excontradiction}
	Suppose $\epsilon< \frac{1}{n}$. Then the following holds. 
	\begin{itemize}
		\item The top level of the building $\mathbb{H}$  consists of  $n+1$ negatively asymptotically cylindrical $J_{\infty}$-holomorphic planes, denoted by  $u_1,u_2,\dots,u_n, u_{\infty}$. All these planes are somewhere injective. Moreover, the planes $u_1,u_2,\dots,u_n$ that do not intersect the hypersurface $\mathbb{CP}^{n-1}$ are asymptotic to simple Reeb orbits (equivalently simple geodesics on $L$). The plane $u_{\infty}$ is the component of the building that intersects the hypersurface $\mathbb{CP}^{n-1}$ at infinity.
		\item The building has no symplectization levels.
	\end{itemize}
	
\end{lemma}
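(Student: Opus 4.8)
The plan is to extract the statement of Lemma~\ref{excontradiction} from the energy budget established in~(\ref{engery1}) together with the structural results already proved in Lemmas~\ref{countendsincotangentbundle}--\ref{count-intersections}. First I would recall that by Lemma~\ref{count-disks} the top level has at least $n+1$ negatively asymptotically cylindrical $J_\infty$-holomorphic planes $u_1,\dots,u_m$ with $m\ge n+1$, and by Lemma~\ref{count-intersections} at most one of them, call it $u_\infty$, meets the hypersurface $\mathbb{CP}^{n-1}$ at infinity (with intersection number $+1$), while the remaining planes lie in the affine chart $B^{2n}(1+\epsilon)$. The key point is a lower bound on the $\widehat\omega_{\mathrm{FS},\epsilon}$-energy of each such plane: every plane $u_i$ is negatively asymptotic to a closed Reeb orbit $\gamma_i$ on $S^*L$, and by Stokes' theorem (using that $\widehat\omega_{\mathrm{FS},\epsilon}=d(e^r\lambda_{\mathrm{can}})$ on the cylindrical piece) its energy is at least $\int_{\gamma_i}\lambda_{\mathrm{can}}=\operatorname{length}(c_i)$, where $c_i$ is the underlying geodesic. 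Since the metric $g$ has injectivity radius bounded below (after the perturbation of the flat metric, one can assume all closed geodesics have length bounded below by some $\ell_0>0$ depending only on $g_0$), each plane carries energy at least $\ell_0$; after a suitable normalization of $g_0$ we may take $\ell_0$ to be any fixed constant, say we arrange that the shortest closed geodesic has length $>1$ is too strong, so instead I would phrase it via: the number of planes times the minimal orbit action is bounded by the total energy $(1+\epsilon)$ appearing in~(\ref{engery1}).

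Concretely, each of the $m$ planes $u_i$ has $\widehat\omega_{\mathrm{FS},\epsilon}$-energy $\ge \mathcal{A}(\gamma_i)>0$, and since the bottom curve $C_{\mathrm{bot}}$ is a null-homology of the orbits $\gamma_i$ lying entirely in $T^*L$ where the action is the length of the projected geodesic, and since every geodesic here is non-contractible hence has length bounded below by the systole $s(g)$ of $(L,g)$, we get $E(\mathbb{H})\ge \sum_{i=1}^m \mathcal{A}(\gamma_i)\ge m\cdot s(g)$. Choosing the perturbed metric $g$ (equivalently the scaling of $g_0$) so that $s(g)>\tfrac{1}{n}$ is not available because $g$ was already fixed when embedding $D^*L$ into $B^{2n}(1+\epsilon/2)$; the correct bookkeeping is that $g$ is fixed first and then $\epsilon$ is chosen small. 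So I would instead argue: the total energy is $1+\epsilon$; $n$ of the planes lie in the affine chart and are therefore forced, together with $C_{\mathrm{bot}}$'s constraint, to carry the bulk of the area, and the hypothesis $\epsilon<\tfrac1n$ prevents an $(n+2)$-nd plane. More precisely, combining the index computation in Lemma~\ref{countendsincotangentbundle} (which gave $\operatorname{ind}(C_{\mathrm{bot}})\le 2(l-(n+1))$ and $\operatorname{ind}(C_{\mathrm{bot}})\ge 0$, hence $l\ge n+1$) with the fact that $\mathbb{H}$ has total index zero and genus zero: the planes in the top level each have non-negative index (genericity of $J_\infty$), the cylindrical levels contribute non-negatively, so index zero of the whole building forces $\operatorname{ind}(C_{\mathrm{bot}})=0$, hence $l=n+1$, hence $C_{\mathrm{bot}}$ has exactly $n+1$ positive punctures; since the tree has no other vertices contributing extra planes (each subtree off $C_{\mathrm{bot}}$ carries exactly one top-level plane, and a genus-zero index-zero building with no symplectization levels cannot have extra components), there are exactly $n+1$ planes $u_1,\dots,u_n,u_\infty$.

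Next I would establish that all these planes are somewhere injective: a multiply covered $J_\infty$-holomorphic plane would have its underlying simple plane of strictly smaller index, contradicting the index-zero balance once one accounts for the parity/positivity of indices in the generic split almost complex structure (this is the standard argument, appealing to the transversality of $J_\infty$ together with the fact that the relevant simple curves have index $\ge 0$ and a multiple cover of positive covering number would force a negative contribution somewhere). Then the claim that $u_1,\dots,u_n$ are asymptotic to \emph{simple} Reeb orbits: if $u_i$ were asymptotic to a $k$-fold cover of a simple geodesic with $k\ge 2$, the action of that orbit would be $k$ times the length of the simple geodesic, and by the energy bound $\mathcal{A}(\gamma_i)\le 1+\epsilon$ together with $\epsilon<\tfrac1n$ and the already-used normalization of $g$ this would exceed the available energy; alternatively, the Conley--Zehnder/Maslov computation via Theorem~\ref{special-trivalization} combined with $0\le\mu(c_i)\le n-1$ and the index formula forces each positive puncture of $C_{\mathrm{bot}}$ to be on a geodesic of maximal Morse index $n-1$, and a multiple cover of a geodesic of Morse index $\le n-1$ would have Morse index outside $[0,n-1]$, contradicting Theorem~\ref{index-estemate}. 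Finally, that $u_\infty$ is the unique component meeting $\mathbb{CP}^{n-1}$ is immediate from Lemma~\ref{count-intersections}. The main obstacle I anticipate is the careful index bookkeeping — making the passage ``total index zero $+$ genus zero $+$ no symplectization levels $\Rightarrow$ exactly $n+1$ planes, all simple'' fully rigorous requires pinning down the index of each top-level plane and ruling out ghost bubbles and extra multiply-covered components, which is where the genericity of $J_\infty$ and the semipositivity-type arguments from Section~\ref{Chapter03} must be invoked with care.
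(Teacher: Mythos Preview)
Your proposal has a genuine gap: you never use the extremality of $L$, which is the heart of the argument and the only reason the hypothesis $\epsilon<\tfrac1n$ is relevant. The paper's proof runs as follows. Each plane $u_i$ lying in the affine chart $B^{2n}(1+\epsilon)$ compactifies to a smooth disk with boundary on $L$; since $L$ is extremal in $\bar B^{2n}(1)$, every such disk has symplectic area at least $\tfrac1n$. Hence if there were $m\ge n+1$ planes in the affine chart one would get $\sum_{i=1}^{m}\int u_i^*\omega_{\mathrm{std}}\ge \tfrac{m}{n}$, and together with the energy identity~(\ref{engery1}) this forces $\int u_\infty^*\omega_{\mathrm{FS},\epsilon}\le 1+\epsilon-\tfrac{m}{n}<0$, a contradiction. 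The same area trick handles somewhere injectivity (a $k$-fold cover contributes area $\ge \tfrac{k}{n}$) and simplicity of the asymptotic orbits (a multiply covered orbit contributes action, hence area, $\ge \tfrac{2}{n}$), and also forces $u_\infty$ to be the component meeting $\mathbb{CP}^{n-1}$.

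Your index-balancing approach, by contrast, is circular. To conclude $\operatorname{ind}(C_{\mathrm{bot}})=0$ from the vanishing total index you need every other component to have nonnegative index; genericity of $J_\infty$ gives this only for \emph{somewhere injective} curves, which is precisely what you are trying to prove. Likewise, the assertion that ``each subtree off $C_{\mathrm{bot}}$ carries exactly one top-level plane'' and that there are no symplectization levels is established in the paper (Lemma~\ref{buildinglook}) \emph{after} and \emph{using} the present lemma, again via the extremality area bound. Finally, your claimed index relation for multiple covers and the Morse-index statement for iterated geodesics are not correct as stated. The fix is simply to replace the index bookkeeping by the area argument above; once you invoke extremality, the role of $\epsilon<\tfrac1n$ becomes transparent and every claim in the lemma follows in a few lines.
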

\begin{proof}
	Let $u_1,u_2,\dots,u_{m}, u_{\infty}$ be the $J_{\infty}$-holomorphic planes contained in the top level  $\mathbb{CP}^n\setminus L$ of $\mathbb{H}$, where $m\geq n$ by Lemma \ref{count-disks}. By  Lemma \ref{count-intersections}, we can assume that $u_1,u_2,\dots,u_{m}$ do not intersect the hypersurface $\mathbb{CP}^{n-1}$ and hence are contained in the affine symplectic chart $(B^{2n}(1+\epsilon),\omega_{\mathrm{std}})$. By the symplectic identification 
	\begin{equation}\label{identification}
		\bigl(\widehat{\mathbb{CP}^{n}\setminus D^*L},\widehat{\omega}_{\mathrm{FS,\epsilon}}\bigr)\cong (\mathbb{CP}^{n}\setminus L,\omega_{\mathrm{FS,\epsilon}}),
	\end{equation}
	these  $J_\infty$-holomorphic planes can be compactified to $m$ smooth disks in $B^{2n}(1+\epsilon)$ with boundaries on $L$. The Lagrangian torus $L$ being extremal in $(\bar{B}^{2n}(1),\omega_{\mathrm{std}})$ implies that
	\[\sum_{i=1}^{m}\int u^*_i\omega_{\mathrm{FS,\epsilon}}=\sum_{i=1}^{m}\int u^*_i\omega_{\mathrm{std}}\geq m \frac{1}{n}. \]
	
	By (\ref{engery1}), the energy of the building $\mathbb{H}$ is $(1+\epsilon)$. Therefore, 
	\begin{equation}\label{engerybound}
		\sum_{i=1}^{m}\int u^*_i\omega_{\mathrm{std}}+\int u^*_{\infty}\omega_{\mathrm{FS,\epsilon}}\leq 1+\epsilon. 
	\end{equation}
	So
	\[\int u^*_{\infty}\omega_{\mathrm{FS,\epsilon}}\leq 1+\epsilon-m\frac{1}{n}. \]
	If $m>n$, then the assumption   $\epsilon< \frac{1}{n}$ implies 
	\[\int u^*_{\infty}\omega_{\mathrm{FS,\epsilon}}<0.\]
This is a contradiction because $u_{\infty}$ is a non-constant $J_\infty$-holomorphic curve and $J_\infty$ is compatible with $\omega_{\mathrm{FS,\epsilon}}$. So we must have $m\leq n$. By Lemma \ref{count-disks}, we have $m\geq n$. Therefore,  we have $m=n$, i.e., there are exactly $n+1$ planes $u_1,u_2,\dots,u_n, u_{\infty}$ in the top level $\mathbb{CP}^n\setminus L$ of the building $\mathbb{H}$.
	
Next we prove that each of these planes is somewhere injective. Assume that one of these planes, say $u_1$, is multiply covered with multiplicity $k>1$. Let $\bar{u}_1$ be the underlying somewhere injective plane. Since $L$ is extremal, we have 
	\[\sum_{i=1}^{n}\int u^*_i\omega_{\mathrm{std}}= k \int \bar{u}_1 ^*\omega_{\mathrm{std}}+\sum_{i=2}^{n}\int u^*_i\omega_{\mathrm{std}} \geq 1+(k-1)\frac{1}{n}. \]
So
	\[\int u^*_{\infty}\omega_{\mathrm{FS,\epsilon}}\leq \epsilon-(k-1)\frac{1}{n}. \]
	The assumption  $\epsilon< \frac{1}{n}$ again implies the contradiction
	\[\int u^*_{\infty}\omega_{\mathrm{FS,\epsilon}}<0.\]

One can write
	\[\sum_{i=1}^{n}\int u^*_i\omega_{\mathrm{std}}=\sum_{i=1}^{n}\int_{\gamma_i}\lambda_{\mathrm{std}}\]
	where $d \lambda_{\mathrm{std}}=\omega_{\mathrm{std}}$ and $\gamma_i$ is the closed geodesic that lifts to the closed Reeb orbit to which $u_i$ is negatively asymptotic. By an argument similar to above, if one or more $\gamma_i$ are multiply covered, then  
	\[\int u^*_{\infty}\omega_{\mathrm{FS,\epsilon}}<0\]
which is a contradiction.
	
	The plane $u_{\infty}$ must be the component of the building that intersects the complex hypersurface $\mathbb{CP}^{n-1}$, otherwise all of the planes $u_1,u_2,\dots,u_{n}, u_{\infty}$  are contained in the affine symplectic chart $(B^{2n}(1+\epsilon),\omega_{\mathrm{std}})$. By the argument above, this implies the contradiction
	\[\int u_{\infty}^*\omega_{\mathrm{FS,\epsilon}}<0.\]
	
The intersection number of $u_{\infty}$ with the complex hypersurface $\mathbb{CP}^{n-1}$ is $+1$ by Lemma \ref{count-intersections}. This means $u_{\infty}$  is somewhere injective. 

Next, we argue, following arguments similar to above, that the top level of $\mathbb{H}$ contains no component other than the $n+1$ negatively asymptotically cylindrical $J_{\infty}$-holomorphic planes  $u_1,u_2,\dots,u_n, u_{\infty}$. 

Suppose, on the contrary, that there is a smooth non-constant curve component $C$ in the top level $\mathbb{CP}^n\setminus L$ other than the planes  $u_1,u_2,\dots,u_n, u_{\infty}$. By the previous discussion, $C$ must lie in the  affine symplectic chart $(B^{2n}(1+\epsilon),\omega_{\mathrm{std}})$. We compactify $C$ to a surface in $B^{2n}(1+\epsilon)$ with a boundary on $L$. By the Hurewicz theorem, we have that $\pi_2(B^{2n}(1+\epsilon),T^n)=H_2(B^{2n}(1+\epsilon),T^n)$. Since $L$ is extremal in $(\bar{B}^{2n}(1),\omega_{\mathrm{std}})$, this means that
\[\int C^*_i\omega_{\mathrm{std}}\geq \frac{1}{n}.\]
Also 
\[\int C^*_i\omega_{\mathrm{std}}+\sum_{i=1}^{n}\int u^*_i\omega_{\mathrm{std}}+\int u^*_{\infty}\omega_{\mathrm{FS,\epsilon}}\leq 1+\epsilon. \]
Combining the above two estimates, we get
\[0\leq\int u^*_{\infty}\omega_{\mathrm{FS,\epsilon}}\leq 1+\epsilon-\int C^*_i\omega_{\mathrm{std}}-\sum_{i=1}^{n}\int u^*_i\omega_{\mathrm{std}}\leq \epsilon-\frac{1}{n}. \]
For $\epsilon<1/n$, this is a contradiction. We conclude that the top level of $\mathbb{H}$ contains no component other than the $n+1$ negatively asymptotically cylindrical $J_{\infty}$-holomorphic planes  $u_1,u_2,\dots,u_n, u_{\infty}$.

	For the second bullet point, we prove that symplectization levels consist of trivial cylinders over closed Reeb orbits, and hence we disregard them since non-stable components are disregarded in the limit.

	The underlying graph of the building $\mathbb{H}$ is a tree because the building $\mathbb{H}$ has genus zero. By the previous arguments, exactly $n+1$ edges emanate from the vertex  $C_{\mathrm{bot}}$ in the underlying graph. Let $C_1, C_2,\dots, C_n,C_{n+1}$ be the trees that emanate from $n+1$ ends of $C_{\mathrm{bot}}$. Each $C_i$ is a topological plane. Suppose a smooth connected punctured curve in $C_1$ has more than one negative end. By the maximum principle, this curve must have at least one positive puncture. Removing this curve component from $C_1$ will produce at least three connected components of the tree $C_1$, of which two are topological planes. The intersection of these two topological planes with the top level $\mathbb{CP}^n\setminus L$ contains at least two disks because $S^*L$ has no contractible closed Reeb orbits. Thus, we have more than $n+1$ disks in the top level, which is a contradiction by the previous discussion.
	
	Suppose $C_1$ has a smooth connected curve component with at least two positive ends. The trees emerging from the positive ends of this component intersect the top level by the maximum principle. The intersection contains a plane and a compact surface (not necessarily embedded) with a boundary on $L$. The existence of this compact surface contradicts the conclusions made in the previous paragraphs. Thus, every smooth connected component of each $C_i$ in the symplectization is a holomorphic cylinder with a negative and a positive end. This cylinder is trivial. It will become clear from the upcoming Lemmas.  
\end{proof}
Putting everything together, we conclude that the building $\mathbb{H}$ has only two levels. The bottom level of the building $\mathbb{H}$ in $T^*L$ contains a smooth connected punctured sphere $C_{\mathrm{bot}}$ with exactly $n+1$ positive ends that carries the tangency constraint $\ll \mathcal{T}_D^{n-1}q\gg$ and the Hamiltonian perturbation around $q$. The top level  $\mathbb{CP}^n\setminus L$ consists of $n+1$ negatively asymptotically cylindrical simple  $J_{\infty}$-holomorphic planes $u_1,u_2,\dots,u_{n}, u_{\infty}$. Since the building is connected, there are no curve components other than $u_1,u_2,\dots,u_{n}, u_{\infty}$,  and $C_{\mathrm{bot}}$. This completes the proof of the claim we made at the beginning of this section.
\subsection{Preparing a counting problem}\label{countingproblem}
By  Lemma \ref{count-intersections}, exactly one of the  asymptotically cylindrical $J_{\infty}$-holomorphic planes $u_1,u_2,\dots,u_n, u_{\infty}$ in $\mathbb{CP}^n\setminus L$ intersects the hypersurface $\mathbb{CP}^{n-1}$ at infinity. From now on, we assume it is the plane $u_{\infty}$.

Let $\gamma_1,\gamma_2,\dots,\gamma_n, \gamma_{\infty}$ be the asymptotic closed Reeb orbits of the $J_\infty$-holomorphic planes $u_1,u_2,\dots,u_n, u_{\infty}$, respectively. Define	
\begin{equation}\label{count-prob}
	\mathcal{M}^{J_\infty}_{\mathbb{CP}^n\setminus L}(\gamma_i):=\left\{
	\begin{array}{l}
		u:\mathbb{CP}^1\setminus \{\infty\} \to (\widehat{\mathbb{CP}^n\setminus D^*L},J_{\infty}),\\
		du\circ i=J_{\infty}\circ du  ,\\
		u \text{ is asymptotic to  $\gamma_i$ at $\infty$,} \\
	\text{and } [u]=[u_i]\in \pi_2(\mathbb{CP}^n,L).
	\end{array}
	\right\}\bigg/\operatorname{Aut}(\mathbb{CP}^1,\infty)
\end{equation}
where $i=1,2,\dots,n,\infty$.

Next, we compute the symplectic areas of the $J_\infty$-holomorphic planes $u_1,u_2,\dots,u_n,u_{\infty}$. Note that under the symplectomorphic identification
\[
\bigl(\widehat{\mathbb{CP}^{n}\setminus D^*L},\widehat{\omega}_{\mathrm{FS,\epsilon}}\bigr)\cong (\mathbb{CP}^{n}\setminus L,\omega_{\mathrm{FS,\epsilon}}),
\]
each of these planes admits a compactification to a smooth disk in $\mathbb{CP}^n$ with boundary on $L$. It therefore suffices to determine the symplectic areas of the corresponding compactified disks.
\begin{lemma}\label{estemate-engery} 
The disks $u_1,u_2,\dots,u_n, u_{\infty}$ satisfy
\[\int u^*_{i}\omega_{\mathrm{std}}=\frac{1}{n}, \]
for all $i=1,2,\dots,n$, and 
	\[\int u^*_{\infty}\omega_{\mathrm{FS,\epsilon}}= \epsilon. \]
\end{lemma}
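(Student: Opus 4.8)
The plan is to distill from the structure of $\mathbb{H}$ established in \S\ref{Analyze} a single energy identity, and then pin down the areas of $u_1,\dots,u_n$ using the extremality of $L$ together with the fact that $A_{\mathrm{min}}(L)=1/n>0$ forces the ``period subgroup'' of $L$ to be a lattice. By Lemma \ref{buildinglook} the top level of $\mathbb{H}$ is exactly $u_1,\dots,u_n,u_\infty$ and nothing more, and by Lemma \ref{count-intersections} the planes $u_1,\dots,u_n$ lie in the affine chart $B^{2n}(1+\epsilon)$, on which $\omega_{\mathrm{FS,\epsilon}}=\omega_{\mathrm{std}}$. Hence (\ref{engery1})---equivalently (\ref{engerybound}) with $m=n$---upgrades to the \emph{equality}
\[\sum_{i=1}^{n}\int u_i^*\omega_{\mathrm{std}}+\int u_\infty^*\omega_{\mathrm{FS,\epsilon}}=1+\epsilon .\]
Since $u_\infty$ is a non-constant $J_\infty$-holomorphic curve and $J_\infty$ is $\omega_{\mathrm{FS,\epsilon}}$-compatible, $\int u_\infty^*\omega_{\mathrm{FS,\epsilon}}>0$, so $\sum_{i=1}^{n}\int u_i^*\omega_{\mathrm{std}}<1+\epsilon$.

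Next I would analyze the individual disks. For $i\le n$ the identification (\ref{identification}) compactifies $u_i$ to a smooth $J_\infty$-holomorphic disk $D_i\subset B^{2n}(1+\epsilon)$ with $\partial D_i=c_i$, the closed geodesic lifting to $\gamma_i$ (cf. the proof of Lemma \ref{excontradiction}). Since $\omega_{\mathrm{std}}=d\lambda_{\mathrm{std}}$ on $\mathbb{C}^n$, Stokes' theorem gives
\[\int u_i^*\omega_{\mathrm{std}}=\int_{D_i}d\lambda_{\mathrm{std}}=\int_{c_i}\lambda_{\mathrm{std}}=\big\langle\,[\lambda_{\mathrm{std}}|_L],[c_i]\,\big\rangle ,\]
a number lying in the subgroup $\Lambda:=[\lambda_{\mathrm{std}}|_L]\big(H_1(L;\mathbb Z)\big)\subset\mathbb R$. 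It is positive (being $\tfrac12\int_{D_i}|du_i|^2$), hence a positive symplectic area of a $2$-disk in $(\bar B^{2n}(1),\omega_{\mathrm{std}})$ with boundary on $L$, so $\int u_i^*\omega_{\mathrm{std}}\ge A_{\mathrm{min}}(L)=1/n$ by Theorem \ref{compute-lag}. Moreover, since $\pi_2(\bar B^{2n}(1),L)\to H_1(L;\mathbb Z)$ is onto and $\omega_{\mathrm{std}}$ is exact, the set of positive disk areas of $L$ is exactly $\Lambda\cap(0,\infty)$; thus $\inf\big(\Lambda\cap(0,\infty)\big)=A_{\mathrm{min}}(L)=1/n>0$, which forces $\Lambda$ to be a discrete subgroup of $\mathbb R$ and therefore $\Lambda=\tfrac1n\mathbb Z$. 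Consequently $\int u_i^*\omega_{\mathrm{std}}\in\{\tfrac1n,\tfrac2n,\dots\}$ for every $i\le n$.

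Finally, writing $\int u_i^*\omega_{\mathrm{std}}=k_i/n$ with $k_i\in\mathbb Z_{\ge1}$, the bound $\tfrac1n\sum_{i=1}^{n}k_i<1+\epsilon$ together with $\epsilon<1/n$ gives $\sum_{i=1}^{n}k_i<n+1$, hence $\sum_{i=1}^{n}k_i\le n$; since each $k_i\ge1$ we get $k_i=1$ for all $i$, i.e. $\int u_i^*\omega_{\mathrm{std}}=1/n$. Substituting into the energy identity of the first paragraph yields $\int u_\infty^*\omega_{\mathrm{FS,\epsilon}}=1+\epsilon-1=\epsilon$, which is the assertion. The only step carrying genuine content is the middle one---identifying the disk areas with the period pairing $\langle[\lambda_{\mathrm{std}}|_L],\cdot\rangle$ via compactification and Stokes, and using $A_{\mathrm{min}}(L)>0$ to conclude $\Lambda=\tfrac1n\mathbb Z$; the rest is bookkeeping with the structure of $\mathbb{H}$ obtained in \S\ref{Analyze}, and no further analytic input is needed.
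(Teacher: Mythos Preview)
Your proof is correct and follows the same route as the paper's: the energy identity together with extremality and the fact that the disk areas lie in $\tfrac{1}{n}\mathbb{Z}$ forces each $\int u_i^*\omega_{\mathrm{std}}=\tfrac{1}{n}$, whence $\int u_\infty^*\omega_{\mathrm{FS,\epsilon}}=\epsilon$. Your explicit justification that the period subgroup $\Lambda=[\lambda_{\mathrm{std}}|_L]\big(H_1(L;\mathbb Z)\big)$ equals $\tfrac{1}{n}\mathbb{Z}$ (via $A_{\min}(L)=\tfrac1n>0$ forcing discreteness) is a welcome clarification of a step the paper simply asserts.
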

\begin{proof}
	Since $L$ is extremal in the ball $(\bar{B}^{2n}(1),\omega_{\mathrm{std}})$ and the disks $u_1,u_2,\dots,u_n$ are contained in the affine symplectic chart $(\bar{B}^{2n}(1+\epsilon),\omega_{\mathrm{std}})$, we find
	\[\int u^*_i\omega_{\mathrm{std}}\geq \frac{1}{n} \]
	for all $i=1,2,\dots,n$.
	Here, note that the symplectic area of a disk $u:(D^2,\partial D^2)\to (\bar{B}^{2n}(1+\epsilon), L)$ depends only on its relative homotopy class $[u]\in \pi_2(\bar{B}^{2n}(1+\epsilon), L)$. Every disk  $u:(D^2,\partial D^2)\to (\bar{B}^{2n}(1+\epsilon), L)$ can be smoothly deformed to a disk $u:(D^2,\partial D^2)\to (\bar{B}^{2n}(1), L)$ without changing its relative homotopy class.
	
By (\ref{engery1}), the energy of the building $\mathbb{H}$ is $(1+\epsilon)$. Therefore, 
	\begin{equation}\label{engerybound1}
		\sum_{i=1}^{n}\int u^*_i\omega_{\mathrm{std}}+\int u^*_{\infty}\omega_{\mathrm{FS,\epsilon}}= 1+\epsilon. 
	\end{equation}
	The energy estimate (\ref{engerybound1}) implies
	\[0<\int u^*_{\infty}\omega_{\mathrm{FS,\epsilon}}= \epsilon. \]
	Suppose one of the disks  $u_1,u_2,\dots,u_n$, say $u_1$, has symplectic area strictly greater than $1/n$. Since $L$ is extremal in the ball $\bar{B}^{2n}(1)$, the symplectic area of $u_1$ is positive integer multiple of $1/n$. In particular, 
	\[\int u^*_{1}\omega_{\mathrm{std}}\geq 2\frac{1}{n}.\]

	The Equation (\ref{engerybound1}) and the assumption $\epsilon< \frac{1}{n}$ lead to the contradiction
	\[\int u^*_{\infty}\omega_{\mathrm{FS,\epsilon}}= \epsilon-\frac{1}{n}<0. \]
	Therefore, we must have
	 \[\int u^*_{i}\omega_{\mathrm{std}}=\frac{1}{n}, \]
for all $i=1,2,\dots,n$, and 
	\[\int u^*_{\infty}\omega_{\mathrm{FS,\epsilon}}= \epsilon. \qedhere\]
\end{proof}
\begin{lemma}\label{index-estemate}
The following holds. 
\begin{itemize}
\item 	Suppose the positive ends of $C_{\mathrm{bot}}$ are  asymptotic to the closed Reeb orbits $\gamma_1, \gamma_2, \dots, \gamma_{n},\gamma_{\infty}$. Then the  Fredholm index of $C_{bot}$ is zero and 
\[\operatorname{\mu}(c_i)=n-1,\]
for all $i=1,2,\dots, n, \infty$, where $\mu(c_i)$ denotes the Morse index of the geodesic $c_i$ that lifts to $\gamma_i$.
\item The $n+1$ negatively asymptotically cylindrical $J_{\infty}$-holomorphic planes  $u_1,u_2,\dots,u_n, u_{\infty}$  in the top level $\mathbb{CP}^n\setminus L$ have Fredholm index zero.
	\end{itemize}
\end{lemma}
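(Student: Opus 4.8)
The plan is to derive both statements by index bookkeeping, feeding in the structural description of $\mathbb{H}$ from Lemma \ref{buildinglook} and Lemma \ref{excontradiction}, the trivialization identity of Theorem \ref{special-trivalization}, and the curvature bound $0\le \mu(c)\le n-1$ for the short geodesics on $L$ (Theorem \ref{perturb-metric}). For the first bullet I would start from the index computation already carried out in the proof of Lemma \ref{countendsincotangentbundle}: there it was shown that a curve $C_{\mathrm{bot}}$ in $T^*L$ carrying the constraint $\ll\mathcal{T}_D^{n-1}q\gg$ and having $l$ positive punctures satisfies $\operatorname{ind}(C_{\mathrm{bot}})\le 2(l-(n+1))$, and Lemma \ref{excontradiction} gives exactly $l=n+1$, so $\operatorname{ind}(C_{\mathrm{bot}})\le 0$. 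On the other hand $C_{\mathrm{bot}}$ carries the generic Hamiltonian perturbation supported near $q$, so for generic data it is regular and sits in a moduli space of dimension $\operatorname{ind}(C_{\mathrm{bot}})\ge 0$; hence $\operatorname{ind}(C_{\mathrm{bot}})=0$. Reading off the equality case: that estimate factored through $\sum_i\operatorname{CZ}^{\tau}(\gamma_i)=\sum_i\mu(c_i)$ (with the trivialization of Theorem \ref{special-trivalization} normalized so that $\mu^{\kappa(\tau)}(c_i)=0$) and through $\mu(c_i)\le n-1$, so equality in $\operatorname{ind}(C_{\mathrm{bot}})=0$ forces $\sum_i\mu(c_i)=(n+1)(n-1)$, which since each $\mu(c_i)\in\{0,\dots,n-1\}$ is only possible if $\mu(c_i)=n-1$ for every $i=1,\dots,n,\infty$.

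For the second bullet I would invoke additivity of the Fredholm index under formation of holomorphic buildings. By Lemma \ref{buildinglook} the building $\mathbb{H}$ has no symplectization levels: it consists only of $C_{\mathrm{bot}}$ in $T^*L$ and the $n+1$ somewhere injective planes $u_1,\dots,u_n,u_\infty$ in $\mathbb{CP}^n\setminus L$, glued along the nondegenerate Reeb orbits $\gamma_1,\dots,\gamma_n,\gamma_\infty$, so the virtual dimension of the stratum of the compactified moduli space containing $\mathbb{H}$ is simply $\operatorname{ind}(C_{\mathrm{bot}})+\sum_{i=1}^{n}\operatorname{ind}(u_i)+\operatorname{ind}(u_\infty)$. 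Since $\mathbb{H}$ is a limit of curves in the zero–dimensional moduli space $\mathcal{M}^{J_k}_{\mathbb{CP}^n,[\mathbb{CP}^1]}\ll\mathcal{T}_D^{n-1}q\gg$ (Theorem \ref{count}), and all components are somewhere injective hence regular for generic $J_{\mathrm{bot}},J_\infty$, this stratum has dimension exactly $0$; combined with $\operatorname{ind}(C_{\mathrm{bot}})=0$ from the first bullet and $\operatorname{ind}(u_i)\ge 0$ (regularity), this forces $\operatorname{ind}(u_i)=0$ for all $i=1,\dots,n,\infty$. As a cross–check I would compute the indices directly: with the trivialization $\tau_i$ of Theorem \ref{special-trivalization} giving $\operatorname{CZ}^{\tau_i}(\gamma_i)=\mu(c_i)=n-1$, and compactifying $u_i$ ($i\le n$) to a disk $\bar u_i$ in $(B^{2n}(1+\epsilon),L)$, one gets $\operatorname{ind}(u_i)=n-3+\mu_L(\bar u_i)-\mu(c_i)=\mu_L(\bar u_i)-2$, and since $\operatorname{ind}(u_i)\ge 0$ gives $\mu_L(\bar u_i)\ge 2$ while Lemma \ref{estemate-engery} says $\bar u_i$ realizes the minimal area $\frac1n=\mathrm{c}_L(\bar B^{2n}(1))$, one again obtains $\mu_L(\bar u_i)=2$ and $\operatorname{ind}(u_i)=0$; for $u_\infty$ the same runs through after accounting for its single intersection with $\mathbb{CP}^{n-1}$ and its $\omega_{\mathrm{FS},\epsilon}$–area $\epsilon$.

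The main obstacle is the bookkeeping for holomorphic buildings rather than any hard analysis: one must be precise about the correction terms in the index–additivity formula — those coming from the $\mathbb{R}$–translations of symplectization levels and from the matching conditions at the breaking orbits — and verify that they vanish here, which they do precisely because $\mathbb{H}$ has no symplectization levels and the orbits $\gamma_1,\dots,\gamma_n,\gamma_\infty$ are nondegenerate with prescribed asymptotics. One also has to keep the genericity of $J_{\mathrm{bot}}$, $J_\infty$ and of the Hamiltonian perturbation compatible with the almost complex data fixed near $q$ and near the hyperplane $\mathbb{CP}^{n-1}$ at infinity, and to treat $u_\infty$ — which, unlike $u_1,\dots,u_n$, leaves the affine chart — consistently in both the additivity argument and the direct computation.
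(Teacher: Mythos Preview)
Your main argument is correct and follows essentially the same route as the paper: for the first bullet you recycle the inequality $\operatorname{ind}(C_{\mathrm{bot}})\le 2(l-(n+1))$ from Lemma \ref{countendsincotangentbundle}, set $l=n+1$, combine with $\operatorname{ind}(C_{\mathrm{bot}})\ge 0$ from the generic Hamiltonian perturbation, and read off $\mu(c_i)=n-1$ from the equality case; for the second bullet you use that the total index of $\mathbb{H}$ is zero together with $\operatorname{ind}(u_i)\ge 0$ for the somewhere injective planes. This is exactly the paper's argument.

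One caveat: your ``cross--check'' paragraph is superfluous and contains an unjustified step. From $\operatorname{ind}(u_i)\ge 0$ you get $\mu_L(\bar u_i)\ge 2$, but the inference ``$\bar u_i$ has minimal area $\tfrac1n$, hence $\mu_L(\bar u_i)=2$'' does not follow for an arbitrary extremal torus $L$: extremality only quantizes the \emph{area} spectrum, not the Maslov class, and $L$ is not assumed monotone. The conclusion $\mu_L(\bar u_i)=2$ is true here, but it is a \emph{consequence} of the additivity argument (which forces $\operatorname{ind}(u_i)=0$), not an independent verification of it. I would simply drop the cross--check.
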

\begin{proof}
The curve $C_{\mathrm{bot}}$ carries a Hamiltonian perturbation $H$ supported around the point $q$. For generic $H$, we have
	\[\operatorname{ind}(C_{\mathrm{bot}})\geq 0.\] 

Let  $\tau$ be a symplectic trivialization along the ends of $C_{\mathrm{bot}}$ that extends to $C_{\mathrm{bot}}^*TT^*L$. The Fredholm index of $C_{\mathrm{bot}}$ (cf.\cite[Prop. 3.1]{Cieliebak2018}), taking the constraint $\ll \mathcal{T}_D^{n-1}q\gg$ into consideration, is given by
\[\operatorname{ind}(C_{\mathrm{bot}})=(n-3)(2-(n+1))+\sum_{i=1}^{n}\operatorname{CZ^{\tau}}(\gamma_i)+\operatorname{CZ^{\tau}}(\gamma_\infty)-2n+2-2(n-1).\]
After simplifying and using $\operatorname{ind}(C_{\mathrm{bot}})\geq 0$  we obtain
\begin{equation}\label{indexfinal12}
	n^2-1\leq \sum_{i=1}^{n}\operatorname{CZ^{\tau}}(\gamma_i)+\operatorname{CZ^{\tau}}(\gamma_\infty).
\end{equation}   
On the other hand, Theorem \ref{special-trivalization} implies that	
\[\sum_{i=1}^{n}\operatorname{CZ^{\tau}}(\gamma_i)+\operatorname{CZ^{\tau}}(\gamma_\infty)+\sum_{i=1}^{n}\mu^{\kappa(\tau)}(c_i)+\mu^{\kappa(\tau)}(c_\infty)=\sum_{i=1}^{n}\mu(c_i)+\mu(c_\infty)\]
where $\mu(c_i)$ is the Morse index of the geodesic $c_i$ corresponding to $\gamma_i$ and  $\mu^{\kappa(\tau)}(c_i)$ denotes its Maslov index. Since $C_{\mathrm{bot}}$ is a null-homology of the cycles $\gamma_1,\dots,\gamma_{n}, \gamma_\infty$, we have \[\sum_{i=1}^{k}\mu^{\kappa(\tau)}(c_i)+\mu^{\kappa(\tau)}(c_\infty)=0.\]
This implies
\[\sum_{i=1}^{n}\operatorname{CZ^{\tau}}(\gamma_i)+\operatorname{CZ^{\tau}}(\gamma_\infty)=\sum_{i=1}^{n}\mu(c_i)+\mu(c_\infty).\]
So  (\ref{indexfinal12}) implies
\begin{equation}\label{indexfinal123}
	n^2-1\leq  \sum_{i=1}^{n}\mu(c_i)+\mu(c_\infty).
\end{equation} 
On the other hands, Theorem \ref{perturb-metric} implies that for all $i\in \{1,\dots,k,\infty\}$ we have
\[0\leq  \mu(c_i)\leq n-1.\]
So, the only solution to (\ref{indexfinal123}) is 
\[\operatorname{\mu}(\gamma_i)=n-1\]
for all $i=1,2,\dots,n, \infty.$ Hence, $\operatorname{ind}(C_{\mathrm{bot}})=0$. This completes the proof of the first bullet point.

The index of the holomorphic building $\mathbb{H}$ is zero. So
\begin{equation}\label{indd}
		\operatorname{ind}(C_{\mathrm{bot}})+\operatorname{ind}(u_\infty)+\sum_{i=1}^{n}\operatorname{ind}(u_i)=0
\end{equation}

The $n+1$ negatively asymptotically cylindrical $J_{\infty}$-holomorphic planes  $u_1,u_2,\dots,u_n, u_{\infty}$  in $\mathbb{CP}^n\setminus L$  are somewhere injective by Lemma \ref{excontradiction}. So for generic $J_{\infty}$, we have
	\[\operatorname{ind}(u_i)\geq 0\]
	for all $i=1,2,\dots,n,\infty$.
Thus, Equation (\ref{indd}) implies that 
\[\operatorname{ind}(u_i)= 0,\]
for all $i=1,2,\dots,n, \infty$. \qedhere
\end{proof}
\begin{remark}
It follows directly from the arguments used in the proofs of Lemmas 6.6--6.8 that these lemmas remain valid for any holomorphic building representing the class $[\mathbb{CP}^1]$ whose bottom level satisfies the tangency condition $\ll \mathcal{T}_D^{\,n-1} q \gg$, and not only for those obtained as limits under neck-stretching.
\end{remark}
\begin{theorem}\label{counting}
Suppose $L$ is extremal in the ball $(\bar{B}^{2n}(1),\omega_{\mathrm{std}})$ and  $\epsilon< \frac{1}{n}$. The moduli spaces $\mathcal{M}^{J_{\infty}}_{\mathbb{CP}^n\setminus L}(\gamma_i)$ are all compact. Moreover, for generic $J_{\infty}$, each one of these moduli spaces is an oriented smooth manifold of dimension zero. The signed count $\# \mathcal{M}^{J_{\infty}}_{\mathbb{CP}^n\setminus L}(\gamma_{i})$ does not depend on the choice of generic almost complex structure and depends only on the Hamiltonian isotopy class of $L$.
\end{theorem}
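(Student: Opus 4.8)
We prove the three assertions—zero-dimensionality, compactness, and invariance—in turn; all of the content lies in the second one. \emph{Dimension.} By Lemma \ref{excontradiction} each plane $u_i$ is somewhere injective, and by Lemma \ref{index-estemate} its Fredholm index, as an element of $\mathcal{M}^{J_\infty}_{\mathbb{CP}^n\setminus L}(\gamma_i)$, equals $0$. The asymptotic orbit $\gamma_i$ and the relative class $[u_i]\in\pi_2(\mathbb{CP}^n,L)$ are fixed, and $\gamma_i$ is nondegenerate (Section \ref{proof for ball}), so by the standard transversality theory for somewhere injective punctured holomorphic curves with fixed asymptotics underlying {\cite[Proposition 3.1]{Cieliebak2018}}, for generic SFT-admissible $J_\infty$ the moduli space $\mathcal{M}^{J_\infty}_{\mathbb{CP}^n\setminus L}(\gamma_i)$ is cut out transversally and is therefore a smooth oriented manifold of dimension $0$, the orientations being those of the coherent orientation scheme of symplectic field theory.

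\emph{Compactness.} Fix $i$ and a sequence $v_m\in\mathcal{M}^{J_\infty}_{\mathbb{CP}^n\setminus L}(\gamma_i)$. All $v_m$ lie in the fixed class $[u_i]$ with the fixed asymptote $\gamma_i$, so their $\tilde\omega_{\mathrm{FS},\epsilon}$-energies coincide with that of $u_i$ and their Hofer energies are uniformly bounded; by Gromov--Hofer compactness in the form of the Remark following Theorem \ref{sft}, a subsequence converges to a connected genus-zero holomorphic building $\mathbb{H}_i$ with one top level in $\widehat{\mathbb{CP}^{n}\setminus D^*L}\cong\mathbb{CP}^n\setminus L$, symplectization levels of $S^*L$ below it, a single negative puncture on $\gamma_i$, and $[\mathbb{H}_i]=[u_i]$. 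I claim $\mathbb{H}_i$ consists of a single simple plane, hence lies in $\mathcal{M}^{J_\infty}_{\mathbb{CP}^n\setminus L}(\gamma_i)$, which gives sequential compactness. The argument is the numeric/soft analysis of Section \ref{Analyze}: extremality of $L$ together with $\pi_2(\bar B^{2n}(1))=0$ forces the $\omega_{\mathrm{std}}$-area of any smooth disk in $\bar B^{2n}(1)$ with boundary on $L$ to lie in $\tfrac1n\mathbb{Z}_{\ge 0}$ (the group of such areas is finitely generated, nontrivial, and—by Theorem \ref{compute-lag}—has minimal positive element $\tfrac1n$, hence equals $\tfrac1n\mathbb{Z}$); positivity of intersection with the $J_\infty$-holomorphic hypersurface $\mathbb{CP}^{n-1}$ confines all components of $\mathbb{H}_i$ to the affine chart $(B^{2n}(1+\epsilon),\omega_{\mathrm{std}})$ (for $i\le n$) or to it except one component meeting $\mathbb{CP}^{n-1}$ exactly once (for $i=\infty$); any top-level component compactifies to a surface in $B^{2n}(1+\epsilon)$ with boundary on $L$, so by Stokes' theorem its $\tilde\omega$-energy equals the total action of its asymptotic orbits (plus the $\mathbb{CP}^{n-1}$-contribution in the exceptional case); and $S^*L$ has no contractible closed Reeb orbit of action $\le 1+\epsilon$, so no positive end of a symplectization component can be capped inside a symplectization, while such a component has nonnegative $\tilde\omega$-energy vanishing only on (covers of) trivial cylinders. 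If $\mathbb{H}_i$ had a symplectization level, the bottom level would reduce to a single non-trivial curve $C_0$ over $\gamma_i$ with $s\ge 1$ positive ends, each capped—going up—by a component reaching the top level, producing $s$ disks there; feeding this into $E(\mathbb{H}_i)=\int_{\gamma_i}\lambda_{\mathrm{can}}$ and the $\tfrac1n$-quantization forces $s=1$ and $\int_{C_0}\tilde\omega=0$, so $C_0$ is a trivial cylinder over $\gamma_i$ with no nodes or marked points, contradicting stability. Hence there is no symplectization level, and the same energy count excludes any further top-level component and any multiple cover of the plane; for $i=\infty$ one uses $\epsilon<\tfrac1n$, so that a single additional disk of positive area already exceeds the total energy $\epsilon$.

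\emph{Invariance.} Given two generic SFT-admissible $J_\infty^0,J_\infty^1$, join them by a generic path $\{J_\infty^t\}$ with the fixed cylindrical behaviour near $S^*L$ (in the spirit of Definition \ref{perturbationinvariance}). The parametric moduli space $\bigcup_{t}\{t\}\times\mathcal{M}^{J_\infty^t}_{\mathbb{CP}^n\setminus L}(\gamma_i)$ is then a smooth oriented $1$-manifold with boundary $\mathcal{M}^{J_\infty^1}_{\mathbb{CP}^n\setminus L}(\gamma_i)\sqcup\bigl(-\mathcal{M}^{J_\infty^0}_{\mathbb{CP}^n\setminus L}(\gamma_i)\bigr)$, and the same bookkeeping—run now at Fredholm index $1$—shows that no genuine holomorphic building can appear in its Gromov--Hofer limit, since a breaking would again force a trivial cylinder at the bottom and violate stability once the $\tfrac1n$-quantization is imposed; so the cobordism is compact and $\#\mathcal{M}^{J_\infty^0}_{\mathbb{CP}^n\setminus L}(\gamma_i)=\#\mathcal{M}^{J_\infty^1}_{\mathbb{CP}^n\setminus L}(\gamma_i)$. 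Finally, if $\{\varphi^t\}$ is a Hamiltonian isotopy of $\bar B^{2n}(1)$ (extended by cutoff to $\mathbb{CP}^n$) with $\varphi^1(L)=L'$, then $\varphi^1$ transports the entire construction for $L$—the perturbed flat metric, $D^*L$, $S^*L$, the orbits $\gamma_i$, the classes $[u_i]$, and any generic $J_\infty$—to a legitimate such construction for $L'$, inducing a diffeomorphism of moduli spaces; by the $J$-independence just proved the resulting count agrees with the one computed from any generic datum for $L'$. Hence $\#\mathcal{M}^{J_\infty}_{\mathbb{CP}^n\setminus L}(\gamma_i)$ depends only on the Hamiltonian isotopy class of $L$.

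The main obstacle is the compactness step: one must check that the interplay of the minimal-energy identity $E(\mathbb{H}_i)=\int_{\gamma_i}\lambda_{\mathrm{can}}$, the $\tfrac1n$-quantization of disk areas coming from extremality, and the absence of contractible Reeb orbits on $S^*L$ really rules out \emph{every} degeneration permitted by Gromov--Hofer compactness—symplectization breaking, bubbling inside $\widehat{\mathbb{CP}^{n}\setminus D^*L}$, nodal degenerations, and multiply covered limit components—and that the analogous analysis at index $1$ keeps the cobordism used for invariance compact; everything else is a routine application of transversality and SFT orientation theory.
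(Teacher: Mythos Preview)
Your approach is essentially the paper's: zero-dimensionality via Lemmas \ref{excontradiction} and \ref{index-estemate}, compactness from the $\tfrac1n$-quantization of disk areas coming from extremality of $L$ together with positivity of intersection with $\mathbb{CP}^{n-1}$ and the absence of contractible Reeb orbits on $S^*L$, and invariance via a parametric cobordism; the paper separates the cases $i\le n$ (minimal energy $\tfrac1n$) and $i=\infty$ (energy $\epsilon<\tfrac1n$) whereas you treat them uniformly, but the content is the same.

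One slip to correct: the identity $E(\mathbb{H}_i)=\int_{\gamma_i}\lambda_{\mathrm{can}}$ (and the claim that a top-level component's $\tilde\omega$-energy equals the \emph{contact} action of its asymptote) is not right—the quantity conserved under breaking is the $\omega_{\mathrm{FS},\epsilon}$-area of the compactified top level, which by Lemma \ref{estemate-engery} equals $\tfrac1n$ for $i\le n$ and $\epsilon$ for $i=\infty$. With this replacement your bookkeeping still yields $s=1$, but the further conclusion $\int_{C_0}d\lambda_{\mathrm{can}}=0$ does not follow from area alone: a non-trivial index-$0$ cylinder $C_0$ in $\mathbb{R}\times S^*L$ with one positive and one negative end is not excluded by energy. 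The clean way to close this (for $i\le n$) is to note that $\gamma_i$ is simple by Lemma \ref{excontradiction}, hence any such $C_0$ is somewhere injective and therefore generically absent by the standard index count in symplectizations; the paper's ``minimal energy, hence no splitting'' and ``must have at least two positive ends'' are comparably brief at this same point.
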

\begin{proof}
	By Lemma \ref{index-estemate}, for generic $J_{\infty}$, the virtual dimension of each $\mathcal{M}^{J_{\infty}}_{\mathbb{CP}^n\setminus L}(\gamma_i)$ is zero. By Lemma \ref{excontradiction}, each of $\mathcal{M}^{J_{\infty}}_{\mathbb{CP}^n\setminus L}(\gamma_i)$ consists of somewhere injective curves. Therefore, by standard trasversality arguments, for generic $J_\infty$, each $\mathcal{M}^{J_{\infty}}_{\mathbb{CP}^n\setminus L}(\gamma_i)$ is a smooth oriented manifold of dimension zero.
	
	To prove compactness of $\mathcal{M}^{J_{\infty}}_{\mathbb{CP}^n\setminus L}(\gamma_{\infty})$, assume	a sequence in $\mathcal{M}^{J_{\infty}}_{\mathbb{CP}^n\setminus L}(\gamma_{\infty})$ degenerates to a non-trivial holomorphic building $\mathbb{H_\infty}$. The building $\mathbb{H_\infty}$ has only one negative end, on $\gamma_{\infty}$. The building $\mathbb{H_\infty}$ must have some symplectization levels: If not, then it must have two non-constant components, say $u_1$ and $u_2$,  in $\mathbb{CP}^n\setminus L$. By positivity of intersection, only one of these, say $u_1$, can intersect the hypersurface $\mathbb{CP}^{n-1}$. The negative end of $\mathbb{H_\infty}$ asymptotic to $\gamma_{\infty}$ is either inherited by $u_1$ or $u_2$. So we can assume $u_1$ is a $J_\infty$-holomorphic sphere and $u_2$ has a negative end on $\gamma_{\infty}$. Since the symplectic form is exact on the complement of the hypersurface $\mathbb{CP}^{n-1}$, $u_1$ must intersect $\mathbb{CP}^{n-1}$. This means $u_2$ is a disk in the affine chart $B^{2n}(1+\epsilon)$ with boundary on $L$. By extremality of $L$, the symplectic area of $u_2$ must be at least $1/n$. This area is greater than the symplectic area of $\mathcal{M}^{J_{\infty}}_{\mathbb{CP}^n\setminus L}(\gamma_{\infty})$ determined in Lemma \ref{estemate-engery}. This is a contradiction. 
	
	So $\mathbb{H_\infty}$ has some symplectization levels that do not consist of purely trivial cylinders over closed Reeb orbits. The portion of $\mathbb{H_\infty}$ in the symplectization levels has only one negative end, on $\gamma_{\infty}$, and must have at least two positive ends; see Figure \ref{degneration} below for an illustration. The two positive ends correspond to at least two disks at the top level because there are no contractible geodesics on $L$. At most, one of the disks intersects the hypersurface at infinity. The remaining disks have energy at least $1/n$ by the extremality of $L$. This is a contradiction because the total energy of the building is at most $\epsilon$ which is strictly less than $ \frac{1}{n}$. By the same argument, the parametric moduli space $\mathcal{M}^{\{J^t_{\infty}\}_{t\in I}}_{\mathbb{CP}^n\setminus L}(\gamma_{\infty})$ is compact whenever the parameter space $I$ is compact. Hamiltonian isotopies preserve the symplectic area class of $L$. Therefore, we conclude that
\[\# \mathcal{M}^{J_{\infty}}_{\mathbb{CP}^n\setminus L}(\gamma_{\infty})=\# \mathcal{M}^{J'_{\infty}}_{\mathbb{CP}^n\setminus L'}(\gamma_{\infty})\]
	for any two generic $J_{\infty}, J'_{\infty} $ and $L'$ Hamiltonian isotopic to $L$.
	
	For $i=1,2,\dots,n$, each of the $\mathcal{M}^{J_{\infty}}_{\mathbb{CP}^n\setminus L}(\gamma_i)$ has energy $1/n$ by Lemma \ref{estemate-engery}. Since the Lagrangian torus $L$ is extremal in $(\bar{B}^{2n}(1),\omega_{\mathrm{std}})$, this is the minimal energy a punctured holomorphic curve can have in this setting. Hence curves in $\mathcal{M}^{J_{\infty}}_{\mathbb{CP}^n\setminus L}(\gamma_i)$ cannot split into non-trivial holomorphic building.	
\end{proof}
\begin{figure}[h]
	\centering
	\includegraphics[width=8cm]{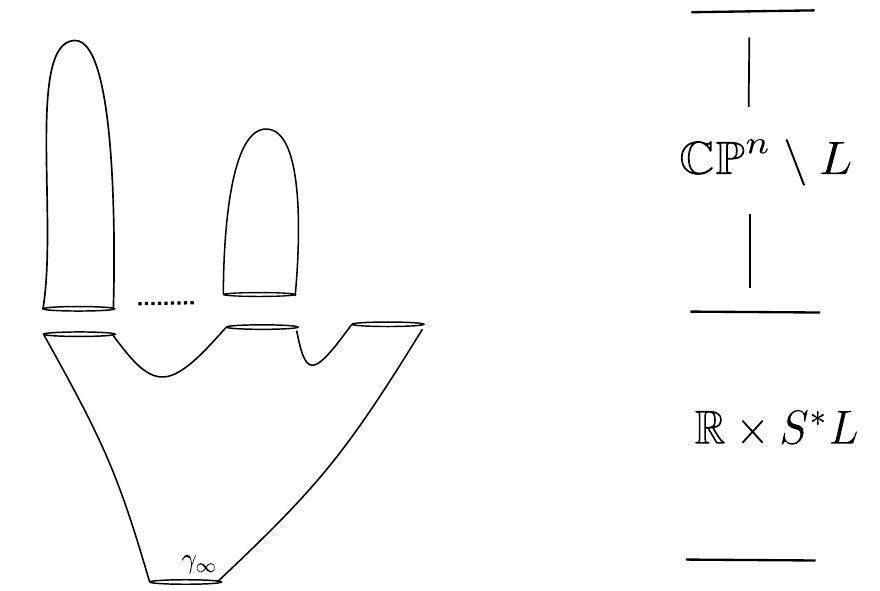}
	\caption{Degeneration in $\mathcal{M}^{J_{\infty}}_{\mathbb{CP}^n\setminus L}(\gamma_{\infty})$}\label{degneration}
\end{figure}

\subsection{The Borman--Sheridan  class entering the picture} \label{Borman-}

Let $\gamma$ denote a closed Reeb orbit of action less or equal to $1+\epsilon$ on $S^*L$. Moreover, assume it projects to a closed geodesic of Morse index $n-1$ on $L$. Let  $\tau$ be a symplectic trivialization of $TT^*L$. Define
\begin{equation*}
\mathcal{M}^{J_\infty}_{\mathbb{CP}^n\setminus L}(\gamma):=\left\{
	\begin{array}{l}
		u:\mathbb{CP}^1\setminus \{\infty\} \to (\widehat{\mathbb{CP}^n\setminus D^*L},J_{\infty}),\\
		du\circ i=J_{\infty}\circ du  ,\\
		u \text{ is asymptotic to $\gamma$ at $\infty$,} \\
		c_1^\tau(u)=1,\\
		0<\int_{u}\omega_{\mathrm{FS,\epsilon}}\leq \frac{1}{n}.
	\end{array}
	\right\}\bigg/\operatorname{Aut}(\mathbb{CP}^1,\infty).
\end{equation*}
The moduli space $\mathcal{M}^{J_\infty}_{\mathbb{CP}^n\setminus L}(\gamma)$ is of zero dimension and compact\footnote{A non-trivial holomorphic building that can appear as a result of degeneration in any of the moduli spaces defining $\mathcal{BS}$ requires a symplectic area strictly greater than $1/n$. Note that, as in the proof of Theorem \ref{counting}, any such building contains two non-constant components in the top level, one of which can be compactified to a disk with boundary on $L$. Such a disk has a symplectic area of at least $1/n$ by the extremality of $L$, leaving no symplectic area for the other component.}: this follows from the arguments in the proof of Theorem \ref{counting} above. 

The  Borman--Sheridan class {\cite{Tonkonog:2018aa}} of $D^*L\subset \mathbb{CP}^n$ is the symplectic cohomology class defined by
\begin{equation}\label{Borman-Sheridan}
\mathcal{BS} :=\sum_{\gamma}\#\mathcal{M}^{J_\infty}_{\mathbb{CP}^n\setminus L}(\gamma)\cdot \gamma \in \operatorname{SH}^0_{\mathrm{S^1,+}}(D^*L),
\end{equation}
where the sum is taken over $\gamma$ of degree zero (cf. Equation \ref{grading}). This class is independent of the choice of the almost complex structure $J_\infty$ because the moduli spaces appearing in its definition carry minimal symplectic areas; a detailed proof follows from the same arguments as in the proof of Theorem \ref{counting} above.

We remark that the moduli spaces $\mathcal{M}^{J_\infty}_{\mathbb{CP}^n\setminus L}(\gamma_i)$ defined by (\ref{count-prob}) corresponding to the $J_\infty$-holomorphic planes $u_1,u_2,\dots,u_n, u_{\infty}$ in the top level of the building $\mathbb{H}$ in the previous section are computing the Borman--Sheridan class (\ref{Borman-Sheridan}).

From Section 6.1---6.4, it follows that under neck-stretching along the boundary of a Weinstein neighborhood of $L$, the curves computing the count $\langle \psi_{n-1}p\rangle_{\mathbb{CP}^n, n+1} ^\bullet$ in Theorem \ref{countlast}  descends to a two-level holomorphic building $\mathbb{H}=(u_1,u_2,\dots,u_n, u_{\infty}, C_{\mathrm{bot}})$ with the top level consisting of $n+1$ somewhere injective rigid negatively asymptotically cylindrical $J_{\infty}$-holomorphic planes  $u_1,u_2,\dots,u_n, u_{\infty}$  in $\mathbb{CP}^n\setminus L$, and a somewhere injective and rigid asymptotically cylindrical $J_{\mathrm{bot}}$-holomorphic sphere $C_{\mathrm{bot}}$ with $n+1$ positive punctures in the bottom level $(T^*L,d\lambda_{\mathrm{can}})$. The moduli spaces of the curves $u_1,u_2,\dots,u_n, u_{\infty}$ compute the class $\mathcal{BS}$ defined by (\ref{Borman-Sheridan}). The moduli space containing the  curve $C_{\mathrm{bot}}$ computes the  linear operations $\langle \cdot|\cdot|\cdots|\cdot\rangle $ defined by (\ref{operations}).  By standard gluing results,  there is a sign preserving bijection between the count $\langle \psi_{n-1}p\rangle_{\mathbb{CP}^n, n+1} ^\bullet$  and the count of holomorphic building of this type $(u_1,u_2,\dots,u_n, u_{\infty}, C_{\mathrm{bot}})$, up to the ordering of the end asymptotics. Therefore, by Theorem \ref{countlast}, we have  
\begin{equation}\label{rel}
\frac{1}{(n+1)!}	\langle \overbrace{ \mathcal{BS}|\mathcal{BS}|\dots|\mathcal{BS}}^{n+1 \text{ inputs}} \rangle=\langle \psi_{n-1}p\rangle_{\mathbb{CP}^n, n+1} ^\bullet\neq 0,
\end{equation}
where $\langle \cdot|\cdot|\cdots|\cdot\rangle $ are the linear operations defined in (\ref{operations}). This leads to the following conclusion. 
\begin{lemma}\label{count-imp}
There exist  generators $\gamma_1,\gamma_2,\dots,\gamma_n,\gamma_\infty \in \operatorname{SH}^0_{\mathrm{S^1,+}}(D^*L)$ such that
\[\langle \overbrace{ \gamma_1|\gamma_2|\dots|\gamma_n|\gamma_\infty}^{n+1 \text{ inputs}} \rangle\neq 0,\]
 and the coefficient of $\gamma_\infty$ in $\mathcal{BS}$ given by the signed count of elements in the moduli space 
\begin{equation*}
\mathcal{M}^{J_\infty}_{\mathbb{CP}^n\setminus L}(\gamma_\infty):=\left\{
	\begin{array}{l}
		u:\mathbb{CP}^1\setminus \{\infty\} \to (\widehat{\mathbb{CP}^n\setminus D^*L},J_{\infty}),\\
		du\circ i=J_{\infty}\circ du  ,\\
		u \text{ is asymptotic to $\gamma_\infty$ at $\infty$,} \\
		c_1^\tau(u)=1,\\
		\int_{u}\omega_{\mathrm{FS,\epsilon}}= \epsilon.
	\end{array}
	\right\}\bigg/\operatorname{Aut}(\mathbb{CP}^1,\infty)
\end{equation*}
is non-vanishing.
\end{lemma}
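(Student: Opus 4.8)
The plan is to extract Lemma \ref{count-imp} directly from the relation \eqref{rel}, which packages the Cieliebak--Mohnke count $\mathcal{N}_{\mathbb{CP}^n, [\mathbb{CP}^1]}\ll \mathcal{T}^{n-1}\gg=(n-1)!$ into the neck-stretched picture. Everything that precedes this statement has already done the analytic heavy lifting: the neck-stretching of Section 6.2, the analysis of the building $\mathbb{H}$ in Section 6.3 showing it has exactly two levels with $n+1$ rigid planes $u_1,\dots,u_n,u_\infty$ on top and one rigid punctured sphere $C_{\mathrm{bot}}$ on the bottom (Lemma \ref{buildinglook}), the compactness/invariance of the moduli spaces $\mathcal{M}^{J_\infty}_{\mathbb{CP}^n\setminus L}(\gamma_i)$ (Theorem \ref{counting}), and the identification of the top-level moduli spaces with the Borman--Sheridan class $\mathcal{BS}\in\operatorname{SH}^0_{\mathrm{S^1,+}}(D^*L)$ via \eqref{Borman-Sheridan}. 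So the proof of Lemma \ref{count-imp} is essentially a one-line consequence: if \emph{every} generator $\gamma$ of degree zero appearing in $\mathcal{BS}$ had vanishing coefficient $\#\mathcal{M}^{J_\infty}_{\mathbb{CP}^n\setminus L}(\gamma)$, then $\mathcal{BS}=0$ in $\operatorname{SH}^0_{\mathrm{S^1,+}}(D^*L)$, forcing the left-hand side of \eqref{rel} to vanish since the linear operations $\langle\cdot|\cdots|\cdot\rangle$ of \eqref{operations} are multilinear; but the right-hand side $\langle \psi_{n-1}p\rangle_{\mathbb{CP}^n,[\mathbb{CP}^1], n+1}^\bullet=(n-1)!\neq 0$ by Theorem \ref{countlast}, a contradiction.

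Concretely, I would proceed as follows. First, recall that $\mathcal{BS}$ is a well-defined element of $\operatorname{SH}^0_{\mathrm{S^1,+}}(D^*L)$ by the discussion around \eqref{Borman-Sheridan}, independent of $J_\infty$ because the relevant moduli spaces carry the minimal symplectic area $\le 1/n$ (so no nontrivial building can form, by the energy argument in the proof of Theorem \ref{counting}). Second, invoke the gluing statement that produces \eqref{rel}: under neck-stretching along $\partial(D^*L)$, the rigid $J_k$-curves computing $\mathcal{N}_{\mathbb{CP}^n,[\mathbb{CP}^1]}\ll\mathcal{T}^{n-1}\gg$ break into two-level buildings $(u_1,\dots,u_n,u_\infty, C_{\mathrm{bot}})$ whose top-level components compute $\mathcal{BS}$ and whose bottom component computes the descendant operation $\langle\cdot|\cdots|\cdot\rangle$ of Theorem \ref{importcountall}; the standard gluing bijection (up to ordering the $n+1$ asymptotic orbits, hence the factor $1/(n+1)!$) gives
\[
\frac{1}{(n+1)!}\langle \overbrace{\mathcal{BS}|\cdots|\mathcal{BS}}^{n+1}\rangle=\langle\psi_{n-1}p\rangle^\bullet_{\mathbb{CP}^n,[\mathbb{CP}^1],n+1}=(n-1)!\neq 0.
\]
Third, write $\mathcal{BS}=\sum_\gamma a_\gamma\,\gamma$ with $a_\gamma=\#\mathcal{M}^{J_\infty}_{\mathbb{CP}^n\setminus L}(\gamma)$. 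Since $\langle\cdot|\cdots|\cdot\rangle$ is $\mathbb{Z}$-multilinear on $\operatorname{SH}^0_{\mathrm{S^1,+}}(D^*L)$, the left-hand side is a $\mathbb{Z}$-linear combination of terms $a_{\gamma_1}\cdots a_{\gamma_{n+1}}\langle\gamma_1|\cdots|\gamma_{n+1}\rangle$; if all $a_\gamma=0$ this is zero, contradicting the displayed nonvanishing. Hence some $a_{\gamma_\infty}\neq 0$. Finally, observe that any $\gamma$ with $a_\gamma\neq 0$ automatically has action $\le 1+\epsilon$ (from the defining constraint $0<\int_u\omega_{\mathrm{FS},\epsilon}\le 1/n$ together with Stokes), and by the energy computation of Lemma \ref{estemate-engery} the only planes of index zero asymptotic to such a simple orbit have area exactly $\epsilon$ when they meet $\mathbb{CP}^{n-1}$; matching this with the description in Section \ref{countingproblem} identifies the generator $\gamma_\infty$ and pins the constraint $\int_u\omega_{\mathrm{FS},\epsilon}=\epsilon$ and $c_1^\tau(u)=1$ in the statement of the lemma.

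The only genuinely delicate point is the gluing step underlying \eqref{rel}: one must be sure that every two-level building of the type $(u_1,\dots,u_n,u_\infty,C_{\mathrm{bot}})$ arising from the breaking is transversally cut out and that the gluing map is a sign-preserving bijection onto the curves counted by $\mathcal{N}_{\mathbb{CP}^n,[\mathbb{CP}^1]}\ll\mathcal{T}^{n-1}\gg$; this is where the genericity of $J_\infty$ and $J_{\mathrm{bot}}$, the somewhere-injectivity established in Lemma \ref{excontradiction}, and the index computations of Lemma \ref{index-estemate} are all used, and where the factor $1/(n+1)!$ (ordering the ends) and the unbranched-cover transversality of Wendl enter. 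Since all of these ingredients have been assembled in the preceding subsections, for the purposes of this lemma I would simply cite the gluing relation \eqref{rel} and run the short multilinearity-plus-contradiction argument above; the heuristic ``minimal area kills all bad buildings'' (already used verbatim in the proof of Theorem \ref{counting}) is what makes both the well-definedness of $\mathcal{BS}$ and the compactness of $\mathcal{M}^{J_\infty}_{\mathbb{CP}^n\setminus L}(\gamma_\infty)$ go through, and I would explicitly invoke that footnote-style argument rather than re-prove it.
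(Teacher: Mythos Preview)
Your proposal is correct and follows essentially the same approach as the paper: the paper simply states that relation \eqref{rel} ``leads to the following conclusion'' (Lemma~\ref{count-imp}) without further argument, and your multilinearity-plus-contradiction reasoning from $\langle\mathcal{BS}|\cdots|\mathcal{BS}\rangle\neq 0$ is exactly the unpacking of that implication. If anything, you supply more detail than the paper does; your final identification of the area-$\epsilon$ constraint via Lemma~\ref{estemate-engery} and the building structure is the right way to pin down that the non-vanishing coefficient corresponds to a generator bounding planes of area exactly $\epsilon$ (rather than $1/n$).
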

\subsection{Concluding the proof}\label{preparingdisk}
We specify the previous discussion to $L=T^n$ as an extremal torus. Pick an SFT-admissible almost complex structure $J$ on $\bigl(\widehat{D^*L}, \widehat{d\lambda}_{\mathrm{can}}\bigr)\cong \bigl(T^*T^n, d\lambda_{\mathrm{can}}\bigr)$, a point $p$ on the zero section $T^n$. Recall that the closed Reeb orbit $\gamma_\infty$ projects to a closed geodesic $c$ of Morse index $n-1$. Define
\begin{equation*}
	\mathcal{M}^{J}_{T^*T^n}(\gamma_\infty,T^n)\ll p\gg:=\left\{(u,t_0):
	\begin{array}{l}
		u:([0,\infty)\times S^1,i) \to (T^*T^n,J),\\
		du\circ i=J\circ du  ,\\
		u \text{ is asymptotic to $\gamma_\infty$ at $\infty$,} \\
		u(0,t)\in L \text{ for all $t\in S^1$,}\\
		u \text{ passes through $p$ at $(0,t_0)$,}
		
	\end{array}
	\right\}\bigg/\operatorname{Aut}([0,\infty)\times S^1).
\end{equation*}
Here, the marked point $t_0$ is allowed to vary on $\{0\}\times S^1$.
The Fredholm index (cf. {\cite[Section 8, Page 33]{Cieliebak:2007aa}}) of this moduli space is zero:
\[\operatorname{ind}(\mathcal{M}^{J}_{T^*T^n}(\gamma_\infty,T^n)\ll p\gg)=(n-3)(2-2)+\operatorname{CZ}^\tau(\gamma_\infty)-n+1=0.\]
Here, by Theorem \ref{special-trivalization}, we have 
\[\operatorname{CZ}^\tau(\gamma_\infty)=\mu(c)=n-1.\]


The Cieliebak--Latschev isomorphism {\cite[Section 7]{Cieliebak:2007aa}}; also see
 {\cite{Abouzaid-Viterbostheorem,Abbondandolo-Schwarz-Floerhomologyofcotangentbundles,Abbondandolo-Schwarz-Floerhomologyofcotangentbundlesloopproduct}},  computes the signed count  
 \[\#\mathcal{M}^{J}_{T^*T^n,1}(\gamma_\infty ,T^n)\ll p\gg=1\]
 to be equal to $1$. For a computation, see  Section \ref{counthalfcy}.
\begin{lemma}[{\cite[Section 7]{Cieliebak:2007aa}},  {\cite{Abouzaid-Viterbostheorem,Abbondandolo-Schwarz-Floerhomologyofcotangentbundles,Abbondandolo-Schwarz-Floerhomologyofcotangentbundlesloopproduct}}]\label{halfc2}
We have
\[\#\mathcal{M}^{J}_{T^*T^n,1}(\gamma_\infty ,T^n)\ll p\gg=1. \]
\end{lemma}
Citing gluing in the regular setting \cite{Pardon-Contacthomologyandvirtualfundamentalcycles} and Lemma \ref{count-imp}, we can glue this half cylinder to $u_\infty\in \mathcal{M}^{J_\infty}_{\mathbb{CP}^n\setminus L}(\gamma_\infty)$ to get a $J_{\mathrm{glu}}$-holomorphic disk $\bar{u}_\infty:(D^2, \partial D^2)\to (\mathbb{CP}^n, L)$ with its boundary passing through the point $p$ on $L$. The symplectic area of $\bar{u}_\infty$ does not depend on the choice of representative of the relative class $[\bar{u}_\infty]\in \pi_2(\mathbb{CP}^n,L)$, therefore
\begin{equation}\label{estimate-final}
	\int \bar{u}^*_{\infty}\omega_{\mathrm{FS,\epsilon}}=\int u^*_{\infty}\omega_{\mathrm{FS,\epsilon}}= \epsilon.
\end{equation}
Recall that the Lagrangian torus $L$ under consideration is extremal in the ball $(B^{2n}(1),\omega_{\mathrm{std}})$ and $\epsilon<1/n$. So $\bar{u}_\infty$, a Maslov index two disk, has minimal\footnote{A non-trivial holomorphic building that can appear as a result of degeneration in the connected component of the moduli space containing $\bar{u}_\infty$ requires a symplectic area at least $1/n$. Note that any such building contains a non-constant disk with boundary on $L$. Such a disk has symplectic area at least $1 /n$ by the extremality of $L$.} symplectic area by (\ref{estimate-final}). Therefore, the count of disks in the connected component of the moduli space containing the disk $\bar{u}_\infty$ is well-defined. Moreover, this count is invariant under deformations of the almost complex structure $J_{\mathrm{glu}}$ that keep the hypersurface at infinity holomorphic. Also this count does not vanish because $\# \mathcal{M}^{J_{\infty}}_{\mathbb{CP}^n\setminus L}(\gamma_{\infty})\neq 0$ by Lemma \ref{count-imp} and $\#\mathcal{M}^{J}_{T^*T^n,1}(\gamma_\infty ,T^n)\ll p\gg\neq 0$ by Lemma \ref{halfc2}.  The conclusion is that the disk $\bar{u}_\infty$ survives under deformation of the almost complex structure $J_{\mathrm{glu}}$ supported near the Lagrangian $L$. 

We deform $J_{\mathrm{glu}}$ near $L$ to a new almost complex structure $J_{\mathrm{def}}$  which is $\omega_{\mathrm{FS,\epsilon}}$-compatible and agrees with  $\phi_{*}J_{\mathrm{std}}$ on $\phi(D^{2n}(\delta))$, i.e., $J_{\mathrm{def}}|_{\phi(D^{2n}(\delta))}=\phi_{*}J_{\mathrm{std}}$, where $J_{\mathrm{std}}$ is the standard complex structure of $\mathbb{C}^n$. 

 The $J_{\mathrm{def}}$-holomorphic disk $\bar{u}_\infty:(D^2, \partial D^2)\to (\mathbb{CP}^n, L)$ yields a proper $J_{\mathrm{std}}$-holomorphic map $\phi^{-1}\circ\bar{u}_{\infty}|_{\bar{u}_{\infty}^{-1}(\phi(D^{2n}(\delta)))}:\bar{u}_{\infty}^{-1}(\phi(D^{2n}(\delta)))\to D^{2n}(\delta)$. By Lemma \ref{monotoncity1} and (\ref{estimate-final}) we have
\[\epsilon \geq \int \bar{u}^*_{\infty}\omega_{\mathrm{FS,\epsilon}}\geq
	 \int_{\bar{u}_{\infty}^{-1}(\phi(D^{2n}(\delta)))} (\phi^{-1}\circ\bar{u}_{\infty})^*\omega_{\mathrm{std}}\geq \frac{1}{2}\pi  \delta^2.\]
This is a clear contradiction since we can choose $\epsilon$ arbitrarily small and keep $\delta>0$ fixed. This completes our proof. 
\section{Proof  of Theorem \ref{intersection01}}\label{proofof1.16}
We follow the same idea as in the proof of Theorem \ref{extremal-lag-ball} in Section \ref{proof for ball} but in a different setting. We only give a sketch of our proof; the details can be recovered from the proof of Theorem \ref{extremal-lag-ball}. 
\begin{enumerate}
	\item [\textbf{Step 01}] Let $L \subset (\mathbb{CP}^n,\omega_{\mathrm{FS}})$ be a monotone Lagragian torus. Suppose there exists a symplectic embedding $\Phi:B^{2n}(\frac{n}{n+1})\to \mathbb{CP}^{n}\setminus L$ and that $L$  intersects the complement of the closure of $\Phi(B^{2n}(\frac{n}{n+1}))$ in  $ \mathbb{CP}^{n}$. Pick a point $p$ on $L$ in the complement of the closure of $\Phi(B^{2n}(\frac{n}{n+1}))$. Let  $D^{2n}(\delta)$ be the closed disk of radius $\delta>0$ centered at the origin in $\mathbb{R}^{2n}$.  For sufficiently small $\delta>0$, choose a symplectic embedding $\phi:D^{2n}(\delta)\mapsto \mathbb{CP}^{n}\setminus \Phi(B^{2n}(\frac{n}{n+1}))$ such that 
	\[\phi(0)=p,\ \phi^{-1}(L)=D^{2n}(\delta)\cap \mathbb{R}^n.\]
	
	\item [\textbf{Step 02}]  By a theorem of Gromov, for generic $\omega_{\mathrm{FS}}$-compatible almost complex structure $J$ on $\mathbb{CP}^n$ and generic points $p_1, p_2 \in \mathbb{CP}^n$, there exists a $J$-holomorphic sphere in the homology class $[\mathbb{CP}^1]$ passing through $p_1$ and $ p_2$. Take $p_1$ as the point $\Phi(0)$ and $p_2$ to be a point on the Lagrangian $L$. 
	
	\item [\textbf{Step 03}]  
	Let  $0<\epsilon<1/(n+1)$. Pick a flat metric on $L$. After scaling it, we can symplectically embed the codisk bundle of radius $2$ into $ \mathbb{CP}^n\setminus \Phi(B^{2n}(\frac{n}{n+1}-\epsilon))$.  Perturb this metric according to Theorem \ref{perturb-metric} to a Riemannian metric $g$ such that with respect to $g$ every closed geodesic $\gamma$ of length less than or equal to $c=1$ is non-contractible, non-degenerate (as a critical point of the energy functional) and satisfies 
	\[0\leq \operatorname{\mu}(\gamma)\leq n-1,\]
	where $\operatorname{\mu}(\gamma)$ denotes the Morse index of $\gamma$. Since $g$ can be chosen to be a small perturbation of the flat metric, we can ensure that the unit codisk bundle $D^*_1L$ with respect to $g$ still symplectically embeds into $ \mathbb{CP}^n\setminus \Phi(B^{2n}(\frac{n}{n+1}-\epsilon))$.

	\item [\textbf{Step 04}] Take a neck-stretching family of almost complex structures $J_k$ on $\mathbb{CP}^n$ along the contact type hypersurface $S^*L$, the boundary of  $D^*_1L$ in $\mathbb{CP}^n$. We assume that $J_k=\Phi_*J_{\mathrm{std}}$ on $\Phi(B^{2n}(\frac{n}{n+1}-\epsilon)$ for all $k$, where  $\Phi_*J_{\mathrm{std}}$ denotes the push-forward of the standard complex structure on $\mathbb{C}^n$. Let $J_\infty$ be the almost complex structure on $\mathbb{CP}^n\setminus L$ obtained as the limit of $J_k$.
	
	\item [\textbf{Step 05}] As $k\to \infty$, the sequence of $J_k$-holomorhpic spheres in \textbf{Step 02} breaks to a holomorphic building with its top level in  $\mathbb{CP}^n\setminus L$,  bottom level in $T^*L$, and some symplectization levels in $\mathbb{R}\times S^*L$. Next, we explain in Step 06---08 that the top level of the building in $\mathbb{CP}^n\setminus L$ consists of two somewhere injective negatively asymptotically cylindrical $J_{\infty}$-holomorphic planes, denoted by $D_1$ and $D_2$, one of which passes through the point $\Phi(0)$. The bottom level consists of a smooth cylinder passing through the point $p_2$. There are no symplectization levels. Moreover, if  $D_1$ is the plane passing through $\Phi(0)$, then 
	\begin{equation}\label{need}
	\int D_1^*\omega_{\mathrm{FS}}=\frac{n}{n+1}, \ \int D_2^*\omega_{\mathrm{FS}}=\frac{1}{n+1}.
	\end{equation} 
	  For an illustration, see Figure \ref{holo}.
	\begin{figure}[h]
	 			\centering
	 			\includegraphics[width=6cm]{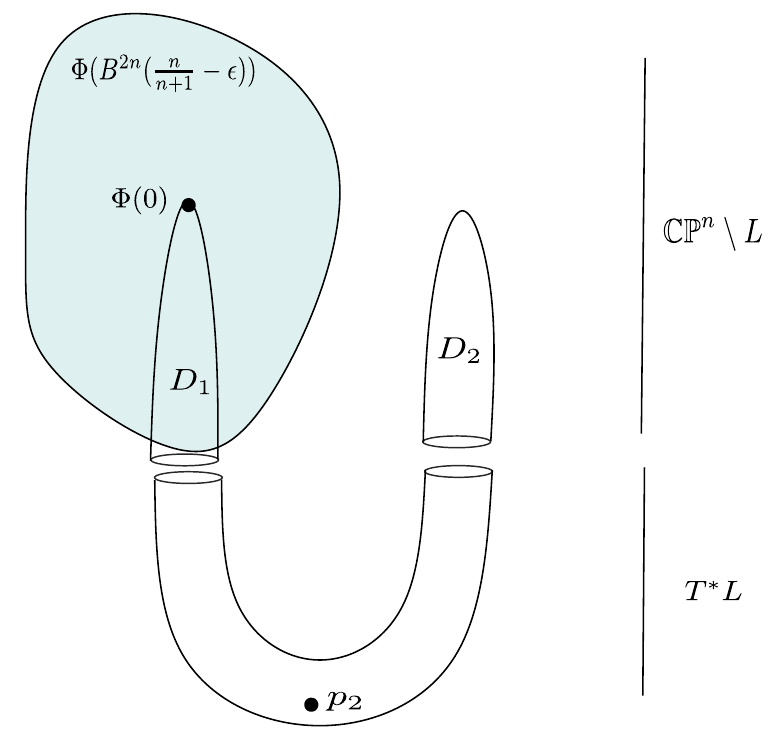}
	 			\caption{The holomorphic building}\label{holo}
 \end{figure}
	\item [\textbf{Step 06}] There is a connected smooth component $C_{\mathrm{bot}}$ in $T^*L$  of the resultant building  that passes through the point $p_2$ and has positive punctures asymptotic to closed Reeb orbits on $S^*L$. Let $C^s_{\mathrm{bot}}$ be the underlying simple punctured curve of $C_{\mathrm{bot}}$. Assume $C^s_{\mathrm{bot}}$ has $l$ positive punctures asymptotic to the closed Reeb orbits $\gamma_{1},\gamma_{2},\dots, \gamma_{l}$ that project to the closed geodesics $c_1,\dots, c_l$ repsectivly. Generically, the Fredholm index of $C^s_{\mathrm{bot}}$ (taking the point constraint at $p_2$ into account) satisfies  
	\[0\leq \operatorname{ind}(C^s_{\mathrm{bot}})=(n-3)(2-l)+\sum_{i=1}^{l}\operatorname{\mu}(c_i)-2n+2\leq 2(l-2),\] 
	where $\operatorname{\mu}(c_i)$ denotes the Morse index of $c_i$, which by \textbf{Step 03} satisfies $0\leq \operatorname{\mu}(c_i)\leq n-1$.
This implies that the curve  $C^s_{\mathrm{bot}}$, and hence 	$C_{\mathrm{bot}}$, must have at least two positive punctures. By the same argument as in the proof of Lemma \ref{count-disks}, the positive punctures of $C_{\mathrm{bot}}$ give rise to planer components of the building in the top level $\mathbb{CP}^n\setminus L$. Thus, there are at least two negatively asymptotically cylindrical $J_{\infty}$-holomorphic planes in the top level  $\mathbb{CP}^n\setminus L$.
	\item [\textbf{Step 07}] Let $D_1, D_2,\dots, D_m$ be the smooth components of the building in the top level $\mathbb{CP}^n\setminus L$. The energy of the whole building is $1$, i.e., 
	\[\sum_{j=1}^{m}\int D_j^*\omega_{\mathrm{FS}}=1.\]

	\item [\textbf{Step 08}] We have chosen the point $p_1=\Phi(0)$ in the complement of the Weinstein neighborhood $D^*_1L$. So there is a component, say $D_1$, in the top level $\mathbb{CP}^n\setminus L$ that passes through the point  $\Phi(0)$, the center of the embedded ball $\Phi(B^{2n}(\frac{n}{n+1}-\epsilon))$. Moreover, the boundary $\partial D_1$ maps to $L$. Therefore, by monotonicity 
	\[\frac{n}{n+1}-\epsilon\leq \int_{D_1^{-1}(\Phi(B^{2n}(\frac{n}{n+1})))} D_1^*\omega_{\mathrm{std}}\leq \int D_1^*\omega_{\mathrm{FS}}.\]
	On the other hand, $D_1, D_2,\dots, D_m$ can be compactified to surfaces in $\mathbb{CP}^n$ with boundaries on $L$. Since $L$ is monotone (hence extremal by {\cite[Corollary 1.7]{Cieliebak2018}}), we have
	\[\frac{1}{n+1}\leq \int D_j^*\omega_{\mathrm{FS}}\]
	for every $j\in \{1,2,\dots, m\}.$ Combining the above, we get
	\[\frac{n}{n+1}-\epsilon +\frac{m-1}{n+1}\leq \sum_{i=1}^{m} \int D_j^*\omega_{\mathrm{FS}}=1.\] 
	By \textbf{Step 03}, we have $\epsilon<1/(n+1)$. So the last estimate implies that $1\leq m\leq 2$ and both $D_1, D_2$ must be somewhere injective. On the other hand, by \textbf{Step 06}, at least two planer components are in the top level. Putting this together implies that the building consists of two somewhere injective negatively asymptotically cylindrical $J_{\infty}$-holomorphic planes $D_1,D_2$ in the top level  $\mathbb{CP}^n\setminus L$. The bottom level of the building consists of a smooth cylinder passing through the point $p_2$. There are no symplectization levels.   For an illustration, see Figure \ref{holo}.
	
	Note that $L$ is monotone, so the symplectic area of any disk with boundary on $L$ takes  values in the integer multiples of $1/(n+1)$. This, together with the above estimates, implies that
	\begin{equation}\label{need}
	\int D_1^*\omega_{\mathrm{FS}}=\frac{n}{n+1}, \int D_1^*\omega_{\mathrm{FS}}=\frac{1}{n+1}.
	\end{equation}  
	In particular, by monotonicity of $L$, $D_1$ is of Maslov index $2n$, and $D_2$ is of Maslov index $2$.
 \item[\textbf{Step 09}]
 Let $\gamma$ denote a closed Reeb orbit of action less than or equal to $1$ on $S^*L$. Moreover, assume it projects to a closed geodesic of Morse index $n-1$ on $L$. Let  $\tau$ be a symplectic trivialization of $TT^*L$. We associate to each such $\gamma$ two types of moduli spaces defined below:
\begin{equation}\label{type1}
\mathcal{M}^{J_\infty}_{\mathbb{CP}^n\setminus L}(\gamma)\ll\Phi(0)\gg:=\left\{
	\begin{array}{l}
		u:\mathbb{CP}^1\setminus \{\infty\} \to (\widehat{\mathbb{CP}^n\setminus D^*L},J_{\infty}),\\
		du\circ i=J_{\infty}\circ du  ,\\
		u \text{ is asymptotic to $\gamma$ at $\infty$,} \\
		u(0)=\Phi(0),\\
		c_1^\tau(u)=n.
	\end{array}
	\right\}\bigg/\operatorname{Aut}(\mathbb{CP}^1,\infty, 0),
\end{equation}
and 
\begin{equation}\label{type2}
\mathcal{M}^{J_\infty}_{\mathbb{CP}^n\setminus L}(\gamma):=\left\{
	\begin{array}{l}
		u:\mathbb{CP}^1\setminus \{\infty\} \to (\widehat{\mathbb{CP}^n\setminus D^*L},J_{\infty}),\\
		du\circ i=J_{\infty}\circ du  ,\\
		u \text{ is asymptotic to $\gamma$ at $\infty$,} \\
		c_1^\tau(u)=1.
	\end{array}
	\right\}\bigg/\operatorname{Aut}(\mathbb{CP}^1,\infty).
\end{equation}
Both these moduli spaces consist of somewhere injective curves. To see this, let $u\in \mathcal{M}^{J_\infty}_{\mathbb{CP}^n\setminus L}(\gamma)\ll\Phi(0)\gg$ be a degree $d$ cover of a somewhere injective curve $\bar{u}$. The symplectic area of $\bar{u}$ is at least $n/(n+1)$ because it satisfies the constraint $\ll\Phi(0)\gg$ and $L$ is monotone. This forces $d=1$.

These moduli spaces are zero-dimensional manifolds for generic $J_\infty$. We argue that the signed counts $\#\mathcal{M}^{J_\infty}_{\mathbb{CP}^n\setminus L}(\gamma)\ll\Phi(0)\gg$ and  $\#\mathcal{M}^{J_\infty}_{\mathbb{CP}^n\setminus L}(\gamma)$ are well-defined and do not depend on the choice of generic  $J_\infty$. First, we explain that $\mathcal{M}^{J_\infty}_{\mathbb{CP}^n\setminus L}(\gamma)\ll\Phi(0)\gg$ is compact for generic $J_\infty$.  Assume there is a splitting in this moduli space to a non-trivial building. Following the proof of Theorem \ref{counting}, the building must have at least two smooth connected components $u_1, u_2$ in the top level $\mathbb{CP}^n\setminus L$. One of these, say $u_1$, must pass through the point $\Phi(0)$. By monotonicity of the symplectic area of holomorphic curves, $u_1$ must have a symplectic area at least $n/(n+1)-\epsilon$. Also, after compactifying the component $u_2$ to a surface with boundary on $L$, the symplectic area of $u_2$ must be a positive integer multiple of $1/(n+1)$ by the monotonicity of $L$. The total area of $u_1$ and $u_2$ is at least $1-\epsilon$, where $\epsilon<1/(n+1)$ by \textbf{Step 02}. This is greater than the symplectic area of  $\mathcal{M}^{J_\infty}_{\mathbb{CP}^n\setminus L}(\gamma)\ll\Phi(0)\gg$, which is $n/(n+1)$, a contradiction. By the same arguments, the signed count of elements in $\mathcal{M}^{J_\infty}_{\mathbb{CP}^n\setminus L,D_1}\ll\Phi(0)\gg$ does not depend on the choice of generic $J_\infty$. One adopts the same arguments to show that  $\#\mathcal{M}^{J_\infty}_{\mathbb{CP}^n\setminus L}(\gamma)$ is well-defined and does not depend on the choice of generic  $J_\infty$.

These moduli spaces compute a well-defined class given by 
\[\mathcal{BS} :=\mathcal{BS}_1 + \mathcal{BS}_2\in \operatorname{SH}^0_{\mathrm{S^1,+}}(D^*L),\]   
where 
\[\mathcal{BS}_1:=\sum_{\gamma}\#\mathcal{M}^{J_\infty}_{\mathbb{CP}^n\setminus L}(\gamma)\ll\Phi(0)\gg \gamma \textit{ and } \mathcal{BS}_2:=\sum_{\gamma}\#\mathcal{M}^{J_\infty}_{\mathbb{CP}^n\setminus L}(\gamma)\cdot \gamma, \]
where both sums are taken over $\gamma$ of degree zero (cf. Equation \ref{grading}).

The somewhere injective negatively asymptotically cylindrical $J_{\infty}$-holomorphic planes $D_1$ and $D_2$ in \textbf{Step 08} are asymptotic to Reebs orbits that project to geodesics of Morse index $n-1$. The connected component of the moduli space containing $D_1$, denoted by $\mathcal{M}^{J_\infty}_{\mathbb{CP}^n\setminus L,D_1}\ll\Phi(0)\gg$, is of type (\ref{type1}). The connected component of the moduli space  containing $D_2$, denoted by $\mathcal{M}^{J_\infty}_{\mathbb{CP}^n\setminus L,D_1}$, is of type (\ref{type2}).  In particular, both are zero-dimensional manifolds, and the signed counts  $\#\mathcal{M}^{J_\infty,s}_{\mathbb{CP}^n\setminus L,D_1}$ and $\#\mathcal{M}^{J_\infty,s}_{\mathbb{CP}^n\setminus L,D_1}$ are well-defined and do not depend on the choice of generic $J_\infty$.  The moduli spaces $\mathcal{M}^{J_\infty}_{\mathbb{CP}^n\setminus L,D_1}$ and $\mathcal{M}^{J_\infty}_{\mathbb{CP}^n\setminus L,D_2}$ are computing $\mathcal{BS}_1$ and $\mathcal{BS}_2$, respectively. 
  \item [\textbf{Step 10}] The count of closed degree $1$ curves passing through two generic points in $\mathbb{CP}^n$ does not vanish. By the above steps, this count descends to a two-level holomorphic building $(D_1,D_2, C_{\mathrm{bot}})$ under stretching along a Weinstein neighborhood of $L$. The disks $D_1$ and $D_2$ are computing the class $\mathcal{BS}$, and the cylinder $C_{\mathrm{bot}}$ is computing the linear operations defined by (\ref{decendantsoftorus}). By standard gluing,  there is a sign preserving bijection between the count of closed degree one curves passing through two generic points in $\mathbb{CP}^n$ and the holomorphic buildings of the form $(D_1,D_2, C_{\mathrm{bot}})$. This implies that $\langle \mathcal{BS}| \mathcal{BS} \rangle$ does not vanish. In particular,  $\mathcal{BS}_1\neq 0$. So there exists a generator $\gamma \in \operatorname{SH}^0_{\mathrm{S^1,+}}(D^*L)$  such that  $\mathcal{M}^{J_\infty}_{\mathbb{CP}^n\setminus L}(\gamma)\ll\Phi(0)\gg$ has a non-vanishing signed count. Following Section \ref{preparingdisk}, one can produce a disk $\bar{D}_1:(D^2,\partial D^2)\mapsto (\mathbb{CP}^n, L)$ whose boundary passes through the point $p\in L$, where $p$ is the point we have chosen in \textbf{Step 01}. This disk has symplectic area equal to $n/(n+1)$ which by monotonicity of $L$ implies that it has Maslov index $2n$.  Moreover, we can assume that $\bar{D}_1$ is $J$-holomorphic for some $\omega_{\mathrm{FS}}$-compatible almost complex structure $J$ that agrees with  $\phi_{*}J_{\mathrm{std}}$ on $\phi(D^{2n}(\delta))$, where $J_{\mathrm{std}}$ is the standard complex structure of $\mathbb{C}^n$ , i.e., $J|_{\phi(D^{2n}(\delta))}=\phi_{*}J_{\mathrm{std}}$.
 The $J$-holomorphic disk $\bar{D}_1:(D^2, \partial D^2)\to (\mathbb{CP}^n, L)$ yields a proper $\phi_{*}J_{\mathrm{std}}$-holomorphic map $\bar{D}_1|_{\bar{D}_1^{-1}(\phi(D^{2n}(\delta)))}:\bar{D}_1^{-1}(\phi(D^{2n}(\delta)))\mapsto \phi(D^{2n}(\delta))$. By Lemma \ref{monotoncity1} and (\ref{need}), we have
 	\[\frac{n}{n+1}=	\int \bar{D}^*_1\omega_{\mathrm{FS}}\geq \int_{\bar{D}_1^{-1}(\phi(D^{2n}(\delta)))} \bar{D}^*_{1}\omega_{\mathrm{std}}+ \int_{\bar{D}_1^{-1}(\Phi(B^{2n}(\frac{n}{n+1}-\epsilon)))} \bar{D}^*_{1}\omega_{\mathrm{std}}\geq \frac{1}{2}\pi  \delta^2+ \frac{n}{n+1}-\epsilon.\]
 	This is a contradiction because we can choose $\epsilon$ arbitrarily small while keeping $\delta$ fixed. This completes the sketch of our proof.
\end{enumerate}
\section{Proof sketch of Theorem \ref{extremal-lag-cylinder}}\label{proofof1.8}
\subsection{Proof sketch part I}
In this part, we explain that, for any $k\in \mathbb{Z}_{\geq 1}$, we have
\[\mathrm{c}_{L}(B^{2k}(1)\times \mathbb{C}^m,\omega_{\mathrm{std}})=\mathrm{c}_{L}(B^{2k}(1),\omega_{\mathrm{std}}).\] 
We give a sketch of our proof; the details can be recovered from  Section \ref{Analyze} in the proof of Theorem \ref{extremal-lag-ball}.
\begin{enumerate}
\item [\textbf{Step 01}] Let $L\subset\operatorname{int}(B^{2k}(1))\times \mathbb{C}^m$ be a Lagrangian torus. After compactifying the ball $B^{2k}(1)$ to $\mathbb{CP}^k$ and cutting $\mathbb{C}^{m}$ by a large lattice,  we can assume that $L$ is a Lagrangian torus in $(\mathbb{CP}^k\times T^{2m}, \omega_{\mathrm{FS}}\oplus \omega_{\mathrm{std}})$ that lies in the complement of the hypersurface $\mathbb{CP}^{k-1}\times T^{2m}$. Here $\omega_{\mathrm{FS}}$ integrates to $1$ on lines in $\mathbb{CP}^k$.
\item [\textbf{Step 02}] Set $\Omega:=\omega_{\mathrm{FS}}\oplus \omega_{\mathrm{std}}$. By Corollary \ref{simplecount}, the count of $J$-holomorphic spheres in $\mathbb{CP}^k\times T^{2m}$ representing  the homology class $[\mathbb{CP}^1\times\{*\}]$ and satisfying the constraint $\ll \mathcal{T}_D^{k-1}q\gg$ at a generic point $q\in  \mathbb{CP}^k\times T^{2m}$ is non-zero. Therefor, for a generic $\Omega$-compatible almost complex structure $J$ on $\mathbb{CP}^k\times T^{2m}$ and generic $q\in \mathbb{CP}^k\times T^{2m}$, there exists a $J$-holomorphic sphere $u$ in the homology class $[\mathbb{CP}^1\times\{*\}]$ satisfying the constraint $\ll \mathcal{T}_D^{k-1}q\gg$ (cf. Definition \ref{tangdef}). 
	 
\item [\textbf{Step 03}] Pick a flat metric on $L$. After scaling it, we can symplectically embed the codisk bundle of radius $2$ into $\operatorname{int}(B^{2k}(1))\times T^{2m} = \mathbb{CP}^k\times T^{2m}\setminus \mathbb{CP}^{k-1} \times T^{2m}$. Perturb this metric according to Theorem \ref{perturb-metric} to a Riemannian metric $g$ such that, with respect to $g$, every closed geodesic $\gamma$ of length less than or equal to $c=1$ is non-contractible, non-degenerate (as a critical point of the energy functional) and satisfies 
	\[0\leq \operatorname{\mu}(\gamma)\leq n-1,\]
	where $\operatorname{\mu}(\gamma)$ denotes the Morse index of $\gamma$. Since $g$ can be chosen to be a small perturbation of the flat metric, we can ensure that the unit codisk bundle $D^*L$ with respect to $g$ still symplectically embeds into $\operatorname{int}(B^{2k}(1))\times T^{2m} = \mathbb{CP}^k\times T^{2m}\setminus \mathbb{CP}^{k-1} \times T^{2m}$.
	
\item [\textbf{Step 04}] Take a family of $\Omega$-compatible almost complex structures $J_n$ on $\mathbb{CP}^k\times T^{2m}$ that stretches the neck along the contact type hypersurface $S^*L:=\partial D^*L $ in $\mathbb{CP}^k\times T^{2m}$. We assume that $J_n$ restricted to a small neighborhood of the hypersurface $\mathbb{CP}^{(k-1)}\times T^{2m}$ at infinity is the standard complex structure $J_{\mathrm{std}}\oplus J_{\mathrm{std}}$, so that the hypersurface $\mathbb{CP}^{k-1}\times T^{2m}$ is $J_n$-holomorphic for all $n$. Let $J_\infty$ be the almost complex structure on $\mathbb{CP}^k\times T^{2m}\setminus L$ obtained as the limit of $J_n$. Then the hypersurface $\mathbb{CP}^{k-1}\times T^{2m}$ is $J_\infty$-holomorphic.
	
\item [\textbf{Step 05}] Choose a point $q$ on $L$ and a local symplectic divisor containing $q$. As $n\to \infty$, the sequence of $J_n$-holomorphic spheres in \textbf{Step 02} breaks into a holomorphic building $\mathbb{H}$ with its top level in  $\mathbb{CP}^k\times T^{2m}\setminus L$,  bottom level in $T^*L$, and some symplectization levels in $\mathbb{R}\times S^*L$.

	\item [\textbf{Step 06}] 
	There is a non-constant smooth punctured sphere $C_{\mathrm{bot}}$ in the bottom level of $\mathbb{H}$ in  $T^*L$ that inherits  the tangency constraint $\ll \mathcal{T}_D^{k-1}q\gg$ at $q$. As in Lemma \ref{countendsincotangentbundle},  the curve $C_{\mathrm{bot}}$ has at least $k+1$ positive asymptotically cylindrical ends. As in the proof of Lemma \ref{count-disks}, this means there are at least $k+1$ $J_\infty$-holomorphic disks, denoted by $D_1, D_2,\dots, D_{k+1}$, in the top level $\mathbb{CP}^k\times T^{2m}\setminus L$. Since the building is the limit of holomorphic spheres intersecting the $J_\infty$-holomorphic hypersurface $\mathbb{CP}^{k-1}\times T^{2m}$ with intersection number $+1$, this means at least $k$ disks, say  $D_1, D_2,\dots, D_{k}$, lie in complement $\operatorname{int}(B^{2k}(1))\times T^{2m}$ of $\mathbb{CP}^{k-1}\times T^{2m}$, since otherwise the total intersection number would be larger than $+1$. Here we use the fact that distinct holomorphic objects intersect positively.
	\item [\textbf{Step 07}]  The energy of the whole building is $1$. So
	\[\sum_{i=1}^k\int_{D^2}D_i^*\omega_{\mathrm{std}}\leq 1.\]
	This means that, for some disk $D_j$, we have
	\[\int_{D^2}D_j^*\omega_{\mathrm{std}}\leq \frac{1}{k}\sum_{i=1}^k\int_{D^2}D_i^*\omega_{\mathrm{std}}\leq \frac{1}{k}.\]
We conclude that
	\[\mathrm{c}_{L}(B^{2k}(1)\times \mathbb{C}^m,\omega_{\mathrm{std}})\leq \frac{1}{k}.\]
	On the other hand, the Lagrangian torus $S^1(1/k)\times \cdots\times S^1(1/k)\subset \bar{B}^{2k}(1)\times \mathbb{C}^m$ has symplectic area equal to $1/k$. Thus
\[\mathrm{c}_{L}(B^{2k}(1)\times \mathbb{C}^m,\omega_{\mathrm{std}})= \frac{1}{k}=\mathrm{c}_{L}(B^{2k}(1),\omega_{\mathrm{std}}).\] \end{enumerate}
\subsection{Proof sketch part II}
In this part, we explain that every extremal Lagrangian torus $L$ in $(B^{2k}(1)\times \mathbb{C}^m,\omega_{\mathrm{std}})$ is entirely contained in the boundary $\partial B^{2k}(1)\times \mathbb{C}^m$. The proof follows step by step the proof of Theorem \ref{extremal-lag-ball}, so we give a sketch only.
\begin{enumerate}
\item [\textbf{Step 01}] Let $L\subset B^{2k}(1)\times \mathbb{C}^m$ be an extremal Lagrangian torus that is not entirely contained in the boundary $\partial B^{2k}(1)\times \mathbb{C}^m$.  We aim to arrive at a contradiction.  Fix a point $p\in L\cap \operatorname{int}(B^{2n}(1))\times \mathbb{C}^m$. 
Let  $D^{2n}(\delta)$ be the closed disk of radius $\delta>0$ centered at the origin in $\mathbb{R}^{2n}$. For sufficiently small $\delta>0$, choose a symplectic embedding $\phi:D^{2n}(\delta)\to \operatorname{int}(B^{2n}(1))\times \mathbb{C}^m$ such that 
\[\phi(0)=p,\ \phi^{-1}(L)=D^{2n}(\delta)\cap \mathbb{R}^n.\]
 \item [\textbf{Step 02}] Let $\epsilon>0$, extend the cylinder $B^{2k}(1)\times \mathbb{C}^m$ to $B^{2k}(1+\epsilon)\times \mathbb{C}^m$. After compactifying  $B^{2k}(1+\epsilon)$ to $(\mathbb{CP}^k, (1+\epsilon)\omega_{\mathrm{FS}})$ and cutting $\mathbb{C}^{m}$ by a large lattice,  we can assume that $L$ is a Lagrangian torus in $(\mathbb{CP}^k\times T^{2m}, \Omega_{\epsilon})$, where $\Omega_{\epsilon}:=(1+\epsilon)\omega_{\mathrm{FS}}\oplus \omega_{\mathrm{std}}$. Moreover, $L$ lies in the complement of the hypersurface $\mathbb{CP}^{k-1}\times T^{2m}$. Pick a flat metric on $L$. After scaling it, we can symplectically embed the codisk bundle of radius $2$ into $B^{2k}(1+\epsilon/2)\times  T^{2m}$. Perturb this metric according to Theorem \ref{perturb-metric} to a Riemannian metric $g$ such that, with respect to $g$, every closed geodesic $\gamma$ of length less than or equal to $c=1+\epsilon$ is non-contractible, non-degenerate (as a critical point of the energy functional) and satisfies 
	\[0\leq \operatorname{\mu}(\gamma)\leq n-1,\]
	where $\operatorname{\mu}(\gamma)$ denotes the Morse index of $\gamma$. Since $g$ can be chosen to be a small perturbation of the flat metric, we can ensure that the unit codisk bundle $D^*L$ with respect to $g$ still symplectically embeds into $B^{2k}(1+\epsilon/2)\times  T^{2m} $.
For an illustration of this geometric setup, see Figure \ref{geometric-setup-cylinder}.
\begin{figure}[h]
	\centering
	\includegraphics[width=8cm]{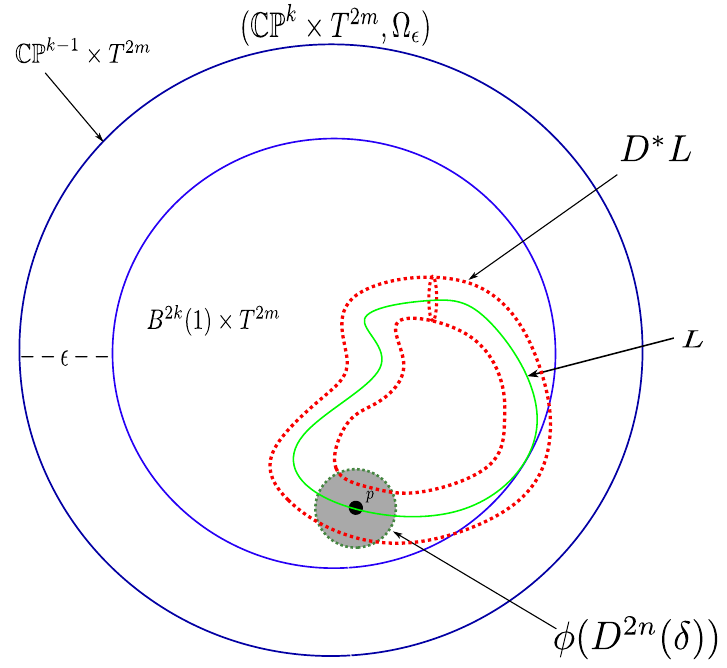}
	\caption{Geometric Setup}\label{geometric-setup-cylinder}
\end{figure}

\item [\textbf{Step 03}] By Corollary \ref{simplecount}, for a generic $\Omega_{\epsilon}$-compatible almost complex structure $J$ on $\mathbb{CP}^k\times T^{2m}$ and a generic $q\in \mathbb{CP}^k\times T^{2m}$, there exists a $J$-holomorphic sphere $u$ in the homology class $[\mathbb{CP}^1\times\{*\}]$ satisfying the contstraint $\ll \mathcal{T}_D^{k-1}q\gg$ (cf. Definition \ref{tangdef}). By Theorem \ref{countlast}, we assume that each curve $u$ carries a Hamiltonian perturbation around the point $q$ as described in (\ref{hamilto-perturb}). 

\item [\textbf{Step 04}] Take an $\Omega_{\epsilon}$-compatible family of almost complex structures $J_n$ on $\mathbb{CP}^k\times T^{2m}$ that stretch the neck along the contact type hypersurface $S^*L:=\partial D^*L$ in $\mathbb{CP}^k\times T^{2m}$. We assume that $J_n$ restricted to a small neighborhood of the hypersurface $\mathbb{CP}^{k-1}\times T^{2m}$ at infinity is the standard complex structure $J_{\mathrm{std}}\oplus J_{\mathrm{std}}$. so that the hypersurface $\mathbb{CP}^{k-1}\times T^{2m}$ is $J_n$-holomorphic for all $n$. Let $J_\infty$ be the almost complex structure on $\mathbb{CP}^k\times T^{2m}\setminus L$ obtained as the limit of $J_n$. Then the hypersurface $\mathbb{CP}^{k-1}\times T^{2m}$ is $J_\infty$-holomorphic.
	
	\item [\textbf{Step 05}] Choose a point $q$ on $L$ and a local symplectic divisor containing $q$. As $n\to \infty$, the sequence of $J_n$-holomorhpic spheres in \textbf{Step 02} breaks into a holomorphic building $\mathbb{H}$ with its top level in  $\mathbb{CP}^k\times T^{2m}\setminus L$,  bottom level in $T^*L$, and some symplectization levels in $\mathbb{R}\times S^*L$.

	\item [\textbf{Step 06}] From the analysis in Section $\ref{Analyze}$  it follows that $\mathbb{H}$ is a two-level holomorphic building with the top level consisting of $k+1$ somewhere injective rigid negatively asymptotically cylindrical $J_{\infty}$-holomorphic planes  $u_1,u_2,\dots,u_k, u_{\infty}$  in $\mathbb{CP}^k\times T^{2m}\setminus L$, and a somewhere injective and rigid asymptotically cylindrical $J_{\mathrm{bot}}$-holomorphic sphere $C_{\mathrm{bot}}$ with $k+1$ positive punctures in the bottom level $(T^*L,d\lambda_{\mathrm{can}})$. Only one of these planes, denoted by $u_\infty$, in the top level, intersects the complex hypersurface $\mathbb{CP}^k\times T^{2m}$. Moreover,  we have
\[\int u^*_{i}\omega_{\mathrm{std}}=\frac{1}{k}, \]
for all $i=1,2,\dots,k$, and 
	\[\int u^*_{\infty}\Omega_{\epsilon}= \epsilon. \]
	\item [\textbf{Step 07}]  From Section \ref{Borman-} it follows that there exists a generator $\gamma_\infty \in \operatorname{SH}^0_{\mathrm{S^1,+}}(D^*L)$ such that its coefficient in the Borman--Sheridan class $\mathcal{BS}$ given by the signed count of elements in the moduli space 
\begin{equation*}
\mathcal{M}^{J_\infty}_{\mathbb{CP}^k\times T^{2m}\setminus L}(\gamma_\infty):=\left\{
	\begin{array}{l}
		u:\mathbb{CP}^1\setminus \{\infty\} \to (\widehat{\mathbb{CP}^k\times T^{2m}\setminus D^*L},J_{\infty}),\\
		du\circ i=J_{\infty}\circ du  ,\\
		u \text{ is asymptotic to $\gamma_\infty$ at $\infty$,} \\
		c_1^\tau(u)=1,\\
		\int_{u}\Omega_{\epsilon}= \epsilon.
	\end{array}
	\right\}\bigg/\operatorname{Aut}(\mathbb{CP}^1,\infty)
\end{equation*}
is non-vanishing. Following Section \ref{preparingdisk}, one gets a disk $\bar{u}_\infty:(D^2,\partial D^2)\to (\mathbb{CP}^k\times T^{2m}, L)$ whose boundary passes through the point $p\in L$, where $p$ is the point we have chosen in \textbf{Step 01}. Moreover, we can assume that $\bar{u}_\infty$ is $J$-holomorphic for some $\Omega_\epsilon$-compatible almost complex structure $J$ that agrees with  $\phi_{*}J_{\mathrm{std}}$ on $\phi(D^{2n}(\delta))$, where $J_{\mathrm{std}}$ is the standard complex structure of $\mathbb{C}^n$ , i.e., $J|_{\phi(D^{2n}(\delta))}=\phi_{*}J_{\mathrm{std}}$.
 The $J$-holomorphic disk $\bar{u}_\infty:(D^2, \partial D^2)\to (\mathbb{CP}^k\times T^{2m}, L)$ yields a proper $J_{\mathrm{std}}$-holomorphic map $\phi^{-1}\circ\bar{u}_{\infty}|_{\bar{u}_{\infty}^{-1}(\phi(D^{2n}(\delta)))}:\bar{u}_{\infty}^{-1}(\phi(D^{2n}(\delta)))\to D^{2n}(\delta)$. By Lemma \ref{monotoncity1}, we have
\[\epsilon \geq \int \bar{u}^*_{\infty}\Omega_{\epsilon}\geq
	 \int_{\bar{u}_{\infty}^{-1}(\phi(D^{2n}(\delta)))} (\phi^{-1}\circ\bar{u}_{\infty})^*\omega_{\mathrm{std}}\geq \frac{1}{2}\pi  \delta^2.\]
This is a contradiction since we can choose $\epsilon$ arbitrarily small and keep $\delta$ fixed, and it completes the sketch of our proof. 
\end{enumerate}
\section{Proof of Theorem \ref{extremal-lag-toric}}\label{sectiontoricproof}
\subsection{Preparing a geometric framework}

Let $(X^4_{\Omega},\omega_{\mathrm{std}})$ be a four-dimensional toric domain such that there exists an ellipsoid $E^{4}(a,b)$ satisfying  $X^4_{\Omega} \subseteqq E^{4}(a,b)$ and $\operatorname{diagonal }(X^4_{\Omega})=\operatorname{diagonal }(E^{4}(a,b))$.
 Suppose $L\subset (X_{\Omega}^4,\omega_{\mathrm{std}})$ is an extremal Lagrangian torus that intersects the interior of $X_{\Omega}^4$. We aim to arrive at a contradiction using the monotonicity property of the area of pseudoholomorphic curves (cf. Lemma \ref{monotoncity1}).

Enclose $X_{\Omega}^4$ in a four-dimensional ellipsoid $E^4(a,b)$ of the same diagonal, where $0<a\leq b$. We recall that the $k$-th Gutt--Hutchings capacity of the ellipsoid $E^4(a,b)$, denoted by $C_{k}^{\mathrm{GH}}(E^4(a,b))$ and defined in (\ref{Gutt--Hutchingscapacities}), is the $k$-th term in the sequence of positive integer multiples of $a$ and $b$ arranged in nondecreasing order with repetition {\cite[Example 1.8]{MR3868228}}. By density, we can assume that the ellipsoid $E^4(a,b)$ is a rational ellipsoid. So there exists\footnote{By the scaling property of the Gutt--Hutchings capacity, it is enough to prove this for rational ellipsoids of the form $E^4(1,b)$. We can write $b=p/q$ for some coprime integers $p$ and $q$, and we take $k=p+q$.} $k\in \mathbb{Z}_{\geq 1}$ such that 
\[C_{k}^{\mathrm{GH}}(E^4(a,b))=k \operatorname{diagonal}(E^4(a,b))=k \operatorname{diagonal}(X_{\Omega}^4). \]
Extend the ellipsoid $E^4(a,b)$ to the ellipsoid $E^{\mathrm{ext}}:=E(a(1+\epsilon),b(1+\epsilon))$ for some fixed  $\epsilon>0$. The latter is of diagonal equal to $(1+\epsilon)\operatorname{diagonal}(X_{\Omega}^4)$, and  
\begin{equation}\label{GH-cap}
	C_{k}^{\mathrm{GH}}(E^{\mathrm{ext}})=k(1+\epsilon) \operatorname{diagonal}(X_{\Omega}^4).
\end{equation}
Moreover, we have the following proper symplectic  inclusion:
\[\bigl(X_{\Omega}^4,\omega_{\mathrm{std}}\bigr)\subset \bigl(E^{\mathrm{ext}},\omega_{\mathrm{std}}\bigr).\]
Let $E^{\mathrm{ext}}_{\mathrm{FR}}$ be a  rounding (cf. Subsection \ref{roudningfullyconvex}) of $E^{\mathrm{ext}}$ such that Theorem \ref{GH} and Theorem \ref{puctured-dsik} hold for the positive integer $k$ that satisfies Equation (\ref{GH-cap}). We still have a proper symplectic inclusion
\[\bigl(X_{\Omega}^4,\omega_{\mathrm{std}}\bigr)\subset \bigl(E^{\mathrm{ext}}_{\mathrm{FR}},\omega_{\mathrm{std}}\bigr).\]
We now see $L$ as a Lagrangian torus in the interior of the rounded extended ellipsoid $E^{\mathrm{ext}}_{\mathrm{FR}}$.  By assumption, $L$ is extremal in $(X_{\Omega}^4,\omega_{\mathrm{std}})$. This means that for every smooth disk $u:(D^2,\partial D^2)\to (E^{\mathrm{ext}}_{\mathrm{FR}}, L)$ we have 
\[\int_{D^2}u^*\omega_{\mathrm{std}}>0 \implies \int_{D^2}u^*\omega_{\mathrm{std}}\geq \operatorname{diagonal}(X_{\Omega}^4).\]
Here, we point out that the symplectic area of a disk $u:(D^2,\partial D^2)\to (E^{\mathrm{ext}}_{\mathrm{FR}}, L)$ depends only on its relative homotopy class $[u]\in \pi_2(E^{\mathrm{ext}}_{\mathrm{FR}}, L)$. Every disk  $u:(D^2,\partial D^2)\to (E^{\mathrm{ext}}_{\mathrm{FR}}, L)$ can be smoothly deformed to a disk $u:(D^2,\partial D^2)\to (X_{\Omega}^4, L)$ without changing its relative homotopy class.

Let $g$ be the standard flat metric on the torus $L$ and $(D_{1+\delta_0}^*L, d\lambda_{\mathrm{can}})$ be the symplectic codisk bundle of $(L,g)$ of radius $1+\delta_0$, for $\delta_0>0$, i.e., 
\[D_{1+\delta_0}^*L:=\bigl\{v\in T^*L: \|v\|_g\leq 1+\delta_0 \bigr\},\]
and $\lambda_{\mathrm{can}}$ is the canonical $1$-form on $T^*L$. The unit cosphere bundle $\Pi:(S^*L,\lambda_{\mathrm{can}})\to L$ given by 
\[S^*L:=\bigl \{v\in T^*L: \|v\|_g=1\bigr\}\]
is a contact type boundary of the unit symplectic codisk $(D^*L,  d\lambda_{\mathrm{can}})$. Moreover, $(D^*L,  d\lambda_{\mathrm{can}})$ is a Liouville domain whose symplectic completion is isomorphic to $(T^*L, d\lambda_{\mathrm{can}})$.

 For some $\delta_0>0$, after scaling $g$ by a constant, Weinstein's neighborhood theorem ensures that we can identify a neighborhood of $L$ in $\operatorname{int} (E^{\mathrm{ext}}_{\mathrm{FR}})$  with  $D_{1+\delta_0}^*L$ via a symplectic embedding 
\[\Phi: (D_{1+\delta_0}^*L,d\lambda_{\mathrm{can}})\to (\operatorname{int} (E^{\mathrm{ext}}_{\mathrm{FR}}), \omega_{\mathrm{std}})\]
such that
\[\Phi|_{L}=\operatorname{Id}_L.\]

By assumption, the Lagrangian torus $L$ intersects the interior of $X_{\Omega}^4$ (hence the interior of $E^4(a,b)$ as well). Fix a point $p \in L\cap \operatorname{int} (E^4(a,b))$. Let  $D^{2n}(\delta)$ be the closed disk of radius $\delta>0$ centered at the origin in $\mathbb{R}^{2n}$. For sufficiently small $\delta>0$, choose a symplectic embedding $\phi:D^{2n}(\delta)\to \operatorname{int} (E^4(a,b))$ such that 
\[\phi(0)=p,\ \phi^{-1}(L)=D^{2n}(\delta)\cap \mathbb{R}^n.\]
This geometric setup is illustrated in Figure \ref{geometric-setup1} below.

\begin{figure}[h]
	\centering
	\includegraphics[width=6cm]{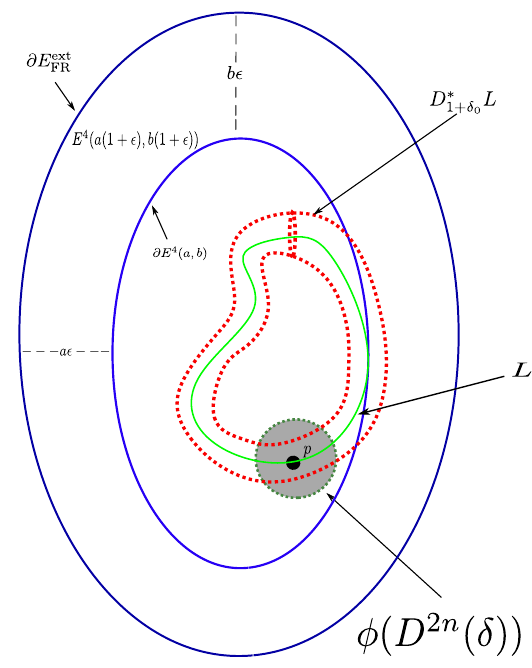}
	\caption{Geometric Setup}\label{geometric-setup1}
\end{figure}

\subsection{Perturbing a contact type hypersurface}\label{perturbsurface}
The contact type  hypersurface 
\[(\Phi(S^*L),\Phi_*\lambda_{\mathrm{can}}|_{S^*L}) \subset(E^{\mathrm{ext}}_{\mathrm{FR}},\omega_{\mathrm{std}})\]
has degenerate Reeb dynamics. Closed Reeb orbits come in $S^1$-families. In general, as explained in Cieliebak--Frauenfelder--Paternain \cite{Cieliebak:2009aa}, a stable hypersurface in a symplectic manifold can not be perturbed to a nondegenerate one; there is a possibility of losing the stability of the hypersurface under a generic $C^\infty$-perturbation. However, the space of closed hypersurfaces of contact type in a symplectic manifold forms an open subset of the space closed hypersurfaces in the $C^{\infty}$-topology. Therefore, we can perturb $(\Phi(S^*L),\Phi_*\lambda_{\mathrm{can}}|_{S^*L})$ into a nondegenerate one. Below, we describe one perturbation that is suitable for our purpose.

Recall that the unit cosphere bundle $(S^*L,\lambda_{\mathrm{can}}|_{S^*L})$ for the flat metric on $L$ is of Morse--Bott type, closed Reeb orbits comes in $S^1$-families. For $T>0$, an arclength parametrized closed curve $c:\mathbb{R}/T\mathbb{Z}\to L$ is a geodesic of length $T$ with respect to the flat metric $g$ if and only its lift to the unit cosphere bundle $\gamma_c:=\hat{g}\circ c: \mathbb{R}/T\mathbb{Z}\to (S^*L,\lambda_{\mathrm{can}}|_{S^*L}) $ defined by 
\[\hat{g}\circ c(t):=(c(t),g_{c(t)}(c'(t),.))\]
is a closed Reeb orbit of period (action) $T$; see {\cite[Section 2.3]{Frauenfelder_2018}} for details.

Each Morse--Bott $ S^1$-family of closed Reeb orbits has a unique Reeb orbit, that under the projection $\Pi:(S^*L,\lambda_{\mathrm{can}})\to L$, projects to a closed geodesic on $L$ that passes through the point $p\in L$,  chosen in the intersection of $L$ and the interior of $X_{\Omega}^4$.

Let $\phi^t: S^*L\to S^*L$ be the Reeb flow for the contact form $\lambda_{\mathrm{can}}|_{S^*L}$. For $T>0$, define
\[N_T:=\{x\in S^*L: \phi^T(x)=x \}.\]
Then $N_T$ is a two-dimensional submanifold of $S^*L$. The Reeb flow $\phi^t$ induces an $S^1$-action on $N_T$; the quotient  $S_T:=N_T/S^1$  is a smooth one-dimensional manifold\footnote{In general, the quotient  $S_T:=N_T/S^1$ is an orbifold with singularity groups $\mathbb{Z}_k$ corresponding to closed Reeb orbits with period $T/k$, covered $k$ times.}. We fix $T=C_{k}^{\mathrm{GH}}(E^{\mathrm{ext}})$ and define
\[N_{\leq T}:=\bigcup_{T'\leq T} N_{T'}.\]
Then $S_{\leq T}:=N_{\leq T}/S^1$ is a smooth manifold consisting of finitely many Morse--Bott $ S^1$-families of closed Reeb orbits.  Choose a smooth nonnegative function $\bar{f}$ supported in a small tabular neighborhood of $N_{\leq T}$ that descends to a Morse function $f: S_{\leq T}\to \mathbb{R}$ having two critical points on each Morse--Bott circle with maximum at the closed Reeb orbit whose projection to $L$ is the geodesic passing through the point $p\in L$. 

Perturb the contact form $\lambda_{\mathrm{can}}|_{S^*L}$ to $\bar{\lambda}=(1+\alpha \bar{f}) \lambda_{\mathrm{can}}|_{S^*L}$, where $\alpha$ is positive real number. If $\alpha$ is sufficiently small, then by {\cite[Lemma 2.3]{Bourgeois2002}} all closed Reeb orbits of the perturbed form of action at most $C_{k}^{\mathrm{GH}}(E^{\mathrm{ext}})$ are nondegenerate and correspond to critical points of the Morse function $f$. Since $f$ has only two critical points on each Morse--Bott family in $S_{\leq T}$, each Morse--Bott circle is replaced by two closed Reeb orbits. 

Let $\Gamma$ be a family of orbits in $S_{\leq T}$ and let $\gamma$ be a  Reeb orbit obtained from perturbing $\Gamma$ as above. By {\cite[Lemma 2.4]{Bourgeois2002}}, the Conley--Zehnder index of $\gamma$ is given by 
\begin{equation}\label{ind}
	\operatorname{CZ}^\tau(\gamma)=\operatorname{CZ}^\tau(\Gamma)+\operatorname{index}_{\gamma}(f),
\end{equation} 
where $\operatorname{CZ}^\tau(\Gamma)$ is the Conley--Zehnder index of the family $\Gamma$ and $\operatorname{index}_{\gamma}(f)$ is the Morse index of $f$ at the critical point $\gamma$. For the flat metric, $\operatorname{CZ}^\tau(\Gamma)=0$ for the trivialization $\tau$ satisfying the second bullet point of Theorem \ref{special-trivalization}. In this trivialization, we have 
\begin{equation}\label{ind-f}
	\operatorname{CZ}^\tau(\gamma)=\operatorname{index}_{\gamma}(f).
\end{equation} 
In particular, each Morse--Bott circle of orbits splits into two closed Reeb orbits, one hyperbolic and one elliptic. The elliptic Reeb orbit corresponds to a closed geodesic on $L$, for the flat metric $g_0$, passing through the point $p$ according to  Equation (\ref{ind-f}). Moreover, we have 
\begin{equation}\label{ind01}
	\operatorname{CZ}_\tau(\gamma)\in \{0,1\}
\end{equation}
for every closed Reeb orbit $\gamma$ of action at most $C_{k}^{\mathrm{GH}}(E^{\mathrm{ext}})$.

The cotangent bundle $(T^*L, d\lambda_{\mathrm{can}})$ is canonically isomorphic to the unit codisk bundle $D^*L$  of $(L,g)$ with the positive cylindrical end $([0,\infty)\times S^*L,d(e^r\lambda_{\mathrm{can}}|_{S^*L}) )$ attached along its boundary. Define 
\[X:=D^*L\cup \big\{(r,x)\in [0,\infty)\times S^*L:r\leq \log(1+\alpha \bar{f}(x))\big\}.\]
 Under the identification of $S^*L$ with $\partial X$ given by 
 \[x\mapsto (\log(1+\alpha \bar{f}(x)),x),\]
we have $(1+\alpha \bar{f})\lambda_{\mathrm{can}}|_{TS^*}=\lambda_{\mathrm{can}}|_{\partial X}$. By construction, the new contact type hypersurface
\[(\partial X,\lambda_{\mathrm{can}}|_{\partial X}) \subset(T^*L, d\lambda_{\mathrm{can}})\]
has the property that every closed Reeb orbit  $\gamma$ of action less or equal to $C_{k}^{\mathrm{GH}}(E^{\mathrm{ext}})$ is non-contractible, non-degenerate, and satisfies 
\[\operatorname{CZ}_\tau(\gamma)\in \{0,1\}.\]
Moreover, every closed Reeb orbit of action at most $C_{k}^{\mathrm{GH}}(E^{\mathrm{ext}})$ projects to a closed geodesic for the flat metric on $L$. A closed Reeb orbit with $\operatorname{CZ}_\tau(\gamma)=1$ projects to the unique geodesic, for the flat metric, in its homotopy class that passes through the a priori chosen point $p\in L$. For small $\alpha$, $\partial X$ is a $C^1$-small perturbation of $S^*L$.

By choosing $\alpha$ small enough, we can ensure that the symplectic embedding 
 \[\Phi: (D_{1+\delta_0}^*L,d\lambda_{\mathrm{can}})\to (\operatorname{int} (E^{\mathrm{ext}}_{\mathrm{FR}}), \omega_{\mathrm{std}})\] 
restricts to a symplectic embedding 

\[\Phi: (X,d\lambda_{\mathrm{can}})\to (\operatorname{int} (E^{\mathrm{ext}}_{\mathrm{FR}}), \omega_{\mathrm{std}}) \text{ with } \Phi|_{L}=\operatorname{Id}_L.\]
From now on, we will denote $\Phi( X)$ by $X$ and $ \Phi_*\lambda_{\mathrm{can}}$ by $\lambda_{\mathrm{can}}$.

\subsection{Stretching the neck and more}
Consider the contact type  hypersurface 
\[(\partial X,\lambda_{\mathrm{can}}|_{\partial X}) \subset(E^\mathrm{ext}_{\mathrm{FR}},\omega_{\mathrm{std}}).\]
Stretching the neck along this hypersurface, we get the split cobordism 
$\bigsqcup_{N=0}^{3} (\widehat{X}_N,\widehat \Omega_N)$, where 
\begin{equation*}
(\widehat{X}_N,\widehat \Omega_N):=
\begin{cases}
		\bigl(\mathbb{R}\times \partial E^\mathrm{ext}_{\mathrm{FR}}, d(e^r\lambda_{\mathrm{std}})\bigr) & \text{for } N=3,\\
		\bigl(\widehat{E^\mathrm{ext}_{\mathrm{FR}}\setminus X}, \widehat{\omega}_{\mathrm{std}}\bigr)\cong \bigl(\widehat{E^\mathrm{ext}_{\mathrm{FR}}}\setminus L ,\widehat{\omega}_{\mathrm{std}}\bigr) & \text{for } N=2,\\
		
		\bigl(\mathbb{R}\times \partial X,d(e^r \lambda_{\mathrm{can}})\bigr)& \text{for } N=1,\\
		\bigl(T^*L, d\lambda_{\mathrm{can}}\bigr)\cong \bigl(\widehat{X}, \widehat{d\lambda}_{\mathrm{can}})\bigr) & \text{for } N=0.\\
\end{cases}
\end{equation*}
%
%

Choose SFT-admissible almost complex structures $J_{\mathrm{top}}$, $J_{\mathrm{bot}}$, $J_{\mathrm{int}}$, and $J^\mathrm{ext}_{\mathrm{top}}$ on $\bigl(\widehat{E^{\mathrm{ext}}_{\mathrm{FR}}\setminus X},\widehat{\omega}_{\mathrm{std}}\bigr)$, $(T^*L, d\lambda_{\mathrm{can}})$, $\bigl(\mathbb{R}\times \partial X, d(e^r\lambda_{\mathrm{can}}|_{\partial X})\bigr)$, and $\bigl(\mathbb{R}\times \partial E^\mathrm{ext}_{\mathrm{FR}}, d(e^r\lambda_{\mathrm{std}}) \bigr)$, respectively. Let $J_i$ be a family of SFT-admissible almost complex structures on the symplectic completion $\widehat{E^\mathrm{ext}_{\mathrm{FR}}}$ that stretches the neck along  $\partial X$. As $i\to \infty$, $(\widehat{E^\mathrm{ext}_{\mathrm{FR}}}, J_i)$ splits into the cylindrical almost complex manifolds $\bigl(\widehat{E^{\mathrm{ext}}_{\mathrm{FR}}\setminus X},J_{\mathrm{top}}\bigr)$, $(T^*L,J_{\mathrm{bot}})$, $(\mathbb{R}\times \partial X,J_{\mathrm{int}})$, and $\bigl(\mathbb{R}\times \partial E^\mathrm{ext}_{\mathrm{FR}}, J^\mathrm{ext}_{\mathrm{top}} \bigr).$

Pick a point $y\in L$ and choose $J_i$ to be constantly equal to some integrable almost complex structure near $y$ for every $i$. Fix a local complex divisor $D$ containing $y$. By Theorem \ref{puctured-dsik}, there is a sequence of rigid $J_i$-holomorphic planes in $\widehat{E^{\mathrm{ext}}_{\mathrm{FR}}}$ asymptotic to an elliptic simple Reeb orbit $e_k$ on $\partial E^{\mathrm{ext}}_{\mathrm{FR}}$. Moreover, each $u_i$ carries the local tangency constraint $\ll \mathcal{T}_D^{k-1}y\gg$. By the Gromov--Hofer compactness theorem, as $i \to \infty$, the sequence $u_i$ degenerates to a holomorphic building $\mathbb{H}=(\textbf{u}^0, \textbf{u}^1,\dots, \textbf{u}^{N_+})$ in $\bigsqcup_{N=0}^{N=N_+} (\widehat{X}_N,\widehat \Omega_N,\tilde{ \Omega}_N,J_N)$, for some integer $N_+\geq 0$, where 
\begin{equation*}
(\widehat{X}_N,\widehat \Omega_N,\tilde{ \Omega}_N,J_N):=
\begin{cases}
		\bigl(\mathbb{R}\times \partial E^\mathrm{ext}_{\mathrm{FR}}, d(e^r\lambda_{\mathrm{std}}),d\lambda_{\mathrm{std}}, J^\mathrm{ext}_{\mathrm{top}}\bigr) & \text{for } N\in \{N_+-l+1,\dots,N_+\},\\
		(\widehat{E^\mathrm{ext}_{\mathrm{FR}}\setminus X}, \widehat{\omega}_{\mathrm{std}},\tilde{\omega}_{\mathrm{std}}, J_{\mathrm{top}}) & \text{for } N=N_+-l,\\
		
		(\mathbb{R}\times \partial X,d(e^r \lambda_{\mathrm{can}}),d\lambda_{\mathrm{can}}, J_{\mathrm{int}})& \text{for } N\in \{1,\dots,N_+-l-1\},\\
		(T^*L, d\lambda_{\mathrm{can}},d\lambda_{\mathrm{can}}^{\mathrm{up}}|_{\partial X}, J_{\mathrm{bot}}) & \text{for } N=0.\\
\end{cases}
\end{equation*}
Here, we recall  that $d\lambda_{\mathrm{can}}$, $\omega^c_{\mathrm{std}}$, and $\tilde{\omega}^{\mathrm{up}}_{\mathrm{std}}$ are the $2$-forms defined as:
\begin{equation*}
	d\lambda_{\mathrm{can}}^{\mathrm{up}}|_{\partial X}:=
	\begin{cases}
		
		d\lambda_{\mathrm{can}}|_{\partial X} & \text{on } [0,\infty)\times  \partial X,\\
		d\lambda_{\mathrm{can}} & \text{on } X,
		
	\end{cases}
\end{equation*}
\begin{equation*}
	\widehat{\omega}_{\mathrm{std}}:=
	\begin{cases}
		
		d(e^r\lambda_{\mathrm{std}} )& \text{on } [0,\infty)\times \partial E^\mathrm{ext}_{\mathrm{FR}},\\
		\omega_{\mathrm{std}} & \text{on } E^\mathrm{ext}_{\mathrm{FR}}\setminus X,\\
		d(e^r\lambda_{\mathrm{can}}|_{\partial X}) & \text{on } (-\infty,0]\times  \partial X,

	\end{cases}
\end{equation*}
and
\begin{equation*}
	\tilde{\omega}_{\mathrm{std}}:=
	\begin{cases}
		d\lambda_{\mathrm{std}}& \text{on } [0,\infty)\times \partial E^\mathrm{ext}_{\mathrm{FR}},\\
		\omega_{\mathrm{std}} & \text{on } E^\mathrm{ext}_{\mathrm{FR}}\setminus X,\\
		d\lambda_{\mathrm{can}}|_{\partial X} & \text{on } (-\infty ,0]\times  \partial X.
	\end{cases}
\end{equation*}

Since the building $\mathbb{H}$ is the limit of a sequence of genus zero asymptotically cylindrical rigid curves, it has genus zero. Also, it has index zero, i.e., the sum of indices of its curve components is equal to zero.

For some positive integer $k_N$, we write $\textbf{u}^N=(u^N_1, \dots, u^N_{k_N})$, where $u_i^N$ are the smooth connected punctured curves 
in $(\widehat{X}_N,\widehat \Omega_N,\tilde{ \Omega}_N,J_N)$ that constitute the level $\textbf{u}^N$ of the building. We have the following bound on the energy of the building:
\begin{equation}\label{engery2}
	E(\mathbb{H}):=\sum_{N=0}^{N_+}\sum_{i=1}^{k_N}\int_{u_i^N}\tilde{ \Omega}_N\leq C_{k}^{\mathrm{GH}}(E^{\mathrm{ext}})=k(1+\epsilon) \operatorname{diagonal}(X_{\Omega}^4).
\end{equation}
This, in particular, implies that each closed Reeb orbit appearing in the holomorphic building has an action at most  $C_{k}^{\mathrm{GH}}(E^{\mathrm{ext}})$ and is, therefore,  non-contractible, non-degenerate, and projects to a closed geodesic of Morse index either $0$ or $1$.
\begin{figure}[h]
	\centering
	\includegraphics[width=8cm]{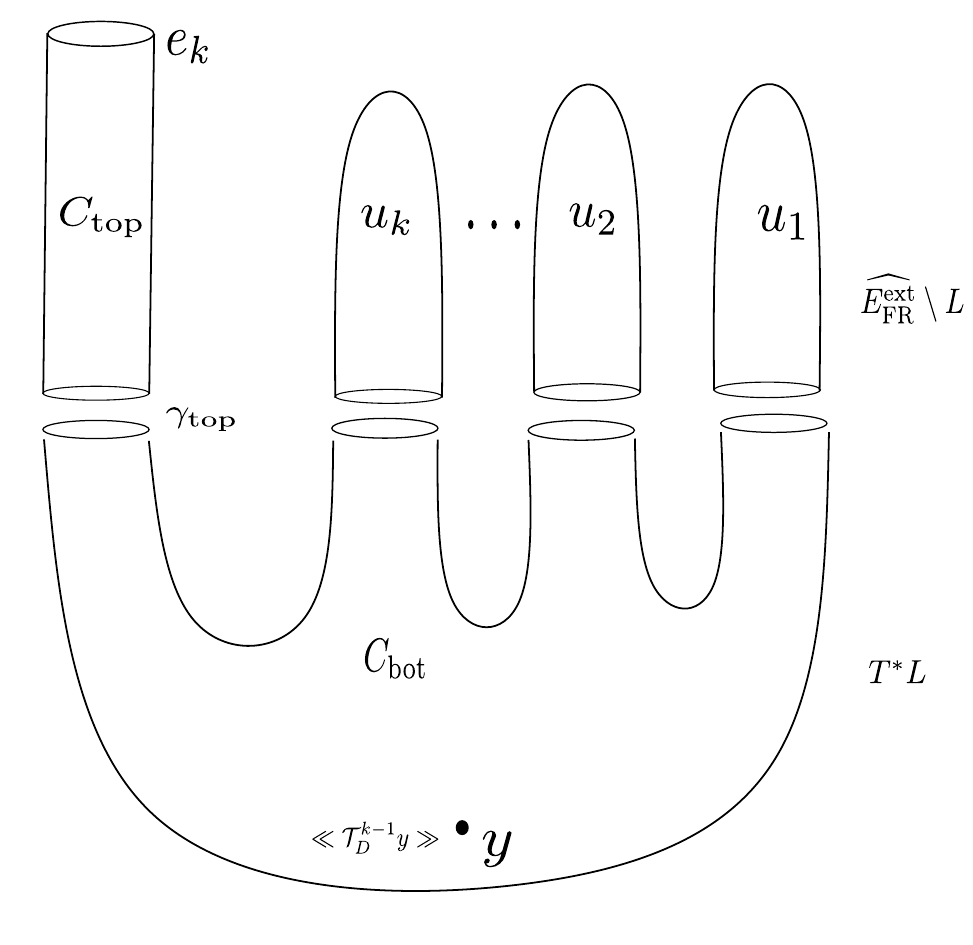}
	\caption{The holomorphic building $\mathbb{H}$.}\label{holo2}
\end{figure}
\begin{claim}\label{claim2}
	If $\epsilon>0$ is sufficiently small,  then:
	\begin{itemize}
		\item  The symplectization levels $\textbf{u}^1, \dots, \textbf{u}^{N_+-l-1}, \textbf{u}^{N_+-l+1}, \dots,\textbf{u}^{N_+}$ are all empty;
		\item The bottom level $\textbf{u}^0$ that sits in $(T^*L, d\lambda_{\mathrm{can}}, J_{\mathrm{bot}})$ consists of a single smooth connected $J_{\mathrm{bot}}$-holomorphic asymptotically cylindrical sphere $C_{\mathrm{bot}}$ with exactly $k+1$ positive punctures. It inherits the tangency constraint  $\ll \mathcal{T}_D^{k-1}y\gg$ at $y$. Moreover, it has zero Fredholm index for generic $J_{\mathrm{bot}}$;
		\item  The  level $\textbf{u}^{N_+-l}$ that sits in $(\widehat{E^\mathrm{ext}_{\mathrm{FR}}\setminus X}, \widehat{\omega}_{\mathrm{std}},\tilde{\omega}_{\mathrm{std}}, J_{\mathrm{top}})$ consists of a single smooth connected asymptotically cylindrical $J_{\mathrm{top}}$-holomorphic  cylinder $C_{\mathrm{top}}$, and $k$ asymptotically cylindrical $J_{\mathrm{top}}$-holomorphic planes $u_1,u_2,\dots, u_k$ with negative ends asymptotic to Reeb orbits on $\partial X$.  The Fredholm indices of $u_1,u_2,\dots, u_k,$ and $C_{\mathrm{top}}$ are zero for generic $J_{\mathrm{top}}$;
		\item The $J_{\mathrm{top}}$-holomorphic cylinder $C_{\mathrm{top}}$ has positive end on $e_k$ and negative end a Reeb orbit, denoted by $\gamma_{\mathrm{top}}$, on $\partial X$. The orbit $\gamma_{\mathrm{top}}$ is an elliptic Reeb \footnote{ A closed Reeb orbit $\gamma$ is called elliptic if there exists a trivialization  $\tau$ such that $\operatorname{CZ}^\tau(\gamma)$ has odd parity. This notion is well-defined because the parity of $\operatorname{CZ}^\tau(\gamma)$ does not depend on the trivialization  $\tau$.} orbit and therefore projects to a closed geodesic on $L$ passing through $p\in L$, a point chosen in the intersection of $L$ and the interior of the domain $X^4_\Omega$. Moreover, $C_{\mathrm{top}}$ is rigid for generic  $J_{\mathrm{top}}$. For an illustration of the building $\mathbb{H}$, see Figure \ref{holo2}. 
	\end{itemize}
\end{claim}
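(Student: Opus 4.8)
The plan is to analyze the holomorphic building $\mathbb{H}$ arising from neck-stretching exactly as in Section \ref{Analyze}, but now using the punctured disk count from Theorem \ref{puctured-dsik} (via the Gutt--Hutchings capacity of the rounded extended ellipsoid) in place of the tangency-constrained sphere count in $\mathbb{CP}^n$. The skeleton is: first establish a uniform energy bound $E(\mathbb{H}) \leq C^{\mathrm{GH}}_k(E^{\mathrm{ext}}_{\mathrm{FR}}) \approx k(1+\epsilon)\operatorname{diagonal}(X^4_\Omega)$ from Theorem \ref{puctured-dsik} (third bullet) and Equation (\ref{engery2}); then rule out nodes between non-constant components by the same Stokes/positivity-of-intersection argument (the intersection with $\partial E^{\mathrm{ext}}_{\mathrm{FR}}$ plays the role the hypersurface at infinity played in the ball case, and $\omega_{\mathrm{std}}$ is exact on $E^{\mathrm{ext}}_{\mathrm{FR}}\setminus X$); then count the positive punctures of the bottom-level curve $C_{\mathrm{bot}}$ carrying $\ll \mathcal{T}_D^{k-1}y\gg$.

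For the puncture count on $C_{\mathrm{bot}}$, I would repeat Lemma \ref{countendsincotangentbundle}'s index computation: with $l$ positive ends on Reeb orbits $\gamma_1,\dots,\gamma_l$ projecting to geodesics $c_1,\dots,c_l$ on the flat torus $L$, Theorem \ref{special-trivalization} and the null-homology condition give $\sum \operatorname{CZ}^\tau(\gamma_i) = \sum \mu(c_i)$, and by the perturbation of Section \ref{perturbsurface} each $\operatorname{CZ}^\tau(\gamma_i) \in \{0,1\}$ so $\operatorname{CZ}^\tau(\gamma_i) \leq n-1 = 1$ (here $n=2$). The index formula $\operatorname{ind}(C_{\mathrm{bot}}) = (n-3)(2-l) + \sum_i \operatorname{CZ}^\tau(\gamma_i) - 2n + 2 - 2(k-1)$ together with $\operatorname{ind}(C_{\mathrm{bot}}) \geq 0$ (generic Hamiltonian perturbation around $y$) forces $l \geq k+1$; equality pins down $\operatorname{ind}(C_{\mathrm{bot}})=0$ and each $\gamma_i$ elliptic, hence projecting through the marked point $p$. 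As in Lemma \ref{count-disks}, since the building is a genus-zero tree and the ends of $C_{\mathrm{bot}}$ are non-contractible, at least $k+1$ planar/cylindrical components live in the top level $\widehat{E^{\mathrm{ext}}_{\mathrm{FR}}\setminus X} \cong \widehat{E^{\mathrm{ext}}_{\mathrm{FR}}}\setminus L$; compactify each to a surface in $E^{\mathrm{ext}}_{\mathrm{FR}}$ with boundary on $L$.

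The energy-balance step is where extremality of $L$ enters, mirroring Lemmas \ref{excontradiction}--\ref{buildinglook} and \ref{estemate-engery}. By positivity of intersection with the $J^{\mathrm{ext}}_{\mathrm{top}}$-holomorphic hypersurface $\partial E^{\mathrm{ext}}_{\mathrm{FR}}$ (the building limits from planes with exactly one positive end on $e_k$), at most one top-level component $C_{\mathrm{top}}$ escapes to the ellipsoid boundary; the remaining $\geq k$ components $u_1,\dots,u_k$ are disks in $X^4_\Omega$ (after the deformation argument: a disk in $E^{\mathrm{ext}}_{\mathrm{FR}}$ is relatively homotopic to one in $X^4_\Omega$) and extremality gives each $\int u_i^*\omega_{\mathrm{std}} \geq \operatorname{diagonal}(X^4_\Omega)$. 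Combined with $E(\mathbb{H}) \leq k(1+\epsilon)\operatorname{diagonal}(X^4_\Omega)$ this forces exactly $k$ such disks, each of area exactly $\operatorname{diagonal}(X^4_\Omega)$, all somewhere injective with simple asymptotic orbits, no extra top-level components, no symplectization levels (stability plus the maximum principle, as in Lemma \ref{buildinglook}), leaving $\int C_{\mathrm{top}}^*\tilde{\omega}_{\mathrm{std}} = k\epsilon\operatorname{diagonal}(X^4_\Omega)$ and $\operatorname{ind}(u_i)=\operatorname{ind}(C_{\mathrm{top}})=0$; since $C_{\mathrm{top}}$ has a positive end on the simple orbit $e_k$ and index zero it is a rigid cylinder, and its negative end $\gamma_{\mathrm{top}}$ has $\operatorname{CZ}^\tau = 1$ (odd parity forced by the index count), hence is elliptic and projects through $p$.

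The main obstacle will be the bookkeeping at the ellipsoid boundary: unlike the single hypersurface at infinity in $\mathbb{CP}^n$, here $\partial E^{\mathrm{ext}}_{\mathrm{FR}}$ is a genuine contact boundary with its own (perturbed, via the rounding) Reeb dynamics and symplectization levels $\textbf{u}^{N_+-l+1},\dots,\textbf{u}^{N_+}$, so I must show that the portion of $\mathbb{H}$ living above level $N_+-l$ is a single trivial-cylinder-plus-$C_{\mathrm{top}}$ configuration rather than something with extra branching that could create additional disks with boundary on $L$ below. This is handled by the same argument as in Theorem \ref{counting}: any branching above $C_{\mathrm{top}}$ would, tracing down the genus-zero tree, produce an additional top-level disk with boundary on $L$ of area $\geq \operatorname{diagonal}(X^4_\Omega)$, exhausting the available energy $k\epsilon\operatorname{diagonal}(X^4_\Omega) < \operatorname{diagonal}(X^4_\Omega)$ for $\epsilon < 1/k$. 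A secondary subtlety is ensuring the rounding $E^{\mathrm{ext}}_{\mathrm{FR}}$ and the Weinstein-neighborhood perturbation of $\partial X$ can be chosen compatibly small so that all the action/index estimates of Sections \ref{roudningfullyconvex} and \ref{perturbsurface} hold simultaneously up to the threshold $C^{\mathrm{GH}}_k(E^{\mathrm{ext}})$ — this is routine but must be stated carefully.
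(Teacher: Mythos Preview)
Your overall strategy matches the paper's proof (Section~\ref{proofofclaim}) closely: energy bound, no-nodes lemma, puncture count on $C_{\mathrm{bot}}$ via the index formula, tree structure producing planes in the cobordism level, extremality forcing exactly $k$ such planes, and the index balance giving $\operatorname{CZ}^\tau(\gamma_{\mathrm{top}})=1$. Two points need correction.

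\textbf{Misuse of positivity of intersection.} The hypersurface $\partial E^{\mathrm{ext}}_{\mathrm{FR}}$ is a real contact boundary, not a $J$-holomorphic divisor, so ``positivity of intersection with $\partial E^{\mathrm{ext}}_{\mathrm{FR}}$'' is meaningless here. The reason at most one top-level component can reach the ellipsoid boundary is purely structural: the building has exactly one unpaired positive puncture (the one on the simple orbit $e_k$), so only one component in the $\widehat{E^{\mathrm{ext}}_{\mathrm{FR}}\setminus X}$ level can carry a positive end; all others are, by the maximum principle, contained in the compact part and have only negative ends on $\partial X$. The paper's no-nodes lemma likewise uses only this: a node splits the building into a plane and a sphere, and the sphere represents $0\in H_2(E^{\mathrm{ext}}_{\mathrm{FR}};\mathbb{Z})$, contradicting Theorem~\ref{nonconstantholclass}.

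\textbf{Incomplete treatment of the ellipsoid-boundary levels.} Your proposed mechanism---``any branching above $C_{\mathrm{top}}$ traces down to an extra disk with boundary on $L$''---does not cover all cases. A branch in the symplectization $\mathbb{R}\times\partial E^{\mathrm{ext}}_{\mathrm{FR}}$ produces a planar subtree whose leaves are planes in $\widehat{E^{\mathrm{ext}}_{\mathrm{FR}}\setminus X}$, but these leaves can be of two kinds: planes with a single \emph{negative} end on $\partial X$ (your ``extra disk,'' ruled out by extremality as you say), or \emph{caps}---planes with a single \emph{positive} end on $\partial E^{\mathrm{ext}}_{\mathrm{FR}}$ and no boundary on $L$ at all. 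Your energy argument sees no such cap. The paper (Lemma~\ref{cy}) handles this with two additional ingredients: a cap has $\tilde{\omega}^{\mathrm{up}}_{\mathrm{std}}$-area at least the minimal Reeb action $S_+>0$ on $\partial E(a,b)$, which exceeds the residual energy $k\epsilon\operatorname{diagonal}(X^4_\Omega)$ once $\epsilon<S_+/(k\operatorname{diagonal}(X^4_\Omega))$; and a separate $d\lambda_{\mathrm{std}}$-energy lower bound for non-trivial curves in $\mathbb{R}\times\partial E^{\mathrm{ext}}_{\mathrm{FR}}$ with action-bounded asymptotics forces those symplectization levels to be trivial cylinders for $\epsilon$ small. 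You should incorporate both, and note that the required smallness of $\epsilon$ now also depends on $S_+$ (hence on the fixed $a,b$), not only on $1/k$.
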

We suppose Claim \ref{claim2} holds and continue with the proof Theorem \ref{extremal-lag-toric}. A proof of Claim \ref{claim2} occupies Subsection \ref{proofofclaim}.

\subsection{Estimating the symplectic area}
Suppose Claim \ref{claim2} holds. Consider the moduli space
\begin{equation*}
	\mathcal{M}^{J_{\mathrm{top}}}_{E^\mathrm{ext}_{\mathrm{FR}}\setminus X,C_{\mathrm{top}} }(e_k,\gamma_{\mathrm{top}}):=\left\{
	\begin{array}{l}
		u:(\mathbb{R}\times \mathbb{R}/\mathbb{Z},i) \to (\widehat{E^\mathrm{ext}_{\mathrm{FR}}\setminus X},J_{\mathrm{top}}),\\
		du\circ i=J_{\mathrm{top}}\circ du  ,\\
		u \text{ is asymptotic to  $e_k$ at $\infty$,} \\
		u \text{ is asymptotic to $\gamma_{\mathrm{top}}$ at $-\infty$,} \\
		\text{and }[u]=[C_{\mathrm{top}}]\in H_2(E^\mathrm{ext}_{\mathrm{FR}}\setminus X, e_k\cup \gamma_{\mathrm{top}}, \mathbb{Z}).
	\end{array}
	\right\}\bigg/\operatorname{Aut}(\mathbb{R}\times \mathbb{R}/\mathbb{Z}).
\end{equation*}
By Remark \ref{somewhere-injective}, the Reeb orbit $e_k$ is simple, so this moduli space consists of somewhere injective curves. By the third bullet point in Claim \ref{claim2}, it is a zero-dimensional manifold for generic $J_{\mathrm{top}}$. 
\begin{lemma}\label{contra}
For every $u\in \mathcal{M}^{J_{\mathrm{top}}}_{E^\mathrm{ext}_{\mathrm{FR}}\setminus X,C_{\mathrm{top}} }(e_k,\gamma_{\mathrm{top}})$, we have 
\[0<\int u ^*\tilde{\omega}^{\mathrm{up}}_{\mathrm{std}}\leq k\epsilon \operatorname{diagonal}(X_{\Omega}^4),\]
where $\tilde{\omega}^{\mathrm{up}}_{\mathrm{std}}$ is the $2$-form defined by
\begin{equation*}
	\tilde{\omega}^{\mathrm{up}}_{\mathrm{std}}:=
	\begin{cases}
		d\lambda_{\mathrm{std}}& \text{on } [0,\infty)\times \partial E^\mathrm{ext}_{\mathrm{FR}},\\
		\omega_{\mathrm{std}} & \text{on } E^\mathrm{ext}_{\mathrm{FR}}\setminus L.
	\end{cases}
\end{equation*}
\end{lemma}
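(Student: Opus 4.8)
\textbf{Proof proposal for Lemma \ref{contra}.}

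The plan is to exploit the Stokes-type identity that relates the $\tilde{\omega}^{\mathrm{up}}_{\mathrm{std}}$-energy of a cylinder in $\widehat{E^\mathrm{ext}_{\mathrm{FR}}\setminus X}$ to the difference of actions of its asymptotic Reeb orbits, and then to bound those actions. Concretely, for $u\in \mathcal{M}^{J_{\mathrm{top}}}_{E^\mathrm{ext}_{\mathrm{FR}}\setminus X,C_{\mathrm{top}} }(e_k,\gamma_{\mathrm{top}})$, the form $\tilde{\omega}^{\mathrm{up}}_{\mathrm{std}}$ restricts to $d\lambda_{\mathrm{std}}$ on the positive cylindrical end over $\partial E^{\mathrm{ext}}_{\mathrm{FR}}$ and to $\omega_{\mathrm{std}}=d\lambda_{\mathrm{std}}$ on $E^{\mathrm{ext}}_{\mathrm{FR}}\setminus L$; on the negative cylindrical end over $\partial X$ we instead have $d\lambda_{\mathrm{can}}$. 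Since $\lambda_{\mathrm{std}}$ is a global primitive for $\tilde{\omega}^{\mathrm{up}}_{\mathrm{std}}$ away from the negative end (and $\lambda_{\mathrm{can}}$ is a primitive on the negative end), Stokes' theorem applied to $u$ on the compact part, together with the asymptotic behaviour at the two punctures, gives
\[
\int u^*\tilde{\omega}^{\mathrm{up}}_{\mathrm{std}}=\operatorname{\mathcal{A}}(e_k)-\operatorname{\mathcal{A}}_{X}(\gamma_{\mathrm{top}}),
\]
where $\operatorname{\mathcal{A}}(e_k)=\int_{e_k}\lambda_{\mathrm{std}}$ is the contact action of $e_k$ on $\partial E^{\mathrm{ext}}_{\mathrm{FR}}$ and $\operatorname{\mathcal{A}}_{X}(\gamma_{\mathrm{top}})=\int_{\gamma_{\mathrm{top}}}\lambda_{\mathrm{can}}$ is the action of $\gamma_{\mathrm{top}}$ on $\partial X$. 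Positivity of the left-hand side is immediate from $J_{\mathrm{top}}$-compatibility and non-constancy of $u$ (the curve is non-constant because it has two distinct asymptotic ends), which also shows $\operatorname{\mathcal{A}}(e_k)>\operatorname{\mathcal{A}}_{X}(\gamma_{\mathrm{top}})$.

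For the upper bound I would argue as follows. By Theorem \ref{puctured-dsik} and Remark \ref{simpleorbit}, the orbit $e_k$ is chosen so that $\operatorname{\mathcal{A}}(e_k)\leq c^{\mathrm{GH}}_k(E^{\mathrm{ext}}_{\mathrm{FR}})\approx c^{\mathrm{GH}}_k(E^{\mathrm{ext}})=C^{\mathrm{GH}}_k(E^{\mathrm{ext}})=k(1+\epsilon)\operatorname{diagonal}(X^4_\Omega)$ by Equation (\ref{GH-cap}). For the lower bound on $\operatorname{\mathcal{A}}_{X}(\gamma_{\mathrm{top}})$, recall from Claim \ref{claim2} that $\gamma_{\mathrm{top}}$ is an elliptic Reeb orbit on $\partial X$, hence (by the discussion in Subsection \ref{perturbsurface}) projects to the closed geodesic in its free homotopy class on $L$ passing through $p$; its action is therefore the length of that geodesic with respect to the chosen (rescaled, slightly perturbed) flat metric $g$. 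The key point is to identify this geodesic: I claim the subtree $C_{\mathrm{top}}$ carries, after capping off $\gamma_{\mathrm{top}}$ by the planes of the bottom and intermediate levels, a disk in $E^{\mathrm{ext}}_{\mathrm{FR}}$ whose boundary lies on $L$ and represents a nontrivial relative homotopy class; combined with the $k$ planes $u_1,\dots,u_k$ and the sphere $C_{\mathrm{bot}}$, the total building caps off to the original plane asymptotic to $e_k$. Running the extremality constraint on $L$ (every nonconstant disk with boundary on $L$ has symplectic area $\geq\operatorname{diagonal}(X^4_\Omega)$) through the energy bound (\ref{engery2}) will force $\operatorname{\mathcal{A}}_{X}(\gamma_{\mathrm{top}})\geq k\operatorname{diagonal}(X^4_\Omega)$ up to a controllable error, i.e.\ the action of $\gamma_{\mathrm{top}}$ cannot be much smaller than $k\operatorname{diagonal}(X^4_\Omega)$ — otherwise there would be ``too much'' energy left over for the remaining components, contradicting extremality. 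Subtracting then yields $\int u^*\tilde{\omega}^{\mathrm{up}}_{\mathrm{std}}\leq k(1+\epsilon)\operatorname{diagonal}(X^4_\Omega)-k\operatorname{diagonal}(X^4_\Omega)=k\epsilon\operatorname{diagonal}(X^4_\Omega)$.

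The main obstacle I anticipate is making the lower bound $\operatorname{\mathcal{A}}_{X}(\gamma_{\mathrm{top}})\geq k\operatorname{diagonal}(X^4_\Omega)$ fully rigorous: it requires carefully tracking how the $k+1$ positive punctures of $C_{\mathrm{bot}}$ distribute the Reeb orbits among $u_1,\dots,u_k$ and $C_{\mathrm{top}}$, compactifying each of the $k$ planes $u_i$ (together with the pieces of $C_{\mathrm{bot}}$ attached to them) to honest disks in $X^4_\Omega$ with boundary on $L$, applying extremality to each to get area $\geq\operatorname{diagonal}(X^4_\Omega)$, and then combining with the total energy identity $E(\mathbb{H})=\operatorname{\mathcal{A}}(e_k)$ to isolate the action of $\gamma_{\mathrm{top}}$. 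One must also confirm that the relevant $\tilde{\Omega}_N$-areas in the symplectization levels vanish (they do, by Claim \ref{claim2}, since those levels are empty), so that the bookkeeping reduces to the top and bottom levels only. Modulo these combinatorial and Stokes'-theorem details — which parallel the energy computations in Lemma \ref{estemate-engery} and the proof of Theorem \ref{counting} — the estimate follows.
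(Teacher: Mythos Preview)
Your overall strategy is morally the same as the paper's, but the detour through the Stokes identity introduces an error and an unnecessary intermediate goal.

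The formula $\int u^*\tilde{\omega}^{\mathrm{up}}_{\mathrm{std}}=\operatorname{\mathcal{A}}(e_k)-\operatorname{\mathcal{A}}_{X}(\gamma_{\mathrm{top}})$ is not correct as stated. On the negative end, once you pass to the identification $\widehat{E^{\mathrm{ext}}_{\mathrm{FR}}\setminus X}\cong \widehat{E^{\mathrm{ext}}_{\mathrm{FR}}}\setminus L$, the primitive of $\tilde{\omega}^{\mathrm{up}}_{\mathrm{std}}=\omega_{\mathrm{std}}$ is $\lambda_{\mathrm{std}}$, not $\Phi_*\lambda_{\mathrm{can}}$. The two primitives differ by a closed $1$-form on the Weinstein neighborhood whose class in $H^1(L)\cong\mathbb{Z}^2$ is typically nonzero, so the boundary term you pick up is $\int_{c}\lambda_{\mathrm{std}}$ for the limiting loop $c\subset L$, not $\int_{\gamma_{\mathrm{top}}}\lambda_{\mathrm{can}}$. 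Consequently your target inequality $\operatorname{\mathcal{A}}_{X}(\gamma_{\mathrm{top}})\geq k\operatorname{diagonal}(X^4_\Omega)$ is neither what you need nor something you can extract from extremality: it is a statement about a geodesic length in the rescaled metric $g$, and nothing in the setup ties that length to $k\operatorname{diagonal}(X^4_\Omega)$.

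The paper avoids this entirely. It never isolates the action of $\gamma_{\mathrm{top}}$. Instead it compactifies the $k$ planes $u_1,\dots,u_k$ to disks with boundary on $L$, applies extremality directly to get $\sum_{i=1}^k\int u_i^*\omega_{\mathrm{std}}\geq k\operatorname{diagonal}(X^4_\Omega)$, and subtracts this from the total-energy bound $\int C_{\mathrm{top}}^*\tilde{\omega}^{\mathrm{up}}_{\mathrm{std}}+\sum_{i=1}^k\int u_i^*\tilde{\omega}^{\mathrm{up}}_{\mathrm{std}}\leq E(\mathbb{H})\leq k(1+\epsilon)\operatorname{diagonal}(X^4_\Omega)$. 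This immediately gives the upper bound for $C_{\mathrm{top}}$, and then homological invariance of $\int(\cdot)^*\tilde{\omega}^{\mathrm{up}}_{\mathrm{std}}$ on the relative class $[C_{\mathrm{top}}]$ transfers it to every $u$ in the moduli space. This is exactly what you describe in your ``main obstacle'' paragraph --- that paragraph \emph{is} the proof, and the Stokes/action layer on top of it should simply be dropped.
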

\begin{proof}
By the maximum principle, the  $J_{\mathrm{top}}$-holomorphic planes $u_1,u_2,\dots, u_k$ are contained in $\widehat{E^{\mathrm{ext}}_{\mathrm{FR}}\setminus X}\setminus (0,\infty)\times \partial E^\mathrm{ext}_{\mathrm{FR}}$. These $J_{\mathrm{top}}$-holomorphic planes $u_1,u_2,\dots, u_k$ are asymptotic to Reeb orbits which correspond to the critical point of the Morse function $f$. Therefore, these  planes seen in $\widehat{E^\mathrm{ext}_{\mathrm{FR}}}\setminus L$  can be compactified to  smooth disks in $\widehat{E^\mathrm{ext}_{\mathrm{FR}}}\setminus L$ with boundaries on $L$. Since $L$ is extremal in $X^4_\Omega$, we have 
\begin{equation}\label{est1}
	\sum_{i=1}^k\int u_i^*\tilde{\omega}^{\mathrm{up}}_{\mathrm{std}}=	\sum_{i=1}^k\int u_i^*\omega_{\mathrm{std}}\geq k \operatorname{diagonal}(X_{\Omega}^4).
\end{equation}
Also (\ref{engery2}) implies
\[0<\int C_{\mathrm{top}}
	 ^*\tilde{\omega}^{\mathrm{up}}_{\mathrm{std}}+\sum_{i=1}^k\int u_i^*\tilde{\omega}^{\mathrm{up}}_{\mathrm{std}}\leq E(\mathbb{H})\leq C_{k}^{\mathrm{GH}}(E^{\mathrm{ext}}).\]
This together with the estimate (\ref{est1}) and Equation (\ref{GH-cap}) gives
\[
	0<\int C_{\mathrm{top}}
	^*\tilde{\omega}^{\mathrm{up}}_{\mathrm{std}}\leq k\epsilon \operatorname{daigonal}(X_{\Omega}^4).
\]
The integral on the left side does not depend on the particular representative of the relative homology class $[C_{\mathrm{top}}]\in H_2(E^\mathrm{ext}_{\mathrm{FR}}\setminus X, e_k\cup \gamma_{\mathrm{top}}, \mathbb{Z})$. So for every $u\in \mathcal{M}^{J_{\mathrm{top}}}_{E^\mathrm{ext}_{\mathrm{FR}}\setminus X,C_{\mathrm{top}} }(e_k,\gamma_{\mathrm{top}})$, we have 
\[0<\int u ^*\tilde{\omega}^{\mathrm{up}}_{\mathrm{std}}\leq k\epsilon \operatorname{diagonal}(X_{\Omega}^4).\qedhere\]
\end{proof}
Following the proof of Lemma \ref{counting} we have:
\begin{lemma}\label{compact}
	Suppose $\epsilon<\frac{1}{k}$. For every family $\{J_t\}_{t\in[0,1]}$ of SFT-admissible almost complex structures, the parametric moduli space $\mathcal{M}^{J_t}_{E^\mathrm{ext}_{\mathrm{FR}}\setminus X,C_{\mathrm{top}} }(e_k,\gamma_{\mathrm{top}})$ is compact. In particular, the moduli space $\mathcal{M}^{J_{\mathrm{top}}}_{E^\mathrm{ext}_{\mathrm{FR}}\setminus X,C_{\mathrm{top}} }(e_k,\gamma_{\mathrm{top}})$ is compact.\qed
\end{lemma}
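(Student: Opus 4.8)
The plan is to follow, essentially verbatim, the compactness argument in the proof of Theorem \ref{counting}, transported to the completed exact cobordism $\widehat{E^{\mathrm{ext}}_{\mathrm{FR}}\setminus X}$. First I would reduce to a sequential statement: since $[0,1]$ is compact, it suffices to show that any sequence $(t_n,u_n)$ with $t_n\to t_\infty$ and $u_n\in\mathcal{M}^{J_{t_n}}_{E^{\mathrm{ext}}_{\mathrm{FR}}\setminus X,C_{\mathrm{top}}}(e_k,\gamma_{\mathrm{top}})$ has a subsequence converging to an element of $\mathcal{M}^{J_{t_\infty}}_{E^{\mathrm{ext}}_{\mathrm{FR}}\setminus X,C_{\mathrm{top}}}(e_k,\gamma_{\mathrm{top}})$. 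The $u_n$ are asymptotically cylindrical curves in $\widehat{E^{\mathrm{ext}}_{\mathrm{FR}}\setminus X}$ lying in the fixed relative homology class $[C_{\mathrm{top}}]$, with one positive puncture on $e_k$ and one negative puncture on $\gamma_{\mathrm{top}}$. Applying the Gromov--Hofer compactness theorem (Theorem \ref{sft}, in the version for completed cobordisms recalled in the remark after Theorem \ref{nonconstantholclass}), a subsequence converges to a holomorphic building $\mathbb{H}'$ in the split completed cobordism obtained by inserting symplectization levels over $\partial E^{\mathrm{ext}}_{\mathrm{FR}}$ above the cobordism level $\widehat{E^{\mathrm{ext}}_{\mathrm{FR}}\setminus X}$ and symplectization levels over $\partial X$ below it; $\mathbb{H}'$ has arithmetic genus zero, a single positive end on $e_k$, a single negative end on $\gamma_{\mathrm{top}}$, and represents $[C_{\mathrm{top}}]$. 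By conservation of energy together with Lemma \ref{contra}, the sum of the $\tilde\omega^{\mathrm{up}}_{\mathrm{std}}$-energies of the components of $\mathbb{H}'$ equals the $\tilde\omega^{\mathrm{up}}_{\mathrm{std}}$-area of the class $[C_{\mathrm{top}}]$, which is at most $k\epsilon\,\operatorname{diagonal}(X^4_{\Omega})$, and this is strictly less than $\operatorname{diagonal}(X^4_{\Omega})$ since $\epsilon<\tfrac1k$.

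Next I would show that this energy bound forces $\mathbb{H}'$ to consist of a single smooth component, which then automatically lies in $\mathcal{M}^{J_{t_\infty}}_{E^{\mathrm{ext}}_{\mathrm{FR}}\setminus X,C_{\mathrm{top}}}(e_k,\gamma_{\mathrm{top}})$ and is the desired limit. Arguing by contradiction, suppose $\mathbb{H}'$ is broken. Since $E^{\mathrm{ext}}_{\mathrm{FR}}$ and all the symplectizations involved are exact, there are no non-constant closed holomorphic spheres in any level, so every component of $\mathbb{H}'$ carries a puncture; and, exactly as in the first lemma of Subsection \ref{Analyze}, no node can occur between two non-constant components, since such a node would split $\mathbb{H}'$ into two non-constant pieces, one of which, by exactness, would have vanishing area. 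Discarding the levels consisting entirely of trivial cylinders, which must be present by the stability condition, either leaves a single cylinder, already an element of the moduli space, or a genuinely broken configuration whose underlying genus-zero tree contains a vertex of valence at least three. Then, as in the proof of Lemma \ref{count-disks}, I would trace the tree from the puncture on $e_k$ and use that $\partial X$ carries no contractible closed Reeb orbits (all Reeb orbits of action at most $C^{\mathrm{GH}}_k(E^{\mathrm{ext}})$ project to non-contractible geodesics on the torus $L$, by Subsection \ref{perturbsurface}) to locate a non-constant component $v$ of $\mathbb{H}'$ that, under the identification $\widehat{E^{\mathrm{ext}}_{\mathrm{FR}}\setminus X}\cong\widehat{E^{\mathrm{ext}}_{\mathrm{FR}}}\setminus L$, compactifies to a smooth disk with boundary a non-contractible loop on $L$. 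Such a disk is non-trivial in $\pi_2(E^{\mathrm{ext}}_{\mathrm{FR}},L)$, has positive symplectic area, and hence area at least $\operatorname{diagonal}(X^4_{\Omega})$ by the extremality of $L$; since this area is one of the summands in the total energy of $\mathbb{H}'$, it contradicts the bound above. Therefore $\mathbb{H}'$ is smooth and connected, which proves the parametric moduli space is compact; specializing to the constant family $J_t\equiv J_{\mathrm{top}}$ gives the compactness of $\mathcal{M}^{J_{\mathrm{top}}}_{E^{\mathrm{ext}}_{\mathrm{FR}}\setminus X,C_{\mathrm{top}}}(e_k,\gamma_{\mathrm{top}})$.

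I expect the last step---extracting an ``escaping'' disk component of symplectic area at least $\operatorname{diagonal}(X^4_{\Omega})$ from a non-trivial limiting building---to be the main obstacle. The subtlety is that the half-cylinder $C_{\mathrm{top}}$ itself is not such a disk: its positive end escapes to $\partial E^{\mathrm{ext}}_{\mathrm{FR}}$ rather than landing on $L$, so its small symplectic area is consistent with extremality, and one must instead rule out that the building is a chain of (possibly non-trivial) cylinders running from $e_k$ down to $\gamma_{\mathrm{top}}$ with nothing hanging off. This is precisely the mechanism exploited in the proof of Theorem \ref{counting}: a genuine breaking in genus zero forces the tree to branch, and each extra branch terminates---since there are no sphere bubbles and no contractible Reeb orbits on $\partial X$---in an asymptotically cylindrical plane that compactifies to an essential disk with boundary on $L$. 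Making this combinatorial bookkeeping precise, while keeping careful track of which asymptotic orbits lie on $\partial X$ versus $\partial E^{\mathrm{ext}}_{\mathrm{FR}}$, is the one point that needs to be spelled out in full.
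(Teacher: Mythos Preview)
Your proposal is correct and takes essentially the same approach as the paper, whose entire proof of this lemma is the single line ``Following the proof of Lemma~\ref{counting} we have'' before the statement and a \qed. Your closing paragraph correctly flags the chain-of-cylinders configuration as the one point needing care; the paper leaves this implicit as well.
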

From this lemma it follows that for generic $J_{\mathrm{top}}$,  $\mathcal{M}^{J_{\mathrm{top}}}_{E^\mathrm{ext}_{\mathrm{FR}}\setminus X,C_{\mathrm{top}} }(e_k,\gamma_{\mathrm{top}})$ is discrete set consisting of finitely many points. Each point carries a sign (positive or negative) depending on whether it inherits a positive or negative orientation.
\begin{lemma}\label{non-venishing}
	Suppose $\epsilon<\frac{1}{k}$ and  $J_{\mathrm{top}}$ is generic. Every element in $\mathcal{M}^{J_{\mathrm{top}}}_{E^\mathrm{ext}_{\mathrm{FR}}\setminus X,C_{\mathrm{top}} }(e_k,\gamma_{\mathrm{top}})$ is positively oriented. In particular,  \[\#\mathcal{M}^{J_{\mathrm{top}}}_{E^\mathrm{ext}_{\mathrm{FR}}\setminus X,C_{\mathrm{top}} }(e_k,\gamma_{\mathrm{top}})>0.\] 
	Moreover, this count does not depend on the choice of generic $J_{\mathrm{top}}$.
\end{lemma}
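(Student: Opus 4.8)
The plan is to combine the neck-stretching degeneration of Claim \ref{claim2} with the positivity of the planar count of Theorem \ref{puctured-dsik} and a gluing argument. First I would record that, by Theorem \ref{puctured-dsik} together with Remark \ref{somewhere-injective}, for generic $J$ on $\widehat{E^{\mathrm{ext}}_{\mathrm{FR}}}$ the moduli space $\mathcal{M}^{J}_{E^{\mathrm{ext}}_{\mathrm{FR}}}(e_k)\ll \mathcal{T}_D^{k-1}y\gg$ is a finite, transversally cut out, rigid set of somewhere injective planes, each counted with sign $+1$, so that its signed count is strictly positive. Stretching the neck along $\partial X$ as in Subsection \ref{perturbsurface}, each such plane degenerates (after passing to a subsequence) to a building $\mathbb{H}$ of the shape described in Claim \ref{claim2}: a central punctured sphere $C_{\mathrm{bot}}\subset T^*L$ carrying $\ll\mathcal{T}_D^{k-1}y\gg$ with $k+1$ positive ends, capped along $k$ of those ends by the planes $u_1,\dots,u_k\subset\widehat{E^{\mathrm{ext}}_{\mathrm{FR}}\setminus X}$ and prolonged along the remaining end by the cylinder $C_{\mathrm{top}}$ up to $e_k$. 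By Claim \ref{claim2} every component is somewhere injective, of Fredholm index $0$, lives in a transversally cut out $0$-dimensional moduli space for generic $J_{\mathrm{bot}}$, $J_{\mathrm{top}}$, and $\mathbb{H}$ itself is rigid; conversely, gluing in the regular setting \cite{Pardon-Contacthomologyandvirtualfundamentalcycles} identifies a neighbourhood of each such $\mathbb{H}$ in the compactified planar moduli space with a single glued plane, and does so orientation-coherently.

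This sets up a sign-preserving correspondence. Fixing the asymptotic orbits of $u_1,\dots,u_k$ and the negative orbit $\gamma_{\mathrm{top}}$ of $C_{\mathrm{top}}$ (a ``combinatorial type'' of building), the signed number of planes whose degeneration has that type equals, up to a universal combinatorial sign, the product of the signed counts of the three kinds of moduli spaces: the one containing $C_{\mathrm{bot}}$, the ones containing the $u_i$, and $\mathcal{M}^{J_{\mathrm{top}}}_{E^\mathrm{ext}_{\mathrm{FR}}\setminus X,C_{\mathrm{top}}}(e_k,\gamma_{\mathrm{top}})$. Summing over combinatorial types recovers $\#\mathcal{M}^{J}_{E^{\mathrm{ext}}_{\mathrm{FR}}}(e_k)\ll \mathcal{T}_D^{k-1}y\gg>0$; here one uses the computation of the torus descendant operations of Theorem \ref{importcountall}, exactly as (\ref{rel}) is used in the proof of Theorem \ref{extremal-lag-ball}, to see that the $C_{\mathrm{bot}}$-count does not vanish. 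Hence at least one $\mathcal{M}^{J_{\mathrm{top}}}_{E^\mathrm{ext}_{\mathrm{FR}}\setminus X,C_{\mathrm{top}}}(e_k,\gamma_{\mathrm{top}})$ is non-empty with non-zero signed count. For the sharper claim that every element is positively oriented, I would argue as in the four-dimensional analysis of McDuff--Siegel underlying Theorem \ref{puctured-dsik}: the elements of $\mathcal{M}^{J_{\mathrm{top}}}_{E^\mathrm{ext}_{\mathrm{FR}}\setminus X,C_{\mathrm{top}}}(e_k,\gamma_{\mathrm{top}})$ are index-$0$, somewhere injective curves of minimal $\tilde{\omega}^{\mathrm{up}}_{\mathrm{std}}$-energy (by Lemma \ref{contra} their energy is $\leq k\epsilon\operatorname{diagonal}(X^4_\Omega)$, hence below $\operatorname{diagonal}(X^4_\Omega)$ once $\epsilon<1/k$) in a four-dimensional cobordism joining the elliptic orbits $e_k$ and $\gamma_{\mathrm{top}}$; choosing $J_{\mathrm{top}}$ to agree with the integrable toric models near $e_k$ and near $\partial X$ forces such a curve to be genuinely $J_{\mathrm{top}}$-complex near its ends, and in dimension four this, together with automatic transversality, pins down its sign to be $+1$.

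Finally, independence of $\#\mathcal{M}^{J_{\mathrm{top}}}_{E^\mathrm{ext}_{\mathrm{FR}}\setminus X,C_{\mathrm{top}}}(e_k,\gamma_{\mathrm{top}})$ of the choice of generic $J_{\mathrm{top}}$ is the standard cobordism argument: by Lemma \ref{compact} the parametric moduli space over any path $\{J_t\}_{t\in[0,1]}$ of SFT-admissible almost complex structures is a compact oriented one-manifold whose boundary is the difference of the two counts. The hard part will be the orientation bookkeeping in the gluing/correspondence together with the last positivity point: turning ``the total planar count is positive'' into ``every $C_{\mathrm{top}}$ contributes $+1$'' requires controlling the coherent orientations of all moduli spaces involved and, in particular, establishing the sign of the individual cylinders directly, which is precisely the delicate minimal-energy four-dimensional analysis that Theorem \ref{puctured-dsik} packages for planes in a filling but that must here be reproduced for a cobordism between two elliptic orbits.
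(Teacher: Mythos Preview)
Your approach is substantially more circuitous than the paper's, and the core positivity argument is not quite pinned down.

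The paper's proof does not use any gluing or correspondence with the planar count at all. Non-emptiness is immediate: the cylinder $C_{\mathrm{top}}$ produced by Claim \ref{claim2} is itself an element of $\mathcal{M}^{J_{\mathrm{top}}}_{E^\mathrm{ext}_{\mathrm{FR}}\setminus X,C_{\mathrm{top}}}(e_k,\gamma_{\mathrm{top}})$. The entire first two paragraphs of your proposal (neck-stretching, gluing bijection, descendant operations of Theorem \ref{importcountall}) are therefore unnecessary for this lemma. Moreover, that line of argument would only yield ``some $\gamma_{\mathrm{top}}$ has non-zero cylinder count'', which is weaker than what is claimed.

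For the positivity of individual elements, the paper argues directly and cleanly: both asymptotes $e_k$ and $\gamma_{\mathrm{top}}$ are elliptic (Remark \ref{elliptic} and the fourth bullet of Claim \ref{claim2}); the elements are somewhere injective of index $0$; a short dimension count (adding a marked point constrained to be a critical point drops the index by at least $2$, cf.\ {\cite[Proposition A.1]{Wendl2023}}) shows such curves are generically \emph{immersed}; and then {\cite[Proposition 5.2.2]{McDuff:2021aa}} applies verbatim to give the sign $+1$. Your sketch gestures at ``automatic transversality'' and ``integrable toric models near the ends'', but the missing ingredient is precisely the immersedness step, without which the McDuff--Siegel orientation criterion does not apply. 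There is no need to reproduce any ``delicate minimal-energy analysis'': the cited proposition already covers cylinders in a four-dimensional cobordism between elliptic orbits.

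Your treatment of the independence statement via Lemma \ref{compact} and a cobordism argument is correct and matches the paper.
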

\begin{proof}
By the fourth bullet point in Claim \ref{claim2}, the Reeb orbit $\gamma_{\mathrm{top}}$ is elliptic.  By Remark \ref{elliptic},  $e_k$ is also an elliptic Reeb orbit. So the ends of the cylinder $C_{\mathrm{top}}$ are on elliptic Reeb orbits. By the fourth bullet point in Claim \ref{claim2}, $C_{\mathrm{top}}$ has index zero for generic $J_{\mathrm{top}}$. Somewhere injective curves of index zero are generically immersed: consider a somewhere injective curve $u$ of index zero. Adding a mark point to $u$ increases the index by $2$, but asking the mark point to be a critical point of $u$ drops the index by at least $4$ (cf. {\cite[Proposition A.1]{Wendl2023}}). So, generically, such a configuration will have a negative index, which is impossible.

 By {\cite[Proposition 5.2.2]{McDuff:2021aa}}, the curve $C_{\mathrm{top}}$, and hence every element in $\mathcal{M}^{J_{\mathrm{top}}}_{E^\mathrm{ext}_{\mathrm{FR}}\setminus X,C_{\mathrm{top}} }(e_k,\gamma_{\mathrm{top}})$, carries a positive orientation.

That the count $\#\mathcal{M}^{J_{\mathrm{top}}}_{E^\mathrm{ext}_{\mathrm{FR}}\setminus X,C_{\mathrm{top}} }(e_k,\gamma_{\mathrm{top}})$ does not depend on the choice of generic $J_{\mathrm{top}}$ follows from a cobordism argument and Lemma \ref{compact}.
\end{proof}
\subsection{Constrained half-cylinders in $T^*T^2$}\label{counthalfcy}
Pick an SFT-admissible almost complex structure $J$ on $(\widehat{X},\widehat{d\lambda}_{\mathrm{can}})=(T^*T^2, d\lambda_{\mathrm{can}})$ and a point $p$ on the zero section $T^2$.  
Define
\begin{equation*}
	\mathcal{M}^{J}_{\widehat{X}}(\gamma_{\mathrm{top}}, T^2,  p):=\left\{(u,t_0):
	\begin{array}{l}
		u:([0,\infty)\times S^1,i) \to (T^*T^2,J),\\
		du\circ i=J\circ du  ,\\
		u \text{ is asymptotic to $\gamma_{\mathrm{top}}$ at $\infty$,} \\
		u(0,t)\in T^2 \text{ for all $t\in S^1$,}\\
		u \text{ passes through $p$ at $(0,t_0)$,}

	\end{array}
	\right\}\bigg/\operatorname{Aut}([0,\infty)\times S^1).
\end{equation*}
Here, the marked point $t_0$ is allowed to move on $\{0\}\times S^1$. The Fredholm index (cf. {\cite[Section 8, Page 33]{Cieliebak:2007aa}}) of this moduli space is zero:
\[\operatorname{ind}(\mathcal{M}^{J}_{\widehat{X}}(\gamma_{\mathrm{top}},T^2, p))=(n-3)(2-2)+\operatorname{CZ}^\tau(\gamma_{\mathrm{top}})-n+1=0.\]
Here, by Theorem \ref{special-trivalization}, we have 
\[\operatorname{CZ}^\tau(\gamma_{\mathrm{top}})=\mu(c)=n-1,\]
where  $c$ denotes the closed geodesic that lifts to $\gamma_{\mathrm{top}}$. We are interested to show that the signed count $\# \mathcal{M}^{J}_{\widehat{X}}(\gamma_{\mathrm{top}},T^2, p)$ does not vanish. 
\begin{lemma}\label{counthalf}
	We have 
	\[\#\mathcal{M}^{J}_{\widehat{X}}(\gamma_{\mathrm{top}},T^2, p)\neq 0.\]
\end{lemma}
\begin{proof}
Consider the flat torus $(T^2,g)$. We think of $(T^*T^2,d\lambda_{\mathrm{can}})$ as the symplectic completion of the unit codisk bundle $D^*T^2$. We define an SFT-admissible almost complex structure on $(T^*T^2,d\lambda_{\mathrm{can}})$ as follows. We write  $T^*T^2=S^1\times S^1\times \mathbb{R}^2$. Let $(\theta_1, \theta_2)$ denote the normal coordinates on $T^2=S^1\times S^1$ and $(p_1, p_2)$ be the dual coordinates on $\mathbb{R}^2$. Choose a smooth function $\rho: [0,\infty)\to \mathbb{R}$ such that 
\begin{itemize}
	\item $\rho(t)>0$ and $\rho'(t)\geq 0$ for all $t\geq 0$,
	\item $\rho(t)=1$ for $t \leq 1$, and
	\item $\rho=t$ for large $t$.
\end{itemize}
Define $J_{\rho}$ by 
\[J_{\rho}(\partial_{\theta_i}):=-\rho(\|(p_1,p_2)\|)\partial_{p_i}. \]
\[J_{\rho}(\partial_{p_i}):=\frac{1}{\rho(\|(p_1,p_2)\|)}\partial_{\theta_i}. \]
The almost complex structure $J_{\rho}$ on $T^*T^2$ is compatible with $d\lambda_{\mathrm{can}}$ and cylindrical for the unit cosphere $S^*T^2$, for the choice of flat metric. Hence, it is SFT-admissible.

Closed Reeb orbits on the unit cosphere bundle $(S^*T^2,\lambda_{\mathrm{can}}|_{S^*T^2})$, for the choice of flat metric, come in Morse--Bott $S^1$-families. Let $\Gamma_{\mathrm{top}}$ be the Morse--Bott family that contains the closed orbit $\gamma_{\mathrm{top}}$. Consider the moduli space
\begin{equation*}
	\mathcal{M}^{J_\rho}_{T^*T^2}(T^2,\Gamma_{\mathrm{top}}, p):=\left\{(u,t_0):
	\begin{array}{l}
			u:([0,\infty)\times S^1,i) \to (T^*T^2,J_\rho),\\
			du\circ i=J_{\rho}\circ du  ,\\
			u \text{ is asymptotic to some $\gamma \in \Gamma_{\mathrm{top}}$ at $\infty$,} \\
			u(0,t)\in T^2 \text{ for all $t\in S^1$,}\\
			u \text{ passes through $p$ at $(0,t_0)$}
		\end{array}
	\right\}\bigg/\operatorname{Aut}([0,\infty)\times S^1).
\end{equation*}
Here, the marked point $t_0$ is allowed to vary on $\{0\}\times S^1$.
The Fredholm index of this moduli space is zero:
\[\operatorname{ind}(\mathcal{M}^{J_\rho}_{T^*T^2}(T^2,\Gamma_{\mathrm{top}}, p)=(n-3)(2-2)+\operatorname{CZ}^\tau(\Gamma_{\mathrm{top}})+1-2+1=0.\]
Here $\tau$ is a trivialization that satisfies the second bullet point of  Theorem \ref{special-trivalization}. For this trivialization, we have
\[\operatorname{CZ}^\tau(\Gamma_{\mathrm{top}})=\mu(c)=0.\]
Here, $\mu(c)=0$ because every closed geodesic for a flat metric has Morse index equal to $0$.

%

Let $c$ be the geodesic that lifts to $\gamma_{\mathrm{top}}$. Then $c$ passes through the point $p$. Let $T>0$ be the period of $\gamma_{\mathrm{top}}$. There is a natural half-cylinder $\tilde{u}^c_\rho:[0,\infty)\to T^*T^2$ over  $\gamma_{\mathrm{top}}$ defined by 
\[\tilde{u}^c_\rho(s,t):=(c(Tt),\psi(s)c'(Tt)),\]
 where $\psi:[0,\infty)\to \mathbb{R}$ is a choice of smooth function. This half-cylinder is $(i,J_\rho)$-holomorphic when $\psi$ solves the differential equation 
\[\psi'(s)=T\rho(s),\, \text{with} \ \psi'(0)=0.\]
Denote by $u^c_\rho$ the unparametrized half-cylinder associated with $\tilde{u}^c_\rho$. Then $u^c_\rho$ belongs to the moduli space $\mathcal{M}^{J_\rho}_{T^*T^2}(T^2,\Gamma_{\mathrm{top}}, p)$ since the geodesic $c$ passes through the constrained point $p$. By {\cite[Proposition 5.1]{Cieliebak2023}} or {\cite[Lemma 7.2]{Cieliebak:2007aa}},   $\mathcal{M}^{J_\rho}_{T^*T^2}(T^2,\Gamma_{\mathrm{top}}, p)$ consists of a unique element, which is the unparametrized half-cylinder $u^c_\rho$.  Moreover, by Wendl's automatic transversality {\cite[Theorem 1]{Wendl2010}},  $u^c_\rho$ is transversally cut out. Therefore $\# \mathcal{M}^{J_\rho}_{T^*T^2}(T^2,\Gamma_{\mathrm{top}}, p)=\pm1$.

The SFT-admissible almost complex structure $J_\rho$ is cylindrical with respect to $(S^*L,\lambda_{\mathrm{can}}|_{S^*L})$ and $\partial X$ is a $C^1$-small perturbation of $S^*L$. Performing neck-stretching along $(\partial X, \lambda_{\mathrm{can}}|_{\partial X})$,  the half-cylinder $u^c_\rho$ breaks into a half cylinder that belongs to  $\mathcal{M}^{J}_{\widehat{X}}(\gamma_{\mathrm{top}},T^2, p)$ and a cylinder $u$ in $\widehat{T^*T^2\setminus X}$ with a positive  and negative end. Let $\mathcal{M}^{J}_{\widehat{T^*T^2\setminus X},u}$ be the connected component of the moduli space containing $u$. Then 
\[\pm 1=\# \mathcal{M}^{J_\rho}_{T^*T^2}(T^2,\Gamma_{\mathrm{top}}, p)=\#\mathcal{M}^{J}_{\widehat{X}}(\gamma_{\mathrm{top}},T^2, p)\cdot \#\mathcal{M}^{J}_{\widehat{T^*T^2\setminus X},u}.\]
In particular, 
\[\#\mathcal{M}^{J}_{\widehat{X}}(\gamma_{\mathrm{top}},T^2, p)\neq 0.\]
\end{proof}

\subsection{Concluding the proof assuming the Claim \ref{claim2} holds}

By the third bullet point in Claim \ref{claim2},  the negative end of $C_{\mathrm{top}}$  is on the closed Reeb orbit $\gamma_{\mathrm{top}}$ in $\partial X$. Using gluing results in the regular setting \cite{Pardon-Contacthomologyandvirtualfundamentalcycles} and Lemma \ref{non-venishing}, we glue a half cylinder given by Lemma \ref{counthalf} to $C_{\mathrm{top}}$ to get a $J_{\mathrm{glu}}$-holomorphic half-cylinder $\bar{C}_{\mathrm{top}}:([0,\infty)\times \mathbb{R}/\mathbb{Z},\{0\}\times \mathbb{R}/\mathbb{Z}) \to (\widehat{E^\mathrm{ext}_{\mathrm{FR}}}, L)$ whose boundary passes through the point $p$ on $L$ and which has a positive puncture asymptotic to $e_k$. The symplectic area of $\bar{C}_{\mathrm{top}}$ does not depend on the choice of representative of the relative class $[\bar{C}_{\mathrm{top}}]\in H_2(\widehat{E^\mathrm{ext}_{\mathrm{FR}}},L\cup e_k)$, therefore by  (\ref{contra}) we have 
\begin{equation}\label{contra1}
	0<\int_{[0,\infty)\times \mathbb{R}/\mathbb{Z}}\bar{C}_{\mathrm{top}}^*\tilde{\omega}^{\mathrm{up}}_{\mathrm{std}}=\int_{[0,\infty)\times \mathbb{R}/\mathbb{Z}}C_{\mathrm{top}}^*\tilde{\omega}^{\mathrm{up}}_{\mathrm{std}}\leq k\epsilon \operatorname{diagonal}(X_{\Omega}^4).
\end{equation}
Here, we have compactified $\bar{C}_{\mathrm{top}}$ to a half cylinder with boundary on $L$.

Let $\mathcal{M}^{J_{\mathrm{glu}}}_{E^\mathrm{ext}_{\mathrm{FR}},\bar{C}_{\mathrm{top}} }(e_k,p)$ be the connected component of the moduli space containing  $\bar{C}_{\mathrm{top}}$. Suppose $\epsilon<1/k$. Recall that the Lagrangian torus $L$ under consideration is extremal in  $(X_{\Omega}^4,\omega_{\mathrm{std}})$. So the half-cylinder $\bar{C}_{\mathrm{top}}$ has minimal\footnote{A non-trivial holomorphic building that can appear as a result of degeneration in the connected component of the moduli space containing $\bar{C}_{\mathrm{top}}$ requires a symplectic area at least $\operatorname{diagonal}(X_{\Omega}^4)$. Note that any such building contains a non-constant disk with boundary on $L$. Such a disk has symplectic area at least $\operatorname{diagonal}(X_{\Omega}^4)$ by the extremality of $L$.} symplectic area by (\ref{contra1}). This means the count of half-cylinders 
 \[\#\mathcal{M}^{J_{\mathrm{glu}}}_{E^\mathrm{ext}_{\mathrm{FR}},\bar{C}_{\mathrm{top}} }(e_k,p)\]  
is well-defined. Moreover, this count is invariant under compactly supported deformations of the almost complex structure $J_{\mathrm{glu}}$. Also by Lemma \ref{non-venishing}, Lemma \ref{counthalf}, and gluing this count does not vanish. More precisely,  we have
\[\#\mathcal{M}^{J_{\mathrm{glu}}}_{E^\mathrm{ext}_{\mathrm{FR}},\bar{C}_{\mathrm{top}} }(e_k,p)=\underset{\neq 0\text{ by Lemma } \ref{counthalf}}{\#\mathcal{M}^{J}_{\widehat{X}}(\gamma_{\mathrm{top}},T^2, p)} \cdot \underset{>0 \text{ by Lemma }\ref{non-venishing}}{\#\mathcal{M}^{J_{\mathrm{top}}}_{E^\mathrm{ext}_{\mathrm{FR}}\setminus X,C_{\mathrm{top}} }(e_k,\gamma_{\mathrm{top}})} \neq 0.\]
  The conclusion is that the half-cylinder $\bar{C}_{\mathrm{top}}$ survives to exist under deformation of the almost complex structure $J_{\mathrm{glu}}$ supported near the Lagrangian $L$. 

 Next, we deform $J_{\mathrm{glu}}$ near $L$ to a new almost complex structure $J_{\mathrm{def}}$  which is $\omega_{\mathrm{std}}$-compatible and agrees with  $\phi_{*}J_{\mathrm{std}}$ on $\phi(D^{2n}(\delta))$,  i.e., $J_{\mathrm{def}}|_{\phi(D^{2n}(\delta))}=\phi_{*}J_{\mathrm{std}}$. Here $J_{\mathrm{std}}$ denotes the standard complex structure of $\mathbb{C}^n$.

 The $J_{\mathrm{def}}$-holomorphic half-cylinder $\bar{C}_{\mathrm{top}}:([0,\infty)\times \mathbb{R}/\mathbb{Z},\{0\}\times \mathbb{R}/\mathbb{Z}) \to (\widehat{E^\mathrm{ext}_{\mathrm{FR}}}, L)$ yields a proper $J_{\mathrm{std}}$-holomorphic map $\phi^{-1}\circ\bar{C}_{\mathrm{top}}|_{\bar{C}_{\mathrm{top}}^{-1}(\phi(D^{2n}(\delta)))}:\bar{C}_{\mathrm{top}}^{-1}(\phi(D^{2n}(\delta)))\mapsto D^{2n}(\delta)$. By  Lemma \ref{monotoncity1} and (\ref{contra1}), we have
\[
k\epsilon \operatorname{diagonal}(X_{\Omega}^4) \geq \int_{[0,\infty)\times \mathbb{R}/\mathbb{Z}}\bar{C}_{\mathrm{top}}^*\tilde{\omega}^{\mathrm{up}}_{\mathrm{std}}\geq
	 \int_{\bar{C}_{\mathrm{top}}^{-1}(\phi(D^{2n}(\delta)))} (\phi^{-1}\circ\bar{C}_{\mathrm{top}})^*\omega_{\mathrm{std}}\geq \frac{1}{2}\pi  \delta^2.\]
This is a contradiction since we can choose $\epsilon$ arbitrarily small and keep $\delta$ and $k$ fixed. This completes the proof.

\subsection{Proof of Claim \ref{claim2}}\label{proofofclaim}
We complete this task in a sequence of lemmas.
\begin{lemma}
	The holomorphic building $\mathbb{H}$ has no node between its non-constant curve components. In particular, the building does not have a node at the constrained marked point $y$.
\end{lemma}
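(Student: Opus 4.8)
The plan is to argue by contradiction in the same spirit as the corresponding step in the proof of Theorem~\ref{extremal-lag-ball} (see the first lemma of Subsection~\ref{Analyze}), but now keeping track of the extra symplectization level coming from $\partial E^\mathrm{ext}_{\mathrm{FR}}$. Suppose there is a node joining two non-constant curve components of $\mathbb{H}$. Since $\mathbb{H}$ has genus zero, its underlying graph is a tree, so removing the edge corresponding to this node disconnects $\mathbb{H}$ into two sub-buildings $A_1$ and $A_2$, each of which is non-constant (possibly spread over several levels of the split cobordism). By Theorem~\ref{nonconstantholclass}, or rather its version for Liouville domains quoted in the Remark following it, each non-constant sub-building has strictly positive $\tilde\omega_{\mathrm{std}}$-area: $\int_{A_1}\tilde\omega_{\mathrm{std}}>0$ and $\int_{A_2}\tilde\omega_{\mathrm{std}}>0$.

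Next I would use the positivity of intersection with a complex hypersurface to derive a contradiction. The building $\mathbb{H}$ arises as a Gromov--Hofer limit of $J_i$-holomorphic planes $u_i$ asymptotic to the simple Reeb orbit $e_k$ on $\partial E^\mathrm{ext}_{\mathrm{FR}}$, each carrying the tangency constraint $\ll\mathcal{T}_D^{k-1}y\gg$ at the point $y$, which lies on the zero-section $L$ in $T^*L$. The tangency constraint at $y$ localizes the intersection with the local divisor $D$ around the point $y$; by Lemma~\ref{tang}, if the constrained marked point sits on a ghost component of $\mathbb{H}$, the contact order $k$ must redistribute over the non-constant components attached to that ghost, forcing at least one of them to have contact order $\geq 1$ with $D$. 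But $D$ is a local symplectic divisor near $y\in L\subset T^*L$, so $\tilde\omega_{\mathrm{std}}$ (which equals $d\lambda_{\mathrm{can}}$ near $y$) is exact on a neighbourhood of $D$; any curve component touching $y$ and contained near $D$ would, by Stokes' theorem, have zero symplectic area with boundary on $L$, contradicting the positivity just established. This rules out a node at $y$. For a node elsewhere, I would instead argue as in Subsection~\ref{Analyze}: one of the two pieces $A_1$, $A_2$ of $\mathbb{H}$, say $A_1$, must be a closed sub-building that lies entirely in the region where $\tilde\omega_{\mathrm{std}}$ is exact (this uses that $\mathbb{H}$ has exactly one positive end on $e_k$ and that closed holomorphic buildings in the Liouville filling $T^*L$ carry exact symplectic forms, so $\int_{A_1}\tilde\omega_{\mathrm{std}}=0$ by Stokes), contradicting $\int_{A_1}\tilde\omega_{\mathrm{std}}>0$.

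The subtlety, and what I expect to be the main obstacle, is correctly identifying which piece of the broken configuration is forced to live in the exact region. Unlike the $\mathbb{CP}^n$ setting in Section~\ref{proof for ball}, here there is no complex hypersurface at infinity to absorb the intersection; instead the relevant "exactness reservoir" is the Liouville cobordism structure itself: the forms $\widehat\omega_{\mathrm{std}}$, $d\lambda_{\mathrm{can}}$, and $d\lambda_{\mathrm{std}}$ are each exact on their respective levels, and a closed curve component (a genuine sphere, with all punctures paired off internally) must have zero total $\tilde\omega_{\mathrm{std}}$-area. So the key is to check that a node separating $\mathbb{H}$ into two non-constant pieces must produce at least one piece with no unpaired positive end---which follows from the tree structure together with the fact that $\mathbb{H}$ has exactly one unpaired positive puncture on $e_k$---and that such a closed piece is then forced to have vanishing area. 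This combinatorial bookkeeping on the tree of the building, combined with the Stokes'-theorem vanishing, delivers the contradiction; the calculations are routine once the dichotomy is set up correctly.
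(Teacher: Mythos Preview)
Your third paragraph contains the complete and correct argument, and it matches the paper's proof almost exactly: since $\mathbb{H}$ has genus zero and exactly one unpaired positive end (on $e_k$), a node between non-constant components splits $\mathbb{H}$ into a topological plane and a topological sphere; the sphere piece represents the zero class in $H_2(E^\mathrm{ext}_{\mathrm{FR}},\mathbb{Z})$ (the domain is star-shaped, hence contractible), so by Theorem~\ref{nonconstantholclass} its area vanishes, contradicting non-constancy. That is all that is needed.

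Your second paragraph, however, is both unnecessary and confused. There is no separate case ``node at $y$'' to handle: the ``in particular'' clause of the lemma is a direct consequence of the general statement, since a ghost at the constrained marked point would create a node between non-constant components. The local-exactness reasoning you sketch (``any curve component touching $y$ and contained near $D$ would, by Stokes' theorem, have zero symplectic area with boundary on $L$'') does not make sense as written---there are no boundaries on $L$ in this setup, and a non-constant punctured curve passing through $y$ is not contained in a neighbourhood of $D$, so local exactness of the form near $y$ gives no constraint. You should simply delete that paragraph; the tree-combinatorics plus zero-homology argument you give afterwards already covers every node, including one at the constrained marked point.
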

\begin{proof}
	Since $\mathbb{H}$ has genus zero, any node between non-constant components decomposes the building $\mathbb{H}$ into two pieces $A_1$ and $A_2$. Both are represented by non-constant punctured holomorphic curves (possibly in different levels). The building $\mathbb{H}$ has only one (unpaired) end, which is positively asymptotic to the Reeb orbit $e_k$. Therefore, one of these pieces, say $A_1$,  is topologically a plane, and the other is a sphere. Since $[A_2]=0\in H_2(E^\mathrm{ext}_{\mathrm{FR}},\mathbb{Z})$, this is a contradiction to Lemma \ref{nonconstantholclass}.
\end{proof}
Let $C_{\mathrm{bot}}$ be the non-constant smooth connected curve component of $\mathbb{H}$ in the bottom level $T^*L$ that carries the tangency constraint $\ll \mathcal{T}_D^{k-1}y\gg$. By Theorem \ref{count-positive-punctures}, $C_{\mathrm{bot}}$ has at least $k+1$ positive punctures. Suppose that  $C_{\mathrm{bot}}$ has $m$ positive punctures with $m\geq k+1$. 

The underlying graph of the building $\mathbb{H}$ is a tree since the building has genus zero. There are $m$ edges emanating from the vertex $C_{\mathrm{bot}}$ in the underlying graph. We order these edges from $1,2, \dots, k+1,\dots, m$. Let $C_i$ be the subtree emanating from the vertex $C_{\mathrm{bot}}$ along the $i$th edge. Since the building has only one positive puncture at $e_{k}$, precisely one among $C_1,\dots, C_{k+1},\dots, C_{m}$, say $C_{m}$, is topologically a cylinder with positive end on $e_{k}$. The rest $C_1,\dots, C_{k+1},\dots, C_{m-1}$ are topological planes with curve components in different levels. Note that $C_1,\dots, C_{k+1},\dots, C_{m}$ is a partition of the building into disconnected parts: every curve component of the building other than $C_{\mathrm{bot}}$ belongs exactly to one of $C_1,\dots, C_{k+1},\dots, C_{m}$. Since there are no contractible Reeb orbits on $\partial X$, each $C_i$ must have some curve components in the level $\widehat{E^\mathrm{ext}_{\mathrm{FR}}\setminus X}$. Also, each of the topological planes $C_i$ has no components in the symplectization levels $\mathbb{R}\times \partial E^\mathrm{ext}_{\mathrm{FR}}$ by the maximum principle. This means that each of the topological planes $C_i$ must have a component in the level $\widehat{E^\mathrm{ext}_{\mathrm{FR}}\setminus X}$ which is a punctured $J_{\mathrm{top}}$-holomorphic plane $u_i$ with negative end on $\partial X$.  Thus, we have at least $k$ negatively asymptotically cylindrical planes, say $u_1,\dots, u_k$, in the level $\widehat{E^\mathrm{ext}_{\mathrm{FR}}\setminus X}$ asymptotic to Reeb orbits on $\partial X$.  The  $J_{\mathrm{top}}$-holomorphic planes $u_1,\dots, u_k$ in $\widehat{E^\mathrm{ext}_{\mathrm{FR}}\setminus X}$ can be compactified to get $k$ disks in $\widehat{E^\mathrm{ext}_{\mathrm{FR}}}$ with boundaries on $L$.

Let $u_1,\dots, u_k, \dots,u_q$ be all of the negatively asymptotic $J_{\mathrm{top}}$-holomorphic planes in the level $\widehat{E^\mathrm{ext}_{\mathrm{FR}}\setminus X}$ that contribute to the building $\mathbb{H}$. Let $C_{\mathrm{top}}$ denote the collection of all other curve components of the building that belongs to the level $\widehat{E^\mathrm{ext}_{\mathrm{FR}}\setminus X}$, i.e., $C_{\mathrm{top}}$ is the portion of the building in $\widehat{E^\mathrm{ext}_{\mathrm{FR}}\setminus X}$ minus  $u_1,\dots, u_k, \dots,u_q$. The $C_{\mathrm{top}}$ contains at least one curve component (which belongs to topological cylinder $C_m$) because the building has one positive (unpaired) end on $e_k$ that belongs to $\partial E^\mathrm{ext}_{\mathrm{FR}}$. In particular, $C_{\mathrm{top}}$ is not empty.

The following shows that, if $\epsilon>0$ is sufficiently small, then we have precisely $k$ planes in the level $\widehat{E^\mathrm{ext}_{\mathrm{FR}}\setminus X}$, i.e., $q=k$. In particular, this means $C_{\mathrm{bot}}$ has exactly $k+1$ positive ends.
\begin{lemma}\label{countdisk}
	Suppose $\epsilon< 1/k$. There are precisely $k$ negatively asymptotic $J_{\mathrm{top}}$-holomorphic planes $u_1,\dots, u_k$ in the level $\widehat{E^\mathrm{ext}_{\mathrm{FR}}\setminus X}$, and all of them are somewhere injective and have ends on simple closed Reeb orbits. Moreover,
\[0<\int C_{\mathrm{top}} ^*\tilde{\omega}^{\mathrm{up}}_{\mathrm{std}}<k\epsilon \operatorname{diagonal}(X_{\Omega}^{4}). \]
\end{lemma}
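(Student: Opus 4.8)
\textbf{Proof plan for Lemma \ref{countdisk}.}
The strategy mirrors the energy-counting arguments already used in the proof of Theorem \ref{extremal-lag-ball} (in particular Lemmas \ref{excontradiction} and \ref{buildinglook}), adapted to the present Liouville setting. The first task is to set up the energy bookkeeping: the $\tilde{\omega}^{\mathrm{up}}_{\mathrm{std}}$-energy of the whole building is bounded by $E(\mathbb{H})\le C^{\mathrm{GH}}_k(E^{\mathrm{ext}}) = k(1+\epsilon)\operatorname{diagonal}(X^4_\Omega)$ via (\ref{engery2}) and (\ref{GH-cap}), while each of the $q$ planes $u_1,\dots,u_q$ compactifies to a disk in $\widehat{E^\mathrm{ext}_{\mathrm{FR}}}$ with boundary on $L$, and the extremality of $L$ in $X^4_\Omega$ forces each such disk to have $\tilde{\omega}^{\mathrm{up}}_{\mathrm{std}}$-area either zero or at least $\operatorname{diagonal}(X^4_\Omega)$. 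Since the $u_i$ are non-constant $J_{\mathrm{top}}$-holomorphic curves, positivity of the area against a compatible form rules out the zero case, so $\int u_i^*\tilde{\omega}^{\mathrm{up}}_{\mathrm{std}}\ge \operatorname{diagonal}(X^4_\Omega)$ for every $i$. Combining with $\int C_{\mathrm{top}}^*\tilde{\omega}^{\mathrm{up}}_{\mathrm{std}}>0$ (Theorem \ref{nonconstantholclass}, since $C_{\mathrm{top}}$ is non-constant and its homology class has positive $\tilde{\omega}^{\mathrm{up}}_{\mathrm{std}}$-pairing — it is the piece carrying the positive end on $e_k$) yields
\[q\,\operatorname{diagonal}(X^4_\Omega) + \int C_{\mathrm{top}}^*\tilde{\omega}^{\mathrm{up}}_{\mathrm{std}} \le k(1+\epsilon)\operatorname{diagonal}(X^4_\Omega),\]
hence $q < k(1+\epsilon)$; with $\epsilon<1/k$ this gives $q\le k$. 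Since we already showed $q\ge k$, we conclude $q=k$, and the displayed inequality of the lemma then follows immediately: $0<\int C_{\mathrm{top}}^*\tilde{\omega}^{\mathrm{up}}_{\mathrm{std}}\le k(1+\epsilon)\operatorname{diagonal}(X^4_\Omega)-k\operatorname{diagonal}(X^4_\Omega)=k\epsilon\operatorname{diagonal}(X^4_\Omega)$ (strict on the right once one checks $C_{\mathrm{top}}$ is a single cylinder, or simply keep the non-strict bound which is all that is needed downstream in Lemma \ref{contra}).

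Next I would rule out multiple covers. Suppose some $u_i$, say $u_1$, is a $d$-fold cover of a somewhere injective plane $\bar u_1$ with $d>1$. Its compactified disk still has boundary on $L$, and by extremality $\int \bar u_1^*\tilde{\omega}^{\mathrm{up}}_{\mathrm{std}}\ge \operatorname{diagonal}(X^4_\Omega)$, so $\int u_1^*\tilde{\omega}^{\mathrm{up}}_{\mathrm{std}}\ge d\operatorname{diagonal}(X^4_\Omega)\ge 2\operatorname{diagonal}(X^4_\Omega)$. Feeding this back into the energy inequality gives $\int C_{\mathrm{top}}^*\tilde{\omega}^{\mathrm{up}}_{\mathrm{std}}\le k\epsilon\operatorname{diagonal}(X^4_\Omega)-(d-1)\operatorname{diagonal}(X^4_\Omega)<0$ for $\epsilon<1/k$, contradicting $\int C_{\mathrm{top}}^*\tilde{\omega}^{\mathrm{up}}_{\mathrm{std}}>0$. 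The same computation, applied to the Reeb orbits rather than the planes, shows each $u_i$ is asymptotic to a \emph{simple} closed Reeb orbit: if $u_i$ were asymptotic to a multiply-covered orbit $\gamma_i^m$ with $m>1$, writing $\int u_i^*\tilde{\omega}^{\mathrm{up}}_{\mathrm{std}}=\int_{\gamma_i^m}\lambda=m\int_{\bar\gamma_i}\lambda$ and noting (via extremality again) that the corresponding disk area is a positive multiple of $\operatorname{diagonal}(X^4_\Omega)$, one forces the same negative-area contradiction. This uses the structure of $\partial X$ from Section \ref{perturbsurface} — every orbit of action $\le C^{\mathrm{GH}}_k(E^{\mathrm{ext}})$ projects to a closed geodesic, and the multiply-covered ones are excluded on action/area grounds.

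The main obstacle, as in the closed-manifold case, is not any single estimate but making sure the area of the compactified disks is genuinely controlled: one must verify that the $J_{\mathrm{top}}$-holomorphic planes $u_i$ in $\widehat{E^\mathrm{ext}_{\mathrm{FR}}\setminus X}$, which live in the completion and a priori could dip into the cylindrical end $(0,\infty)\times\partial E^\mathrm{ext}_{\mathrm{FR}}$, actually lie in $\widehat{E^\mathrm{ext}_{\mathrm{FR}}\setminus X}\setminus(0,\infty)\times\partial E^\mathrm{ext}_{\mathrm{FR}}$ so that they close up to disks \emph{in $\widehat{E^\mathrm{ext}_{\mathrm{FR}}}$ with boundary on $L$} whose $\tilde{\omega}^{\mathrm{up}}_{\mathrm{std}}$-area equals their $\omega_{\mathrm{std}}$-area and is bounded below by extremality. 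This is the role of the maximum principle: since the $u_i$ have no positive punctures (they are planes with a single negative end on $\partial X$), they cannot escape into the positive cylindrical end over $\partial E^\mathrm{ext}_{\mathrm{FR}}$, exactly as invoked at the start of the proof of Lemma \ref{contra}. Once this confinement is in place, the extremality bound $\int u_i^*\tilde{\omega}^{\mathrm{up}}_{\mathrm{std}}\ge\operatorname{diagonal}(X^4_\Omega)$ is legitimate and the counting argument goes through. I would also remark that the argument shows $C_{\mathrm{top}}$ contains exactly one curve component, namely the cylinder with ends on $e_k$ and on some orbit of $\partial X$: any extra non-constant component in the top level would, after compactification, contribute another disk of area $\ge\operatorname{diagonal}(X^4_\Omega)$ with boundary on $L$ (or, if it is a sphere, a positive $\tilde\omega^{\mathrm{up}}_{\mathrm{std}}$-area by Theorem \ref{nonconstantholclass}), again violating the $\epsilon<1/k$ bound — this is what pins down the final structure asserted in Claim \ref{claim2}.
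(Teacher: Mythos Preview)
Your proposal is correct and follows essentially the same approach as the paper's proof: confine the planes via the maximum principle, compactify to disks with boundary on $L$, invoke extremality for the lower bound $\int u_i^*\omega_{\mathrm{std}}\ge\operatorname{diagonal}(X^4_\Omega)$, combine with the energy bound (\ref{engery2}) and $\epsilon<1/k$ to force $q=k$, and then rerun the same arithmetic to exclude multiple covers and multiply-covered asymptotic orbits (the paper simply refers back to Lemma \ref{excontradiction} for this last step). Your closing paragraph about $C_{\mathrm{top}}$ being a single cylinder anticipates the content of the two lemmas following Lemma \ref{countdisk} rather than belonging to its proof, but the reasoning is sound.
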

\begin{proof}
Let $u_1,\dots, u_k, \dots,u_q$ be all of the negatively asymptotic $J_{\mathrm{top}}$-holomorphic planes in the level $\widehat{E^\mathrm{ext}_{\mathrm{FR}}\setminus X}$ that contribute to the building.  As explained in the proof of Lemma \ref{contra} as well, by the maximum principle, these $J_{\mathrm{top}}$-holomorphic planes are contained in $\widehat{E^{\mathrm{ext}}_{\mathrm{FR}}\setminus X}\setminus (0,\infty)\times \partial E^\mathrm{ext}_{\mathrm{FR}}$.  Moreover, these planes  can be compactified to $q$ smooth disks in $\widehat{E^\mathrm{ext}_{\mathrm{FR}}}\setminus L$ with boundaries on $L$. 
Since $L$ is extremal in $X^4_\Omega$, we have 

\begin{equation}\label{est}
	\sum_{i=1}^q\int u_i^*\tilde{\omega}^{\mathrm{up}}_{\mathrm{std}}=	\sum_{i=1}^q\int u_i^*\omega_{\mathrm{std}}\geq q \operatorname{diagonal}(X_{\Omega}^4).
\end{equation}
	
The energy $E(\mathbb{H})$ of the building $\mathbb{H}$ is  $\operatorname{\mathcal{\tilde{A}}}(e_{k})\leq k (1+\epsilon)\operatorname{diagonal}(X_{\Omega}^{4})$. Therefore, 
\begin{equation*}
		\sum_{i=1}^{q}\int u^*_i\omega_{\mathrm{std}}+\int C_{\mathrm{top}} ^*\tilde{\omega}^{\mathrm{up}}_{\mathrm{std}}\leq \operatorname{\mathcal{\tilde{A}}}(e_k)\leq k (1+\epsilon)\operatorname{diagonal}(X_{\Omega}^{4}). 
	\end{equation*}
So \[0<\int C_{\mathrm{top}} ^*\tilde{\omega}^{\mathrm{up}}_{\mathrm{std}}\leq  \operatorname{diagonal}(X_{\Omega}^{4})(k(1+\epsilon) -q). \]
By assumption $\epsilon< 1/k$, so
	\[0<\int C_{\mathrm{top}} ^*\tilde{\omega}^{\mathrm{up}}_{\mathrm{std}}< \operatorname{diagonal}(X_{\Omega}^{4})(k+1-q). \]
We must have $q=k$: otherwise we have 
\[0<\int C_{\mathrm{top}} ^*\tilde{\omega}^{\mathrm{up}}_{\mathrm{std}}< 0 \]
which is not possible because $C_{\mathrm{top}}$ contains at least one non-constant smooth connected punctured $J_{\mathrm{top}}$-holomorphic curve.
		
By the same argument, it follows that all of the disks $u_1,u_2,\dots,u_k$ are somewhere injective and have asymptotics on simple closed Reeb orbits (cf. Lemma \ref{excontradiction}).
Moreover, we have the estimate
	\[0<\int C_{\mathrm{top}} ^*\tilde{\omega}^{\mathrm{up}}_{\mathrm{std}}<k\epsilon \operatorname{diagonal}(X_{\Omega}^{4}).\qedhere \]
\end{proof}
The following lemma shows that every curve component in $C_{\mathrm{top}}$ has at least one positive puncture and one negative puncture. Note that, because of the exactness of the symplectic form, $C_{\mathrm{top}}$ cannot have a non-constant curve component without punctures.
\begin{lemma}\label{cy}
Define 
	 \[0<S_+:=\min\bigg\{\int_{\gamma}\lambda_{\mathrm{std}}>0: \text{ $\gamma$ is a closed Reeb orbit on $\partial E(a,b)$}\bigg\},\]
and set
\[M=\min\bigg\{\frac{S_+}{k \operatorname{diagonal}(X_{\Omega}^{4})},\frac{1}{k} \bigg\}.\]

	\begin{itemize}
	\item If $\epsilon< M$, then $C_{\mathrm{top}}$ does not contain a curve component that has only negative ends.
	\item If $\epsilon< M$, $C_{\mathrm{top}}$ does not contain a curve component that has only positive ends.
	\item If $\epsilon$ is sufficiently small, then the symplectization levels $\mathbb{R}\times  \partial E^\mathrm{ext}_{\mathrm{FR}}$ and $\mathbb{R}\times \partial X$ consist of trivial cylinders over closed Reeb orbits.
	\end{itemize}
\end{lemma}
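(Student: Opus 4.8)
The plan is to prove Lemma \ref{cy} by exploiting the energy bound established in Lemma \ref{countdisk}, namely that $C_{\mathrm{top}}$ carries total $\tilde{\omega}^{\mathrm{up}}_{\mathrm{std}}$-energy strictly less than $k\epsilon \operatorname{diagonal}(X_{\Omega}^{4})$, together with the structural description of the building $\mathbb{H}$ as a genus-zero tree with a single positive (unpaired) end on $e_k$. The guiding principle throughout is that any curve component of $C_{\mathrm{top}}$ with an ``extra'' positive or negative puncture forces additional planar subtrees in the level $\widehat{E^\mathrm{ext}_{\mathrm{FR}}\setminus X}$, and hence additional disks with boundary on $L$; since $L$ is extremal in $X^4_\Omega$, each such disk eats up at least $\operatorname{diagonal}(X_{\Omega}^4)$ of symplectic area, which the tiny budget $k\epsilon\operatorname{diagonal}(X_{\Omega}^4)$ cannot afford once $\epsilon$ is small.

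For the first bullet point, I would argue by contradiction: suppose some curve component $C$ of $C_{\mathrm{top}}$ in $\widehat{E^\mathrm{ext}_{\mathrm{FR}}\setminus X}$ has only negative ends. Then $C$ compactifies (inside $\widehat{E^\mathrm{ext}_{\mathrm{FR}}}\setminus L$, using the maximum principle to keep it away from the end $[0,\infty)\times\partial E^\mathrm{ext}_{\mathrm{FR}}$) to a closed surface or a disk-configuration with boundary on $L$; since $\omega_{\mathrm{std}}$ is exact away from where $C$ could intersect nothing closed, positivity of area of a non-constant holomorphic curve together with Stokes forces $C$ to have positive $\tilde{\omega}^{\mathrm{up}}_{\mathrm{std}}$-energy, yet its asymptotic Reeb orbits contribute at least $S_+$ to that energy by the definition of $S_+$, so $\int_C \tilde{\omega}^{\mathrm{up}}_{\mathrm{std}}\geq S_+ > k\epsilon\operatorname{diagonal}(X_{\Omega}^4)$ when $\epsilon < M$, contradicting $\int C_{\mathrm{top}}^*\tilde{\omega}^{\mathrm{up}}_{\mathrm{std}} < k\epsilon\operatorname{diagonal}(X_{\Omega}^4)$. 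For the second bullet point, a component $C$ with only positive ends (no negative ends) sits either in $\widehat{E^\mathrm{ext}_{\mathrm{FR}}\setminus X}$ or in a symplectization level; in either case removing $C$ from the subtree $C_m$ produces at least one extra planar subtree terminating in a $J_{\mathrm{top}}$-holomorphic plane in $\widehat{E^\mathrm{ext}_{\mathrm{FR}}\setminus X}$ negatively asymptotic to a Reeb orbit on $\partial X$, hence an extra disk with boundary on $L$ of area $\geq\operatorname{diagonal}(X_{\Omega}^4)$, pushing the total count of disks above $k$ and contradicting Lemma \ref{countdisk} (equivalently, exceeding the energy budget). One must also use that $\partial X$ carries no contractible Reeb orbits, so such a planar subtree genuinely reaches the top level.

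For the third bullet point, I would combine the first two: any non-constant curve component in a symplectization level $\mathbb{R}\times\partial E^\mathrm{ext}_{\mathrm{FR}}$ or $\mathbb{R}\times\partial X$ that is not a trivial cylinder over a closed Reeb orbit must, for stability reasons (no level is a disjoint union of trivial cylinders without marked points or nodes), either have more than one positive end, or more than one negative end, or be a genuinely non-trivial cylinder. A non-trivial cylinder in a symplectization has strictly positive $\tilde{\Omega}$-energy; but the $\tilde{\omega}^{\mathrm{up}}_{\mathrm{std}}$-energy of the symplectization portion of the building is bounded by the same small quantity (it is part of the energy budget controlled by (\ref{engery2}) minus the disk contributions), so for $\epsilon$ small enough the only way to avoid a contradiction is that these levels consist entirely of trivial cylinders. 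The multi-puncture cases are ruled out by the first two bullet points applied inside the subtrees, exactly as above. The main obstacle I anticipate is the careful bookkeeping in the second and third bullet points: one has to track precisely how removing a multi-punctured component from the genus-zero tree produces the claimed number of new planar ends in $\widehat{E^\mathrm{ext}_{\mathrm{FR}}\setminus X}$, invoke the maximum principle to confine these planes away from the cylindrical end of $E^\mathrm{ext}_{\mathrm{FR}}$, and verify that each resulting disk is genuinely non-constant with boundary on $L$ so that extremality of $L$ applies — this is the same style of argument as in Lemma \ref{buildinglook} and the proof of Theorem \ref{counting}, and the delicate point is making sure no subtree collapses to a constant or gets absorbed into a trivial cylinder, which is where the stability condition on holomorphic buildings and the absence of contractible Reeb orbits on $\partial X$ are both essential.
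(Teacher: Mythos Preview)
Your overall strategy is right, but you have swapped the arguments for the first two bullet points, and this swap introduces a genuine error in the first.

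For the first bullet, a component $C$ of $C_{\mathrm{top}}$ with only negative ends has those ends on $\partial X$ (the perturbed cosphere bundle), \emph{not} on $\partial E(a,b)$. The constant $S_+$ is defined as the minimal action of a Reeb orbit on $\partial E(a,b)$, so it gives no lower bound here. The correct argument is the one you started to sketch and then abandoned: by the maximum principle $C$ stays away from the positive cylindrical end, so it compactifies to a surface in $E^\mathrm{ext}_{\mathrm{FR}}$ with boundary on $L$; extremality of $L$ then forces $\int_C\tilde{\omega}^{\mathrm{up}}_{\mathrm{std}}\geq\operatorname{diagonal}(X_\Omega^4)$, which contradicts Lemma~\ref{countdisk} once $\epsilon<1/k$.

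For the second bullet, the paper's argument is much more direct than your tree-surgery approach. A component $C$ of $C_{\mathrm{top}}$ with only positive ends has those ends on $\partial E^\mathrm{ext}_{\mathrm{FR}}$; by Stokes, $\int_C\tilde{\omega}^{\mathrm{up}}_{\mathrm{std}}$ equals the sum of the actions of the positive asymptotic orbits, which is at least $S_+$. Since $\epsilon<M$ gives $k\epsilon\operatorname{diagonal}(X_\Omega^4)<S_+$, this again contradicts Lemma~\ref{countdisk}. Your approach via counting extra planar subtrees could be made to work, but it is unnecessary here and also slightly misstated: $C_{\mathrm{top}}$ by definition consists only of components in the level $\widehat{E^\mathrm{ext}_{\mathrm{FR}}\setminus X}$, not in symplectization levels.

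Your argument for the third bullet is essentially the paper's: the $d\lambda$-energy of the symplectization levels is bounded by $k\epsilon\operatorname{diagonal}(X_\Omega^4)$ (after subtracting the disk contributions from the total energy), and there is a positive lower bound on the $d\lambda$-energy of any non-trivial punctured curve in a symplectization with asymptotics of bounded action. For $\epsilon$ small this forces every such component to be a trivial cylinder.
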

\begin{proof}
	For the first bullet point, suppose there is component $u$ that has only negative ends. By the maximum principle, the  $J_{\mathrm{top}}$-holomorphic curve $u$ is contained in $\widehat{E^{\mathrm{ext}}_{\mathrm{FR}}\setminus X}\setminus (0,\infty)\times \partial E^\mathrm{ext}_{\mathrm{FR}}$. Compactify $u$ to a surface in  $\widehat{E^\mathrm{ext}_{\mathrm{FR}}}\setminus L$ with boundary on $L$. 	Since $L$ is extremal in $X^4_\Omega$, we have 
	\[\int u^*\tilde{\omega}^{\mathrm{up}}_{\mathrm{std}}\geq  \operatorname{diagonal}(X_{\Omega}^4).\]
But, by Lemma \ref{countdisk}, we have 
\[0<\int u^*\tilde{\omega}^{\mathrm{up}}_{\mathrm{std}}\leq \int C_{\mathrm{top}} ^*\tilde{\omega}^{\mathrm{up}}_{\mathrm{std}}<k\epsilon \operatorname{diagonal}(X_{\Omega}^{4})<\operatorname{diagonal}(X_{\Omega}^{4}). \]
This is a contradiction.

For the second bullet point,  suppose there is a component $u$ with only positive ends, say asymptotic to $\gamma_1,\dots, \gamma_m$. Then by Stokes' theorem
\[\int u^*\tilde{\omega}^{\mathrm{up}}_{\mathrm{std}}=\sum_{i=1}^{m}\int_{\gamma_i}\lambda_{\mathrm{std}}\geq S_+.\]
But by Lemma \ref{countdisk}, we have 
\[0<\int u^*\tilde{\omega}^{\mathrm{up}}_{\mathrm{std}}\leq \int C_{\mathrm{top}} ^*\tilde{\omega}^{\mathrm{up}}_{\mathrm{std}}<k\epsilon \operatorname{diagonal}(X_{\Omega}^{4})<S_+. \]
This is again a contradiction.

For the last bullet point, recall by (\ref{engery2}) we have
\[	E(\mathbb{H}):=\sum_{N=0}^{N_+}\sum_{i=1}^{k_N}\int_{u_i^N}\tilde{ \Omega}_N\leq \operatorname{\mathcal{\tilde{A}}}(e_{k})= k(1+\epsilon) \operatorname{diagonal}(X_{\Omega}^4).\]
This implies that every Reeb orbit that appears as an asymptote of some curve component in the holomorphic building has action at most $\operatorname{\mathcal{\tilde{A}}}(e_{k})$. Also, this estimate implies
\[\sum_{i=1}^{k}\int u^*_i\omega_{\mathrm{std}}+\int C_{\mathrm{top}} ^*\tilde{\omega}^{\mathrm{up}}_{\mathrm{std}}	+\sum_{N=N_+-l+1}^{N_+}\sum_{i=1}^{k_N}\int_{u_i^N}\tilde{ \Omega}_N\leq k(1+\epsilon) \operatorname{diagonal}(X_{\Omega}^4).\]
This leads to 
\[\sum_{N=N_+-l+1}^{N_+}\sum_{i=1}^{k_N}\int_{u_i^N}d\lambda_{\mathrm{std}}\leq k\epsilon \operatorname{diagonal}(X_{\Omega}^4)-\int C_{\mathrm{top}} ^*\tilde{\omega}^{\mathrm{up}}_{\mathrm{std}}.\]
In particular, for every curve component $u$ of the building in $\mathbb{R}\times  \partial E^\mathrm{ext}_{\mathrm{FR}}$ we have 
\[0\leq  \int_{u}d\lambda_{\mathrm{std}}\leq k\epsilon \operatorname{diagonal}(X_{\Omega}^4)-\int C_{\mathrm{top}} ^*\tilde{\omega}^{\mathrm{up}}_{\mathrm{std}}.\]
There is a positive lower bound for the $d\lambda_{\mathrm{std}}$-energy of punctured holomorphic curves in $\mathbb{R}\times  \partial E^\mathrm{ext}_{\mathrm{FR}}$  which are not trivial cylinders and whose asymptotic Reeb orbits have action at most some fixed positive constant. Therefore, if $\epsilon$ is sufficiently small, then $\int_{u}d\lambda_{\mathrm{std}}=0$. This means $u$ must be a trivial cylinder over some closed Reeb orbit. 
\end{proof}

\begin{lemma}
If $\epsilon$ is sufficiently small, then $C_{\mathrm{top}}$ is a smooth connected simple $J_{\mathrm{top}}$-holomorphic cylinder in $\widehat{E^\mathrm{ext}_{\mathrm{FR}}\setminus X}$ with a positive end asymptotic to $e_k$ and negative end asymptotic to a Reeb orbit on $\partial X$.
\end{lemma}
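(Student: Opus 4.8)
The plan is to show that $C_{\mathrm{top}}$, the portion of the building $\mathbb{H}$ in the level $\widehat{E^\mathrm{ext}_{\mathrm{FR}}\setminus X}$ that is not one of the $k$ planes $u_1,\dots,u_k$, is in fact a single smooth connected simple $J_{\mathrm{top}}$-holomorphic cylinder, and to identify its asymptotics. First I would record what Lemma \ref{cy} already gives us: for $\epsilon$ small enough the symplectization levels $\mathbb{R}\times\partial E^\mathrm{ext}_{\mathrm{FR}}$ and $\mathbb{R}\times\partial X$ consist only of trivial cylinders, and every non-constant curve component inside $C_{\mathrm{top}}$ has at least one positive and at least one negative puncture. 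Since the building has genus zero, its underlying graph is a tree, so it suffices to argue that $C_{\mathrm{top}}$ has exactly one vertex: any component of $C_{\mathrm{top}}$ with two or more positive ends, or two or more negative ends, would, after removing it, disconnect the building into pieces one of which is a topological plane with no positive end at $e_k$ — this plane has a curve component in $\widehat{E^\mathrm{ext}_{\mathrm{FR}}\setminus X}$ negatively asymptotic to $\partial X$ (there are no contractible Reeb orbits), hence compactifies to an extra disk with boundary on $L$; by extremality of $L$ that disk has area at least $\operatorname{diagonal}(X^4_\Omega)$, contradicting the area bound $\int C_{\mathrm{top}}^*\tilde\omega^{\mathrm{up}}_{\mathrm{std}} < k\epsilon\operatorname{diagonal}(X^4_\Omega)$ of Lemma \ref{countdisk} once $\epsilon<1/k$. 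This forces every component of $C_{\mathrm{top}}$ to have exactly one positive and one negative end, i.e.\ to be a cylinder; and the tree structure plus the fact that the only unpaired puncture of the whole building is the single positive puncture at $e_k$ then forces $C_{\mathrm{top}}$ to consist of a single such cylinder (a chain of cylinders in the same level $\widehat{E^\mathrm{ext}_{\mathrm{FR}}\setminus X}$ is impossible since consecutive ones would have to be glued along an interior node, already excluded, and a chain spanning into the symplectization is excluded because those levels are trivial).

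Next I would pin down the asymptotics. The positive end of $C_{\mathrm{top}}$ must be the puncture at $e_k$: it is the unique unpaired positive puncture of $\mathbb{H}$, and by the previous paragraph $C_{\mathrm{top}}$ is the only piece of the building touching $\partial E^\mathrm{ext}_{\mathrm{FR}}$, while all trivial cylinders in the top symplectization levels are trivial over $e_k$ and can be absorbed. The negative end of $C_{\mathrm{top}}$ lands on some closed Reeb orbit $\gamma_{\mathrm{top}}$ on $\partial X$ of action at most $C_k^{\mathrm{GH}}(E^{\mathrm{ext}})$, hence by Section \ref{perturbsurface} it is non-degenerate with $\operatorname{CZ}^\tau(\gamma_{\mathrm{top}})\in\{0,1\}$ and projects to a closed geodesic on $L$. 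The claim that $\gamma_{\mathrm{top}}$ is elliptic, i.e.\ $\operatorname{CZ}^\tau(\gamma_{\mathrm{top}})=1$, should follow from an index count for the cylinder $C_{\mathrm{top}}$ together with the requirement $\operatorname{ind}(C_{\mathrm{top}})\geq 0$ for generic $J_{\mathrm{top}}$ (it is somewhere injective since $e_k$ is simple by Remark \ref{somewhere-injective}): writing the Fredholm index of $C_{\mathrm{top}}$ in a trivialization extending over the capping, using $\operatorname{CZ}^{\tau_{\mathrm{ext}}}(e_k)=2k+1$ from \eqref{CZI} and the fact that the total building index is zero with $\operatorname{ind}(C_{\mathrm{bot}})=0$ and $\operatorname{ind}(u_i)\geq 0$, one forces $\operatorname{ind}(C_{\mathrm{top}})=0$ and this equality is compatible only with $\operatorname{CZ}^\tau(\gamma_{\mathrm{top}})$ odd. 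Then by \eqref{ind-f} an elliptic orbit corresponds to a critical point of $f$ of odd Morse index, which was arranged to be the maximum sitting over the geodesic through the a priori chosen point $p\in L$. Simplicity and rigidity of $C_{\mathrm{top}}$ for generic $J_{\mathrm{top}}$ then follow: a multiple cover would have a somewhere injective quotient carrying too small an area to be consistent with the fact that its boundary-compactification gives a disk on $L$, and index-zero somewhere injective curves are rigid by standard transversality.

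The main obstacle I anticipate is the bookkeeping in the index computation that separates the elliptic from the hyperbolic case — one must be careful about which trivialization is used ($\tau$ versus $\tau_{\mathrm{ext}}$), about the Maslov correction terms from Theorem \ref{special-trivalization}, and about correctly distributing the total index-zero constraint of $\mathbb{H}$ among $C_{\mathrm{bot}}$, the planes $u_i$, the cylinder $C_{\mathrm{top}}$, and the (trivial, index-zero) symplectization components. A secondary subtlety is making the ``single vertex'' argument fully rigorous when $C_{\mathrm{top}}$ could a priori include constant (ghost) components; but ghosts carry no area and by exactness cannot support punctures, so they must be decorated with the tangency marked point — which lives on $C_{\mathrm{bot}}$ in the bottom level, not here — and hence cannot occur in $C_{\mathrm{top}}$ by the stability condition. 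Once these two points are dispatched, the lemma follows by assembling Lemma \ref{countdisk}, Lemma \ref{cy}, the genus-zero tree structure, and the index count, exactly as in the parallel argument for the building $\mathbb{H}$ in Section \ref{proof for ball}.
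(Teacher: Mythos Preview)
Your first paragraph is essentially the paper's proof: invoke Lemma~\ref{cy} to get that every component of $C_{\mathrm{top}}$ has at least one positive and one negative puncture and that the symplectization levels are trivial, then use the uniqueness of the unpaired positive puncture $e_k$ (together with triviality of the $\mathbb{R}\times\partial E^\mathrm{ext}_{\mathrm{FR}}$ levels) to force $C_{\mathrm{top}}$ to be a single component with one positive end on $e_k$, and deduce simplicity from the fact that $e_k$ is a simple orbit (Remark~\ref{somewhere-injective}). One caveat: your ``extra disk $\to$ area contradiction'' argument for ruling out a component with $\geq 2$ negative ends is not quite right as written. The extra topological plane obtained by deleting such a component from the tree sits \emph{below} the level $\widehat{E^\mathrm{ext}_{\mathrm{FR}}\setminus X}$ (in $\mathbb{R}\times\partial X$ and $T^*L$), so it need not contain any curve component in that level, and in any case its area is not bounded by $\int C_{\mathrm{top}}^*\tilde\omega^{\mathrm{up}}_{\mathrm{std}}$. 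The real contradiction is simply that this plane would cap a non-contractible Reeb orbit on $\partial X$ inside $T^*L$, which is impossible --- you correctly flag ``(there are no contractible Reeb orbits)'' but then head toward the wrong conclusion. The paper's proof sidesteps this: once $C_{\mathrm{top}}$ is a single component with one positive end, the building's single unpaired puncture and the tree structure force it to be a cylinder.

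Your entire second paragraph is not part of this lemma. The statement only asserts that the negative end lies on \emph{some} Reeb orbit on $\partial X$; it says nothing about that orbit being elliptic, nothing about $\operatorname{ind}(C_{\mathrm{top}})=0$, and nothing about the corresponding geodesic passing through $p$. Those facts are established in the two lemmas that follow (Lemma~\ref{indezero} and the lemma immediately after it). So the index bookkeeping, the discussion of trivializations $\tau$ versus $\tau_{\mathrm{ext}}$, and the Morse-index considerations --- while not wrong in spirit --- are not needed here and belong elsewhere. For this lemma you should stop after: single component, one positive end on $e_k$, one negative end on $\partial X$, somewhere injective because $e_k$ is simple.
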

\begin{proof}
By Lemma \ref{cy}, $C_{\mathrm{top}}$ consists of smooth connected $J_{\mathrm{top}}$-holomorphic cylinders each  with a positive and negative end. Moreover, the symplectization levels $\mathbb{R}\times  \partial E^\mathrm{ext}_{\mathrm{FR}}$  of the building are empty. The building has only one positive end, which is on $e_k$, and has no negative ends. Therefore, $C_{\mathrm{top}}$ must be a single smooth connected $J_{\mathrm{top}}$-holomorphic cylinder with positive end asymptotic to $e_k$. 

The cylinder $C_{\mathrm{top}}$ is somewhere injective because it has positive end on $e_{k}$, which is a simple Reeb orbit by  Remark \ref{somewhere-injective}.
\end{proof}

Putting everything together, we conclude that, for sufficiently small $\epsilon$, the building $\mathbb{H}$ has only two levels. The bottom level $T^*L$ contains a single smooth connected punctured sphere $C_{\mathrm{bot}}$ with exactly $k+1$ positive ends and that carries the tangency constraint $\ll \mathcal{T}_D^{k-1}y\gg$ at $y$. Corresponding to the $k+1$ positive ends of $C_{\mathrm{bot}}$, the top level consists of exactly $k$ negatively asymptotic somewhere injective $J_{\mathrm{top}}$ planes $u_1,u_2,\dots,u_k$ and a somewhere injective $J_{\mathrm{top}}$ cylinder $C_{\mathrm{top}}$ in $\widehat{E^\mathrm{ext}_{\mathrm{FR}}\setminus X}$.  Since the building is connected, it has no components other than $C_{\mathrm{bot}}, u_1,u_2,\dots,u_k$, and $C_{\mathrm{top}}$; see Figure \ref{holo2} for an illustration.
\begin{lemma}\label{indezero}
	For generic $J_{\mathrm{bot}}$ and $J_{\mathrm{top}}$, each of components $C_{\mathrm{bot}}, u_1,u_2,\dots,u_k$, and $C_{\mathrm{top}}$ has Fredholm index zero.
\end{lemma}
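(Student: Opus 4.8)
\textbf{Proof sketch of Lemma \ref{indezero}.} The strategy is to combine three ingredients: (i) the additivity of the Fredholm index over the levels of the holomorphic building $\mathbb{H}$, (ii) the fact that $\mathbb{H}$ has index zero (being a limit of rigid index-zero curves), and (iii) lower bounds for the indices of the individual somewhere injective components coming from genericity of $J_{\mathrm{bot}}$ and $J_{\mathrm{top}}$, together with a dimension count for the constrained curve $C_{\mathrm{bot}}$. First I would recall that, since the building $\mathbb{H}$ is the limit of a sequence of rigid $J_i$-holomorphic planes asymptotic to $e_k$ and carrying the constraint $\ll \mathcal{T}_D^{k-1}y\gg$, its total index vanishes:
\[
\operatorname{ind}(C_{\mathrm{bot}})+\operatorname{ind}(C_{\mathrm{top}})+\sum_{i=1}^{k}\operatorname{ind}(u_i)=0,
\]
where the index of $C_{\mathrm{bot}}$ is computed taking the tangency constraint into account and the index of $C_{\mathrm{top}}$ is its ordinary (unconstrained) Fredholm index. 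This uses that the symplectization levels are empty (already established) so there are no other contributions and no gluing-parameter corrections beyond the breaking pairs, which contribute zero.

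Next I would bound each term below. By Lemma \ref{countdisk} the planes $u_1,\dots,u_k$ and the cylinder $C_{\mathrm{top}}$ are somewhere injective, so for generic $J_{\mathrm{top}}$ standard transversality (\cite{MR2954391}, adapted to the SFT setting as in \cite[Proposition 3.1]{Cieliebak2018}) gives $\operatorname{ind}(u_i)\geq 0$ for all $i$ and $\operatorname{ind}(C_{\mathrm{top}})\geq 0$. For $C_{\mathrm{bot}}$ I would run the same index computation as in Lemma \ref{index-estemate}: writing $C_{\mathrm{bot}}$ with $k+1$ positive punctures on Reeb orbits $\gamma_1,\dots,\gamma_{k+1}$ that project to closed geodesics $c_i$, choosing a trivialization $\tau$ extending over $C_{\mathrm{bot}}^*TT^*L$, and using that $C_{\mathrm{bot}}$ is a null-homology of the $\gamma_i$ so that $\sum_i \mu^{\kappa(\tau)}(c_i)=0$ (Theorem \ref{special-trivalization}), one gets
\[
\operatorname{ind}(C_{\mathrm{bot}})=(n-3)(2-(k+1))+\sum_{i=1}^{k+1}\operatorname{CZ}^{\tau}(\gamma_i)-2n+2-2(k-1)
\]
with $n=2$; since each $\gamma_i$ has action at most $C_k^{\mathrm{GH}}(E^{\mathrm{ext}})$, by Subsection \ref{perturbsurface} we have $\operatorname{CZ}^{\tau}(\gamma_i)\in\{0,1\}$, which yields an upper bound on $\operatorname{ind}(C_{\mathrm{bot}})$ in terms of how many of the $\gamma_i$ are elliptic. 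On the other hand, $C_{\mathrm{bot}}$ carries the point/tangency constraint and lies in a generically chosen $J_{\mathrm{bot}}$, so being somewhere injective (the argument of Theorem \ref{gromov-witten-tang} applies, since a multiple cover would force the constraint onto the underlying simple curve and produce negative index) it satisfies $\operatorname{ind}(C_{\mathrm{bot}})\geq 0$.

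Finally, feeding $\operatorname{ind}(C_{\mathrm{bot}})\geq 0$, $\operatorname{ind}(C_{\mathrm{top}})\geq 0$, $\operatorname{ind}(u_i)\geq 0$ into the vanishing of the total index forces every term to be exactly zero, which is the assertion. As a by-product the index computation for $C_{\mathrm{bot}}$ pins down the Conley--Zehnder indices of all $\gamma_i$ (forcing $\operatorname{CZ}^{\tau}(\gamma_i)=1$ for each $i$, i.e. each $\gamma_i$ is elliptic and projects to a geodesic through $p$), which is exactly what is needed for the fourth bullet point of Claim \ref{claim2} concerning $\gamma_{\mathrm{top}}$. The main obstacle I anticipate is bookkeeping the index formula correctly in the punctured/constrained setting---in particular making sure the contribution of the tangency constraint $\ll \mathcal{T}_D^{k-1}y\gg$ is subtracted as $2(k-1)$ and that the trivialization-dependent Conley--Zehnder terms are handled consistently via Theorem \ref{special-trivalization}---rather than any conceptual difficulty; the transversality inputs are by now standard for somewhere injective curves.
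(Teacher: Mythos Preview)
Your proposal is correct and follows essentially the same approach as the paper: total index zero plus nonnegativity of each summand (from somewhere injectivity and genericity) forces every individual index to vanish. The paper's proof is terser---it simply cites \cite{Cieliebak2018} for $\operatorname{ind}(C_{\mathrm{bot}})\geq 0$ rather than arguing somewhere injectivity explicitly---and the detailed Conley--Zehnder computation you include at the end is actually the content of the \emph{next} lemma in the paper, not this one, but that is a matter of organization rather than correctness.
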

\begin{proof}
	The curve components of the building in the top level $u_1,u_2,\dots,u_k, C_{\mathrm{top}}$ are somewhere injective, so each has a non-negative index for generic $J_{\mathrm{top}}$. Also, by {\cite{Cieliebak2018}}, $C_{\mathrm{bot}}$ has non-negative zero for generic  $J_{\mathrm{bot}}$. Moreover, the index of the building is zero, so 
\[\underbrace{\operatorname{ind}(C_{\mathrm{bot}})}_{\geq 0}+\underbrace{\operatorname{ind}(C_{\mathrm{top}})}_{\geq 0}+\sum_{i=1}^{k}\underbrace{\operatorname{ind}(u_i)}_{\geq 0}=0.\]
This means we must have $\operatorname{ind}(C_{\mathrm{bot}})=\operatorname{ind}(C_{\mathrm{top}})=\operatorname{ind}(u_1)=\cdots=\operatorname{ind}(u_k)=0$.
\end{proof}
\begin{lemma}
	If	$\epsilon$ is sufficiently small, then the negative end of the
 cylinder $C_{\mathrm{top}}$  is asymptotic to a closed Reeb orbit on $\partial X$, denoted by $\gamma_{\mathrm{top}}$,  whose projection to $L$ is a closed geodesic passing through the point $p$. 
\end{lemma}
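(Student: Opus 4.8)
The plan is to deduce from $\operatorname{ind}(C_{\mathrm{top}})=0$ that $\gamma_{\mathrm{top}}$ is an \emph{elliptic} Reeb orbit, and then to read off the conclusion from the construction of the perturbed hypersurface $\partial X$ in Subsection~\ref{perturbsurface}. By the preceding lemma, $C_{\mathrm{top}}$ is a smooth connected simple $J_{\mathrm{top}}$-holomorphic cylinder in $\widehat{E^{\mathrm{ext}}_{\mathrm{FR}}\setminus X}$ with one positive puncture asymptotic to $e_k$ and one negative puncture asymptotic to $\gamma_{\mathrm{top}}$, and by Lemma~\ref{indezero} it has Fredholm index $0$.

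First I would write down the SFT Fredholm index of $C_{\mathrm{top}}$: since the ambient cobordism is four-dimensional ($n=2$) and $C_{\mathrm{top}}$ has genus $0$ with one positive and one negative puncture, the topological term $(n-3)\,\chi(\dot{\Sigma})$ vanishes and
\[\operatorname{ind}(C_{\mathrm{top}})=2\,c_1^\tau(C_{\mathrm{top}})+\operatorname{CZ}^\tau(e_k)-\operatorname{CZ}^\tau(\gamma_{\mathrm{top}})\]
for any choice of symplectic trivialisations $\tau$ along $e_k$ and along $\gamma_{\mathrm{top}}$. Setting this equal to $0$ gives $\operatorname{CZ}^\tau(\gamma_{\mathrm{top}})=\operatorname{CZ}^\tau(e_k)+2\,c_1^\tau(C_{\mathrm{top}})$; in particular $\operatorname{CZ}^\tau(\gamma_{\mathrm{top}})$ and $\operatorname{CZ}^\tau(e_k)$ have the same parity. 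By Remark~\ref{elliptic} (equivalently, by~(\ref{CZI})) the orbit $e_k$ is elliptic, i.e.\ $\operatorname{CZ}^\tau(e_k)$ is odd; since the parity of the Conley--Zehnder index does not depend on the trivialisation, $\operatorname{CZ}^\tau(\gamma_{\mathrm{top}})$ is odd as well, so $\gamma_{\mathrm{top}}$ is an elliptic Reeb orbit.

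Next I would invoke the energy bound~(\ref{engery2}): because $\gamma_{\mathrm{top}}$ appears as the negative end of the component $C_{\mathrm{top}}$ of $\mathbb{H}$, its action is at most $C_k^{\mathrm{GH}}(E^{\mathrm{ext}})$. Hence, by the properties of the perturbation of $\partial X$ recorded in Subsection~\ref{perturbsurface}, the orbit $\gamma_{\mathrm{top}}$ projects under $\Pi$ to a closed geodesic on $L$ and satisfies $\operatorname{CZ}^\tau(\gamma_{\mathrm{top}})\in\{0,1\}$ by~(\ref{ind01}); combined with the parity statement above, this forces $\operatorname{CZ}^\tau(\gamma_{\mathrm{top}})=1$. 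Finally, in the special trivialisation for which $\operatorname{CZ}^\tau=\operatorname{index}_\gamma(f)$ (cf.~(\ref{ind-f})), and since the Morse function $f$ on $S_{\leq T}$ was chosen with its maximum on each Morse--Bott circle at the orbit whose projection to $L$ is the geodesic through $p$, the orbits with $\operatorname{CZ}^\tau=1$ are precisely those projecting to a closed geodesic through $p$. Therefore $\gamma_{\mathrm{top}}$ projects to a closed geodesic on $L$ passing through $p$, as claimed.

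The only point requiring care is the bookkeeping of trivialisations in the index formula; since only the parity of $\operatorname{CZ}^\tau$ enters the argument, this reduces to the trivialisation-independence of that parity, which is standard, so I do not expect a genuine obstacle here.
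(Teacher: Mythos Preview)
Your argument is correct and takes a genuinely different route from the paper's own proof. The paper works with the \emph{bottom} curve $C_{\mathrm{bot}}$: using the index formula for a sphere with $k+1$ positive ends carrying the tangency constraint $\ll\mathcal{T}_D^{k-1}y\gg$ and $\operatorname{ind}(C_{\mathrm{bot}})=0$, it obtains (in the special trivialisation from Theorem~\ref{special-trivalization}) the equation $k+1=\sum_{i=1}^{k}\mu(c_i)+\mu(c_{\mathrm{top}})$ with each Morse index in $\{0,1\}$, forcing every term to equal $1$; then~(\ref{ind-f}) yields $\operatorname{index}_{\gamma_{\mathrm{top}}}(f)=1$ and hence the conclusion. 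You instead argue directly from $\operatorname{ind}(C_{\mathrm{top}})=0$: the parity of $\operatorname{CZ}^\tau(\gamma_{\mathrm{top}})$ agrees with that of $\operatorname{CZ}^\tau(e_k)$, and since $e_k$ is elliptic this forces $\gamma_{\mathrm{top}}$ to be elliptic, which combined with~(\ref{ind01}) and~(\ref{ind-f}) gives the same conclusion. Your route is shorter and avoids the trivialisation bookkeeping along all $k+1$ ends of $C_{\mathrm{bot}}$; the paper's computation, on the other hand, yields the extra information that \emph{all} the orbits $\gamma_1,\dots,\gamma_k$ also have $\operatorname{CZ}^\tau=1$, which is not asserted in the lemma but is consonant with the analogous step in the ball case (Lemma~\ref{index-estemate}).
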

\begin{proof}
Recall from Section \ref{perturbsurface} that every closed Reeb orbit on $\partial X$ that appears in the building is a critical point of the Morse function $f$, and the closed Reeb orbits at which $f$ attains its maximum project to geodesics on $L$ passing through $p$. So it is enough to prove that $\gamma_\mathrm{top}$ as a critical point of $f$ has maximal Morse index, i.e.,
\[1=\operatorname{index}_{\gamma_\mathrm{top}}(f)\in \{0,1\}.\]
We prove that the closed Reeb orbit $\gamma_\mathrm{top}$ corresponds to a closed geodesic $c_\mathrm{top}$ of Morse index, denoted by $\mu(c_\mathrm{top})$, equal to $1$. This is enough because then we can find a trivialization $\tau$ such 
\[1=\mu(c_\mathrm{top})=\operatorname{CZ}^\tau(\gamma_\mathrm{top})=\operatorname{index}_{\gamma_\mathrm{top}}(f)\in \{0,1\}\]
by Equation (\ref{ind-f}). 

Choose a symplectic trivialization  $\tau$ of $C_{\mathrm{bot}}^*TT^*L$. The Fredholm index of $C_{\mathrm{bot}}$ (cf.\cite[Proposition 3.1]{Cieliebak2018}), taking the constraint $\ll \mathcal{T}_D^{k-1}y\gg$ into consideration, is given by
\[\operatorname{ind}(C_{\mathrm{bot}})=(k+1)-2+\sum_{i=1}^{k}\operatorname{CZ^{\tau}}(\gamma_i)+\operatorname{CZ^{\tau}}(\gamma_\mathrm{top})-4+2-2(k-1).\]
After simplifying and using $\operatorname{ind}(C_{\mathrm{bot}})=0$ from Lemma \ref{indezero}  gives
\begin{equation}\label{indexfinal}
	k+1=\sum_{i=1}^{k}\operatorname{CZ^{\tau}}(\gamma_i)+\operatorname{CZ^{\tau}}(\gamma_\mathrm{top}).
\end{equation}   
On the other hand, Theorem \ref{special-trivalization} implies that	
\[\sum_{i=1}^{k}\operatorname{CZ^{\tau}}(\gamma_i)+\operatorname{CZ^{\tau}}(\gamma_\mathrm{top})+\sum_{i=1}^{k}\mu^{\kappa(\tau)}(c_i)+\mu^{\kappa(\tau)}(c_\mathrm{top})=\sum_{i=1}^{k}\mu(c_i)+\mu(c_{\mathrm{top}}),\]
where $\mu(c_i)$ is the Morse index of the geodesic $c_i$ corresponding to $\gamma_i$ and  $\mu^{\kappa(\tau)}(c_i)$ denotes its Maslov index. Since $C_{\mathrm{bot}}$ is a null-homology of the cycles $\gamma_1,\dots,\gamma_{k}, \gamma_{\mathrm{top}}$, we have \[\sum_{i=1}^{k}\mu^{\kappa(\tau)}(c_i)+\mu^{\kappa(\tau)}(c_\mathrm{top})=0.\]
This implies
\[\sum_{i=1}^{k}\operatorname{CZ^{\tau}}(\gamma_i)+\operatorname{CZ^{\tau}}(\gamma_\mathrm{top})=\sum_{i=1}^{k}\mu(c_i)+\mu(c_{\mathrm{top}}).\]
So Equation (\ref{indexfinal1}) implies
\begin{equation}\label{indexfinal1}
	k+1=\sum_{i=1}^{k}\mu(c_i)+\mu(c_{\mathrm{top}}).
\end{equation} 
Since for all $i\in \{1,\dots,k,\text{top}\}$ we have
\[0\leq  \mu(c_i)\leq 1,\]
we must also have
\[\mu(c_i)= 1.\]
for all $i=\infty, 1,\dots, k, \text{top}$.

Again choose trivializations $\tau$ along the ends $\gamma_1,\dots, \gamma_k,\gamma_{\mathrm{top}}$ that satisfy the second bullet point of Theorem \ref{special-trivalization}. In this trivialization we have 
\begin{equation}\label{ind}
\operatorname{CZ}^\tau(\gamma_i)=\mu(c_i)= 1
\end{equation}
for all $i=\infty,1,\dots,k$. In particular,  this means $\gamma_{\mathrm{top}}$ is an elliptic Reeb orbit. By Equation (\ref{ind-f}), we have 
\[1=\operatorname{CZ}^\tau(\gamma_{\mathrm{top}})=\operatorname{index}_{\gamma_{\mathrm{top}}}(f_T).\qedhere\]
\end{proof}

We believe that Theorem \ref{extremal-lag-toric} holds for ellipsoids of all dimensions. Note that the rounding procedure described in Section \ref{roudningfullyconvex} and Theorem \ref{GH} hold for convex toric domains of all dimensions. So it's enough to prove  Theorem \ref{puctured-dsik} for higher-dimensional convex toric domains. 
\begin{conjecture}\label{generalizedconjecture}
For all $0<a_1\leq a_2\leq\dots\leq a_n< \infty$, every extremal Lagrangian torus in $(E^{2n}(a_1,a_2,\dots, a_n),\omega_{\mathrm{std}})$  is entirly contained in the boundary $\partial E^{2n}(a_1,a_2,\dots, a_n)$. 
\end{conjecture}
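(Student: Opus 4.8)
The plan is to deduce Conjecture~\ref{generalizedconjecture} from a higher-dimensional version of Theorem~\ref{puctured-dsik} and then to re-run, essentially verbatim, the proof of Theorem~\ref{extremal-lag-toric} given in Section~\ref{sectiontoricproof}. Observe first that $E^{2n}(a_1,\dots,a_n)$ is itself a compact convex toric domain, so the hypothesis of a higher-dimensional analogue of Theorem~\ref{extremal-lag-toric} is automatic: one takes $X^{2n}_{\Omega}=E^{2n}(a_1,\dots,a_n)$, dilates it to $E^{\mathrm{ext}}:=E^{2n}((1+\epsilon)a_1,\dots,(1+\epsilon)a_n)$, which has $\operatorname{diagonal}(E^{\mathrm{ext}})=(1+\epsilon)\operatorname{diagonal}(E^{2n})$, and (after a density argument, as in Section~\ref{sectiontoricproof}) picks $k\in\mathbb{Z}_{\geq 1}$ with $c^{\mathrm{GH}}_k(E^{\mathrm{ext}})=k\operatorname{diagonal}(E^{\mathrm{ext}})$; for these ellipsoids the relevant Reeb orbit is primitive, so no issue with multiplicities arises. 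The rounding procedure of Section~\ref{roudningfullyconvex}, Theorem~\ref{GH}, and the Conley--Zehnder index formula of \cite{MR3868228} for perturbed toric Reeb orbits all hold in every dimension; they produce, after rounding $E^{\mathrm{ext}}$ to $E^{\mathrm{ext}}_{\mathrm{FR}}$, a simple elliptic Reeb orbit $e_k$ on $\partial E^{\mathrm{ext}}_{\mathrm{FR}}$ of action $\approx c^{\mathrm{GH}}_k(E^{\mathrm{ext}})$ with $\operatorname{CZ}^{\tau_{\mathrm{ext}}}(e_k)=2k+n-1$, so that the moduli space of constrained planes below has virtual dimension $n-3+\operatorname{CZ}^{\tau_{\mathrm{ext}}}(e_k)+2-2n-2(k-1)=0$. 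Thus the only genuinely new input is the counterpart of Theorem~\ref{puctured-dsik} for a rounded higher-dimensional ellipsoid: that $\mathcal{M}^{J}_{E^{\mathrm{ext}}_{\mathrm{FR}}}(e_k)\ll \mathcal{T}_D^{k-1}p\gg$ has Fredholm index zero, is formally perturbation invariant in the sense of Definition~\ref{perturbationinvariance}, carries a positive (hence unsigned $=$ signed) count, and its elements have $\tilde{\omega}_{\mathrm{std}}$-energy $\approx c^{\mathrm{GH}}_k(E^{\mathrm{ext}})=k\operatorname{diagonal}(E^{2n})$.

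I would prove this higher-dimensional Theorem~\ref{puctured-dsik} in three steps. The index and asymptotic statements follow from Section~\ref{roudningfullyconvex} and \cite{MR3868228} as indicated. For the \emph{formal perturbation invariance} one must replace the intersection-positivity arguments of \cite[Proposition 5.6.1]{McDuff:2021aa}, which are special to dimension four, by a purely index-theoretic analysis: using the non-degeneracy of the short Reeb orbits on $\partial E^{\mathrm{ext}}_{\mathrm{FR}}$ together with a covering analysis, every non-constant smooth component of a degenerating building is either a trivial cylinder or a multiple cover of a somewhere injective curve whose index is constrained tightly enough that the total index budget forces the building to be of the harmless two-level type (C2) of Definition~\ref{perturbationinvariance}. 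This step will require transversality for unbranched and low-index multiply covered curves in symplectizations, of the kind supplied by Wendl's work \cite{Wendl2023} transported to the SFT framework. For \emph{positivity} of the count, having lost intersection positivity, I would compute it instead by a cobordism to the standard integrable complex structure on $E^{\mathrm{ext}}_{\mathrm{FR}}$ (in the spirit of the $\mathbb{CP}^n$ computations of Section~\ref{Chapter03}), or equivalently relate it by gluing to the Gutt--Hutchings differential count along $\partial E^{\mathrm{ext}}_{\mathrm{FR}}$; both are known to be non-zero.

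Granting the higher-dimensional Theorem~\ref{puctured-dsik}, the argument of Section~\ref{sectiontoricproof} carries over unchanged. Let $L\cong T^n$ be an extremal Lagrangian torus in $(E^{2n}(a_1,\dots,a_n),\omega_{\mathrm{std}})$ meeting the interior, fix $p\in L\cap\operatorname{int}(E^{2n})$ and a chart $\phi:D^{2n}(\delta)\to\operatorname{int}(E^{2n})$ with $\phi(0)=p$ and $\phi^{-1}(L)=D^{2n}(\delta)\cap\mathbb{R}^n$. Weinstein-embed a codisk bundle of the flat $T^n$ into $\operatorname{int}(E^{\mathrm{ext}}_{\mathrm{FR}})$ and perturb its boundary by a Bourgeois-type perturbation \cite{Bourgeois2002} so that every closed Reeb orbit of action $\leq c^{\mathrm{GH}}_k(E^{\mathrm{ext}})$ is non-contractible, non-degenerate, projects to a closed geodesic, and satisfies $\operatorname{CZ}^{\tau}\in\{0,\dots,n-1\}$, the maximal-index orbits projecting to geodesics through $p$ (using Theorem~\ref{perturb-metric} and Theorem~\ref{special-trivalization}, valid for the flat torus in all dimensions). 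Stretch the neck along this hypersurface and take the Gromov--Hofer limit $\mathbb{H}$ of the rigid constrained planes asymptotic to $e_k$. The analogue of Claim~\ref{claim2} then holds by the same index and energy bookkeeping: Theorem~\ref{count-positive-punctures} forces the bottom component $C_{\mathrm{bot}}$ to have at least $k+1$ positive punctures; exactly $k$ of the resulting planes in the middle level $\widehat{E^{\mathrm{ext}}_{\mathrm{FR}}\setminus X}$ compactify to disks on $L$, each of area $\geq\operatorname{diagonal}(E^{2n})$ by extremality of $L$; hence for $\epsilon<1/k$ the residual cylinder $C_{\mathrm{top}}$ running to $\partial E^{\mathrm{ext}}_{\mathrm{FR}}$ satisfies $0<\int C_{\mathrm{top}}^*\tilde{\omega}^{\mathrm{up}}_{\mathrm{std}}\leq k\epsilon\operatorname{diagonal}(E^{2n})$, and its negative end $\gamma_{\mathrm{top}}$ has maximal Conley--Zehnder index, hence projects to a geodesic through $p$. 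Finally, as in Section~\ref{counthalfcy} and Section~\ref{preparingdisk}, I would glue to $C_{\mathrm{top}}$ a constrained half-cylinder in $T^*T^n$ whose count is non-zero by the Cieliebak--Latschev isomorphism \cite{Cieliebak:2007aa}, obtaining a pseudoholomorphic half-cylinder through $p$ of area $\leq k\epsilon\operatorname{diagonal}(E^{2n})$, and apply the monotonicity Lemma~\ref{monotoncity1} in the chart $\phi(D^{2n}(\delta))$ to get $\frac{1}{2}\pi\delta^2\leq k\epsilon\operatorname{diagonal}(E^{2n})$ --- a contradiction as $\epsilon\to 0$ with $\delta$ and $k$ fixed.

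The main obstacle is the higher-dimensional Theorem~\ref{puctured-dsik} itself, and within it the formal perturbation invariance and the positivity of $\#\mathcal{M}^{J}_{E^{\mathrm{ext}}_{\mathrm{FR}}}(e_k)\ll \mathcal{T}_D^{k-1}p\gg$ in the absence of the intersection-theoretic tools available in dimension four. Establishing this appears to require a delicate index-and-covering analysis of degenerate buildings together with a transversality result for multiply covered punctured curves in symplectizations of $(2n-1)$-dimensional contact manifolds. Everything else --- the reduction, the rounding, the energy estimates, the half-cylinder count, and the concluding monotonicity argument --- should be a routine, dimension-independent transcription of what is already carried out in Sections~\ref{roudningfullyconvex} and~\ref{sectiontoricproof}.
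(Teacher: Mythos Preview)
This statement is stated in the paper as a \emph{conjecture}, not a theorem; the paper does not prove it. Immediately before stating it, the paper remarks that the rounding procedure and Theorem~\ref{GH} hold in all dimensions, so that ``it's enough to prove Theorem~\ref{puctured-dsik} for higher-dimensional convex toric domains'' --- and then leaves the matter there. So there is no ``paper's own proof'' to compare against.

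Your proposal is not a proof but a proof \emph{strategy}, and you are candid about this: you explicitly flag the higher-dimensional analogue of Theorem~\ref{puctured-dsik} as ``the only genuinely new input'' and later as ``the main obstacle''. On the level of strategy you agree exactly with the paper's own diagnosis of what is missing. The reduction you outline --- dilate, round, stretch, and rerun Section~\ref{sectiontoricproof} --- is precisely what the paper has in mind, and your dimension-independent bookkeeping (Theorem~\ref{count-positive-punctures}, Theorem~\ref{perturb-metric}, Theorem~\ref{special-trivalization}, the half-cylinder count, monotonicity) is correctly identified as routine.

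Where your proposal remains genuinely incomplete is in the two sub-steps you single out. For formal perturbation invariance, the four-dimensional argument in \cite[Proposition~5.6.1]{McDuff:2021aa} uses positivity of intersection and ECH-type partition conditions in an essential way; your suggested replacement --- a ``purely index-theoretic analysis'' together with Wendl-style automatic transversality for multiply covered punctured curves --- is not currently available in the literature in the required generality, and it is not clear that index bounds alone rule out the relevant buildings in dimension $2n\geq 6$. Likewise, your two suggestions for positivity of the count (a cobordism to the integrable structure, or identification with a Gutt--Hutchings count) are plausible heuristics but each hides its own compactness and transversality issues in higher dimensions. In short: your outline matches the paper's intended route, and you have located the gap correctly, but the sketches you offer for closing that gap are not yet arguments.
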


\AtEndDocument{
\bibliographystyle{alpha}
\bibliography{extremallagrangian}
\Addresses}
\end{document}